\documentclass[11pt]{amsart}
\usepackage[a4paper,margin=28mm]{geometry}
\usepackage[T1]{fontenc}
\usepackage{lmodern,amsmath,amssymb,amsthm,mathtools,microtype}
\usepackage[hidelinks]{hyperref}
\hypersetup{pdftitle={A Chaining Approach to Canonical Processes with Log-Concave Tails}}

\newtheorem{theorem}{Theorem}[section]
\newtheorem{lemma}[theorem]{Lemma}
\newtheorem{proposition}[theorem]{Proposition}
\newtheorem{corollary}[theorem]{Corollary}
\theoremstyle{definition}

\theoremstyle{remark}

\newcommand{\R}{\mathbb R}

\newcommand{\Z}{\mathbb Z}
\newcommand{\E}{\textsf E}
\renewcommand{\P}{\textsf P}
\newcommand{\ind}{\mathbf 1}
\newcommand{\cA}{\mathcal A}
\newcommand{\cP}{\mathcal P}
\newcommand{\cM}{\mathcal M}

\newcommand{\cexp}{\mathcal C_{\mathrm{exp}}}
\newcommand{\diam}{\operatorname{diam}}

\newcommand{\conv}{\operatorname{conv}}
\newcommand{\clconv}{\overline{\operatorname{conv}}}
\newcommand{\eqdist}{\stackrel{\mathrm d}=}

\numberwithin{equation}{section}
\allowdisplaybreaks[2]
\title[A chaining approach]{A Chaining Approach to Canonical Processes with Log-Concave Tails}
\author{Xuanang Hu}
\address{Shandong University,  Jinan, China.}
\email{xuananghu7@gmail.com}

\author{Hanchao Wang}
\address{ Shandong University, Jinan,  China.}
\email{wanghanchao@sdu.edu.cn}

\author{Xinglong Wu}
\address{ Shandong University, Jinan,  China.}
\email{wuxinglong3@gmail.com}
\date{}
\subjclass[2020]{Primary 60G17; Secondary 60E15, 46B09}
\keywords{Canonical process, chaining, log-concave tail, decomposition theorem, majorizing measure, convex hull}

\begin{document}

\begin{abstract}
We prove a decomposition theorem for canonical processes generated by independent symmetric random variables with log-concave tails, without the $\Delta_2$ condition. The proof uses the classical chaining method, based on admissible partitions and chopping maps. The argument also recovers the majorizing-measure characterization.
\end{abstract}

\maketitle

\section{Introduction and main results}\label{sec:intro}

For a centered Gaussian process $(G_t)_{t\in T}$, put
\[
 d(s,t)=\bigl(\E|G_t-G_s|^2\bigr)^{1/2}.
\]
Dudley obtained an upper bound in terms of metric entropy \cite{Dudley67}. Fernique introduced majorizing measures \cite{Fernique75}. Talagrand proved the converse and developed the generic chaining \cite{Talagrand87,Talagrand96,TalagrandBook}. If $(\cA_k)_{k\ge0}$ is an admissible sequence of partitions,
\[
 \cA_0=\{T\},\qquad |\cA_k|\le2^{2^k},
\]
then
\[
 \gamma_2(T,d)=\inf_{(\cA_k)}\sup_{t\in T}
 \sum_{k\ge0}2^{k/2}\diam_d\bigl(\cA_k(t)\bigr)
\]
satisfies
\[
 \E\sup_{t\in T}G_t\asymp\gamma_2(T,d).
\]

For non-Gaussian canonical processes one needs more than one distance. Talagrand introduced families of distances and chopping maps in \cite{Talagrand93,Talagrand94Bernoulli,Talagrand94Canonical}. Bednorz and Lata\l a solved Talagrand's Bernoulli conjecture \cite{BL}. For $B_t=\sum_i t_i\varepsilon_i$ they proved
\[
 \E\sup_{t\in T}B_t
 \asymp
 \inf_{T\subset T_1+T_2}
 \left(
  \sup_{t\in T_1}\|t\|_1+
  \gamma_2(T_2,\|\cdot\|_2)
 \right).
\]
This result is now called the Bernoulli theorem; see, for example, \cite{LiuZadik}. Liu and Zadik gave a Bayesian proof based on information-theoretic methods \cite{LiuZadik}. Bednorz, Martynek and Meller gave another proof based on admissible partitions \cite{BMMBernoulli}.

For canonical processes with log-concave tails, Lata\l a proved a Sudakov minoration \cite{LatalaSudakov}. Lata\l a and Tkocz obtained chaining estimates under regular growth of moments \cite{LT}. Hu, Wang and Wu treated the case without the $\Delta_2$ condition and introduced intrinsic majorizing-measure functionals equivalent to the expected supremum \cite{HWW}. Bednorz, Martynek and Meller obtained an invariant variational proof of the prescribed-index-law comparison \cite{BMM26}; the passage to general log-concave tails uses the transfer argument of \cite{HWW}.

The present paper returns to the chaining formulation. We prove a decomposition theorem for canonical processes with log-concave tails without the $\Delta_2$ condition. The proof follows Talagrand's partitioning scheme and the construction of Bednorz--Lata\l a.

Let $(Y_i)_{i\ge1}$ be independent symmetric random variables with log-concave tails. Write
\begin{equation}\label{eq:intro-tail}
 N_i(u)=-\log\P(|Y_i|\ge u),\qquad
 q_i(p)=\sup\{u\ge0:N_i(u)\le p\}.
\end{equation}
Then $N_i$ is convex and $q_i$ is nonnegative, nondecreasing and concave. We use the normalization
\begin{equation}\label{eq:intro-normalization}
 q_i(1)=1.
\end{equation}
In finite dimension this normalization amounts to replacing $Y_i$ by $Y_i/q_i(1)$ and $t_i$ by $q_i(1)t_i$. Let $(E_i)$ be independent mean-one exponentials and $(\varepsilon_i)$ independent symmetric signs. Then
\begin{equation}\label{eq:intro-quantile}
 (Y_i)_i\eqdist(\varepsilon_iq_i(E_i))_i.
\end{equation}
Concavity gives $q_i(p)\le\max\{1,p\}$. Hence $\sup_i\E Y_i^2<\infty$, and for $t\in\ell_2$ the series
\begin{equation}\label{eq:intro-process}
 Y_t=\sum_{i\ge1}t_iY_i
\end{equation}
converges in $L_2$. For $T\subset\ell_2$ put
\begin{equation}\label{eq:intro-width}
 S_Y(T)=\sup_{\substack{F\subset T\\F\text{ finite, nonempty}}}
 \E\max_{t\in F}Y_t.
\end{equation}
Put also
\begin{equation}\label{eq:intro-endpoint}
 b_i=\lim_{p\to\infty}q_i(p)\in[1,\infty].
\end{equation}
We work first in $\R^n$. The countable case is treated in Section~\ref{sec:countable}.

Moment distances alone need not contain the relevant information. If $Y_i=\varepsilon_i$ and $T=\{e_1,\ldots,e_m\}$, then $S_Y(T)=1$, whereas
\begin{equation}\label{eq:intro-rademacher-distance}
 \|\varepsilon_i-\varepsilon_j\|_p=2^{1-1/p},
 \qquad i\ne j.
\end{equation}

We first use the variables
\begin{equation}\label{eq:intro-cap}
 \xi_i^{[\beta_i]}=\varepsilon_i\min\{E_i,\beta_i\},
 \qquad 1\le\beta_i\le\infty,
\end{equation}
and write
\begin{equation}\label{eq:intro-Sbeta}
 S_\beta(T)=\E\max_{t\in T}\langle t,\xi^{[\beta]}\rangle.
\end{equation}
Their convex functions are
\begin{equation}\label{eq:intro-Vbeta}
 V_\beta(u)=
 \begin{cases}
 u^2,&|u|\le1,\\
 2|u|-1,&1<|u|\le\beta,\\
 +\infty,&|u|>\beta,
 \end{cases}
\end{equation}
and
\begin{equation}\label{eq:intro-rhop}
 C_p^\beta=\left\{z:\sum_iV_{\beta_i}(z_i)\le p\right\},
 \qquad
 \rho_p^\beta(t)=\sup_{z\in C_p^\beta}\langle t,z\rangle.
\end{equation}

For a finite set $S\subset\R^n$, let
\begin{equation}\label{eq:intro-Cexp}
 \cexp(S)=
 \inf\max_{s\in S}
 \sum_{k=1}^N
 \left(
 2^{k/2}\|u_k(s)-u_{k-1}(s)\|_2
 +2^k\|u_k(s)-u_{k-1}(s)\|_\infty
 \right),
\end{equation}
where the infimum is taken over admissible sequences $(\mathcal A_k)_{0\le k\le N}$ of partitions of $S$, with $\mathcal A_N$ the partition into singletons, and vectors $u_k(A)\in\R^n$, $A\in\mathcal A_k$, such that $u_N(\{s\})=s$. We write $u_k(s)=u_k(\mathcal A_k(s))$.

\begin{theorem}\label{thm:intro-decomp}
For every finite $T\subset\R^n$,
\begin{equation}\label{eq:intro-decomp}
 S_Y(T)\asymp
 \inf
 \left\{
 \E\max_{a\in T_1}\sum_i|a_i|\,|Y_i|
 +\cexp(T_2)
 \right\},
\end{equation}
where the infimum is over $t_0\in T$ and finite sets $T_1,T_2\subset\R^n$ such that
\[
 T-t_0\subset T_1+T_2.
\]
If $Y_i=\xi_i^{[\beta_i]}$, then
\begin{equation}\label{eq:intro-cap-rem}
 S_\beta(T)\asymp
 \inf
 \left\{
 \sup_{a\in T_1}\sum_i\beta_i|a_i|
 +\cexp(T_2)
 \right\},
\end{equation}
with the same infimum.
\end{theorem}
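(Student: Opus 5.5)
The proof splits into an upper bound, which is a direct chaining argument, and a lower bound, which is a Bednorz--Lata\l a type partition construction. I would prove \eqref{eq:intro-cap-rem} first for the capped variables $\xi^{[\beta]}$ and then transfer it to general $Y$.

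\emph{Upper bound.} Suppose $T-t_0\subset T_1+T_2$. Write each $t-t_0=a+s$ with $a\in T_1$ and $s\in T_2$. Since $\E Y_{t_0}=0$,
\[
 \E\max_{t\in T}Y_t\le\E\max_{a\in T_1}\sum_i|a_i||Y_i|+\E\max_{s\in T_2}Y_s .
\]
For the last term I chain along the near-optimal $(\cA_k,u_k)$ from \eqref{eq:intro-Cexp}, writing $Y_s=Y_{u_0}+\sum_k Y_{u_k(s)-u_{k-1}(s)}$; note that $u_0$ is a single vector, so $\E Y_{u_0}=0$. Because $q_i(p)\le\max\{1,p\}$, each $Y_i$ is dominated in tail by a symmetric exponential. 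Bernstein's inequality then gives, for a fixed $v$,
\[
 \P\bigl(Y_v\ge C(\sqrt u\|v\|_2+u\|v\|_\infty)\bigr)\le e^{-u}.
\]
Take $u=2^{k+2}+w$ and union bound over the at most $2^{2^{k+1}}$ pairs at level $k$. This yields the standard generic-chaining bound $\E\max_s Y_s\lesssim\cexp(T_2)$. In the capped case $\E|\xi_i^{[\beta_i]}|\le1\le\beta_i$ and $|\xi_i|\le\beta_i$, so the $T_1$ term is bounded by $\sup_{a\in T_1}\sum_i\beta_i|a_i|$.

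\emph{Lower bound for $S_\beta$.} This is the core of the argument. I would follow the Bednorz--Lata\l a construction, with the Bernoulli ingredients replaced by their exponential analogues.

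First I need three tools. The first is a Sudakov minoration for $\xi^{[\beta]}$ with respect to $\rho_p^\beta$: if $t_1,\dots,t_m\in T$ with $m\ge e^p$ are $\rho_p^\beta$-separated by $\epsilon$, then $S_\beta(T)\gtrsim\epsilon$ (Lata\l a's minoration). The second is a concentration or "growth" lemma: $S_\beta(T\cap(t+\text{small }\rho_p\text{-ball}))$ plus a term of order the diameter controls the supremum over pieces. The third is a chopping device adapted to $V_\beta$. For $t\in T$, level $k$ and a threshold $r^{-k}$-type parameter, coordinates with $|t_i-\pi_k(t)_i|$ large relative to $\beta_i$ go into the $\ell_1$-part $T_1$. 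The remaining bounded increments have $\|\cdot\|_\infty$ controlled at scale $2^{-k}\rho_{2^k}$. The norm $\rho_p^\beta(v)$ is comparable to $\sqrt p\|v\|_2$ plus $p\|v\|_\infty$ on coordinates where $p\lesssim\beta_i$, plus $\sum\beta_i|v_i|$ on the rest. I would verify this comparison by duality with $C_p^\beta$.

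Next comes a partition scheme. I build an admissible sequence $\cA_k$ together with centers $\pi_k(A)$ and counters $j_k(A)$ in the manner of Talagrand's partitioning scheme. At each step the partition is refined either by a Sudakov step, with $2^{2^k}$ pieces that are $\rho_{2^k}$-separated, or by a "decrease" step in which the remaining parameter decreases. A majorizing functional $F(A)=S_\beta(A)$ is shown to satisfy the growth condition. From the bound $\sum_k\rho_{2^k}(\text{cell diameter})\lesssim S_\beta(T)$ along every branch, I define $u_k(s)$ as the truncation of $\pi_k(s)$. The truncation keeps the part of each increment with $|v_i|\lesssim 2^{-k}\cdot$(scale) and pushes the rest into $T_1$. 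The key estimate is that the total $\beta$-weighted $\ell_1$ mass thrown away along a branch is summable. Following Bednorz--Lata\l a, this uses the fact that each coordinate is chopped at geometrically decreasing levels, so the contributions telescope.

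\emph{Transfer and main obstacle.} For general $Y$ I pass through \eqref{eq:intro-quantile} and the transfer argument of \cite{HWW}. By concavity of $q_i$ with $q_i(1)=1$, $Y_i$ is comparable to $\xi^{[\beta_i]}$ up to a contraction-type comparison in the small-$p$ regime. The heavy part, where $q_i(p)\gg p$ beyond $b_i$, is absorbed into the $\E\max\sum|a_i||Y_i|$ term. The main obstacle I expect is the lower bound's chopping step without the $\Delta_2$ condition. Here the cap $\beta_i$ can sit at any level, so the decomposition of $\rho_p^\beta$ must be handled uniformly in $p$ relative to the $\beta_i$. My first attempt would be to put coordinate $i$ into $T_1$ exactly from the first level $k$ with $2^k\ge\beta_i$ at which its increment is nonnegligible. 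I would then bound the resulting $\ell_1$ cost by $\rho_{2^k}$ of the cell diameter.
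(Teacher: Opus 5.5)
Your upper bound is correct and matches the paper: a Bernstein-type moment bound and a union bound along the admissible sequence, together with $|\xi_i^{[\beta_i]}|\le\beta_i$ in the capped case. Your outline of the capped lower bound follows the same Bednorz--Lata\l a scheme as Theorem~\ref{thm:chop-main}.

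\textbf{The gap: the lower bound in \eqref{eq:intro-decomp} for general $Y$.} Your transfer paragraph gives no working mechanism.
\begin{itemize}
\item Since $q_i$ is concave with $q_i(1)=1$, you have $q_i(p)\le\max\{1,p\}$ and $q_i\le b_i$. So there is no ``heavy part where $q_i(p)\gg p$ beyond $b_i$''. The domination $|Y_i|\le\max\{1,\min(E_i,b_i)\}$ only serves the upper bound.
\item No single cap models $Y_i$. Indeed $\|\xi^{[\beta]}\|_p\asymp\min\{p,\beta\}$, while $q_i(p)=\sqrt p$ gives $\|Y_i\|_p\asymp\sqrt p$. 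Hence $\xi^{[\beta_i]}\preceq_{\rm cx}CY_i$, which a lower bound needs, forces $\beta_i\le C^2$. That reduces you to the Bernoulli decomposition, which controls $\sup_{a\in T_1}\|a\|_1$ but not $\E\max_{a\in T_1}\sum_i|a_i||Y_i|$.
\item The slope transfer of Section~\ref{sec:lower} represents $Y_i$, up to convex order, by $\Xi_{i,*}+\sum_j c_j\Xi_{ij}$. These are many capped coordinates, one per dyadic level of $\sigma_i$. Decomposing $L_mT$ gives $L_mt-L_mt_0=a(t)+v(t)$ in the enlarged space, and $a(t),v(t)$ need not lie in the range of $L_m$. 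Nothing in your argument pulls this back to a decomposition of $t-t_0$ in $\R^n$.
\end{itemize}

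\textbf{How the paper proceeds.} It passes through the majorizing-measure functional, which does pull back: $\Gamma^{\psi_\beta}(L_mT)=\Gamma^{\Phi^{(m)}}(T)$.
\begin{enumerate}
\item The capped decomposition is first turned into $\Gamma^{\psi_\beta}(T)\lesssim S_\beta(T)$ (Lemma~\ref{lem:lower-Cexp}, Theorem~\ref{thm:lower-capped}).
\item With Lemma~\ref{lem:lower-convex-order} this yields $\Gamma^\Psi(T)\lesssim S_Y(T)$.
\item The decomposition for $Y$ is then built directly in $\R^n$ from $\Gamma^\Psi$-partitions. Here $v$ is the sum of increments clipped at $4r_A$, and $a=a^{(2)}$ is the remainder.
\item The key estimate $\E\max_t\sum_i|a_i(t)|q_i(E_i)\lesssim B$ (Proposition~\ref{prop:upper-a}) uses clipping at every level $\kappa=\sigma_i(u)$ and integration in $u$ (Lemma~\ref{lem:upper-a-integral}). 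This exploits the layer-cake form of $\Psi_i$.
\end{enumerate}
Nothing in your plan replaces this step.

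\textbf{A secondary error in the capped part.} The comparison you plan to verify for $\rho_p^\beta$ is false. Take $\beta\equiv1$, $n>p$ and $v=(1,\ldots,1)$. Then $\rho_p^\beta(v)=\sqrt{pn}$, whereas $\sum_i\beta_i|v_i|=n$, because the budget $\sum_iV_{\beta_i}(z_i)\le p$ prevents saturating all small-cap coordinates.

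For the same reason, routing coordinates into $T_1$ by comparing $\beta_i$ with $2^k$ fails. For Bernoulli variables and $T=\{0,n^{-1/2}(1,\ldots,1)\}$, that rule costs $\sqrt n$ in $\ell_1$, while $S_\beta(T)\le1/2$. The split must be by the size of the increment relative to $\rho_{2^k}(h)/2^k$, as in Lemma~\ref{lem:chop-threshold}. That split is what guarantees $\sum_{i\in J}\beta_i\le2^{k-1}$.

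Finally, the discarded $\ell_1$ mass along a branch is summable because of the functional drop from the deletion step (Lemma~\ref{lem:chop-delete}, Proposition~\ref{prop:part-old}). It is not a consequence of telescoping of chopping levels.
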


Bernoulli variables correspond to $q_i\equiv1$ and are therefore included in Theorem~\ref{thm:intro-decomp}. In this case the first term in \eqref{eq:intro-decomp} is
\[
 \sup_{a\in T_1}\|a\|_1.
\]
The Bernoulli theorem \cite{BL} gives the equivalent $\ell_1+\gamma_2$ representation. For $Y_i=\xi_i^{[\beta_i]}$, \eqref{eq:intro-cap-rem} gives the corresponding weighted $\ell_1$ decomposition.

We next recall the majorizing-measure functionals of \cite{HWW}. Set
\begin{equation}\label{eq:intro-V}
 V_i(u)=
 \begin{cases}
 u^2,& |u|\le1,\\
 2N_i(|u|)-1,&1<|u|\le b_i,\\
 +\infty,&|u|>b_i,
 \end{cases}
\end{equation}
and, for $0<u<b_i$, let $\sigma_i(u)=V'_{i,+}(u)$. Define
\begin{equation}\label{eq:intro-Psi}
 \Psi_i(a)=\int_0^{b_i}\sigma_i(u)
 \ind_{\{\sigma_i(u)<a\}}\,du,
 \qquad a\ge0.
\end{equation}
In particular,
\begin{equation}\label{eq:intro-Psi-basic}
 \Psi_i(a)=\frac{a^2}{4}\quad(0\le a\le2),
 \qquad
 \Psi_i(a)\ge\min\{1,a^2/4\}.
\end{equation}
For finite nonempty $T\subset\R^n$, $r>0$, and $\nu\in\cP(T)$, put
\begin{align}
 D_r(s,t)&=\sum_i\Psi_i(|s_i-t_i|/r),\label{eq:intro-Dr}\\
 F_r(t,\nu)&=-\log\sum_{s\in T}\nu(s)e^{-D_r(t,s)}.\label{eq:intro-Fr}
\end{align}
Set
\begin{equation}\label{eq:intro-Gamma}
 \Gamma^\Psi(T)=
 \inf_{(\nu_k)_{k\in\Z}\subset\cP(T)}
 \max_{t\in T}\sum_{k\in\Z}2^{-k}F_{2^{-k}}(t,\nu_k)
\end{equation}
and
\begin{equation}\label{eq:intro-M}
 \cM^\Psi(T)=
 \inf_{\nu\in\cP(T)}
 \max_{t\in T}\int_0^\infty F_r(t,\nu)\,dr.
\end{equation}

We also recover the following theorem of \cite{HWW}.

\begin{theorem}\label{thm:intro-main}
There is a universal constant $C$ such that for every finite nonempty $T\subset\R^n$,
\begin{equation}\label{eq:intro-main}
 C^{-1}\Gamma^\Psi(T)\le S_Y(T)\le C\Gamma^\Psi(T),
 \qquad
 \cM^\Psi(T)\asymp\Gamma^\Psi(T).
\end{equation}
\end{theorem}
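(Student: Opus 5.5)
The plan is to derive Theorem~\ref{thm:intro-main} from the decomposition Theorem~\ref{thm:intro-decomp}, comparing both sides with the same chaining quantity. The proof has three parts: the upper bound $S_Y(T)\lesssim\Gamma^\Psi(T)$, the lower bound $\Gamma^\Psi(T)\lesssim S_Y(T)$, and the equivalence $\cM^\Psi\asymp\Gamma^\Psi$.

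\emph{Upper bound.} Fix $(\nu_k)$ that is nearly optimal in \eqref{eq:intro-Gamma}.
1. Run Talagrand's partitioning scheme at scale $r=2^{-k}$. On a piece $A\in\cA_k$, choose a point $\pi_k(A)$ and use the value $F_{2^{-k}}(t,\nu_k)$ to control $D_{2^{-k}}(t,\pi_k(A))\lesssim 2^k$ for $t\in A$. This is the usual majorizing-measure-to-partition conversion: balls of $D_r$-radius $2^k$ with $\nu_k$-mass at least $e^{-2^{k+1}}$ number at most $2^{2^{k+1}}$.
2. Split each increment $t_i-\pi_k(t)_i$ coordinatewise by chopping at level $2^{-k}$ times the point where $\sigma_i$ reaches $2$. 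Since $\Psi_i(a)=a^2/4$ for $a\le 2$, the small parts satisfy
\[
\sum_i |u_i|^2/r^2\lesssim 2^k,
\]
which gives the $2^{k/2}\|\cdot\|_2$ term and, after truncation, the $2^k\|\cdot\|_\infty$ term of $\cexp$.
3. The large parts have $\Psi_i(|u_i|/r)\gtrsim 1$ on their support. Their number is at most $C2^k$, and their tail contributions are accumulated into $T_1$.
4. Bound $\E\max_{a\in T_1}\sum_i|a_i||Y_i|$ through the quantile representation \eqref{eq:intro-quantile}, using that $\Psi_i$ dominates the integrated tail via $\sigma_i$.
5. Conclude with the upper half of \eqref{eq:intro-decomp}.

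\emph{Lower bound.} Use a Talagrand growth-condition argument.
1. Build functionals $F_k(A)=\sup_{B\subset A}S_Y(B)$.
2. Apply Lata\l a's Sudakov minoration for log-concave tails \cite{LatalaSudakov}. In the present normalization it says: if $m=e^{2^k}$ points are $D_r$-separated by $2^k$, then $S_Y\gtrsim 2^k r$, up to the $\Psi$-versus-$\rho_p$ comparison. Together with concentration for the supremum over small $D_r$-balls, this gives a growth condition.
3. The partitioning theorem then yields an admissible sequence with
\[
\sup_t\sum_k 2^k\,r_k(t)\lesssim S_Y(T),
\]
where $r_k(t)$ is the $D$-radius at level $k$.
4. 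Set $\nu_k$ to be uniform over representatives of $\cA_k$. Then $F_{2^{-k}}(t,\nu_k)\le 2^{k+1}+D(t,\pi_k(t))$, so summing over $k$ gives $\Gamma^\Psi\lesssim S_Y$.
5. The lower half of \eqref{eq:intro-decomp} can also be used to transfer from general $Y$ to the capped variables $\xi^{[\beta]}$. There $\rho_p^\beta$ is explicit and $\Psi$ is comparable to the $V_\beta$ Legendre dual.

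\emph{Equivalence of the two functionals.} The relation $\cM^\Psi\asymp\Gamma^\Psi$ is elementary.
1. $F_r(t,\nu)$ is nonincreasing in $r$, since $\Psi_i$ is nondecreasing. Hence
\[
\int_0^\infty F_r\,dr\asymp\sum_k 2^{-k}F_{2^{-k}}
\]
up to a factor of $2$.
2. Given $(\nu_k)$, set $\nu=\sum_k c2^{-|k-k_0|}\nu_k$, or rather use weights $w_k$ adapted to the scales. Then
\[
F_r(t,\nu)\le F_r(t,\nu_k)+\log(1/w_k).
\]
3. The extra terms $2^{-k}\log(1/w_k)$ must be absorbed. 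Truncate $k$ to the range between $\diam$ and the minimal separation, and compare the additive $\log$ costs with $F$ itself, using that $F_{2^{-k}}\ge$ const on the relevant range. The converse direction is immediate by taking $\nu_k=\nu$.

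The main obstacle is the lower-bound growth condition without $\Delta_2$. The Sudakov minoration must be stated in terms of $\Psi_i$ rather than moment norms; the Rademacher example \eqref{eq:intro-rademacher-distance} shows why that matters. To handle this, first reduce via the $\xi^{[\beta]}$ capping, where the $\ell_\infty$ cutoff at $\beta_i$ encodes $b_i<\infty$, and only then transfer to general $Y$.
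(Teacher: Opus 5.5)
Your upper bound and the comparison $\cM^\Psi\asymp\Gamma^\Psi$ broadly follow the paper's Section~\ref{sec:upper}, but three steps need repair.

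First, the radius at level $n$ must be chosen adaptively, not fixed at $2^{-k}$. The paper takes $q_n(x)=\sup\{r:F_r(x,\nu)>2^n\}$ for a single measure $\nu$, which is why it proves $\cM^\Psi\le C\Gamma^\Psi$ first.

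Second, the mixture cost $\sum_{j\ge0}2^{-k_0-j}(j+1)\log2\le C\diam_2T$ is absorbed by $\diam_2T\le C\Gamma^\Psi(T)$, which comes from \eqref{eq:upper-two-points} and \eqref{eq:upper-area}. A pointwise lower bound on $F_{2^{-k}}(t,\nu_k)$ is false; take $\nu_k=\delta_t$.

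Third, the large part $a(t)$ cannot be bounded increment by increment. When $\Psi_i$ is bounded, $D_r\le2^n$ limits how many coordinates have $|\Delta_{n,i}|>2r$, but not their size. You need the telescoping Lemma~\ref{lem:upper-scalar} applied to $|t_i-\pi_{n,i}(t)|$ and integrated over the clip level $\kappa=\sigma_i(u)$, as in Proposition~\ref{prop:upper-a}.

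The genuine gap is the lower bound $\Gamma^\Psi(T)\le CS_Y(T)$. Your growth condition relies on concentration of $\sup_{t\in H_l}(Y_t-Y_{t_l})$ over sets of small $D_r$-diameter, and this fails without $\Delta_2$. For $Y_1=\varepsilon_1$ one has $\Psi_1(a)=\min\{a^2/4,1\}$. Hence $D_r(0,Me_1)\le1$ for every $r$ and $M$, while $\max\{0,MY_1\}$ deviates from its mean by $M/2$.

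Concentration is available only in terms of $\rho_p^\beta$ (Theorem~\ref{thm:est-affine}). A small $\rho_p^\beta$-diameter forces a small $D_r$-diameter at $r\asymp\delta/p$, but not conversely.

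If you run the scheme with $\rho_p^\beta$ instead, so that concentration holds, the natural output is a bound on $\sum_n\diam_{\rho^\beta_{p_n}}\cA_n(t)$. No bound of this quantity by $S_Y(T)$ can hold. For $T=\{e_1,\dots,e_m\}$ all these diameters stay of order $1$ until the points are separated, which gives order $\log\log m$ against $S_Y(T)=1$; this is the point of \eqref{eq:intro-rademacher-distance}.

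So the obstacle is not Sudakov: Lata\l a's minoration in moment norms (Theorem~\ref{thm:est-sudakov}) is exactly what the paper uses. Capping does not remove the obstacle either, since finite $\beta_i$ is precisely the Bernoulli-like regime. Nor is $\psi_\beta$ comparable to the Legendre dual of $V_\beta$: for $a>2$ the former is the constant $2\beta-1$ and the latter is $\beta(a-2)+1$.

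What is missing is the Bednorz--Lata\l a construction with chopping maps (Theorem~\ref{thm:chop-abstract}). At each change of scale the coordinates are clipped to intervals of length of order $H/R$ (Lemma~\ref{lem:chop-clip}). The clipped-off part is charged to a weighted $\ell_1$ term, and the deletion Lemma~\ref{lem:chop-delete} shows that the functional drops.

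This yields Theorem~\ref{thm:chop-main}: $T-t_0\subset T_1+T_2$ with $\sup_{T_1}\|a\|_{1,\beta}+\cexp(T_2)\le CS_\beta(T)$. That is a chaining bound with free representatives plus an $\ell_1$ remainder, and it escapes the $\log\log m$ obstruction. Then $\Gamma^{\psi_\beta}(T)\le CS_\beta(T)$ follows from three ingredients: a Dirac measure on $T_1$ (using $\int_0^\infty\psi_\beta(u)u^{-2}\,du=\beta$), Lemma~\ref{lem:lower-Cexp}, and Lemma~\ref{lem:lower-glue}.

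The transfer to general $Y$ also needs a concrete device, which you do not give. The paper splits $\sigma_i$ into dyadic levels and proves $\Psi_i\le2\Phi_i+2E_0$, with $\Phi_i$ a sum of the functions $\psi_{\beta_{ij}}(c_j\,\cdot)$ (Lemma~\ref{lem:lower-cost-comparison}). It then dominates $\Xi_{i,*}+\sum_jc_j\Xi_{ij}$ by $CY_i$ in convex order (Lemmas~\ref{lem:lower-scalar-moment} and~\ref{lem:lower-convex-order}).

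Invoking the hard half of \eqref{eq:intro-decomp} for general $Y$ is circular. The paper proves that half through $B\le C\Gamma^\Psi(T)\le CS_Y(T)$, i.e.\ through \eqref{eq:lower-final}. Even taken as a black box, it would leave you needing $\Gamma^\Psi(T_1)\le C\,\E\max_{a\in T_1}\sum_i|a_i||Y_i|$. A Dirac measure does not give this: it yields $\max_{T_1}\sum_ib_i|a_i-a_{0,i}|$, which may be infinite.
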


Write $\ell_j=\log(e+j)$.

\begin{corollary}\label{cor:intro-hull}
For every bounded nonempty $T\subset\R^n$, there are vectors
$s_j\in\R^n$, $j\ge1$, such that
\begin{equation}\label{eq:intro-hull}
 T-T\subset\clconv\{\pm s_j:j\ge1\},
 \qquad
 \|\langle s_j,Y\rangle\|_{\ell_j}\le C S_Y(T).
\end{equation}
For every fixed $\theta\ge1$, the same assertion holds with
$\ell_j$ replaced by $\theta\ell_j$ and $C$ by $C_\theta$.
\end{corollary}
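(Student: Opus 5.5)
We derive Corollary~\ref{cor:intro-hull} from the decomposition of Theorem~\ref{thm:intro-decomp} by reading off a convex-hull representation from each of the two pieces. First reduce to finite $T$: if the assertion holds for every finite subset with a constant independent of the subset, a compactness argument (vectors $s_j$ bounded in a fixed finite-dimensional space by $\|\langle s_j,Y\rangle\|_{\ell_j}\le CS_Y(T)$ together with $\|\cdot\|_{\ell_j}\ge\|\cdot\|_1$, which controls a norm on $\R^n$ under the normalization $q_i(1)=1$) gives it for bounded $T$, after taking a limit along an increasing sequence of finite sets and a diagonal subsequence. So fix finite $T$ and choose $t_0,T_1,T_2$ attaining \eqref{eq:intro-decomp} up to a factor $2$. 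Since $T-T\subset(T-t_0)-(T-t_0)$, it suffices to place $T-t_0$ in $\clconv\{\pm s_j\}$ with $\|\langle s_j,Y\rangle\|_{\ell_j}\le CS_Y(T)$; then $T-T$ lies in twice that hull, and rescaling by $2$ is harmless.

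\emph{The $T_2$ part.} Take the admissible sequence and the vectors $u_k$ nearly optimal in \eqref{eq:intro-Cexp}. Write $s-t_0'=\sum_{k=1}^N(u_k(s)-u_{k-1}(s))$ with $u_0$ constant. At level $k$ there are at most $2^{2^k}\cdot2^{2^{k-1}}\le 2^{2^{k+1}}$ increment vectors $v=u_k(s)-u_{k-1}(s)$. By the Gluskin--Kwapie\'n type bound for log-concave tails (via \eqref{eq:intro-quantile} and $q_i(p)\le\max\{1,p\}$), $\|\langle v,Y\rangle\|_p\le C(\sqrt p\|v\|_2+p\|v\|_\infty)$. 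Assign the level-$k$ increments indices $j$ with $\ell_j\asymp2^k$, which is possible since $\ell_j\le C2^k$ for $j\le 2^{2^{k+1}}$. Set $s_j=\lambda_k(s)^{-1}v$, where $\lambda_k(s)$ is the weight $(2^{k/2}\|v\|_2+2^k\|v\|_\infty)/\cexp(T_2)$. The weights sum to at most $1$ along each chain, so $s-t_0'\in\conv\{\pm s_j\}$ and $\|\langle s_j,Y\rangle\|_{\ell_j}\le C\cexp(T_2)$. The constant $t_0'$ (the top-level vector $u_0$) is a single extra vector; we absorb it by noting that the $T$-translation can be chosen so that $0\in T_2$, or bound it directly, since it is dominated by $S_Y$.

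\emph{The $T_1$ part.} This is the main obstacle. An element $a\in T_1$ need not have small $\ell_j$-moments individually; only $\E\max_{a\in T_1}\sum|a_i||Y_i|$ is controlled. The plan is to use the $\ell_1$-type structure instead: since $T_1$ is finite, write each $a=\sum_i a_ie_i$. However, the coordinate vectors $\pm e_i$ scaled by the weights $\sum|a_i|$ are not directly controlled. Instead I would use that, for the chaining bound $\E\max_a\sum|a_i||Y_i|$, one can apply Theorem~\ref{thm:intro-decomp} (or its proof) to the positive process $\sum|a_i||Y_i|$ and decompose by levels of $|Y_i|$. Concretely, split $a$ by the quantile levels $p\in[2^k,2^{k+1})$ of $N_i$-scale, obtaining at most $2^{2^{k+1}}$ pieces per level. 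This should follow from the construction in the proof of the decomposition theorem, where $T_1$ arises from chopping maps, and each chopped piece $a^{(k)}$ satisfies $\|\langle a^{(k)},Y\rangle\|_{2^k}\le C S_Y(T)$ with weights summing to $1$. If this structural information is unavailable, the fallback is to pass through Theorem~\ref{thm:intro-main}: the majorizing measure sequence $(\nu_k)$ gives, via the standard Talagrand argument, level-$k$ sets of cardinality $e^{2^k}$ with $F$-controlled increments, and $\Psi_i$ bounds translate to $\|\cdot\|_{2^k}$ bounds through $V_i$, since $\sum V_i(z_i)\le p$ describes the $L_p$ unit ball dual. Finally, for $\theta\ell_j$, reindex so that level $k$ uses $j$ with $\theta\ell_j\asymp2^k$. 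This costs only constants depending on $\theta$, because $\|\cdot\|_{\theta p}\le C_\theta\|\cdot\|_p$ for log-concave variables.
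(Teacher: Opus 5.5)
Your $T_2$ step is correct and is essentially the paper's treatment of $v(T)$. You normalize each chaining increment by its weight $2^{k/2}\|h\|_2+2^k\|h\|_\infty$, use the moment bound of Lemma~\ref{lem:upper-moment}, and enumerate level by level so that $\ell_j\le C2^k$. The constant $u_0$ cancels in $T-T$ anyway.

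The genuine gap is the $T_1$ step. You flag it as the main obstacle but do not resolve it, and none of the routes you sketch closes it. The only information you have on $T_1$ is the positive-process bound $\E\max_{a\in T_1}\sum_i|a_i|\,|Y_i|\le CS_Y(T)$. This does not by itself place $T_1$ in a hull of vectors with $\ell_j$-moment bounds.

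\begin{itemize}
\item Theorem~\ref{thm:intro-decomp} cannot be applied to $\sum_i|a_i|\,|Y_i|$, because the $|Y_i|$ are not symmetric.
\item For general $Y$ the set $T_1$ is not produced by chopping maps. It is $a(T)$ from Section~\ref{sec:upper}, with $a(t)=\sum_n\bigl(\Delta_n(t)-\operatorname{clip}_{4r_{n-1}(t)}(\Delta_n(t))\bigr)$. The summands of this sum are not controlled individually; only their telescoped sum is, via Lemma~\ref{lem:upper-scalar}. So the construction supplies no pieces $a^{(k)}$ with $\|\langle a^{(k)},Y\rangle\|_{2^k}\le CS_Y(T)$.
\item Chopping controls $\|t-z(t)\|_{1,\beta}$. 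That gives a trivial hull $\{\pm e_i/\beta_i\}$ only for the capped variables $\xi^{[\beta]}$. The slope transfer through $L_m$ only compares widths in the lower-bound direction and yields no moment bounds for $\langle s,Y\rangle$.
\item Your fallback is false as stated: $D_r(x,y)\le 2^k$ does not give $\|\langle x-y,Y\rangle\|_{2^k}\le C2^kr$. For Bernoulli variables $\Psi_i(a)=\min\{a^2/4,1\}$. So $h=M(e_1+\cdots+e_p)$ has $D_r(0,h)=p$ for every $M\ge 2r$, while $\|\langle h,Y\rangle\|_p\ge M\sqrt p$. Large jumps in few coordinates are exactly what the $T_1$ part has to absorb.
\end{itemize}

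The missing ingredient is a structural theorem for positive processes. The paper uses the positive small-cover theorem of Bednorz, Martynek and Meller, applied to $A_+=\{(|a_i(t)|)_i:t\in T\}$. Its width $P_Y(A_+)$ is at most $CB$ by Proposition~\ref{prop:upper-a}.

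\begin{itemize}
\item The theorem covers the set $\{x\in[0,\infty)^n:\sup_{a\in A_+}\langle a,x\rangle\ge LP_Y(A_+)\}$ by cylinders $\{x_i\ge z_i^\lambda,\ i\in I_\lambda\}$. These satisfy $\sum_\lambda e^{-p_\lambda}\le\frac12$, where $p_\lambda=\sum_{i\in I_\lambda}N_i(z_i^\lambda)$.
\item Hahn--Banach turns this into $a(T)\subset K(A_+)\subset LP_Y(A_+)\clconv\{\eta w^\lambda\}$, with $w_i^\lambda=N_i(z_i^\lambda)/(p_\lambda z_i^\lambda)$ on $I_\lambda$.
\item Concavity of $q_i$ gives $\|\langle\eta w^\lambda,Y\rangle\|_s\le 1+C(|I_\lambda|+s)/p_\lambda$.
\item The count $\#\{(\lambda,\eta):p_\lambda\le r\}\le\frac12e^{(1+\log2)r}$ yields $\ell_j\le Cp_\lambda$ in the enumeration.
\end{itemize}

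This argument needs the $q_i$ to be strictly increasing and unbounded. The paper therefore approximates general $q_i$ by $q_i^{(r)}$ and uses a compactness lemma. That lemma also repairs a point your finite-to-bounded reduction skips. Along the diagonal subsequence, vectors $s_{j_r}^{(r)}$ with $j_r\to\infty$ can carry part of the hull and disappear in the limit. Their limit points lie in $C_\infty(A)=\{h:\sum_ib_i|h_i|\le A\}$, so its finitely many vertices $\pm Ae_i/b_i$ must be added to the sequence.
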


The main step is the decomposition theorem for the variables \eqref{eq:intro-cap}. The cap parameters determine the family $(\rho_p^\beta)_{p\ge1}$ and lead to a decomposition into a weighted $\ell_1$ part and an exponential chaining part. We prove this by the partitioning scheme of Bednorz--Lata\l a. Lata\l a's Sudakov minoration, the infimum-convolution estimate and chopping maps provide the estimates used in the construction.

For general log-concave tails, the dyadic decomposition of the slopes from \cite{HWW} transfers the capped estimate and gives the majorizing-measure lower bound. The reverse bound is obtained from partitions associated with the majorizing-measure functional. The same construction yields the decomposition in \eqref{eq:intro-decomp}.

Section~\ref{sec:estimates} gives the estimates for $\xi^{[\beta]}$. Sections~\ref{sec:chopping} and \ref{sec:partitions} prove the decomposition theorem. Sections~\ref{sec:lower} and \ref{sec:upper} prove the majorizing-measure estimates. Section~\ref{sec:hulls} gives the convex-hull consequence, and Section~\ref{sec:countable} treats countably many coordinates.

\section{Estimates for $\xi^{[\beta]}$}\label{sec:estimates}

We keep the notation of Section~\ref{sec:intro}. Throughout the paper $C,c>0$ denote universal constants which may change from line to line. For a random variable $Z$ and $p\ge1$, write
\[
 \|Z\|_p=(\E|Z|^p)^{1/p}.
\]
If $I\subset\{1,\ldots,n\}$, put
\begin{equation}\label{eq:est-XI}
 X_t^I=\sum_{i\in I}t_i\xi_i^{[\beta_i]},
 \qquad
 S_I(T)=\E\max_{t\in T}X_t^I.
\end{equation}
For $p>0$ define the restricted body and its support function by
\begin{equation}\label{eq:est-rhoI}
 C_{p,I}^\beta=
 \left\{z\in\R^I:\sum_{i\in I}V_{\beta_i}(z_i)\le p\right\},
 \qquad
 \rho_{p,I}^\beta(t)=\sup_{z\in C_{p,I}^\beta}\sum_{i\in I}t_i z_i.
\end{equation}
We omit $I$ when all coordinates are used.

We first record elementary properties of these norms.

\begin{lemma}\label{lem:est-clock}
If $q\ge p>0$, then
\begin{equation}\label{eq:est-clock}
 \rho_{p,I}^\beta(t)\le \rho_{q,I}^\beta(t)
 \le \frac{q}{p}\rho_{p,I}^\beta(t).
\end{equation}
Moreover,
\begin{equation}\label{eq:est-l1}
 \rho_{p,I}^\beta(t)\le\sum_{i\in I}\beta_i|t_i|.
\end{equation}
If $\beta_i=\infty$ for $i\in I$, then
\begin{equation}\label{eq:est-rho-infty}
 \rho_{p,I}^\infty(t)
 \le \sqrt p\,\|t_I\|_2+p\|t_I\|_\infty.
\end{equation}
\end{lemma}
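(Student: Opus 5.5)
All three assertions come from elementary properties of the functions $V_{\beta_i}$: each is convex, even, satisfies $V_{\beta_i}(0)=0$, and is finite exactly on $[-\beta_i,\beta_i]$. We treat the three estimates in turn.

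\emph{Proof of \eqref{eq:est-clock}.} Since $C_{p,I}^\beta\subset C_{q,I}^\beta$ for $p\le q$, taking suprema gives the first inequality. For the second, we show
\[
 C_{q,I}^\beta\subset \tfrac{q}{p}\,C_{p,I}^\beta .
\]
Fix $z\in C_{q,I}^\beta$ and put $\lambda=p/q\in(0,1]$. Convexity of $V_{\beta_i}$ together with $V_{\beta_i}(0)=0$ gives $V_{\beta_i}(\lambda u)\le\lambda V_{\beta_i}(u)$ for every $u$. The case $V_{\beta_i}(u)=+\infty$ is trivial here, since then the right side is $+\infty$. Summing over $i\in I$ yields $\sum_i V_{\beta_i}(\lambda z_i)\le\lambda q=p$, so $\lambda z\in C_{p,I}^\beta$. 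Hence
\[
 \langle t,z\rangle=\tfrac{q}{p}\langle t,\lambda z\rangle\le\tfrac{q}{p}\rho_{p,I}^\beta(t),
\]
and taking the supremum over $z$ proves the claim.

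\emph{Proof of \eqref{eq:est-l1}.} Every $z\in C_{p,I}^\beta$ has all $V_{\beta_i}(z_i)$ finite, so $|z_i|\le\beta_i$ for each $i\in I$. Therefore $\sum_i t_iz_i\le\sum_i\beta_i|t_i|$. When $\beta_i=\infty$ the bound is trivial, with the convention that the right side is $+\infty$ unless $t_i=0$.

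\emph{Proof of \eqref{eq:est-rho-infty}.} Here $V_\infty(u)=u^2$ for $|u|\le1$ and $V_\infty(u)=2|u|-1$ otherwise. Take $z$ with $\sum_i V_\infty(z_i)\le p$ and split each coordinate as $z=x+y$, where
\[
 x_i=\operatorname{sign}(z_i)\min\{|z_i|,1\},\qquad y_i=z_i-x_i .
\]
The key step is the pointwise inequality
\[
 V_\infty(u)\ge \min\{|u|,1\}^2+(|u|-1)_+ ,
\]
which for $|u|>1$ reads $2|u|-1\ge 1+|u|-1$, i.e.\ $|u|\ge1$. It follows that $\|x\|_2^2\le p$ and $\|y\|_1\le p$. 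By Cauchy--Schwarz and Hölder,
\[
 \langle t,z\rangle\le\|t_I\|_2\|x\|_2+\|t_I\|_\infty\|y\|_1\le\sqrt p\,\|t_I\|_2+p\|t_I\|_\infty .
\]

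The only step needing any care is this pointwise splitting inequality for $V_\infty$; the other two assertions are immediate.
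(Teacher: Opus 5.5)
Your proof is correct and follows the paper's argument. It uses inclusion of the bodies and the convexity scaling $V_{\beta_i}(\lambda u)\le\lambda V_{\beta_i}(u)$ for \eqref{eq:est-clock}, the constraint $|z_i|\le\beta_i$ for \eqref{eq:est-l1}, and a split of $z$ into a piece with $\|x\|_2\le\sqrt p$ and a piece with $\|y\|_1\le p$ for \eqref{eq:est-rho-infty}. The only cosmetic difference is that you clip each coordinate to $[-1,1]$, whereas the paper splits the index set into $\{i:|z_i|\le1\}$ and its complement; both rest on the same inequality $2|u|-1\ge|u|$ for $|u|\ge1$.
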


\begin{proof}
The first inequality in \eqref{eq:est-clock} follows from
$C_{p,I}^\beta\subset C_{q,I}^\beta$. Since $V_\beta$ is convex and
$V_\beta(0)=0$, $z\in C_{q,I}^\beta$ implies $(p/q)z\in C_{p,I}^\beta$,
which proves the second inequality. Estimate \eqref{eq:est-l1} follows from
$|z_i|\le\beta_i$.

For \eqref{eq:est-rho-infty}, let $z\in C_{p,I}^\infty$ and split $I$ into
$I_0=\{i:|z_i|\le1\}$ and $I_1=I\setminus I_0$. Then
\[
 \sum_{i\in I_0}z_i^2\le p,
 \qquad
 \sum_{i\in I_1}|z_i|\le p.
\]
Hence
\[
 \sum_{i\in I}t_i z_i
 \le \sqrt p\,\|t_I\|_2+p\|t_I\|_\infty.
\]
Taking the supremum over $z$ proves the assertion.
\end{proof}

The next two estimates will be used repeatedly.

\begin{lemma}\label{lem:est-deletion-diameter}
For $I\subset J\subset\{1,\ldots,n\}$ and every finite $T\subset\R^n$,
\begin{equation}\label{eq:est-deletion}
 S_I(T)\le S_J(T).
\end{equation}
Moreover,
\begin{equation}\label{eq:est-diam2}
 \diam_2 T\le C S_\beta(T).
\end{equation}
\end{lemma}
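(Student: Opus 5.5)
For \eqref{eq:est-deletion} I will use a conditioning (Jensen) argument. Write $X_t^J=X_t^I+X_t^{J\setminus I}$, and note that the families $(\xi_i^{[\beta_i]})_{i\in I}$ and $(\xi_i^{[\beta_i]})_{i\in J\setminus I}$ are independent. Let $\E'$ denote expectation over the variables indexed by $J\setminus I$ only. Each $\xi_i^{[\beta_i]}$ is symmetric, hence centered, so $\E' X_t^{J\setminus I}=0$ for every $t$. Since the maximum of conditional expectations is at most the conditional expectation of the maximum,
\[
 \max_{t\in T}X_t^I=\max_{t\in T}\E'\bigl(X_t^I+X_t^{J\setminus I}\bigr)\le \E'\max_{t\in T}X_t^J .
\]
Taking expectation over the remaining variables and applying Fubini gives $S_I(T)\le S_J(T)$. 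All quantities are integrable because $T$ is finite and each $\xi_i$ has finite moments.

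For \eqref{eq:est-diam2}, I fix $s,t\in T$ and use the two-point subset. We have
\[
 S_\beta(T)\ge\E\max\{X_s,X_t\}=\tfrac12\E|X_s-X_t|,
\]
where the equality uses $\E X_s=\E X_t=0$ and $\max\{a,b\}=\tfrac12(a+b)+\tfrac12|a-b|$. It therefore suffices to show $\E|X_u|\ge c\|u\|_2$ for $u=s-t$. The $\xi_i^{[\beta_i]}$ are independent and symmetric, so $X_u\eqdist\sum_i\varepsilon_iu_i|\xi_i|$, with $(\varepsilon_i)$ independent of $(|\xi_i|)$. Conditioning on $(|\xi_i|)$, Khintchine's inequality in $L_1$ gives
\[
 \E|X_u|\ge\tfrac1{\sqrt2}\,\E\Bigl(\sum_iu_i^2\min\{E_i,\beta_i\}^2\Bigr)^{1/2}.
\]

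The one step that needs care is a lower bound for this last expectation that is uniform in $\beta_i\ge1$. Every $\beta_i\ge1$, so $\min\{E_i,\beta_i\}\ge\min\{E_i,1\}\ge\ind_{\{E_i\ge1\}}$. Hence the right-hand side is at least $\E\bigl(\sum_iu_i^2\ind_{\{E_i\ge1\}}\bigr)^{1/2}$. Write $W=\sum_iu_i^2\ind_{\{E_i\ge1\}}$. Then $\E W=e^{-1}\|u\|_2^2$ and $\E W^2\le\|u\|_2^4$. Since $W=W^{1/2}\cdot W^{1/2}$, Hölder's inequality with exponents $(3,3/2)$ gives the standard moment comparison $\E W\le(\E W^{1/2})^{2/3}(\E W^2)^{1/3}$. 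This yields $\E W^{1/2}\ge c\|u\|_2$ with $c=e^{-3/2}$; the Paley–Zygmund inequality would give the same conclusion. Combining the steps gives $\|s-t\|_2\le CS_\beta(T)$, and taking the supremum over $s,t\in T$ proves \eqref{eq:est-diam2}.
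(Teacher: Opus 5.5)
Your proof is correct and follows the paper's argument: conditioning plus Jensen for \eqref{eq:est-deletion}, and for \eqref{eq:est-diam2} the two-point reduction $S_\beta(T)\ge\frac12\E|\langle s-t,\xi^{[\beta]}\rangle|$ followed by Khintchine. The only difference is the order of the last steps. The paper applies Jensen over the magnitudes first, using $\E\min(E_i,\beta_i)\ge1-e^{-1}$, so the coefficients are deterministic before Khintchine is used and your H\"older step is not needed. That step also has a simpler form, $W^{1/2}\ge W/\|u\|_2$ since $W\le\|u\|_2^2$. In addition, the H\"older factorization behind your displayed inequality is $W=W^{1/3}W^{2/3}$, not $W^{1/2}W^{1/2}$.
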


\begin{proof}
Conditioning on the coordinates in $I$ and using Jensen's inequality gives \eqref{eq:est-deletion}.
For $s,t\in T$, symmetry gives
\[
 S_\beta(T)\ge \E\max\{\langle s,\xi^{[\beta]}\rangle,
                              \langle t,\xi^{[\beta]}\rangle\}
 =\frac12\E|\langle s-t,\xi^{[\beta]}\rangle|.
\]
Since $\E\min(E_i,\beta_i)\ge1-e^{-1}$, Jensen's inequality in the magnitudes, followed by the Khintchine lower bound, gives
\[
 \E|\langle s-t,\xi^{[\beta]}\rangle|
 \ge c\|s-t\|_2.
\]
Taking the supremum over $s,t$ proves \eqref{eq:est-diam2}.
\end{proof}

We next compare $\rho_p^\beta$ with moments of the process.

\begin{proposition}\label{prop:est-moments}
For $p\ge2$, $I\subset\{1,\ldots,n\}$, and $t\in\R^n$,
\begin{equation}\label{eq:est-rho-lp}
 \rho_{p,I}^\beta(t)\le C\|X_t^I\|_p
\end{equation}
and
\begin{equation}\label{eq:est-bernstein}
 \|X_t^I\|_p
 \le C\bigl(\sqrt p\,\|t_I\|_2+p\|t_I\|_\infty\bigr).
\end{equation}
\end{proposition}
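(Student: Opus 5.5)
The upper bound \eqref{eq:est-bernstein} is a Bernstein-type moment estimate, and I would handle it first. Since $|\xi_i^{[\beta_i]}|\le E_i$ in magnitude, a contraction (or symmetrization) argument gives $\|X_t^I\|_p\le\|\sum_{i\in I}t_i\varepsilon_iE_i\|_p$. The variables $\varepsilon_iE_i$ are symmetric exponentials, so Gluskin--Kwapie\'n (or Lata\l a's moment formula for log-concave tails) gives
\[
 \Bigl\|\sum_{i\in I}t_i\varepsilon_iE_i\Bigr\|_p\asymp\sqrt p\,\|t_I\|_2+p\|t_I\|_\infty .
\]
For a self-contained route I would instead use the mgf bound $\E e^{\lambda\varepsilon_iE_i}=(1-\lambda^2)^{-1}\le e^{2\lambda^2}$ for $|\lambda|\le 1/2$. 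This yields the Bernstein tail
\[
 \P(|X^I_t|\ge u)\le 2\exp\bigl(-c\min(u^2/\|t_I\|_2^2,\,u/\|t_I\|_\infty)\bigr),
\]
and integrating the tail gives \eqref{eq:est-bernstein}.

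For the lower bound \eqref{eq:est-rho-lp}, fix $z\in C^\beta_{p,I}$. I want to show $\langle t,z\rangle\le C\|X^I_t\|_p$. First reduce to $t_iz_i\ge0$ for all $i$, and note that only coordinates with $t_iz_i>0$ matter. The plan is a tilting (change of measure) argument.

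The key observation is that $V_{\beta}$ is, up to constants, the Legendre-type tail function of $\xi^{[\beta]}$: $-\log\P(|\xi^{[\beta]}|\ge u)=u$ for $u\le\beta$ and $+\infty$ beyond. This means the event
\[
 A=\{\varepsilon_i\min(E_i,\beta_i)\ \text{has the sign of } z_i \text{ and magnitude}\ \ge c|z_i|\ \text{for } i\in I_1\}
\]
has probability at least $e^{-C\sum_{I_1}|z_i|}$, where $I_1=\{i:|z_i|>1\}$. This works because $|z_i|\le\beta_i$ and $\P(E_i\ge |z_i|)=e^{-|z_i|}$. On $I_1$ we have $\sum|z_i|\le p$, so $\P(A)\ge e^{-Cp}$.

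For the quadratic part $I_0=\{i:|z_i|\le 1\}$, I would use the Gaussian-like lower behavior of the symmetric variables, via the Paley--Zygmund inequality or the lower Khintchine-type moment bound. Together these should give
\[
 \P\Bigl(\sum_{i\in I_0}t_i\xi_i\ge c\sum_{i\in I_0}t_iz_i\Bigr)\ge e^{-Cp}.
\]
The standard approach here is to exploit $\sum_{I_0} z_i^2\le p$ and the fact that $\|\sum_{I_0} t_i\xi_i\|_p\gtrsim \sup\{\sum t_iz_i:\sum z_i^2\le p,\ |z_i|\le1\}$. This is the classical lower bound for $p$-th moments of sums of independent symmetric variables with a bounded-below second moment and subexponential tails, as in Hitczenko and Montgomery-Smith. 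Alternatively one can cite Lata\l a's general moment formula directly, which contains both pieces at once.

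Combining the two pieces by independence and symmetrizing the remaining coordinates (so that the other terms are nonnegative with probability $\ge 1/2$) gives $\P(X_t^I\ge c\langle t,z\rangle)\ge \tfrac12 e^{-Cp}$. Hence $\|X^I_t\|_p\ge c e^{-C}\langle t,z\rangle$. Taking the supremum over $z$ proves \eqref{eq:est-rho-lp}.

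The main obstacle is the quadratic part: obtaining an $e^{-Cp}$ lower tail at level $\sum_{I_0} t_iz_i$ with universal constants. I would handle it by citing Lata\l a's formula
\[
 \|\textstyle\sum t_iX_i\|_p\asymp\sup\{\sum t_iz_i:\sum \hat N_i(z_i)\le p\}
\]
for symmetric variables with log-concave tails, applied to $\xi^{[\beta]}$, whose $\hat N_i$ is comparable to $V_{\beta_i}$.
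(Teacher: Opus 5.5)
Your proposal is correct, and its skeleton matches the paper's. The upper bound \eqref{eq:est-bernstein} is proved exactly as in the paper: contraction to $\sum_{i\in I}t_i\varepsilon_iE_i$, the bound $(1-\lambda^2)^{-1}\le e^{2\lambda^2}$ for $|\lambda|\le 1/2$, then Chernoff and integration. For the lower bound the paper also splits at $|z_i|=1$ and treats $I_1$ with the same event $\{\varepsilon_i=\operatorname{sign}z_i,\ E_i\ge|z_i|\}$, of probability at least $e^{-(1+\log 2)p}$.

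Two things differ.

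First, the quadratic part. You cite Hitczenko/Montgomery-Smith or the Gluskin--Kwapie\'n or Lata\l a moment formula; the paper instead gives an elementary duality argument. Put $q=p/(p-1)$ and $L(\varepsilon)=\prod_{i\in I_0}(1+z_i\varepsilon_i/2)$. Then $\E[L\sum_{I_0}a_i\varepsilon_i]=\frac12\sum_{I_0}a_iz_i$. The inequality $\frac{(1+u)^q+(1-u)^q}{2}\le1+q(q-1)u^2$ together with $\sum_{I_0}z_i^2\le p$ gives $\|L\|_q\le e^{1/2}$. H\"older then yields $\|\sum_{I_0}a_i\varepsilon_i\|_p\ge\frac{1}{2\sqrt e}\sum_{I_0}a_iz_i$. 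To finish, condition on $(E_i)$ with $a_i=t_i\min(E_i,\beta_i)$ and apply Jensen with $\E\min(E_i,\beta_i)\ge1-e^{-1}$. Only a lower bound on $\E|\xi_i^{[\beta_i]}|$ is used here; the subexponential tails you mention play no role.

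Second, how the pieces are combined. The paper works at the level of $L_p$ norms: adding an independent centered summand cannot decrease an $L_p$ norm, so $\|X_t^I\|_p\ge\max\{\|X_t^{I_0}\|_p,\|X_t^{I_1}\|_p\}$. This removes the obstacle you singled out, an $e^{-Cp}$ lower tail for the $I_0$ part at level $\sum_{I_0}t_iz_i$. On your tail route you would have to extract that tail from the moment bound via Paley--Zygmund and $\|W\|_{2p}\le C\|W\|_p$.

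Finally, be aware of what the citation does. The tail function of $\xi^{[\beta]}$ is $N(u)=u$ on $[0,\beta]$, so $\hat N\le V_\beta\le2\hat N$. Once you invoke the Gluskin--Kwapie\'n formula for $\xi^{[\beta]}$, both inequalities of the proposition follow immediately, and your event construction on $I_1$ becomes redundant. What the paper's route buys is a self-contained proof that does not depend on that formula.
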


\begin{proof}
By symmetry we may assume $t_i\ge0$ when proving \eqref{eq:est-rho-lp}. Fix
$z\in C_{p,I}^\beta$ with $z_i\ge0$ and set
\[
 I_1=\{i\in I:z_i>1\},\qquad I_0=I\setminus I_1.
\]
The definition of $C_{p,I}^\beta$ gives
\begin{equation}\label{eq:est-large-z}
 |I_1|\le p,
 \qquad
 \sum_{i\in I_1}z_i\le p.
\end{equation}
On the event
\[
 \varepsilon_i=1,\qquad E_i\ge z_i,
 \qquad i\in I_1,
\]
we have $\sum_{i\in I_1}t_i\xi_i^{[\beta_i]}\ge\sum_{i\in I_1}t_i z_i$. By \eqref{eq:est-large-z}, this event has probability at least $\exp(-(1+\log2)p)$. Therefore
\begin{equation}\label{eq:est-large-part}
 \left\|\sum_{i\in I_1}t_i\xi_i^{[\beta_i]}\right\|_p
 \ge c\sum_{i\in I_1}t_i z_i.
\end{equation}

On $I_0$ we have $\sum_{i\in I_0}z_i^2\le p$. Let $q=p/(p-1)$ and put
\[
 L(\varepsilon)=\prod_{i\in I_0}(1+z_i\varepsilon_i/2).
\]
The inequality
\[
 \frac{(1+u)^q+(1-u)^q}{2}
 \le 1+q(q-1)u^2,
 \qquad 0\le u\le\frac12,
\]
shows that $\|L\|_q\le e^{1/2}$. By duality, for nonnegative $(a_i)$,
\begin{equation}\label{eq:est-sign-duality}
 \left\|\sum_{i\in I_0}a_i\varepsilon_i\right\|_p
 \ge \frac{1}{2\sqrt e}\sum_{i\in I_0}a_i z_i.
\end{equation}
Condition on $(E_i)$, apply \eqref{eq:est-sign-duality} with
$a_i=t_i\min(E_i,\beta_i)$, and use Jensen's inequality. This gives
\begin{equation}\label{eq:est-small-part}
 \left\|\sum_{i\in I_0}t_i\xi_i^{[\beta_i]}\right\|_p
 \ge c\sum_{i\in I_0}t_i z_i.
\end{equation}
Adding independent centered coordinates cannot decrease an $L_p$ norm. Hence
\[
 \|X_t^I\|_p\ge c\sum_{i\in I}t_i z_i.
\]
Taking the supremum over $z$ proves \eqref{eq:est-rho-lp}.

For \eqref{eq:est-bernstein}, the contraction principle reduces the estimate to the symmetric exponential sum $\sum_{i\in I}t_i\varepsilon_iE_i$. If $|\lambda|\|t_I\|_\infty\le1/2$, then
\[
 \E\exp\left(\lambda\sum_{i\in I}t_i\varepsilon_iE_i\right)
 =\prod_{i\in I}(1-\lambda^2t_i^2)^{-1}
 \le \exp(2\lambda^2\|t_I\|_2^2).
\]
The Chernoff bound and integration give \eqref{eq:est-bernstein}.
\end{proof}

We now state the Sudakov minoration that will be used below. The form for independent symmetric variables with log-concave tails is due to Lata\l a \cite{LatalaSudakov}; see also \cite[Theorem 14]{LT}.

\begin{theorem}\label{thm:est-sudakov}
Let $p\ge2$, $m\ge e^p$, and let $t_1,\ldots,t_m\in\R^n$. If
\begin{equation}\label{eq:est-lp-sep}
 \|X_{t_l}^I-X_{t_k}^I\|_p\ge r,
 \qquad l\ne k,
\end{equation}
then
\begin{equation}\label{eq:est-sudakov-lp}
 \E\max_{1\le l\le m}X_{t_l}^I\ge cr.
\end{equation}
\end{theorem}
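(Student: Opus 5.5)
This Sudakov minoration is Lata\l a's theorem \cite{LatalaSudakov}. We only need to check that the capped variables $\xi_i^{[\beta_i]}$ restricted to $I$ fit its hypotheses, and then quote it. Each $\xi_i^{[\beta_i]}=\varepsilon_i\min\{E_i,\beta_i\}$ is symmetric. Its tail function is
\[
 N(u)=-\log\P(|\xi_i^{[\beta_i]}|\ge u),
\]
which equals $u$ for $u\le\beta_i$ and $+\infty$ for $u>\beta_i$. This is convex, so the variables have log-concave tails. It is also worth noting that $\E\min(E_i,\beta_i)\ge 1-e^{-1}$, so all variables are uniformly nondegenerate. Lata\l a's statement, in the form of \cite[Theorem 14]{LT}, reads as follows: if $t_1,\dots,t_m$ are $r$-separated in the $L_p$ distance of the process, with $m\ge e^p$, then $\E\max_l X_{t_l}\ge c r$ with a universal $c$. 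Applied to the vectors $(t_l)_{i\in I}$ and the variables $(\xi_i^{[\beta_i]})_{i\in I}$, this gives \eqref{eq:est-sudakov-lp} directly.

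If instead one wants a self-contained reduction to the structure already built in this section, I would argue as follows.

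\textbf{Step 1: replace $L_p$ distances by $\rho_{p}$-distances.} By Proposition~\ref{prop:est-moments} and Lata\l a's two-sided moment formula for log-concave sums, $\|X^I_t\|_p\asymp \rho^\beta_{p,I}(t)$. The lower half of this comparison is \eqref{eq:est-rho-lp}. The upper half follows from the standard Gluskin--Kwapie\'n/Lata\l a estimate, which I would quote. With it, separation \eqref{eq:est-lp-sep} becomes $\rho^\beta_{p,I}(t_l-t_k)\ge cr$.

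\textbf{Step 2: change of measure.} For each $l$ choose $z^{(l)}\in C^\beta_{p,I}$ attaining $\rho^\beta_{p,I}$ in the relevant direction. Tilt the law of $\xi$ by the density proportional to $e^{\langle \lambda, \xi\rangle}$, shifting the mean toward $z^{(l)}$. The entropy cost of this tilt is of order $\sum_i V_{\beta_i}(z_i)\le p$. This is exactly where the convex functions \eqref{eq:intro-Vbeta} enter.

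\textbf{Step 3: the main obstacle.} A union/concentration argument must show that under the tilted measures the point $t_l$ wins the maximum with probability $\ge e^{-Cp}$. Since $m\ge e^p$ and the tilt costs at most $e^{-Cp}$, this yields the bound $\E\max\ge cr$.

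The genuinely hard part, and the reason to simply cite \cite{LatalaSudakov}, is the concentration for log-concave product measures needed in Step 3. It is Talagrand's/Lata\l a's concentration for measures with log-concave tails. For the exponential case it comes from Talagrand's two-level concentration, with the sets $\sqrt p B_2+pB_1$ matching \eqref{eq:est-rho-infty}. The truncation at $\beta_i$ is handled by the cap on $V_{\beta_i}$. So in the paper I would present the statement as a direct citation, with the verification of the log-concavity hypothesis above as the only argument.
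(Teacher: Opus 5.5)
Your proposal matches the paper, which gives no proof of this statement and simply quotes Lata\l a's Sudakov minoration for independent symmetric variables with log-concave tails (see also Theorem~14 of Lata\l a--Tkocz). Your check that the capped variables $\xi_i^{[\beta_i]}$ qualify is the verification needed: $N(u)=u$ on $[0,\beta_i]$ and $N=+\infty$ beyond is convex. Your optional Steps~1--3 are only a heuristic outline, and Step~3 is itself deferred to the citation, so they should not be presented as a proof; nothing in your argument depends on them.
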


Together with Proposition~\ref{prop:est-moments}, this gives the form used in the partition argument.

\begin{corollary}\label{cor:est-sudakov-rho}
Let $p\ge2$, $m\ge e^p$, and let $t_1,\ldots,t_m\in\R^n$. If
\begin{equation}\label{eq:est-rho-sep}
 \rho_{p,I}^\beta(t_l-t_k)\ge r,
 \qquad l\ne k,
\end{equation}
then
\begin{equation}\label{eq:est-sudakov-rho}
 S_I(\{t_1,\ldots,t_m\})\ge cr.
\end{equation}
\end{corollary}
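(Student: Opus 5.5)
The corollary follows by combining Theorem~\ref{thm:est-sudakov} with the lower moment bound \eqref{eq:est-rho-lp} of Proposition~\ref{prop:est-moments}. Fix $p\ge2$, $m\ge e^p$, and points $t_1,\ldots,t_m$ satisfying \eqref{eq:est-rho-sep}. The plan is to convert $\rho_{p,I}^\beta$-separation into $L_p$-separation of the process $X^I$, and then apply the Sudakov minoration for $X^I$.

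\emph{Step 1: apply \eqref{eq:est-rho-lp} to differences.} For $l\ne k$, apply \eqref{eq:est-rho-lp} to the vector $t=t_l-t_k$. By linearity, $X_{t_l-t_k}^I=X_{t_l}^I-X_{t_k}^I$. Therefore
\[
 r\le\rho_{p,I}^\beta(t_l-t_k)\le C\|X_{t_l}^I-X_{t_k}^I\|_p .
\]
So the points are $r/C$-separated in the $L_p$ distance of the process.

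\emph{Step 2: check that Theorem~\ref{thm:est-sudakov} applies.} The variables $\xi_i^{[\beta_i]}=\varepsilon_i\min\{E_i,\beta_i\}$ are independent and symmetric. Their tails are log-concave: for $u<\beta_i$ we have $-\log\P(|\xi_i^{[\beta_i]}|\ge u)=u$, and for $u>\beta_i$ it is $+\infty$. This function is convex, possibly taking the value $+\infty$, which is the setting without the $\Delta_2$ condition. Hence Theorem~\ref{thm:est-sudakov} applies to $X^I$ with separation $r/C$. It yields
\[
 \E\max_{l\le m}X_{t_l}^I\ge c\,r/C .
\]

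\emph{Step 3: conclude.} The left-hand side of the last display is exactly $S_I(\{t_1,\ldots,t_m\})$ by definition \eqref{eq:est-XI}. This proves \eqref{eq:est-sudakov-rho} after renaming the constant.

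The only point needing care is Step 2, namely confirming that the cited Sudakov minoration covers the capped variables $\xi^{[\beta]}$, which have bounded support when $\beta_i<\infty$. I would handle this by noting that Theorem~\ref{thm:est-sudakov} is stated in the paper directly for $X_t^I$, so it is taken as given. Everything else is linearity of $t\mapsto X_t^I$ and the fact that the distance bound is applied only to pairwise differences.
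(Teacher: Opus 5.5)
Your proposal is correct and matches the paper's argument: the paper obtains the corollary by applying \eqref{eq:est-rho-lp} to the differences $t_l-t_k$, which turns $\rho_{p,I}^\beta$-separation into $L_p$-separation of $X^I$ at level $r/C$, and then invoking Theorem~\ref{thm:est-sudakov}. Your Step~2 remark is also right: that theorem is stated directly for the capped process $X_t^I$, so nothing further needs to be checked.
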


We shall also use the infimum-convolution inequality for the symmetric
exponential measure. We recall Maurey's result in the form stated in
\cite[Theorem 2.5]{LW}. Let $\nu$ be the symmetric
exponential probability measure on $\mathbb R$, that is,
\[
        d\nu(x)=\frac12 e^{-|x|}\,dx,
\]
and define
\[
w(x)=
\begin{cases}
{x^2}/{36}, & |x|\le 4,\\[1mm]
\dfrac{2}{9}(|x|-2), & |x|>4.
\end{cases}
\]
Then the pair
\[
\left(\nu^{\otimes n},\,W_0\right),
\qquad
W_0(x)=\sum_{i=1}^n w(x_i),
\]
has property $(\tau)$; equivalently, for every bounded measurable
function $g:\mathbb R^n\to\mathbb R$,
\[
\int_{\mathbb R^n} e^{g\square W_0}\,d\nu^{\otimes n}
\int_{\mathbb R^n} e^{-g}\,d\nu^{\otimes n}\le 1,
\]
where
\[
(g\square W_0)(x)
=
\inf_{y\in\mathbb R^n}
\{g(x-y)+W_0(y)\}.
\]
We shall also use the transport property of $(\tau)$
\cite[Proposition 2.3]{LW}: if $(\mu,\varphi)$
has property $(\tau)$ and a map $T$ satisfies
\[
        \psi(Tx-Ty)\le \varphi(x-y),
\]
then $(\mu\circ T^{-1},\psi)$ has property $(\tau)$.

\begin{theorem}\label{thm:est-affine}
Let $A\subset\R^n$ be finite, let $(b_t)_{t\in A}$ be real numbers, and put
\begin{equation}\label{eq:est-affine-f}
 f(x)=\max_{t\in A}\{b_t+\langle t,x\rangle\}.
\end{equation}
Then for every $p\ge2$,
\begin{equation}\label{eq:est-affine-bound}
 \|f(\xi^{[\beta]})-\E f(\xi^{[\beta]})\|_p
 \le C\sup_{t\in A}\rho_p^\beta(t).
\end{equation}
\end{theorem}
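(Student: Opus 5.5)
We derive this concentration bound from property $(\tau)$ for the symmetric exponential measure, transported to the law of $\xi^{[\beta]}$, in the usual way: property $(\tau)$ gives exponential deviation bounds for sets, and these are integrated into $L_p$ bounds.

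\emph{Step 1: transport.} The map $x\mapsto \varepsilon\min(|x|,\beta)\,\mathrm{sgn}(x)$, applied coordinatewise, pushes $\nu$ onto the law of $\xi_i^{[\beta_i]}$. It is $1$-Lipschitz, and its image is $[-\beta_i,\beta_i]$. Define a cost $\psi_i(u)=w(u)$ for $|u|\le 2\beta_i$ and $\psi_i(u)=+\infty$ otherwise. Differences of image points never exceed $2\beta_i$, and $w$ is increasing in $|u|$, so the transport inequality $\psi_i(Tx-Ty)\le w(x-y)$ holds. By \cite[Proposition 2.3]{LW}, the pair $(\text{law of }\xi^{[\beta]},\Psi)$ with $\Psi(y)=\sum_i\psi_i(y_i)$ has property $(\tau)$. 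Since $w(u)\ge c\min(u^2,|u|)$, we also have $\psi_i(u)\ge c\,V_{\beta_i}(u/2)$ up to constants. Hence $\{y:\Psi(y)\le p\}\subset C'\,C_{Cp}^\beta$, which is contained in a constant multiple of $C_p^\beta$ by Lemma~\ref{lem:est-clock}.

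\emph{Step 2: deviation from $(\tau)$.} Apply $(\tau)$ with $g=0$ on a set $B$ and $g=+\infty$ off it; the standard truncation makes $g$ bounded. This gives
\[
 \P(\xi\in B)\,\P(\xi\notin B+\{\Psi\le p\})\le e^{-p}.
\]
Take $B=\{f\le M\}$, where $M$ is a median of $f(\xi)$. Put $R_p=\sup_{t\in A}\rho_p^\beta(t)$. If $x\in B$ and $\Psi(y)\le p$, then $f(x+y)\le f(x)+\max_t\langle t,y\rangle\le M+CR_p$. Therefore $\P(f(\xi)>M+CR_p)\le 2e^{-p}$.

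For the lower tail, take $B=\{f\ge M+u\}$ and use the same inequality with $x$ and $y$ exchanged. The pointwise bound $f(x)\ge f(x+y)-\max_t\langle t,-y\rangle$ gives $\P(f(\xi)<M-CR_p)\le 2e^{-p}$. This step is symmetric because $\Psi$ is even.

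\emph{Step 3: integration.} The function $p\mapsto R_p$ is nondecreasing. By Lemma~\ref{lem:est-clock}, $R_{\lambda p}\le\lambda R_p$ for $\lambda\ge1$. Apply the tail bound at levels $\lambda p$:
\[
 \P\bigl(|f-M|>C\lambda R_p\bigr)\le 4e^{-\lambda p},\qquad \lambda\ge1.
\]
Integrating this gives $\|f-M\|_p\le CR_p$. Replacing $M$ by $\E f$ costs at most $|\E f-M|\le\|f-M\|_1\le\|f-M\|_p$, which proves \eqref{eq:est-affine-bound}. The bound is independent of $n$ because every constant comes from the one-dimensional $w$.

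\emph{Main obstacle.} The delicate step is Step 1. The cost $w$ is evaluated at $x-y$ in the exponential coordinates, and we must check that it dominates $V_\beta$ evaluated at the capped differences. This comparison matters most near the cap, where $V_\beta=+\infty$ beyond $\beta$. The constraint is harmless there, since the actual difference of two points in $[-\beta,\beta]$ never exceeds $2\beta$. Even so, we still need $C_{p}^{2\beta}\subset C\,C_p^\beta$, or a comparison of $\rho^{2\beta}$ with $\rho^\beta$. To get this, scale by $1/2$: if $V_{2\beta}(z)\le p$ then $V_\beta(z/2)\le V_{2\beta}(z)$, since $|z/2|\le\beta$ and $V$ is increasing. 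Hence $C_p^{2\beta}\subset 2C_p^\beta$, which closes the argument.
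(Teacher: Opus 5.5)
Your argument is correct, but it uses a different form of property $(\tau)$ than the paper does.

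\textbf{The paper's route.} It uses the same transported cost as you ($c_\beta$, which is your $\Psi$). It then applies $(\tau)$ in its functional form with the single test function $g=\lambda f$, where $\lambda=p/(36R)$. From $\{c_\beta\le p\}\subset 36\,C_p^\beta$ it gets $(\lambda f)\square c_\beta\ge\lambda f-p$, and hence $\E e^{\lambda f}\,\E e^{-\lambda f}\le e^p$. Jensen then centres this at $\E f$. The elementary inequality $x^p\le (p/(e\lambda))^p e^{\lambda x}$ converts the exponential bound directly into the $L_p$ bound.

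\textbf{Your route.} You use the set form of $(\tau)$ at every level $q=\lambda p\ge p$, centred at a median. You integrate the resulting tails using $R_{\lambda p}\le\lambda R_p$ and only then pass from the median to the mean.

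\textbf{Comparison.} Both arguments rest on the same scaling fact. Your homogeneity $R_{\lambda p}\le\lambda R_p$ plays the role of convexity of $c_\beta$ in the paper. The paper needs that convexity, implicitly, to get $\lambda\max_t\langle t,y\rangle-c_\beta(y)\le p$ also for $y$ outside $\{c_\beta\le p\}$. The paper's version is shorter: one exponential moment at one scale, with no medians and no tail integration. Yours yields the two-sided deviation inequality $\P(|f-M|>C\lambda R_p)\le 4e^{-\lambda p}$ for all $\lambda\ge1$, which is slightly more information than the single $L_p$ bound.

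\textbf{Two cosmetic slips.} Neither affects the argument.
\begin{itemize}
\item The transport map should be the deterministic clipping $x\mapsto\operatorname{sgn}(x)\min\{|x|,\beta_i\}$, with no $\varepsilon$.
\item For the lower tail, the clean choice is $B=\{f\ge M\}$ together with $f(x+y)\ge f(x)-\max_t\langle t,-y\rangle$. This is harmless because $\{\Psi\le p\}$ is symmetric.
\end{itemize}
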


\begin{proof}
Let $P_\beta$ be coordinatewise clipping to $[-\beta_i,\beta_i]$, let
$\mu_\beta$ be the law of $\xi^{[\beta]}$, and put
\[
 c_\beta(y)=
 \begin{cases}
 \sum_i w(y_i),& |y_i|\le2\beta_i\ \text{for all }i,\\
 +\infty,&\text{otherwise}.
 \end{cases}
\]
Since
\[
 c_\beta(P_\beta x-P_\beta y)\le W_0(x-y),
\]
the transport property of $(\tau)$ gives, for every bounded measurable $g$,
\begin{equation}\label{eq:est-maurey-cap}
 \int e^{g\square c_\beta}\,d\mu_\beta
 \int e^{-g}\,d\mu_\beta\le1.
\end{equation}
Moreover $w(u)\ge V_\infty(u)/36$, and hence
\begin{equation}\label{eq:est-cost-body}
 \{y:c_\beta(y)\le p\}\subset36C_p^\beta.
\end{equation}

Let $R=\sup_{t\in A}\rho_p^\beta(t)$. If $R=0$, the assertion is immediate.
Otherwise put $\lambda=p/(36R)$. From \eqref{eq:est-cost-body},
\begin{equation}\label{eq:est-dual-cost}
 \lambda\max_{t\in A}\langle t,y\rangle-c_\beta(y)\le p.
\end{equation}
Since
\[
 f(x-y)\ge f(x)-\max_{t\in A}\langle t,y\rangle,
\]
we get $(\lambda f)\square c_\beta\ge\lambda f-p$. Thus
\eqref{eq:est-maurey-cap} implies
\[
 \E e^{\lambda f(\xi^{[\beta]})}
 \E e^{-\lambda f(\xi^{[\beta]})}\le e^p.
\]
By Jensen's inequality,
\[
 \E e^{\lambda(f(\xi^{[\beta]})-\E f(\xi^{[\beta]}))}\le e^p,
 \qquad
 \E e^{-\lambda(f(\xi^{[\beta]})-\E f(\xi^{[\beta]}))}\le e^p.
\]
Using $x^p\le(p/(e\lambda))^pe^{\lambda x}$ for $x\ge0$ gives
\eqref{eq:est-affine-bound}. A standard truncation removes the boundedness
assumption in \eqref{eq:est-maurey-cap} for the function used above.
\end{proof}

We record one consequence. It will be used with $p=\log m$.

\begin{corollary}\label{cor:est-max-fluct}
Let $m\ge8$, $p=\log m$, and let $B_1,\ldots,B_m\subset\R^n$ be finite and nonempty. Choose $t_j\in B_j$ and suppose that
\begin{equation}\label{eq:est-local-radius}
 \sup_{t\in B_j}\rho_p^\beta(t-t_j)\le v,
 \qquad 1\le j\le m.
\end{equation}
Put
\[
 Z_j=\max_{t\in B_j}\langle t-t_j,\xi^{[\beta]}\rangle.
\]
Then
\begin{equation}\label{eq:est-max-fluct}
 \E\max_{1\le j\le m}|Z_j-\E Z_j|\le Cv.
\end{equation}
\end{corollary}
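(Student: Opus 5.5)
The plan is to apply Theorem~\ref{thm:est-affine} to each $Z_j$ separately, with the same $p=\log m$, and then pass from $L_p$ bounds to a bound on the maximum by a union bound. Each $Z_j$ has the form \eqref{eq:est-affine-f}: take $A=B_j-t_j$ and all $b_t=0$, so $Z_j=f_j(\xi^{[\beta]})$ with $f_j(x)=\max_{t\in B_j}\langle t-t_j,x\rangle$. Hypothesis \eqref{eq:est-local-radius} says precisely that $\sup_{s\in A}\rho_p^\beta(s)\le v$. Since $m\ge8$ gives $p=\log m>2$, Theorem~\ref{thm:est-affine} applies and yields
\[
 \|Z_j-\E Z_j\|_p\le Cv,\qquad 1\le j\le m.
\]

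Next we bound the expectation of the maximum through the $L_p$ norm of the maximum, replacing the maximum by a sum of $p$-th powers:
\[
 \E\max_{j}|Z_j-\E Z_j|
 \le\Bigl(\E\max_j|Z_j-\E Z_j|^p\Bigr)^{1/p}
 \le\Bigl(\sum_{j=1}^m\E|Z_j-\E Z_j|^p\Bigr)^{1/p}
 \le m^{1/p}\,Cv .
\]
With $p=\log m$ we have $m^{1/p}=e$, and \eqref{eq:est-max-fluct} follows with constant $eC$.

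There is no real obstacle beyond checking that Theorem~\ref{thm:est-affine} applies as stated. It requires only that $A$ be finite, which holds since each $B_j$ is finite, and that $p\ge2$, which $m\ge8$ guarantees. The constant in Theorem~\ref{thm:est-affine} is universal and does not depend on $p$, so the final constant is universal as well. The only point needing care is to use the same exponent $p=\log m$ both in the radius hypothesis and in the union bound, so that the factor $m^{1/p}$ is absolute. If one instead wanted a bound at a different moment level, the clock inequality \eqref{eq:est-clock} would let us adjust $p$ by constant factors at the cost of a constant.
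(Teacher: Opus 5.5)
Your proposal is correct and is essentially the paper's proof. The paper also applies Theorem~\ref{thm:est-affine} to each $Z_j$ to get $\|Z_j-\E Z_j\|_p\le Cv$, then bounds the expected maximum by $\bigl(\sum_j\E|Z_j-\E Z_j|^p\bigr)^{1/p}\le Cm^{1/p}v=Cev$.
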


\begin{proof}
Theorem~\ref{thm:est-affine} gives $\|Z_j-\E Z_j\|_p\le Cv$. Hence
\[
 \E\max_{j\le m}|Z_j-\E Z_j|
 \le\left(\sum_{j=1}^m\E|Z_j-\E Z_j|^p\right)^{1/p}
 \le C m^{1/p}v=Cev.
\]
\end{proof}

\section{A decomposition theorem}\label{sec:chopping}

We first state the decomposition theorem for $\xi^{[\beta]}$. The proof follows Talagrand's partitioning scheme and the construction of Bednorz--Lata\l a. Chopping maps are used in the decomposition lemmas below.

For $x\in\R^n$ put
\begin{equation}\label{eq:chop-l1beta}
 \|x\|_{1,\beta}=\sum_i\beta_i|x_i|,
\end{equation}
with the usual interpretation when $\beta_i=\infty$.

\begin{theorem}\label{thm:chop-main}
Let $T\subset\R^n$ be finite. There are a map $z:T\to\R^n$, an admissible sequence $(\mathcal A_k)_{0\le k\le N}$ of partitions of $T$, with $\mathcal A_N$ the partition into singletons, and vectors $u_k(A)\in\R^n$, $A\in\mathcal A_k$, such that $u_N(\{t\})=z(t)$. Writing $u_k(t)=u_k(\mathcal A_k(t))$, we have
\begin{equation}\label{eq:chop-main}
 \max_{t\in T}\|t-z(t)\|_{1,\beta}
 +\max_{t\in T}\sum_{k=1}^N
 \rho_{2^k}^\beta\bigl(u_k(t)-u_{k-1}(t)\bigr)
 \le C S_\beta(T).
\end{equation}
Consequently there are $a,v:T\to\R^n$ and $t_0\in T$ such that
\[
 t-t_0=a(t)+v(t)
\]
and
\begin{equation}\label{eq:chop-main-decomp}
 \max_{t\in T}\|a(t)\|_{1,\beta}+\cexp(v(T))\le C S_\beta(T).
\end{equation}
\end{theorem}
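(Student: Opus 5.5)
The plan is to follow the Bednorz--Lata\l a proof of the Bernoulli conjecture, with $\ell_1$ replaced by $\|\cdot\|_{1,\beta}$ and the family $(\rho_{2^k}^\beta)$ playing the role of the $\ell_2$ distances. We first reduce the consequence \eqref{eq:chop-main-decomp} to \eqref{eq:chop-main}. Fix $t_0\in T$ and set $a(t)=t-z(t)-(t_0-z(t_0))$ and $v(t)=z(t)-z(t_0)$. Then $\|a(t)\|_{1,\beta}\le 2\max_t\|t-z(t)\|_{1,\beta}$. For $v$ we transport the partitions $\mathcal A_k$ to $v(T)$ via $t\mapsto v(t)$; if $v$ is not injective we merge classes, which keeps admissibility, and we shift all $u_k$ by $-z(t_0)$. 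It then remains to bound $\cexp$ by $\sum_k\rho^\beta_{2^k}$. This step is not immediate, since \eqref{eq:est-rho-infty} only bounds $\rho^\infty$ from above. I would handle it by a chopping argument in each increment $x=u_k-u_{k-1}$. Split $x=x'+x''$, where $x''$ collects the large coordinates on which the cap $\beta_i$ is active, that is, the part of $x$ controlled by $\rho_{2^k}^\beta(x)$ only through the term $\beta_i|x_i|$. The part $x'$ satisfies $2^{k/2}\|x'\|_2+2^k\|x'\|_\infty\le C\rho^\beta_{2^k}(x)$, which one checks by exhibiting explicit $z\in C^\beta_{2^k}$ of the form $z_i=\min(\beta_i,\ldots)\operatorname{sign}x_i$. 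The part $x''$ has $\|x''\|_{1,\beta}\le C\rho^\beta_{2^k}(x)$. Summing $x''$ along the chain for each $t$ and moving it into $a(t)$ gives \eqref{eq:chop-main-decomp}.

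For the main estimate \eqref{eq:chop-main}, I would construct the partitions by Talagrand's partitioning scheme using the functional $F(A)=S_\beta(A)$, or rather a localized version $\E\max_{t\in A}\langle t-\pi(A),\xi^{[\beta]}\rangle$ restricted to coordinates not yet chopped. The growth condition comes from Corollary~\ref{cor:est-sudakov-rho} with $p=2^{k}$ and $m=2^{2^{k+1}}\ge e^p$. Suppose $t_1,\dots,t_m$ are $r$-separated in $\rho^\beta_{2^k}$ and $B_l\subset A$ are small $\rho^\beta_{2^{k+1}}$-balls around $t_l$. Then the required "Sudakov plus localization" inequality
\[
 S\Bigl(\bigcup B_l\Bigr)\ge cr+\min_l S(B_l)-Cv
\]
follows from Corollary~\ref{cor:est-sudakov-rho} together with the concentration Corollary~\ref{cor:est-max-fluct}. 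The latter controls the fluctuations $Z_j-\E Z_j$ uniformly in $j$ by $Cv$, where $v$ is the $\rho_{\log m}^\beta$-radius of the pieces. We then run the standard greedy construction: at level $k$, split each set into $\rho_{2^k}$-balls of maximal functional plus a remainder. This yields an admissible sequence with $\sum_k\rho^\beta_{2^k}(\text{radius of }\mathcal A_k(t))\le CS_\beta(T)$, provided the radii do not overlap. That proviso is exactly what fails in the Bernoulli setting.

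The main obstacle is this failure. Since $\rho_p^\beta$ saturates at $\|\cdot\|_{1,\beta}$ by \eqref{eq:est-l1}, the Sudakov bound is useless once the separation is carried by capped coordinates. Following Bednorz--Lata\l a, we introduce chopping maps at each step. When a cell $A$ at level $k$ has large $\rho_{2^k}$-diameter but cannot be split profitably, we chop its coordinates at level $r_k$. Concretely, we replace $t_i-\pi_i(A)$ by its truncation $\max(-r,\min(r,\cdot))$ and put the excess into $t-z(t)$. The chopped part is charged to $\|\cdot\|_{1,\beta}$ through the event that $E_i$ reaches its cap, as in the proof of \eqref{eq:est-large-part}. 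The truncated part has small $\ell_\infty$ norm, so Lemma~\ref{lem:est-clock} makes $\rho_{2^{k+1}}$ comparable to the Gaussian--exponential term. The difficulty is to show that the total chopped amount $\|t-z(t)\|_{1,\beta}$ is bounded by $CS_\beta(T)$ uniformly in $t$. For this I would follow the Bednorz--Lata\l a bookkeeping: keep a decreasing set of "active" coordinates $I_k(A)$ and use Lemma~\ref{lem:est-deletion-diameter}, namely $S_I\le S_J$, so that the functional stays monotone when coordinates are removed. Each chopping then decreases the localized functional by an amount proportional to the $\ell_{1,\beta}$ mass removed, and the resulting telescoping sum is bounded by $S_\beta(T)$.
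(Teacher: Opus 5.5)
Your reduction of \eqref{eq:chop-main-decomp} to \eqref{eq:chop-main} is fine and matches the paper's final step: the split of each increment is Lemma~\ref{lem:chop-threshold}. One small fix there: if $v$ is not injective, restrict the partitions to one preimage of each point rather than merging classes, since the $u_k$ may differ on merged classes. Your Sudakov-plus-concentration growth inequality and greedy step are Lemmas~\ref{lem:chop-growth}--\ref{lem:chop-greedy}.

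\textbf{The gap} is in the core of \eqref{eq:chop-main}, namely the bound on $\max_t\|t-z(t)\|_{1,\beta}$. You assert that each truncation lowers the localized functional by an amount proportional to the $\|\cdot\|_{1,\beta}$-mass it removes, but you give no mechanism, and truncation by itself does not do this. Take $\beta_i=1$ and $A=\{\pi,t\}$ with $t-\pi=M(e_1+\cdots+e_K)$. Then $S_\beta(A)=\frac M2\,\E\bigl|\sum_{i\le K}\xi_i^{[1]}\bigr|\asymp M\sqrt K$. Truncating at level $r<M$ removes mass $K(M-r)$ but lowers the functional only by order $(M-r)\sqrt K$. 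Any valid argument must therefore exploit the particular situation in which truncation is performed, and that ingredient is what is missing. The tools you cite do not supply it:
\begin{itemize}
\item Lemma~\ref{lem:est-deletion-diameter} gives only monotonicity, not a quantitative drop.
\item That lemma also concerns deleting whole independent coordinates. Truncating $t_i-\pi_i$ keeps a single variable $\xi_i$ per coordinate, so there is nothing independent to delete.
\item The moment bound \eqref{eq:est-large-part} concerns $\|X_t\|_p$ for one fixed $t$. It gives no control of a supremum along branches.
\end{itemize}

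\textbf{What the paper does instead.}
\begin{itemize}
\item Each coordinate is replaced by Talagrand's chopping maps on the grid $H(A)\Z$, with an independent copy of $\xi_i^{[\beta_i]}$ for every piece. Refining does not increase $F_G$ (Lemma~\ref{lem:chop-refine}).
\item Lemma~\ref{lem:chop-inverse} converts small chopped distance into small $\rho_p^\beta$-distance. This is what makes Proposition~\ref{prop:part-first} work with a scale $H(A)$ that is either kept or divided by $R$.
\item The truncation error is not paid for by the truncation itself. It is bounded a priori by the $\rho_q^\beta$-diameter (Lemmas~\ref{lem:chop-threshold} and~\ref{lem:chop-clip}), hence by $a_n/(2R)$.
\item Clipping to the box $Q'$ is performed only on the remainder $C$ of a set that cannot be covered by $m$ small balls.
\item The drop $F(A)-F(C)\ge\kappa a_n$ comes from Lemma~\ref{lem:chop-delete}. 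It combines Sudakov minoration on the chopped pieces outside $Q'$, which are now genuinely independent coordinates that can be deleted, with the concentration Theorem~\ref{thm:est-affine} on the retained pieces. The retained pieces have small diameter thanks to the $\sqrt{p/q}$ gain in \eqref{eq:chop-clip-rho}.
\item The drop paying for $a_s$ is in general obtained only at a later level $n$ with $a_n\le 2a_s$, using the estimate \eqref{eq:part-remaining} inherited from level $s$ (Proposition~\ref{prop:part-old}).
\item Two applications of Lemma~\ref{lem:part-maximal} make these delayed drops telescope along each branch with bounded multiplicity.
\end{itemize}
Without the covering-or-drop alternative on independent chopped coordinates, and without this delayed bookkeeping, your telescoping bound on the total truncation error has nothing to rest on.
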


The second assertion gives the decomposition used in
\eqref{eq:intro-cap-rem}.

The proof is based on the estimates of Section~\ref{sec:estimates} and on the lemmas below. The recursive construction of the partitions will be given after these preparations.

\subsection{Chopping maps}

For $u<v$ define
\begin{equation}\label{eq:chop-phi}
 \varphi_{u,v}(x)=\min\{v,\max\{x,u\}\}-\min\{v,\max\{0,u\}\}.
\end{equation}
Thus $\varphi_{u,v}(0)=0$, the function is constant outside $[u,v]$, and it has slope one on $[u,v]$. If $u_0<u_1<\cdots<u_m$, then
\begin{equation}\label{eq:chop-sum}
 \varphi_{u_0,u_m}(x)=\sum_{j=1}^m\varphi_{u_{j-1},u_j}(x).
\end{equation}
These maps were introduced by Talagrand; see \cite{Talagrand93,Talagrand94Bernoulli}, \cite[Section~4]{BL}, and \cite[Section~10.3.1]{TalagrandBook}. The next elementary facts are standard. We include the proof for completeness.

\begin{lemma}\label{lem:chop-basic}
For $u_0<u_1<\cdots<u_m$ and $x,y\in\R$,
\begin{equation}\label{eq:chop-variation}
 \sum_{j=1}^m
 \bigl|\varphi_{u_{j-1},u_j}(x)-\varphi_{u_{j-1},u_j}(y)\bigr|
 =|\varphi_{u_0,u_m}(x)-\varphi_{u_0,u_m}(y)|
 \le|x-y|.
\end{equation}
In particular,
\begin{equation}\label{eq:chop-square}
 \sum_{j=1}^m|\varphi_{u_{j-1},u_j}(x)|^2\le x^2.
\end{equation}
\end{lemma}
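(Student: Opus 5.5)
The plan is to reduce everything to the single function $\varphi_{u_0,u_m}$, using the telescoping identity \eqref{eq:chop-sum} together with one monotonicity observation. Each $\varphi_{u,v}$ is nondecreasing, since it is a clipping of $x$ to $[u,v]$ minus a constant. Hence for fixed $x,y$ with, say, $x\ge y$, every difference $\varphi_{u_{j-1},u_j}(x)-\varphi_{u_{j-1},u_j}(y)$ is nonnegative. The absolute values in the sum on the left of \eqref{eq:chop-variation} can therefore be removed. By \eqref{eq:chop-sum} the resulting sum telescopes to $\varphi_{u_0,u_m}(x)-\varphi_{u_0,u_m}(y)=|\varphi_{u_0,u_m}(x)-\varphi_{u_0,u_m}(y)|$. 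The case $x<y$ follows by symmetry.

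For the inequality $\le|x-y|$, I will note that $x\mapsto\min\{v,\max\{x,u\}\}$ is $1$-Lipschitz, being the composition of two $1$-Lipschitz maps. Subtracting a constant does not change differences, so $|\varphi_{u_0,u_m}(x)-\varphi_{u_0,u_m}(y)|\le|x-y|$. To verify \eqref{eq:chop-sum} itself, I would check on each interval between consecutive $u_j$ that both sides have the same slope (one on $[u_0,u_m]$, zero outside) and vanish at $0$.

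For \eqref{eq:chop-square}, I will apply \eqref{eq:chop-variation} with $y=0$, using $\varphi_{u,v}(0)=0$. This gives $\sum_j|\varphi_{u_{j-1},u_j}(x)|\le|x|$. Then I use the elementary bound $\sum_j a_j^2\le(\sum_j a_j)^2$ for $a_j\ge0$.

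No step is a real obstacle. The only point needing care is the sign-consistency, that is, the monotonicity of each piece, which is what allows the absolute values to be dropped before telescoping.
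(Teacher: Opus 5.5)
Your proposal is correct and follows the paper's own proof essentially step for step: monotonicity of each $\varphi_{u,v}$ lets you drop the absolute values, \eqref{eq:chop-sum} gives the telescoping identity, the contraction property gives $\le|x-y|$, and taking $y=0$ with $\sum_j a_j^2\le(\sum_j a_j)^2$ gives \eqref{eq:chop-square}. Your slope check of \eqref{eq:chop-sum}, which the paper simply takes as given, is a harmless addition.
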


\begin{proof}
Assume $x\ge y$. Every function $\varphi_{u,v}$ is nondecreasing, so the absolute values in the sum on the left of \eqref{eq:chop-variation} may be removed. The identity follows from \eqref{eq:chop-sum}. The last inequality follows because $\varphi_{u_0,u_m}$ is a contraction. Taking $y=0$ gives
\[
 \sum_j|\varphi_{u_{j-1},u_j}(x)|\le|x|,
\]
and \eqref{eq:chop-square} follows.
\end{proof}

Let $G_i\subset\R$ be finite and put $G=(G_i)_{i=1}^n$. We write
\[
 G_i^-=G_i\setminus\{\max G_i\},
\]
and, for $u\in G_i^-$, denote by $u^+$ the successor of $u$ in $G_i$.
For $t\in\R^n$ write
\begin{equation}\label{eq:chop-PhiG}
 \Phi_G(t)=
 \bigl(\varphi_{u,u^+}(t_i)\bigr)_{i,\,u\in G_i^-}.
\end{equation}
Each coordinate indexed by $(i,u)$ has cap $\beta_i$. Let
$\xi_{u,i}^{[\beta_i]}$, $i=1,\ldots,n$, $u\in G_i^-$, be independent
copies of $\xi_i^{[\beta_i]}$, and put
\begin{equation}\label{eq:chop-process}
 X_t(G)=
 \sum_{i=1}^n\sum_{u\in G_i^-}
 \varphi_{u,u^+}(t_i)\xi_{u,i}^{[\beta_i]}.
\end{equation}
For a finite set $A$ put
\begin{equation}\label{eq:chop-functional}
 F_G(A)=\E\max_{t\in A}X_t(G).
\end{equation}
For a vector $a=(a_{u,i})_{i,\,u\in G_i^-}$ define
\begin{equation}\label{eq:chop-rhoG}
 \rho_p^G(a)=
 \sup\left\{
 \sum_{i=1}^n\sum_{u\in G_i^-}a_{u,i}z_{u,i}:
 \sum_{i=1}^n\sum_{u\in G_i^-}
 V_{\beta_i}(z_{u,i})\le p
 \right\}.
\end{equation}
When $G$ is clear, we simply write $F(A)$.

We next compare a chopping with a refinement of it; compare
\cite[Propositions~10.3.9--10.3.10]{TalagrandBook} and
\cite[Propositions~4.2--4.3]{BL}.

\begin{lemma}\label{lem:chop-refine}
For $i=1,\ldots,n$, let $G_i\subset G_i'$ be finite, and put
$G=(G_i)_{i=1}^n$ and $G'=(G_i')_{i=1}^n$. Assume that
\begin{equation}\label{eq:chop-same-endpoints}
 \min G_i=\min G_i',
 \qquad
 \max G_i=\max G_i'.
\end{equation}
Then, for every finite $A$,
\begin{equation}\label{eq:chop-refine-F}
 F_{G'}(A)\le F_G(A).
\end{equation}
If, for some integer $K\ge1$,
\begin{equation}\label{eq:chop-refine-card}
 \#\bigl(G_i'^-\cap[u,u^+)\bigr)\le K,
 \qquad
 i=1,\ldots,n,\quad u\in G_i^-,
\end{equation}
then, for all $s,t\in\R^n$ and $p>0$,
\begin{equation}\label{eq:chop-refine-rho}
 \rho_p^{G'}\bigl(\Phi_{G'}(t)-\Phi_{G'}(s)\bigr)
 \le
 \rho_p^G\bigl(\Phi_G(t)-\Phi_G(s)\bigr)
 \le
 K\rho_p^{G'}\bigl(\Phi_{G'}(t)-\Phi_{G'}(s)\bigr).
\end{equation}
Deleting any collection of chopped coordinates cannot increase $F_G(A)$.
\end{lemma}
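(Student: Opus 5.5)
The plan is to reduce every part of Lemma~\ref{lem:chop-refine} to the coordinatewise identity \eqref{eq:chop-sum}. By \eqref{eq:chop-same-endpoints}, each interval $[u,u^+)$ with $u\in G_i^-$ is cut by the points of $G_i'$ lying in it. Write these points as $u=v_0<v_1<\cdots<v_m=u^+$, where $v_0,\dots,v_{m-1}$ are exactly the elements of $G_i'^-\cap[u,u^+)$. Then \eqref{eq:chop-sum} gives
\[
 \varphi_{u,u^+}(x)=\sum_{j=1}^m\varphi_{v_{j-1},v_j}(x).
\]
So each coordinate $(i,u)$ of $\Phi_G(t)$ is the sum of the coordinates $(i,v)$, $v\in G_i'^-\cap[u,u^+)$, of $\Phi_{G'}(t)$. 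These blocks partition the index set of $G'$. Thus there is a linear summation map $P$ with $\Phi_G=P\circ\Phi_{G'}$, and it has block structure with blocks of size at most $K$ under \eqref{eq:chop-refine-card}.

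\emph{Proof of \eqref{eq:chop-refine-F}.} I would realize $X_t(G)$ on a common probability space by a Gaussian-style "splitting" comparison. Fix a block $(i,u)$ with refinement variables $\xi_{v,i}^{[\beta_i]}$, $v$ in the block, and condition on everything else. Then $X_t(G')$ contains $\sum_v\varphi_{v,v^+}(t_i)\xi_{v,i}$, while $X_t(G)$ contains $\bigl(\sum_v\varphi_{v,v^+}(t_i)\bigr)\xi_{u,i}$. Now introduce a symmetrization/Jensen argument: let $\eta$ be a single copy and replace all $\xi_{v,i}$ in the block by $\eta$. The inequality needed is
\[
 \E\max_t\Bigl(c_t+\sum_v a_{v}(t)\xi_v\Bigr)\le \E\max_t\Bigl(c_t+\bigl(\textstyle\sum_v a_v(t)\bigr)\eta\Bigr),
\]
where the $a_v(t)=\varphi_{v,v^+}(t_i)$ are nonnegative-sign-consistent: they all have the sign of $t_i$ relative to the split point, and they are monotone in $t_i$. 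This is the standard fact that the chopped coordinates are comonotone in $t$, as in \cite[Proposition~10.3.9]{TalagrandBook} and \cite[Proposition~4.2]{BL}. I would prove it by induction on the number of pieces, merging two adjacent pieces at a time. The two-piece case is the main obstacle. For it I would follow Talagrand's argument: the pairs $(\varphi_{v_0,v_1}(t_i),\varphi_{v_1,v_2}(t_i))$ lie on a monotone path in which at most one coordinate is "active". One then uses exchangeability of $(\xi_{v_0},\xi_{v_1})$ together with a pairing/rearrangement of the maximizers to show that merging does not decrease the expected max. I expect to cite this step from \cite{BL,TalagrandBook} essentially verbatim, checking only that capped symmetric variables (rather than Bernoulli) suffice. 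Their proof uses only symmetry and exchangeability, conditioning on absolute values.

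\emph{Proof of \eqref{eq:chop-refine-rho}.} This is pure convex duality. Put $a'=\Phi_{G'}(t)-\Phi_{G'}(s)$ and $a=Pa'$. On each block all coordinates of $a'$ have a common sign, by monotonicity of the $\varphi$'s, and $|a_{u,i}|=\sum_v|a'_{v,i}|$.
\begin{itemize}
\item \emph{Left inequality.} Given $z'$ feasible for $\rho_p^{G'}$, define $z_{u,i}=\max_v|z'_{v,i}|$, with the common sign. Then $\sum_v a'_vz'_v\le a_uz_u$, and $V_{\beta_i}(z_u)\le\sum_vV_{\beta_i}(z'_v)$, so $z$ is feasible for $\rho_p^G$.
\item \emph{Right inequality.} Given $z$ feasible for $\rho_p^G$, set $z'_{v,i}=z_{u,i}/K$ on the block. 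Convexity with $V(0)=0$ gives $\sum_vV(z_u/K)\le K\cdot V(z_u)/K=V(z_u)$, and $\sum_va'_vz'_v=a_uz_u/K$.
\end{itemize}

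Finally, deleting coordinates cannot increase $F_G(A)$. This follows exactly as \eqref{eq:est-deletion}: condition on the retained variables and apply Jensen to the centered deleted ones.
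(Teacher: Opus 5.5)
Your proposal is correct and matches the paper's proof in structure. The $\rho$ part is the same blockwise duality: take $\max_v|z'_{v,i}|$ with the common sign, and spread $z_{u,i}$ over the block, where your factor $1/K$ just does the paper's final step $\rho_{Kp}^{G'}\le K\rho_p^{G'}$ inline. Deletion is the same conditional Jensen/symmetry argument. For \eqref{eq:chop-refine-F} it is the same comonotonicity-driven rearrangement; the paper states the lattice inequality $H(x\wedge y)+H(x\vee y)\ge H(x)+H(y)$ for $H(x)=\max_t\{b_t+\sum_j f_j(t)x_j\}$ and cites the rearrangement inequality for all pieces at once, rather than merging two at a time as you do.

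One correction to how you describe the two-piece step. It is not a Jensen or symmetrization argument and needs no symmetry, and conditioning on absolute values alone leaves you with unequal magnitudes. Instead, pair $(z_1,z_2)$ with $(z_2,z_1)$, apply the lattice inequality, and use $g(z_1\vee z_2)+g(z_1\wedge z_2)=g(z_1)+g(z_2)$ with $g(z)=H(z,z)$; this gives $\E H(Z_1,Z_2)\le\E H(Z,Z)$ for i.i.d.\ $Z_1,Z_2$.
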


\begin{proof}
For $u\in G_i^-$ put
\[
 G_{i,u}=G_i'^-\cap [u,u^+).
\]
By Lemma~\ref{lem:chop-basic},
\begin{equation}\label{eq:chop-refine-increments}
 \varphi_{u,u^+}(t_i)-\varphi_{u,u^+}(s_i)
 =
 \sum_{v\in G_{i,u}}
 \bigl(
 \varphi_{v,v^+}(t_i)-\varphi_{v,v^+}(s_i)
 \bigr),
\end{equation}
and all the nonzero terms on the right have the same sign.

We first prove \eqref{eq:chop-refine-rho}. Let $(z_{v,i})$ satisfy
\[
 \sum_i\sum_{v\in G_i'^-}V_{\beta_i}(z_{v,i})\le p.
\]
For $u\in G_i^-$ choose $z_{u,i}^*$ with the sign of
$\varphi_{u,u^+}(t_i)-\varphi_{u,u^+}(s_i)$ and such that
\[
 |z_{u,i}^*|=\max_{v\in G_{i,u}}|z_{v,i}|.
\]
Then
\[
 \sum_i\sum_{u\in G_i^-}V_{\beta_i}(z_{u,i}^*)\le p,
\]
while \eqref{eq:chop-refine-increments} gives
\[
\begin{aligned}
&\sum_i\sum_{v\in G_i'^-}
 \bigl(
 \varphi_{v,v^+}(t_i)-\varphi_{v,v^+}(s_i)
 \bigr)z_{v,i} \\
&\qquad\le
 \sum_i\sum_{u\in G_i^-}
 \bigl(
 \varphi_{u,u^+}(t_i)-\varphi_{u,u^+}(s_i)
 \bigr)z_{u,i}^* .
\end{aligned}
\]
Taking the supremum proves the first inequality in
\eqref{eq:chop-refine-rho}.

Conversely, let $(z_{u,i})$ satisfy
\[
 \sum_i\sum_{u\in G_i^-}V_{\beta_i}(z_{u,i})\le p,
\]
and put $z_{v,i}'=z_{u,i}$ for $v\in G_{i,u}$. By
\eqref{eq:chop-refine-increments},
\[
\begin{aligned}
&\sum_i\sum_{v\in G_i'^-}
 \bigl(
 \varphi_{v,v^+}(t_i)-\varphi_{v,v^+}(s_i)
 \bigr)z_{v,i}' \\
&\qquad=
 \sum_i\sum_{u\in G_i^-}
 \bigl(
 \varphi_{u,u^+}(t_i)-\varphi_{u,u^+}(s_i)
 \bigr)z_{u,i}.
\end{aligned}
\]
Moreover, by \eqref{eq:chop-refine-card},
\[
 \sum_i\sum_{v\in G_i'^-}V_{\beta_i}(z_{v,i}')
 \le Kp.
\]
Hence
\[
 \rho_p^G\bigl(\Phi_G(t)-\Phi_G(s)\bigr)
 \le
 \rho_{Kp}^{G'}\bigl(\Phi_{G'}(t)-\Phi_{G'}(s)\bigr)
 \le
 K\rho_p^{G'}\bigl(\Phi_{G'}(t)-\Phi_{G'}(s)\bigr),
\]
where the last inequality follows from
Lemma~\ref{lem:est-clock}.

We next prove \eqref{eq:chop-refine-F}. For a refinement of one interval,
condition on all the unchanged coordinates. If
$f_1,\ldots,f_m$ are the new chopping functions, then they are ordered
and their sum is the original chopping function. Thus, for
\[
 H(x)=\max_t\left\{b_t+\sum_{j=1}^m f_j(t)x_j\right\},
\]
one has
\[
 H(x\wedge y)+H(x\vee y)\ge H(x)+H(y).
\]
The standard rearrangement argument for chopping maps
\cite[Proposition~10.3.10]{TalagrandBook} therefore gives, for independent
copies $Z_1,\ldots,Z_m$ of an integrable random variable $Z$,
\[
 \E H(Z_1,\ldots,Z_m)\le \E H(Z,\ldots,Z).
\]
Taking $Z=\xi_i^{[\beta_i]}$ and using
Lemma~\ref{lem:chop-basic} proves
\[
 F_{G'}(A)\le F_G(A)
\]
for one refinement, and iteration gives \eqref{eq:chop-refine-F}.

Finally, delete one chopped coordinate. Conditioning on all the remaining
coordinates and on its absolute value, symmetry gives
\[
 \frac12
 \left(
 \max_t(a_t+b_t)+\max_t(a_t-b_t)
 \right)
 \ge \max_t a_t.
\]
Taking expectations and iterating proves the last assertion.
\end{proof}

\subsection{Basic estimates}

We need two elementary consequences of the definition of $\rho_p^\beta$. They will be used when the scale is changed.

\begin{lemma}\label{lem:chop-threshold}
Suppose $q\ge1$ and $\rho_q^\beta(v)\le\delta$. Put
\[
 h=\frac{2\delta}{q},\qquad J=\{i:|v_i|>h\}.
\]
Then
\begin{equation}\label{eq:chop-threshold}
 \sum_{i\in J}\beta_i\le\frac q2,
 \qquad
 \sum_{i\in J}\beta_i|v_i|\le\delta,
 \qquad
 \|v\ind_{J^c}\|_2\le\frac{2\delta}{\sqrt q}.
\end{equation}
\end{lemma}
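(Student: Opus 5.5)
The plan is to obtain all three inequalities in \eqref{eq:chop-threshold} by testing the definition \eqref{eq:intro-rhop} of $\rho_q^\beta(v)$ against explicit vectors $z\in C_q^\beta$. We always choose $z_i$ with the sign of $v_i$, so that $\langle v,z\rangle=\sum_i|v_i||z_i|\le\delta$. The single elementary fact needed is
\[
 V_\beta(u)\le 2|u|\qquad\text{for }|u|\le\beta .
\]
It holds because $u^2\le|u|$ for $|u|\le1$ and $2|u|-1\le2|u|$ otherwise. Consequently any $z$ with $|z_i|\le\beta_i$ and $\sum_i|z_i|\le q/2$ lies in $C_q^\beta$.

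\emph{First two inequalities.} Suppose $\sum_{i\in J}\beta_i>q/2$; this includes the case where some $\beta_i=\infty$ with $i\in J$. We choose numbers $0\le s_i\le\beta_i$, $i\in J$, with $\sum_{i\in J}s_i=q/2$. For instance, fill the coordinates of $J$ in some order up to their caps and stop partway through one coordinate. Then $z_i=\operatorname{sign}(v_i)s_i$ on $J$ and $z_i=0$ off $J$ is admissible by the fact above. This gives
\[
 \delta\ge\sum_{i\in J}|v_i|s_i>h\cdot\frac q2=\delta ,
\]
which is a contradiction as soon as $J\ne\emptyset$. (If $J=\emptyset$, all three claims are trivial or reduce to the third.) Hence $\sum_{i\in J}\beta_i\le q/2$, and in particular every $\beta_i$ with $i\in J$ is finite. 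Now $z_i=\operatorname{sign}(v_i)\beta_i$ on $J$ and $0$ elsewhere is admissible, since $\sum_{i\in J}V_{\beta_i}(\beta_i)\le2\sum_{i\in J}\beta_i\le q$. Testing with this $z$ yields $\sum_{i\in J}\beta_i|v_i|\le\delta$.

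\emph{Third inequality.} Put $w=v\ind_{J^c}$, so that $|w_i|\le h$ for all $i$, and assume $w\ne0$. Test with $z=cw$, where $c=\sqrt q/\|w\|_2$, and split into two cases.
\begin{itemize}
\item If $ch\le1$, then $|z_i|\le1\le\beta_i$ for every $i$. Hence $\sum_iV_{\beta_i}(z_i)=c^2\|w\|_2^2=q$, so $z$ is admissible, and $\delta\ge\langle v,z\rangle=c\|w\|_2^2=\sqrt q\,\|w\|_2$. This gives $\|w\|_2\le\delta/\sqrt q$.
\item If $ch>1$, then directly $\|w\|_2<\sqrt q\,h=2\delta/\sqrt q$.
\end{itemize}
In either case $\|w\|_2\le 2\delta/\sqrt q$.

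There is no serious obstacle here; the hypothesis $q\ge1$ is not even needed beyond making $h$ well defined. The only point requiring care is the first inequality. If $J'\subset J$ were chosen greedily with full caps $\beta_i$, the sum could fall well short of $q/2$. Using a fractional value on the last coordinate avoids this, and it also covers infinite caps uniformly.
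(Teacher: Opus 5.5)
Your proof is correct and follows the paper's argument essentially step for step. The paper also uses $V_\beta(u)\le 2u$ on $[0,\beta]$ with a vector summing to exactly $q/2$ on $J$, then tests with $(\beta_i\ind_J(i))_i$, and handles the third bound with the same case split on whether $\|w\|_2\ge h\sqrt q$. Your sign choice $z_i=\operatorname{sign}(v_i)|z_i|$ replaces the paper's appeal to $\rho_q^\beta(v)=\rho_q^\beta(|v|)$.
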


\begin{proof}
For $0\le u\le\beta$, $V_\beta(u)\le2u$. If
$\sum_{i\in J}\beta_i>q/2$, choose $z$ supported on $J$ with
$0\le z_i\le\beta_i$ and $\sum_i z_i=q/2$. Then $z\in C_q^\beta$, and
by symmetry of $C_q^\beta$,
\[
 \rho_q^\beta(v)=\rho_q^\beta(|v|)
 \ge \langle |v|,z\rangle>hq/2=\delta,
\]
a contradiction. Hence $\sum_{i\in J}\beta_i\le q/2$. It follows that
$(\beta_i\ind_J(i))_i\in C_q^\beta$, and therefore
$\sum_{i\in J}\beta_i|v_i|\le\delta$.

Put $w=v\ind_{J^c}$. If $\|w\|_2<h\sqrt q$, the last estimate follows.
Otherwise set $z=\sqrt q\,w/\|w\|_2$. Since $|w_i|\le h$, we have
$|z_i|\le1$ and
\[
 \sum_i V_{\beta_i}(z_i)=\sum_i z_i^2=q.
\]
Thus $z\in C_q^\beta$, so
\[
 \delta\ge\rho_q^\beta(v)
 \ge\langle v,z\rangle
 =\sqrt q\,\|w\|_2.
\]
This proves the last estimate.
\end{proof}

For a closed interval $Q=[a,b]$ let $P_Qx=\min\{b,\max\{x,a\}\}$, and use the same notation coordinatewise for a product of intervals.

\begin{lemma}\label{lem:chop-clip}
Let $B\subset\R^n$, $u\in B$, $q\ge1$, and $\delta>0$. Suppose that
\begin{equation}\label{eq:chop-qsmall}
 \sup_{t\in B}\rho_q^\beta(t-u)\le\delta.
\end{equation}
Put
\[
 h=\frac{2\delta}{q}.
\]
For each $i$, choose $k_i\in\mathbb Z$ such that
\[
 k_i h\le u_i<(k_i+1)h,
\]
and set
\[
 Q_i=[(k_i-1)h,(k_i+2)h],\qquad
 Q=\prod_i Q_i,\qquad
 y(t)=P_Qt.
\]
Then
\begin{align}
 \sup_{t\in B}\|t-y(t)\|_{1,\beta}
 &\le\delta, \label{eq:chop-clip-l1}\\
 \sup_{t\in B}\|y(t)-u\|_2
 &\le\sqrt{12}\,\frac{\delta}{\sqrt q},
 \qquad
 \sup_{t\in B}\|y(t)-u\|_\infty
 \le\frac{4\delta}{q}, \label{eq:chop-clip-2inf}\\
 \diam_{\rho_p^\beta}y(B)
 &\le16\delta\sqrt{\frac pq},
 \qquad 1\le p\le q. \label{eq:chop-clip-rho}
\end{align}
The same conclusions hold if $Q_i$ is replaced by $Q_i\cap I_i$, where
$I_i$ is any closed interval containing $u_i$ and $\{t_i:t\in B\}$.
If $G$ is obtained by chopping the intervals $Q_i$, then
\[
 \diam_{\rho_p^G}\Phi_G(y(B))
 \le16\delta\sqrt{\frac pq},
 \qquad 1\le p\le q,
\]
and this estimate remains valid after deleting coordinates.
\end{lemma}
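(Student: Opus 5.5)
The plan is to split coordinates according to the threshold set of Lemma~\ref{lem:chop-threshold} and treat the two groups separately. Fix $t\in B$ and put $v=t-u$, so $\rho_q^\beta(v)\le\delta$ by \eqref{eq:chop-qsmall}. With $h=2\delta/q$ and $J=\{i:|v_i|>h\}$, Lemma~\ref{lem:chop-threshold} gives
\[
 \sum_{i\in J}\beta_i|v_i|\le\delta,
 \qquad
 \|v\ind_{J^c}\|_2\le 2\delta/\sqrt q .
\]

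For \eqref{eq:chop-clip-l1}, I use that $Q_i\ni u_i$ contains the whole interval $[u_i-h,u_i+h]$, since $k_ih\le u_i<(k_i+1)h$. Hence $y_i(t)=t_i$ whenever $i\notin J$. For $i\in J$, clipping toward an interval containing $u_i$ gives $|t_i-y_i|\le|t_i-u_i|$. Therefore
\[
 \|t-y(t)\|_{1,\beta}\le\sum_{i\in J}\beta_i|v_i|\le\delta .
\]
The same two facts hold for $Q_i\cap I_i$: it still contains $u_i$, and it contains $t_i$ whenever $|t_i-u_i|\le h$.

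For \eqref{eq:chop-clip-2inf}, note that $|y_i-u_i|\le\min\{|v_i|,3h\}$. The bound $3h$ holds because $Q_i$ has length $3h$ and contains $u_i$; this gives the $\ell_\infty$ claim with room to spare (indeed $2h=4\delta/q$ already suffices, since $u_i$ lies at distance at most $2h$ from either endpoint). For the $\ell_2$ claim, the coordinates in $J^c$ contribute at most $4\delta^2/q$. On $J$ we have $|y_i-u_i|^2\le 4h^2$ and, by Lemma~\ref{lem:chop-threshold}, $|J|\le\sum_J\beta_i\le q/2$ since $\beta_i\ge1$. So $J$ contributes at most $2qh^2=8\delta^2/q$, and the total is $12\delta^2/q$.

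For \eqref{eq:chop-clip-rho}, I apply \eqref{eq:est-rho-infty}. Since $V_\beta\ge V_\infty$, we have $C_p^\beta\subset C_p^\infty$, so $\rho_p^\beta\le\rho_p^\infty$. For $s,t\in B$, this gives
\[
 \rho_p^\beta(y(t)-y(s))\le\sqrt p\cdot2\sqrt{12}\,\delta/\sqrt q+p\cdot 8\delta/q .
\]
Using $p\le q$, we have $p/q\le\sqrt{p/q}$. The total is therefore at most $(4\sqrt3+8)\delta\sqrt{p/q}\le16\delta\sqrt{p/q}$.

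For the chopped version, apply the same inequality to the vectors $\Phi_G(y(t))$. Chopping preserves the relevant norms:
\[
 \|\Phi_G(a)-\Phi_G(b)\|_\infty\le\|a-b\|_\infty,
 \qquad
 \|\Phi_G(a)-\Phi_G(b)\|_2\le\|a-b\|_2 .
\]
The second follows from Lemma~\ref{lem:chop-basic}, since the pieces have the same sign and their squares sum to at most the square of their total. The chopped coordinates still carry caps $\beta_i\ge1$, so $\rho_p^G\le\rho_p^\infty$ on that index set, and the same computation applies. Deleting coordinates only decreases both norms, so the estimate survives deletion.

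I expect the main obstacle to be only the bookkeeping of constants, in particular checking the $\ell_\infty$ bound $4\delta/q$ against the interval length. Everything else is a direct combination of Lemma~\ref{lem:chop-threshold} and Lemma~\ref{lem:est-clock}.
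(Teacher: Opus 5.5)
Your proof is correct and follows the paper's argument essentially step for step: the threshold set $J$ of Lemma~\ref{lem:chop-threshold}, $y_i(t)=t_i$ off $J$, the contraction $|t_i-y_i(t)|\le|v_i|$ on $J$, $|J|\le q/2$, and $\rho_p^\beta\le\sqrt p\,\|\cdot\|_2+p\,\|\cdot\|_\infty$. For the chopped statement you use that $\Phi_G$ is $1$-Lipschitz in $\ell_2$ and $\ell_\infty$ (from Lemma~\ref{lem:chop-basic}), where the paper instead invokes the refinement comparison of Lemma~\ref{lem:chop-refine}; your route is equally valid and does not need $G$ to share the endpoints of the $Q_i$.

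One slip to fix: the length-$3h$ bound does not give the $\ell_\infty$ claim ``with room to spare'', since $3h=6\delta/q>4\delta/q$. It is your parenthetical bound $|y_i(t)-u_i|\le 2h$ (every point of $Q_i$ lies within $2h$ of $u_i$) that yields exactly $4\delta/q$, with no slack, and your $\ell_2$ count on $J$ relies on that same $2h$ bound.
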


\begin{proof}
Fix $t\in B$ and put $v=t-u$. If $|v_i|\le h$, then $t_i\in Q_i$ and
hence $y_i(t)=t_i$. Thus $t-y(t)$ is supported on
$J=\{i:|v_i|>h\}$. Since $u_i\in Q_i$,
\[
 |t_i-y_i(t)|\le |t_i-u_i|=|v_i|,
\]
and Lemma~\ref{lem:chop-threshold} gives
\[
 \|t-y(t)\|_{1,\beta}
 \le \sum_{i\in J}\beta_i|v_i|
 \le\delta.
\]

On $J^c$, $|y_i(t)-u_i|\le h$, while on $J$,
$|y_i(t)-u_i|\le2h$. Since Lemma~\ref{lem:chop-threshold} gives
$\sum_{i\in J}\beta_i\le q/2$ and $\beta_i\ge1$, we have
$|J|\le q/2$. Therefore
\[
 \|y(t)-u\|_2^2
 \le \frac{4\delta^2}{q}+\frac{8\delta^2}{q}
 =\frac{12\delta^2}{q},
 \qquad
 \|y(t)-u\|_\infty\le\frac{4\delta}{q}.
\]
For $1\le p\le q$, Lemma~\ref{lem:est-clock} now yields
\[
 \rho_p^\beta(y(t)-u)
 \le \sqrt p\,\|y(t)-u\|_2
      +p\|y(t)-u\|_\infty
 \le 8\delta\sqrt{\frac pq}.
\]
The triangle inequality proves \eqref{eq:chop-clip-rho}. The assertion
after applying a chopping map and deleting coordinates follows from
Lemma~\ref{lem:chop-refine}.

Finally, replacing $Q_i$ by $Q_i\cap I_i$ does not change $y_i(t)$,
since $I_i$ contains both $u_i$ and $t_i$.
\end{proof}

We shall also use the following converse estimate. For $h>0$ and $s,t\in\R^n$, let
\[
 G_i=\{s_i,t_i\}\cup
 \bigl(h\Z\cap[s_i\wedge t_i,s_i\vee t_i]\bigr),
 \qquad G_h=(G_i)_{i=1}^n,
\]
and write $\Phi_h=\Phi_{G_h}$.

\begin{lemma}\label{lem:chop-inverse}
For every $p>0$,
\begin{equation}\label{eq:chop-forward}
 \rho_p^{G_h}\bigl(\Phi_h(t)-\Phi_h(s)\bigr)
 \le\rho_p^\beta(t-s).
\end{equation}
If the left hand side is $\eta<ph/2$, then
\begin{equation}\label{eq:chop-inverse}
 \rho_p^\beta(t-s)\le8\eta.
\end{equation}
\end{lemma}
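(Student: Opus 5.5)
The plan is to reduce both inequalities to coordinatewise statements about the chopped increments. Fix $i$, and by symmetry of $C_p^\beta$ assume $t_i\ge s_i$. Write $d_i=t_i-s_i$.

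Every consecutive pair $u<u^+$ in $G_i$ lies in $[s_i,t_i]$. Hence $\varphi_{u,u^+}(t_i)-\varphi_{u,u^+}(s_i)=u^+-u=:\ell_{u,i}$. These pieces satisfy $0<\ell_{u,i}\le h$ and $\sum_{u\in G_i^-}\ell_{u,i}=d_i$. All pieces except at most the two end pieces have length exactly $h$, and if $d_i<h$ there is a single piece of length $d_i$.

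\emph{Forward bound \eqref{eq:chop-forward}.} This is the first inequality of Lemma~\ref{lem:chop-refine}, with the coarse chopping being $\{s_i\wedge t_i,s_i\vee t_i\}$. The argument runs directly as follows. Given a feasible $(z_{u,i})$ for $\rho_p^{G_h}$, put $z_i^*=\max_u|z_{u,i}|$. Then $\sum_i V_{\beta_i}(z_i^*)\le\sum_{i,u}V_{\beta_i}(z_{u,i})\le p$ and $|z_i^*|\le\beta_i$. Moreover
\[
\sum_u\ell_{u,i}z_{u,i}\le d_iz_i^*,
\]
so the chopped value is dominated by $\langle t-s,z^*\rangle\le\rho_p^\beta(t-s)$.

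\emph{Inverse bound \eqref{eq:chop-inverse}.} Assume $\eta<ph/2$. Split the coordinates into the small set $I_s=\{i:d_i<h\}$ and the large set $I_l=\{i:d_i\ge h\}$. By subadditivity of the support function,
\[
\rho_p^\beta(t-s)\le\rho_p^\beta((t-s)\ind_{I_s})+\rho_p^\beta((t-s)\ind_{I_l}).
\]

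On $I_s$ there is a single piece, equal to $d_i$. Thus the chopped vector restricted to $I_s$ coincides with $(t-s)\ind_{I_s}$, and the first term is at most $\eta$, since dropping coordinates does not increase a support function.

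On $I_l$, call a piece \emph{big} if $\ell_{u,i}\ge h/2$. Each large coordinate has at least one big piece, and $d_i\le 3h N_i$, where $N_i$ is the number of big pieces in coordinate $i$. Let $M=\sum_{i\in I_l}N_i$. Testing $\rho_p^{G_h}$ against the vector equal to $\min\{1,\sqrt{p/M}\}$ on all big pieces gives
\[
\eta\ge\tfrac h2\min\{M,\sqrt{pM}\}.
\]
Together with $\eta<ph/2$ this forces $M<p$. Then the vector equal to $1$ on all big pieces is feasible, and it gives $\sum_{i\in I_l}d_i\le 6\eta$, using $\sum_{\text{big}}\ell\ge\tfrac13\sum_{I_l}d_i$.

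It remains to bound $\sum_{i\in I_l}d_iz_i$ for a feasible $z\in C_p^\beta$. I would split $z$ into its part with $|z_i|\le1$ and its part with $|z_i|>1$. For the part with $|z_i|\le1$, set $z_{u,i}=z_i$ on every big piece of coordinate $i$. The cost is $\sum_i N_iz_i^2$. This should be bounded by rescaling with Lemma~\ref{lem:est-clock} and using $M<p$. The gain is at least $\tfrac13\sum d_iz_i$.

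For the part with $|z_i|>1$, put the full value $z_i$ on one big piece of each coordinate, which costs at most $\sum_iV_{\beta_i}(z_i)\le p$. This yields $\eta\ge\tfrac h2\sum_{I_l}|z_i|\ind_{|z_i|>1}$. I would then combine this with $d_i\le 3hN_i$ and $N_i\le M<p$.

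\emph{Main obstacle.} The hard step is this last combination: distributing large values $z_i$, which can be as large as $\beta_i$, over the $N_i$ pieces of a coordinate without paying $N_iV_{\beta_i}(z_i)$. I would first try spreading $z_i/\sqrt{N_i}$ over the pieces when $z_i\le1$. When $z_i>1$, I would spread $z_i/N_i$ over the pieces, using that $V_\beta$ is linear beyond $1$, so that splitting $z_i$ into $N_i$ parts of total mass $z_i$ costs at most $2z_i+N_i$. The extra $N_i$ in that cost is absorbed by $M<p$, and the gain is still at least $\tfrac16 d_iz_i$. Collecting constants from the three contributions should give the factor $8$.
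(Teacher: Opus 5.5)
\textbf{There is a genuine gap in the step you flag as the main obstacle, and the fix you propose does not work.} Your forward bound is correct. So are the preliminary facts on $I_l$: a big piece in every coordinate, $M<p$, the bound on $\sum_{I_l}d_i$, and the treatment of the part of $z$ with $|z_i|\le1$.

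\emph{Why spreading fails.} If you put $z_i/N_i$ on each of the $N_i$ big pieces of coordinate $i$, the gain is $\sum_{\text{big}}\ell_{u,i}\,|z_i|/N_i\le h|z_i|$. On the other hand, $d_i|z_i|\ge\frac h2N_i|z_i|$. So you recover only a fraction of order $1/N_i$ of $d_i|z_i|$, not $\frac16d_i|z_i|$. For instance, take one coordinate with $s_1=0$, $t_1=Nh$ and $z_1=\beta_1=2$: spreading yields $2h$, while the target is $2Nh$. The other route you mention fails too. Combining $\eta\ge\frac h2\sum_{I_l}|z_i|\ind_{\{|z_i|>1\}}$ with $d_i\le3hN_i$ and $N_i<p$ only gives $\sum_{I_l}d_i|z_i|\ind_{\{|z_i|>1\}}\le6p\eta$. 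What is missing is a bound on $\sum_iN_i|z_i|$, and this is exactly where the hypothesis $\eta<ph/2$ has to be used.

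\emph{The missing idea: a cap-mass argument.} Since $V_\beta(u)\le2|u|$ for $|u|\le\beta$, every refined vector with $|z'_{u,i}|\le\beta_i$ and $\sum|z'_{u,i}|\le p/2$ is admissible for $\rho_p^{G_h}$. Let $N_i^{\mathrm f}$ be the number of pieces of length exactly $h$ in coordinate $i$.
\begin{itemize}
\item If $\sum_iN_i^{\mathrm f}|z_i|>p/2$, choose $0\le z'_{u,i}\le|z_i|$ on these full pieces with total mass exactly $p/2$. This gives $\eta\ge hp/2$, a contradiction.
\item Hence $|z_i|$ can be placed on every full piece at cost at most $2\sum_iN_i^{\mathrm f}|z_i|\le p$. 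This gives $h\sum_iN_i^{\mathrm f}|z_i|\le\eta$.
\item Your threshold $h/2$ for big pieces is too weak for this step: it only yields $\eta\ge hp/4$. You need pieces longer than $2\eta/p$, and the full pieces qualify because $2\eta/p<h$.
\item The remaining end pieces number at most two per coordinate. Placing $z_i$ on them costs at most $2p$, so they contribute at most $\rho_{2p}^{G_h}\le2\eta$ by Lemma~\ref{lem:est-clock}.
\end{itemize}
Together this gives $\rho_p^\beta(t-s)\le3\eta$. It also makes your $I_s/I_l$ split and your split by $|z_i|\le1$ versus $|z_i|>1$ unnecessary.

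\emph{Comparison with the paper.} This is essentially Lemma~\ref{lem:chop-threshold} applied to the refined increment vector, which is the paper's proof. The set $J$ there contains all full pieces, and its pieces recombine into a vector $a$ with $\|a\|_{1,\beta}\le\eta$. The at most two leftover pieces per coordinate give a vector $b$ with $\sqrt p\|b\|_2+p\|b\|_\infty\le(2\sqrt2+4)\eta$.

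\emph{A smaller slip.} $d_i<h$ does not imply a single piece, because $[s_i\wedge t_i,s_i\vee t_i]$ may contain a point of $h\Z$ (e.g.\ $s_i=0.9h$, $t_i=1.1h$). So the chopped vector on $I_s$ need not equal $(t-s)\ind_{I_s}$. By the end-piece argument above, that term is bounded by $2\eta$ rather than $\eta$. As written, your final constant $8$ is asserted rather than derived.
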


\begin{proof}
The first estimate follows from \eqref{eq:chop-refine-rho}. For the converse, each original coordinate gives at most two refined increments of absolute value smaller than $h$; all the others have absolute value $h$. Apply Lemma~\ref{lem:chop-threshold} to the refined increment vector. Since $2\eta/p<h$, every nonzero increment of absolute value $h$ belongs to the set $J$ of that lemma.

Combining the increments in $J$ in each original coordinate gives a vector $a$ with
$\|a\|_{1,\beta}\le\eta$. The remaining increments give a vector $b$ with
\[
 \|b\|_2\le\frac{2\sqrt2\,\eta}{\sqrt p},
 \qquad
 \|b\|_\infty\le\frac{4\eta}{p}.
\]
Since $t-s=a+b$, \eqref{eq:est-l1} and \eqref{eq:est-rho-infty} give
\[
 \rho_p^\beta(t-s)
 \le \eta+2\sqrt2\,\eta+4\eta<8\eta.
\]
\end{proof}

\subsection{The decomposition lemmas}

We next combine the two estimates of Section~\ref{sec:estimates}. This is the usual step in the functional form of the majorizing-measure argument; compare \cite{Talagrand92,Talagrand96,BL,TalagrandBook}.

\begin{lemma}\label{lem:chop-growth}
Let $m\ge8$, $p=\log m$, and let $t_1,\ldots,t_m\in\R^n$ satisfy
\[
 \rho_p^\beta(t_j-t_l)\ge r,
 \qquad j\ne l.
\]
Suppose that $B_j$ are nonempty and
\[
 \sup_{t\in B_j}\rho_p^\beta(t-t_j)\le v.
\]
Then
\begin{equation}\label{eq:chop-growth}
 S_\beta\left(\bigcup_{j=1}^mB_j\right)
 \ge \min_jS_\beta(B_j)+cr-Cv.
\end{equation}
The same statement holds for the process $X_t(G)$ in \eqref{eq:chop-process}, with $\rho_p^G$ in place of $\rho_p^\beta$.
\end{lemma}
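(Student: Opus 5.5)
The plan is the standard "growth" argument: decompose the supremum over $\bigcup_jB_j$ into a choice of the best center plus a local fluctuation, and control the two pieces by Corollary~\ref{cor:est-sudakov-rho} and Corollary~\ref{cor:est-max-fluct} respectively. Write $\xi=\xi^{[\beta]}$ and, as in Corollary~\ref{cor:est-max-fluct},
\[
 Z_j=\max_{t\in B_j}\langle t-t_j,\xi\rangle,
\]
so that $\E Z_j=S_\beta(B_j-t_j)=S_\beta(B_j)$. The last equality holds because $\xi$ is centered, so translating a set does not change $S_\beta$. Then
\[
 \max_{t\in\bigcup_jB_j}\langle t,\xi\rangle
 =\max_j\bigl(\langle t_j,\xi\rangle+Z_j\bigr)
 \ge\max_j\bigl(\langle t_j,\xi\rangle+\E Z_j\bigr)-\max_j|Z_j-\E Z_j|.
\]
Using $\E Z_j\ge\min_lS_\beta(B_l)$ and taking expectations gives
\[
 S_\beta\Bigl(\bigcup_jB_j\Bigr)\ge\min_lS_\beta(B_l)+\E\max_j\langle t_j,\xi\rangle-\E\max_j|Z_j-\E Z_j|.
\]

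The second term is handled by the Sudakov minoration. Since $m\ge8$ we have $p=\log m\ge2$ and $m=e^p$, and the $t_j$ are $r$-separated in $\rho_p^\beta$. Corollary~\ref{cor:est-sudakov-rho}, applied with $I$ the full index set, therefore gives $\E\max_j\langle t_j,\xi\rangle\ge cr$.

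The third term is handled by concentration. The local radii are at most $v$ in $\rho_p^\beta$ with $p=\log m$, which is exactly the hypothesis of Corollary~\ref{cor:est-max-fluct}. That corollary gives $\E\max_j|Z_j-\E Z_j|\le Cv$. Combining the three displays yields \eqref{eq:chop-growth}.

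For the chopped process $X_t(G)$ the argument is the same. This process is $\langle\Phi_G(t),\xi'\rangle$, where $\xi'$ is a capped vector indexed by the pairs $(i,u)$, and each coordinate has cap $\beta_i$. Hence $\rho_p^G$ is exactly the $\rho_p^\beta$ of this enlarged capped family, and Corollaries~\ref{cor:est-sudakov-rho} and \ref{cor:est-max-fluct} apply verbatim with $\Phi_G(t_j)$ as centers. The only point to check is that $F_G(B_j)=\E\max_{t\in B_j}\langle\Phi_G(t)-\Phi_G(t_j),\xi'\rangle$, which again holds by centering.

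I expect no real obstacle here: all the substance lies in the two quoted corollaries, and the proof itself is only the bookkeeping of the decomposition. The one place needing care is that the two corollaries are applied at the same $p=\log m$, so that $m\ge e^p$ holds with equality and the bound $m^{1/p}=e$ in Corollary~\ref{cor:est-max-fluct} applies.
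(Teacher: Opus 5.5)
Your proposal is correct and follows the paper's proof essentially verbatim. You use the same local maxima $Z_j$ and the same pointwise inequality $\max_j(\langle t_j,\xi\rangle+Z_j)\ge\max_j\langle t_j,\xi\rangle+\min_j\E Z_j-\max_j|Z_j-\E Z_j|$, bound the two terms by Corollaries~\ref{cor:est-sudakov-rho} and~\ref{cor:est-max-fluct}, and treat the chopped process $X_t(G)$ by the same argument.
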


\begin{proof}
Put
\[
 Z_j=\max_{t\in B_j}\langle t-t_j,\xi^{[\beta]}\rangle.
\]
Then $\E Z_j=S_\beta(B_j)$ after translation. Pointwise,
\[
 \max_j\{\langle t_j,\xi^{[\beta]}\rangle+Z_j\}
 \ge
 \max_j\langle t_j,\xi^{[\beta]}\rangle
 +\min_j\E Z_j
 -\max_j|Z_j-\E Z_j|.
\]
Corollary~\ref{cor:est-max-fluct} bounds the last term in expectation by $Cv$, while Corollary~\ref{cor:est-sudakov-rho} bounds the first one from below by $cr$. This proves \eqref{eq:chop-growth}. The same proof applies to $X_t(G)$ in \eqref{eq:chop-process}.
\end{proof}

The next lemma is a direct consequence of \eqref{eq:chop-growth} and is the form used in the construction of the partitions; compare \cite{BL}.

\begin{lemma}\label{lem:chop-greedy}
There is a universal constant $K\ge1$ such that the following holds.
Let $m\ge8$, $p=\log m$, $v>0$, and let $A\subset\R^n$ be finite.
Then there is a partition
\[
 A=A_1\cup\cdots\cup A_r,\qquad r\le m,
\]
such that, for every $l\le r$, one of the following holds:
\[
 \diam_{\rho_p^\beta}A_l\le Kv,
\]
or
\begin{equation}\label{eq:chop-greedy}
 \emptyset\ne D\subset A_l,\quad
 \diam_{\rho_p^\beta}D\le v
 \quad\Longrightarrow\quad
 S_\beta(D)\le S_\beta(A)-v.
\end{equation}
The second alternative occurs for at most one $l$.
The same statement holds for $X_t(G)$ in \eqref{eq:chop-process}, with $F_G$ and $\rho_p^G$ in place of $S_\beta$ and $\rho_p^\beta$.
\end{lemma}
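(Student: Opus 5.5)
We construct the partition greedily, in the spirit of Talagrand's and Bednorz--Lata\l a's partitioning lemmas. Put $A^{(0)}=A$. We extract sets $A_1,A_2,\ldots$ one at a time, and only the last one is allowed to satisfy the second alternative. At step $l$, let $A^{(l-1)}$ be the remaining set.

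If $A^{(l-1)}$ contains a nonempty $D$ with $\diam_{\rho_p^\beta}D\le v$ and $S_\beta(D)>S_\beta(A)-v$, pick such a $D$ and a point $t_l\in D$. Set
\[
 A_l=\{t\in A^{(l-1)}:\rho_p^\beta(t-t_l)\le Kv/2\}
\]
and $A^{(l)}=A^{(l-1)}\setminus A_l$. Then $\diam_{\rho_p^\beta}A_l\le Kv$, because $\rho_p^\beta$ is a seminorm. Let $D_l$ denote the chosen set $D$. Since $\diam D_l\le v\le Kv/2$, we have $D_l\subset A_l$.

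If no such $D$ exists, we stop and put $A_l=A^{(l-1)}$. This $A_l$ satisfies \eqref{eq:chop-greedy} by construction, and it is the only set of that type.

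The whole point is to show that the first kind of step cannot occur $m$ times, for then $r\le m$. Suppose we have extracted $t_1,\ldots,t_{m-1}$ together with the sets $D_1,\ldots,D_{m-1}$. The $t_j$ are $Kv/2$-separated: for $j<l$, the point $t_l$ lies in $A^{(l-1)}$, so $t_l\notin A_j$, which means $\rho_p^\beta(t_l-t_j)>Kv/2$.

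Apply Lemma~\ref{lem:chop-growth} to these points with $B_j=D_j$. We have $\sup_{t\in D_j}\rho_p^\beta(t-t_j)\le v$ and $\rho_p^\beta(t_j-t_l)\ge Kv/2$. The lemma gives
\[
 S_\beta(A)\ge S_\beta\Bigl(\bigcup_j D_j\Bigr)\ge \min_j S_\beta(D_j)+cKv/2-Cv> S_\beta(A)-v+cKv/2-Cv.
\]
Choosing $K$ with $cK/2\ge C+1$ makes this a contradiction. The first inequality is monotonicity of $S_\beta$ under inclusion, which is immediate from its definition as an expected maximum.

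There is a technicality about the number of points. Lemma~\ref{lem:chop-growth} needs $m'$ points with $p=\log m$ and $m'\ge 8$. If we only have $m'=m-1$ points, the Sudakov hypothesis $m'\ge e^p$ fails. We handle this by the standard adjustment: assume $m\ge 16$, say, and use $\lfloor m/2\rfloor$ points with $p'=\log\lfloor m/2\rfloor\ge p/2$. Lemma~\ref{lem:est-clock} gives $\rho_{p'}^\beta\ge\tfrac12\rho_p^\beta$ on the separations, and $\rho_{p'}\le\rho_p$ on the radii, so the inequalities above survive with adjusted constants, absorbed into $K$. The small cases $8\le m<16$ are handled the same way. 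Alternatively, one can directly require Corollary~\ref{cor:est-sudakov-rho} and Corollary~\ref{cor:est-max-fluct} with $m-1$ points, since $(m-1)^{1/p}\le e$ and Sudakov tolerates a constant loss in $p$. Hence at most $m-1$ separated extractions happen, and together with the final set this gives $r\le m$.

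The only genuine obstacle is this bookkeeping of $m$ versus $m-1$ and of $p$ versus $\log(m-1)$; everything else is the direct use of \eqref{eq:chop-growth}.

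For the process $X_t(G)$, we repeat the argument verbatim, replacing $S_\beta$ and $\rho_p^\beta$ by $F_G$ and $\rho_p^G$. This works because Lemma~\ref{lem:chop-growth} holds in that setting, $F_G$ is monotone under inclusion, and $\rho_p^G$ is a seminorm.
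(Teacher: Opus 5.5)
Your argument is correct and is a close variant of the paper's proof. The paper chooses each center $t_j$ to maximize $S_\beta$ of the local ball $\{s:\rho_p^\beta(s-t)\le v\}$ among the remaining points, removes the $Lv$-ball, and stops after $m-1$ removals. It then derives \eqref{eq:chop-greedy} for the leftover set afterwards: by maximality, each selected local ball has expected supremum at least $S_\beta(D)$, and $D$ itself serves as the $m$-th set in Lemma~\ref{lem:chop-growth}. You instead pick centers inside witnesses $D$ that violate \eqref{eq:chop-greedy}, and you stop when no witness is left. The last piece then has the required property by construction, and Lemma~\ref{lem:chop-growth} is used only to bound the number of extractions. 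Your version avoids the maximality step; the paper's version has a fixed number of steps that does not depend on the values of $S_\beta$. Both give the same constant ($cL\ge C+1$, $K=2L$).

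The ``technicality'' you single out is not real; it comes from an off-by-one. To show that at most $m-1$ extractions occur, assume that $m$ extractions occur. This produces $m$ centers $t_1,\ldots,t_m$, pairwise $\rho_p^\beta$-separated by more than $Kv/2$, with witnesses $D_1,\ldots,D_m$. Lemma~\ref{lem:chop-growth} then applies verbatim with $p=\log m$ and gives the contradiction. The $\lfloor m/2\rfloor$ patch is therefore unnecessary. As written, it also fails for $8\le m<16$: there $\lfloor m/2\rfloor<8$, so neither Lemma~\ref{lem:chop-growth} nor Corollary~\ref{cor:est-sudakov-rho} (which needs $p\ge2$) applies.
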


\begin{proof}
Choose $L\ge1$ so that the constants in Lemma~\ref{lem:chop-growth} satisfy $cL-C\ge1$, and put $K=2L$. Starting with $A$, choose a point $t_1$ for which the set $A\cap B_p(t_1,v)$ has maximal expected supremum, where $B_p(t,r)=\{s:\rho_p^\beta(s-t)\le r\}$. Remove $A\cap B_p(t_1,Lv)$ and repeat on the points not yet removed. Stop when no point is left or after $m-1$ removals. Each removed set has diameter at most $2Lv=Kv$.

Suppose that a nonempty set $C$ is left. Let $D\subset C$ be nonempty with $\rho_p^\beta$-diameter at most $v$. A point of $D$ was available when each of the previous centers was chosen. Hence every selected ball $B_p(t_j,v)$ has expected supremum at least $S_\beta(D)$. Choose $t_m\in D$. The centers $t_1,\ldots,t_m$ are separated by more than $Lv$, and Lemma~\ref{lem:chop-growth}, applied to the $m$ sets $A\cap B_p(t_j,v)$, gives
\[
 S_\beta(A)\ge S_\beta(D)+v.
\]
This proves \eqref{eq:chop-greedy}.
\end{proof}

A second lemma is needed because the construction deletes coordinates after chopping. The idea is close to the deletion step in the proof of the Bernoulli conjecture; compare \cite[Proposition 2.10]{BL}.

\begin{lemma}\label{lem:chop-delete}
Let $J\subset I$ be finite sets of coordinates and let
$B\subset\R^I$ be finite. Let $m\ge8$, $p=\log m$, and suppose that
\begin{equation}\label{eq:chop-delete-small}
 \diam_{\rho_{p,J}^\beta}B\le\delta.
\end{equation}
Then for every $r>0$ there exist $t_1,\ldots,t_m\in B$ such that either
\[
 B\subset\bigcup_{l\le m}B_{p,I}(t_l,r),
\]
or
\begin{equation}\label{eq:chop-delete}
 S_J\left(
 B\setminus\bigcup_{l\le m}B_{p,I}(t_l,r)
 \right)
 \le S_I(B)-cr+C\delta.
\end{equation}
The same statement holds after applying a chopping map.
\end{lemma}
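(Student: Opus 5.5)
The plan is to mimic the greedy selection in the proof of Lemma~\ref{lem:chop-greedy}, measuring size by the \emph{restricted} functional $S_J$ but separating points with the \emph{full} norm $\rho_{p,I}^\beta$. The comparison step is then done with the full process $X^I$ via Lemma~\ref{lem:chop-growth}. Fix a constant $L\ge1$, chosen so that in Lemma~\ref{lem:chop-growth} one has $cL-C\ge1$.

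\textbf{Selection.} Put $B^{(0)}=B$. At step $l$, choose $t_l\in B^{(l-1)}$ maximizing
\[
 S_J\bigl(B^{(l-1)}\cap B_{p,I}(t,r/L)\bigr),\qquad t\in B^{(l-1)},
\]
and set $B^{(l)}=B^{(l-1)}\setminus B_{p,I}(t_l,r)$. If some $B^{(l)}$ with $l\le m$ is empty, the first alternative holds. Otherwise let $B'=B^{(m)}\ne\emptyset$. In that case the centres $t_1,\dots,t_m$ are pairwise $\rho_{p,I}^\beta$-separated by more than $r$, since each new centre lies outside the earlier balls of radius $r$.

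\textbf{Comparison.} The key point is to compare $S_J(B')$ with each selected small ball $D_l=B^{(l-1)}\cap B_{p,I}(t_l,r/L)$. Every point of $B'$ was available at every step, so maximality gives $S_J(D_l)\ge S_J(B'\cap B_{p,I}(s,r/L))$ for all $s\in B'$. This controls only a local piece of $B'$, not all of $B'$, and bridging that gap is where hypothesis \eqref{eq:chop-delete-small} has to enter.

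To use it, apply Theorem~\ref{thm:est-affine} to the one-point family $A=\{t-s\}$. This gives $\|X^J_{t-s}\|_p\le C\rho_{p,J}^\beta(t-s)\le C\delta$ for all $s,t\in B$. Combined with Corollary~\ref{cor:est-max-fluct} for the $J$-coordinates, over $m$ indices with $p=\log m$, this shows the following: conditionally on the $J$-coordinates, replacing the $J$-part of any $t_l$ by that of a maximizer over $B'$ costs at most $C\delta$ in expectation, uniformly over the $m$ indices.

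Decompose $X^I_t=X^J_t+X^{I\setminus J}_t$ with independent summands. Apply the growth argument of Lemma~\ref{lem:chop-growth} to $X^I$, with the $m$ separated centres $t_l$ and the sets $D_l$ (of $\rho_{p,I}^\beta$-radius $r/L$). Use Lemma~\ref{lem:est-deletion-diameter} in the form $S_I(D_l)\ge S_J(D_l)$. The expected outcome is
\[
 S_I(B)\ \ge\ \min_l S_J(D_l)+cr-C\delta-C r/L,
\]
and after adjusting $L$ the term $Cr/L$ is absorbed into $cr$.

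\textbf{Main obstacle.} The difficulty is the step $\min_l S_J(D_l)\gtrsim S_J(B')-C\delta$. The naive identity $X^J_{t_l}=\max_{B'}X^J-Z_l$, with $\E Z_l=S_J(B')$, goes the wrong way: it subtracts $S_J(B')$ instead of adding it. My first attempt is therefore to condition on $\xi_J$ and let $\tau=\tau(\xi_J)\in B'$ be a measurable maximizer of $X^J$ over $B'$. I would then bound
\[
 \max_{t\in B}X^I_t\ \ge\ \max_l X^{I\setminus J}_{t_l}+X^J_\tau-\max_l|X^J_{t_l}-X^J_\tau|.
\]
The last term should be controlled by $C\delta$ in expectation. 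For this I would compare it with $\max_l|X^J_{t_l-s_0}|$ for a fixed $s_0\in B'$, plus the fluctuation of $\max_{B'}X^J_{\cdot-s_0}$ around its mean, both of order $C\delta$ by Theorem~\ref{thm:est-affine} and Corollary~\ref{cor:est-max-fluct}. The remaining term $\E\max_lX^{I\setminus J}_{t_l}$ must then still produce $cr$, although the separation of the $t_l$ is only known in the full norm $\rho_{p,I}^\beta$. I would handle this by noting that on $J$ all pairwise distances are at most $\delta$. Hence $\rho_{p,I\setminus J}^\beta(t_l-t_k)\ge r-\delta$, and Corollary~\ref{cor:est-sudakov-rho} for the $(I\setminus J)$-process yields $c(r-\delta)$.

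If this direct route breaks down, the fallback is to run the same argument with each $D_l$ enlarged by the $J$-translate of $B'$ and to invoke Lemma~\ref{lem:chop-growth} verbatim. For the chopped version, every step uses only Theorem~\ref{thm:est-affine}, Corollary~\ref{cor:est-sudakov-rho} and the deletion monotonicity. These all hold for $X_t(G)$ with $F_G$ and $\rho_p^G$ by Lemma~\ref{lem:chop-refine}, so the chopped statement follows in the same way.
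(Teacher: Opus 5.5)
\textbf{There is a genuine gap, at exactly the step you flag as the main obstacle.} Write $K=I\setminus J$. Your separation step on $K$ is correct: $\rho^\beta_{p,K}(t_k-t_l)\ge r-\delta$, followed by Corollary~\ref{cor:est-sudakov-rho}. The proposed fix for the obstacle, however, fails.

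The right side of $\max_{t\in B}X^I_t\ge\max_l X^K_{t_l}+X^J_\tau-\max_l|X^J_{t_l}-X^J_\tau|$ is at most $\max_lX^K_{t_l}+\min_lX^J_{t_l}$. Since $\tau$ maximizes $X^J$ over $B'$, you get $\E\max_l|X^J_{t_l}-X^J_\tau|\ge\E(X^J_\tau-X^J_{t_1})=S_J(B')$, not $C\delta$. In your comparison with $s_0$, the fluctuation of $\max_{B'}X^J_{\cdot-s_0}$ is small, but its mean is $S_J(B')$.

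More fundamentally, for \emph{deterministic} centres, Theorem~\ref{thm:est-affine} and $m^{1/p}=e$ give $\E\max_lX^J_{t_l-s_0}\le C\delta$. Hence $\E\max_lX^I_{t_l}\le\E\max_lX^K_{t_l}+C\delta$, and no argument through $m$ fixed points can produce $S_J(B')$. This quantity can be much larger than $\delta$ when $|B'|\gg m$; the $\rho^\beta_{p,J}$-diameter bound only controls about $m$ fixed increments.

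The local route via $D_l$ has the same defect. Maximality only compares with $S_J(B'\cap B_{p,I}(s,r/L))$, which vanishes if the points of $B'$ are far apart in the $K$-coordinates, while $S_J(B')$ may still be large. The fallback does not help either. Points carrying the $K$-part of $t_l$ and the $J$-part of points of $B'$ do not lie in $B$, so Lemma~\ref{lem:chop-growth} applied to them says nothing about $S_I(B)$.

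\textbf{The missing idea is the paper's decoupling with fresh copies of the $J$-coordinates and random, adaptive centres.}

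First, translate so that $0\in B$. Let $\alpha$ be the minimum of $S_J(C)$ over all nonempty $C=B\setminus\bigcup_{l\le m}B_{p,I}(t_l,r)$ with $t_l\in B$. The conclusion is then proved for the minimizing centres, not for a greedy choice.

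Next, let $z=(\xi_i^{[\beta_i]})_{i\in K}$ and let $y_1,\dots,y_m$ be independent copies of $(\xi_i^{[\beta_i]})_{i\in J}$. Set $B_1=B$, $t_k\in\arg\max_{t\in B_k}\langle t_J,y_k\rangle$, and $B_{k+1}=B_k\setminus B_{p,I}(t_k,r)$.

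Then $M_k=\max_{t\in B_k}\langle t_J,y_k\rangle$ has conditional mean $S_J(B_k)\ge\alpha$, so Theorem~\ref{thm:est-affine} gives $\E\min_kM_k\ge\alpha-C\delta$. The same concentration shows that replacing the single $J$-block by $m$ independent copies costs only $C\delta$: $U=\E\max_k\max_{t\in B}\{\langle t_K,z\rangle+\langle t_J,y_k\rangle\}\le S_I(B)+C\delta$.

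Since the $t_k$ depend only on the $y$'s, Corollary~\ref{cor:est-sudakov-rho} applies conditionally to $z$. This gives $U\ge\E\max_k\langle t_{k,K},z\rangle+\E\min_kM_k\ge c(r-\delta)+\alpha-C\delta$, which proves the bound. Your proposal lacks this realization of $S_J$ of the leftover sets through independent copies of the $J$-block, rather than through fixed points.
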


\begin{proof}
If $B$ is covered by $m$ balls $B_{p,I}(t,r)$, there is nothing to
prove. Otherwise let
\[
 \alpha=\min S_J(C),
\]
where the minimum is taken over all nonempty sets of the form
\[
 C=B\setminus\bigcup_{l\le m}B_{p,I}(t_l,r),
 \qquad t_l\in B.
\]
It is enough to prove
\[
 \alpha\le S_I(B)-cr+C\delta.
\]

Put $K=I\setminus J$. Let
\[
 z=(\xi_i^{[\beta_i]})_{i\in K},
\]
and let $y_1,\ldots,y_m$ be independent copies of
$(\xi_i^{[\beta_i]})_{i\in J}$. Set recursively
\[
 B_1=B,\qquad
 t_k\in\arg\max_{t\in B_k}\langle t_J,y_k\rangle,
 \qquad
 B_{k+1}=B_k\setminus B_{p,I}(t_k,r).
\]
Since $B$ is not covered by $m$ such balls, all $B_k$ are nonempty.
Writing
\[
 M_k=\max_{t\in B_k}\langle t_J,y_k\rangle,
\]
we have, conditionally on $y_1,\ldots,y_{k-1}$,
$S_J(B_k)\ge\alpha$. Hence Theorem~\ref{thm:est-affine} gives
\begin{equation}\label{eq:chop-delete-min}
 \E\min_{k\le m}M_k\ge\alpha-C\delta.
\end{equation}

Now put
\[
 U=\E\max_{k\le m}\max_{t\in B}
 \{\langle t_K,z\rangle+\langle t_J,y_k\rangle\}.
\]
By Corollary~\ref{cor:est-max-fluct},
\begin{equation}\label{eq:chop-delete-upper}
 U\le S_I(B)+C\delta.
\end{equation}
Moreover,
\[
 \rho_{p,I}^\beta(t_k-t_l)>r,\qquad k\ne l,
\]
while \eqref{eq:chop-delete-small} implies
\[
 \rho_{p,K}^\beta(t_k-t_l)\ge r-\delta.
\]
Corollary~\ref{cor:est-sudakov-rho}, applied to the coordinates $K$,
\begin{equation}\label{eq:chop-delete-sud}
\E\max_{k\le m}\langle t_{k,K},z\rangle
\ge c(r-\delta)
\ge cr-C\delta.
\end{equation}
Finally,
\[
 \max_{k,t}\{\langle t_K,z\rangle+\langle t_J,y_k\rangle\}
 \ge
 \max_k\langle t_{k,K},z\rangle+\min_k M_k.
\]
Combining \eqref{eq:chop-delete-min}--\eqref{eq:chop-delete-sud} gives
\[
 \alpha\le S_I(B)-cr+C\delta,
\]
after changing the constants. This proves the lemma.
\end{proof}

\subsection{The decomposition theorem}

We now formulate the preceding estimates in the form used in the construction of the partitions. We begin with the notation.

Let $I$ be a finite set. For every $i\in I$ fix a number $\beta_i\in[1,\infty]$, and let $x:T\to\R^I$ be a map defined on a finite set $T$. For every $i$ choose a finite set
\[
 G_i=\{u_{i,0}<\cdots<u_{i,m_i}\}.
\]
If $J\subset I$, put
\begin{equation}\label{eq:abstract-process}
 X_t^{G,J}=\sum_{i\in J}\sum_{r=1}^{m_i}
 \varphi_{u_{i,r-1},u_{i,r}}(x_i(t))\,\xi_{i,r}^{[\beta_i]},
\end{equation}
where all variables $\xi_{i,r}^{[\beta_i]}$ are independent. For a nonempty $A\subset T$ write
\begin{equation}\label{eq:abstract-F-real}
 F_{G,J}(A)=\E\max_{t\in A}X_t^{G,J},
 \qquad F_G(A)=F_{G,I}(A).
\end{equation}
For $p\ge1$ put
\begin{equation}\label{eq:abstract-d-real}
 d_{p,G,J}(s,t)=
 \sup\left\{
 \sum_{i\in J}\sum_{r=1}^{m_i}
 z_{i,r}\bigl(\varphi_{u_{i,r-1},u_{i,r}}(x_i(s))-
 \varphi_{u_{i,r-1},u_{i,r}}(x_i(t))\bigr):
 \sum_{i\in J}\sum_{r=1}^{m_i}V_{\beta_i}(z_{i,r})\le p
 \right\}.
\end{equation}
We write $d_{p,G}=d_{p,G,I}$.

We recall that a functional on $T$ is a nondecreasing map from the subsets of $T$ to $\R_+$. For each $G$ and $J$ we consider a functional $\mathcal F_{G,J}$ on $T$. For \eqref{eq:abstract-process},
\begin{equation}\label{eq:abstract-application}
 \mathcal F_{G,J}(A)=F_{G,J}(A).
\end{equation}
We assume
\begin{equation}\label{eq:abstract-basic}
 \mathcal F_{G,J}(\{t\})=0.
\end{equation}
The functional is translation invariant: for any $a=(a_i)$, replacing
$x_i(t)$ by $x_i(t)+a_i$ and $G_i$ by $G_i+a_i$ leaves its value
unchanged. If $G'$ refines $G$, or if $J'\subset J$, then
\begin{equation}\label{eq:abstract-monotone}
 \mathcal F_{G',J}(A)\le \mathcal F_{G,J}(A),
 \qquad
 \mathcal F_{G,J'}(A)\le \mathcal F_{G,J}(A).
\end{equation}
For \eqref{eq:abstract-application}, these facts follow from Lemma~\ref{lem:chop-refine}.

We shall use the following two decomposition properties.

There is a constant $L_0\ge1$ such that, for $m\ge8$, $p=\log m$,
$v>0$, and every finite nonempty $A$, there is a partition
\begin{equation}\label{eq:abstract-first-partition}
 A=A_1\cup\cdots\cup A_r,
 \qquad r\le m,
\end{equation}
such that, for every $j\le r$, either
\begin{equation}\label{eq:abstract-first-small}
 \diam_{d_{p,G}}A_j\le L_0v,
\end{equation}
or
\begin{equation}\label{eq:abstract-first-drop}
 \emptyset\ne D\subset A_j,\quad
 \diam_{d_{p,G}}D\le v
 \quad\Longrightarrow\quad
 \mathcal F_G(D)\le\mathcal F_G(A)-v.
\end{equation}
The second alternative occurs for at most one $j$. For
\eqref{eq:abstract-application}, this follows from
Lemma~\ref{lem:chop-greedy}.

There are constants $c_0,C_0>0$ such that the following holds. Let
$J\subset I$, $m\ge8$, $p=\log m$, and suppose that
\begin{equation}\label{eq:abstract-delete-small}
 \diam_{d_{p,G,J}}B\le\delta.
\end{equation}
Then for every $r>0$ there exist $t_1,\ldots,t_m\in B$ such that either
\[
 B\subset\bigcup_{l\le m}B_{d_{p,G}}(t_l,r),
\]
or, with
\[
 C=B\setminus\bigcup_{l\le m}B_{d_{p,G}}(t_l,r),
\]
one has
\begin{equation}\label{eq:abstract-delete-drop}
 \mathcal F_{G,J}(C)
 \le
 \mathcal F_G(B)-c_0r+C_0\delta.
\end{equation}
For \eqref{eq:abstract-application}, this is
Lemma~\ref{lem:chop-delete}.

\begin{theorem}\label{thm:chop-abstract}
Assume \eqref{eq:abstract-basic}--\eqref{eq:abstract-delete-drop}. Let $T\subset\R^n$ be finite. Then there exist an admissible sequence
$(\mathcal A_k)_{0\le k\le N}$ of partitions of $T$, with
$\mathcal A_N$ the partition into singletons, vectors
$u_k(A)\in\R^n$, $A\in\mathcal A_k$, and a map $z:T\to\R^n$ such that
\[
 u_N(\{t\})=z(t),\qquad t\in T.
\]
Writing
\[
 u_k(t)=u_k(\mathcal A_k(t)),
\]
we have
\begin{equation}\label{eq:chop-abstract}
 \max_{t\in T}\|t-z(t)\|_{1,\beta}
 +\max_{t\in T}\sum_{k=1}^{N}
 \rho_{2^k}^\beta\bigl(u_k(t)-u_{k-1}(t)\bigr)
 \le C\bigl(\diam_2T+\mathcal F(T)\bigr),
\end{equation}
where $C$ depends only on $L_0,c_0,C_0$, and $\mathcal F(T)$ is
evaluated before any chopping is performed.
\end{theorem}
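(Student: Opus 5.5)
We plan to follow the Bednorz--Lata\l a partitioning scheme, run on the chopped processes. The partitions $\mathcal A_k$ are built recursively in $k$. Each cell $A\in\mathcal A_k$ carries the following data:
\begin{itemize}
\item a chopping family $G(A)=(G_i(A))_i$ and a set of active coordinates $J(A)\subset I$;
\item a centre $u_k(A)$, obtained as the clipped point $y=P_Q(\cdot)$ of Lemma~\ref{lem:chop-clip} at a reference point of $A$;
\item an integer scale $j_k(A)$ with $\diam_{d_{2^k,G(A),J(A)}}A\le 2^{-j_k(A)}$;
\item counters $(p_k(A),q_k(A))$ recording how many consecutive steps the cell was produced by the ``drop'' alternatives.
\end{itemize}

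The invariant to maintain, as in \cite{BL} and \cite[Ch.~10]{TalagrandBook}, is a potential estimate. The quantity
\[
 \mathcal F_{G(A),J(A)}(A)+c\sum_{l\le k}2^{-j_l(A_l(t))}
\]
stays bounded by $C(\diam_2T+\mathcal F(T))$ along every chain of nested cells, up to a term controlled by the counters. We start with $\mathcal A_0=\{T\}$. The initial data are $G_i$ with the two endpoints $\min_t t_i$ and $\max_t t_i$, $J=I$, and $2^{-j_0}\approx\diam_2T$. This choice accounts for the $\diam_2T$ term, since $\rho_1^\beta\le C\|\cdot\|_2$ for vectors of small sup-norm by \eqref{eq:est-rho-infty}.

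Passing from $\mathcal A_k$ to $\mathcal A_{k+1}$, we split each cell $A$ in two stages, using $m=2^{2^k}$ pieces per stage so that admissibility $|\mathcal A_{k+1}|\le 2^{2^{k+1}}$ is preserved.

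\emph{First stage.} We apply the first decomposition property \eqref{eq:abstract-first-partition}--\eqref{eq:abstract-first-drop} with $p=\log m\asymp 2^k$ and $v=2^{-j_k(A)-1}$. The pieces of small diameter get the finer scale $j+1$. The one exceptional piece keeps scale $j$ but gains the functional drop $v$.

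\emph{Second stage.} On a piece $B$ with $\diam_{d_{2^{k+1},G,J}}B\le\delta$, we rechop at the new level. We take $q=2^{k+1}$ and the threshold $h=2\delta/q$, refine $G$ by the grid $h\Z$ near $u_k$ as in Lemma~\ref{lem:chop-clip}, and clip. The clipping error $t-y(t)$ is charged to $\|t-z(t)\|_{1,\beta}$ with cost at most $\delta$ by \eqref{eq:chop-clip-l1}. The coordinates that escaped the box are deleted from $J$. Then the deletion property \eqref{eq:abstract-delete-drop} either covers $B$ by $m$ balls of radius $r\asymp\delta$ in $d_{p,G}$, producing small-diameter cells, or leaves a remainder on which the functional drops by $c_0r-C_0\delta$.

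The refinement costs are controlled by the monotonicity \eqref{eq:abstract-monotone} and by Lemma~\ref{lem:chop-refine}; there the bounded number $K$ of new grid points per old interval makes $\rho$ change only by a constant factor. Lemma~\ref{lem:chop-inverse} converts the chopped distance $d_{p,G_h}$ back into $\rho_p^\beta$ whenever $\eta<ph/2$. This yields $\rho_{2^k}^\beta(u_k(t)-u_{k-1}(t))\le C2^{-j_{k-1}}$ plus the $\ell_{1,\beta}$ clipping errors.

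Along each chain $t$ we then sum. For the small-diameter steps, the scales $j_k$ strictly increase, so $\sum 2^{-j_k}$ is geometric in the number of distinct scales. For steps where the scale stays constant, each step comes with a drop of order $2^{-j}$ in the functional $\mathcal F$. Since $\mathcal F\ge0$ and is nonincreasing under refinement and deletion, the total of these drops is at most $\mathcal F(T)$. The clipping errors $\sum\delta$ are comparable to $\sum 2^{-j_k}$ and are bounded in the same way; the map $z(t)$ is the final clipped point. We stop at $N$ where all cells are singletons, which happens once $2^{-j}$ falls below the minimal separation.

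The main obstacle is the bookkeeping in the case of several consecutive steps at the same scale. The parameter $p=2^k$ keeps growing while $2^{-j}$ stays fixed, so by Lemma~\ref{lem:est-clock} the diameter in $\rho_{2^k}$ can double at each step, and the drops do not pay for this growth. Following \cite{BL}, we would handle it with the counter trick. A cell that used the exceptional alternative at step $k$ is measured at the next step with the frozen parameter $p_k$ instead of $2^{k}$. Only after it leaves the exceptional branch do we pay the factor $2^{(k-k')/2}$, and this factor is compensated by the accumulated $k-k'$ drops of size $2^{-j}$ each. Checking that the constants in \eqref{eq:chop-clip-rho} (the factor $\sqrt{p/q}$) close this inequality is where we expect the real work.
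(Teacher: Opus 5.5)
There is a genuine gap. The two mechanisms that make the argument close are not set up correctly: how the exceptional alternative pays for itself, and how the deletion property yields a positive drop.

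\textbf{The exceptional alternative gives no immediate drop.} Property \eqref{eq:abstract-first-drop} does not lower the functional of the exceptional piece itself. It only gives $\mathcal F_G(D)\le\mathcal F_G(A)-v$ for subsets $D$ of $d_{p,G}$-diameter at most $v$. So your step ``each step at constant scale comes with a drop of order $2^{-j}$, and these drops total at most $\mathcal F(T)$'' is unjustified. The drop is latent: it can be cashed only after the piece has later been cut into small sets. Moreover, the latent estimates along a frozen stretch are nested, each relative to a different parent, so they yield a maximum rather than a sum. Your invariant $\diam_{d_{2^k,G,J}}A\le2^{-j_k(A)}$ also fails on the exceptional piece when $j$ is frozen while $k$ grows.

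The paper avoids both problems as follows. It records the grid size $H(A)$ and $a_n=p_nH_n$, which doubles along frozen stretches, and it takes $v(A)=a(A)/(16L_0R)$. Talagrand's maximal-element lemma (Lemma~\ref{lem:part-maximal}, applied twice) then reduces $\sum_na_n$ to the indices $s=e_j$ that end a frozen stretch. For each such $s$, a single realized drop of size $\asymp a_s$ suffices. Nothing like counters compensating a factor $2^{(k-k')/2}$ by accumulated drops is involved, and as you describe it that compensation would not work.

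\textbf{Your second stage cannot produce a positive drop.} You run \eqref{eq:abstract-delete-drop} with $q=2^{k+1}$ and $p\asymp2^k$. Then $\sqrt{p/q}$ is a fixed constant, and Lemma~\ref{lem:chop-clip} only bounds the kept-coordinate diameter by $16\delta\sqrt{p/q}$, which is larger than $\delta$. On the other hand, refinement does not increase the chopped distance (Lemma~\ref{lem:chop-refine}) and $p\le q$, so the whole piece already has $d_{p,G}$-diameter at most $\delta$. Hence for $r\ge\delta$ one ball covers $B$ and nothing is gained. For $r<\delta$, the quantity $c_0r-C_0\cdot16\delta\sqrt{p/q}$ has no reason to be positive.

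The missing idea is the delayed deletion of Proposition~\ref{prop:part-old}:
\begin{enumerate}
\item Keep the latent estimate \eqref{eq:part-remaining} from the exceptional step $s$, with radius $b=a_s/(32L_0R)$ and old parameter $p_{s-1}$.
\item Apply clipping and deletion only at a later step $n$, after the grid has been refined by the large factor $R$, and only inside the window $a_n\le2a_s$ of \eqref{eq:part-use-old}.
\item Run the deletion at the old parameter $p=p_{s-1}$ with the old radius $b/10$, while the set is small at the new parameter $q=p_n$ with $\delta=a_n/(2R)$.
\end{enumerate}
Then $16\delta\sqrt{p/q}\le(256L_0/\sqrt R)\,b\le(c_0/20C_0)\,b$ by \eqref{eq:part-R}. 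Here $\sqrt{p/q}$ acts as a gain, not as a cost. The covered pieces inherit the old bound $F\le M-b$ through $d^A_{p,H}\le5d^*_p$. The remainder receives a genuine drop $d_0b$, which also pays for its clipping error.

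Two smaller points:
\begin{itemize}
\item In the first stage the grid must be tied to $v$ so that $L_0v<ph/2$. Only then does Lemma~\ref{lem:chop-inverse} transfer smallness back to $\rho_p^\beta$; this is why the paper uses grid $H/R$ and $v=p_nH/(16L_0R)$.
\item Two stages of $2^{2^k}$ pieces allow $|\mathcal A_{k+1}|$ up to $2^{2^k+2^{k+1}}$, so an index shift is needed to restore admissibility.
\end{itemize}
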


The next section proves the theorem. In the application \eqref{eq:abstract-application},
Lemma~\ref{lem:est-deletion-diameter} will be used to control
$\diam_2 T$ by $S_\beta(T)$.

\section{Proof of the decomposition theorem}\label{sec:partitions}

We prove Theorem~\ref{thm:chop-abstract}. The argument follows the partition construction of Bednorz and Lata\l a \cite{BL}. We first construct the partitions and record the estimates obtained at each step. We then sum these estimates along the sets $\mathcal A_n(t)$ containing a fixed point $t$.

Let $L_0,c_0,C_0$ be the constants in \eqref{eq:abstract-first-small} and \eqref{eq:abstract-delete-drop}. We may suppose that $c_0\le1$. Let $R=2^r$, $r\ge2$, be sufficiently large that
\begin{equation}\label{eq:part-R}
 \frac{256L_0}{\sqrt R}\le \frac{c_0}{20C_0}.
\end{equation}
Put
\begin{equation}\label{eq:part-parameters}
 m_n=8^{2^n},
 \qquad
 p_n=\log m_n=(\log8)2^n.
\end{equation}

\subsection{Building the partitions}

If $A$ is finite, $x:A\to\R^n$, and $Q_i=[a_i,b_i]$ contains
$x_i(A)$, with $a_i,b_i\in h\Z$, put
\begin{equation}\label{eq:part-cutting}
 G_h^A(i)=Q_i\cap h\Z,
 \qquad
 G_h^A=(G_h^A(i))_i.
\end{equation}
For $B\subset A$ write
\begin{equation}\label{eq:part-local-notation}
 F_h^A(B)=\mathcal F_{G_h^A}(B),
 \qquad
 d_{p,h}^A(s,t)=d_{p,G_h^A}(s,t).
\end{equation}

To each $A\in\mathcal A_n$ we attach a map $x_A:A\to\R^n$, a number $H(A)>0$, and intervals
\begin{equation}\label{eq:part-data}
 Q_i(A)=[a_i(A),b_i(A)],\qquad a_i(A),b_i(A)\in H(A)\Z,
 \qquad
 x_{A,i}(A)\subset Q_i(A).
\end{equation}
We use the abbreviations
\begin{equation}\label{eq:part-F-a-v}
 F(A)=F_{H(A)}^A(A),
 \qquad
 a(A)=p_nH(A),
 \qquad
 v(A)=\frac{a(A)}{16L_0R}.
\end{equation}
The first proposition follows from \eqref{eq:abstract-first-partition}--\eqref{eq:abstract-first-drop} applied to $G_{H/R}^A$.

\begin{proposition}\label{prop:part-first}
Let $A\in\mathcal A_n$, and put $H=H(A)$, $a=a(A)$, and $v=v(A)$.  There are $m'\le m_n$ and a partition $(B_\ell)_{\ell\le m'}$ of $A$ such that, for each $\ell$, either
\begin{equation}\label{eq:part-small-chopped}
 \diam_{d_{p_n,H/R}^A}B_\ell
 \le L_0v
 =\frac{a}{16R}
 =\frac{p_n(H/R)}{16},
\end{equation}
and consequently
\begin{equation}\label{eq:part-small-original}
 \diam_{\rho_{p_n}^\beta}x_A(B_\ell)
 \le\frac{a}{2R},
\end{equation}
or else
\begin{equation}\label{eq:part-remaining}
 \emptyset\ne D\subset B_\ell,
 \quad
 \diam_{d_{p_n,H/R}^A}D\le v
 \quad\Longrightarrow\quad
 F_{H/R}^A(D)\le F_{H/R}^A(A)-v.
\end{equation}
The second alternative occurs for at most one value of $\ell$.
\end{proposition}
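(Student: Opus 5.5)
The statement has two parts. The partition and the dichotomy are a direct application of the first decomposition property \eqref{eq:abstract-first-partition}--\eqref{eq:abstract-first-drop}. The passage from \eqref{eq:part-small-chopped} to \eqref{eq:part-small-original} is an application of Lemma~\ref{lem:chop-inverse}.

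\emph{Step 1: the partition.} Take the chopping $G=G_{H/R}^A$. It is legitimate, because $a_i(A),b_i(A)\in H\Z\subset (H/R)\Z$ (recall $R=2^r$ is an integer), so $Q_i(A)\cap (H/R)\Z$ is a grid containing both endpoints of $Q_i(A)$, and $Q_i(A)$ contains $x_{A,i}(A)$. Apply \eqref{eq:abstract-first-partition}--\eqref{eq:abstract-first-drop} to the functional $\mathcal F_G$ on $A$ with $m=m_n\ge8$, $p=p_n=\log m_n$ and $v=v(A)$. This gives a partition into $m'\le m_n$ pieces. Each piece either has $d_{p_n,G}$-diameter at most $L_0v$, or has the drop property \eqref{eq:part-remaining}, and the drop alternative occurs for at most one piece. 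Since $F_{H/R}^A=\mathcal F_{G}$ and $d^A_{p_n,H/R}=d_{p_n,G}$ by \eqref{eq:part-local-notation}, this is exactly \eqref{eq:part-small-chopped} and \eqref{eq:part-remaining}. The identities $L_0v=a/(16R)=p_n(H/R)/16$ are just the definitions in \eqref{eq:part-F-a-v}.

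\emph{Step 2: unchopping the diameter.} Fix $s,t\in B_\ell$ in the small alternative and put $h=H/R$, so $\eta:=d_{p_n,h}^A(s,t)\le p_nh/16<p_nh/2$. We want to apply Lemma~\ref{lem:chop-inverse}, but that lemma uses the grid $G_h$ adapted to the pair $s,t$, namely $\{x_{A,i}(s),x_{A,i}(t)\}\cup(h\Z\cap[\cdot,\cdot])$. The grid here is instead the full $Q_i(A)\cap h\Z$.

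The plan is to reduce to the lemma by refinement. Chopping maps with breakpoints outside $[x_{A,i}(s)\wedge x_{A,i}(t),\,x_{A,i}(s)\vee x_{A,i}(t)]$ give zero increment. Adding the two points $x_{A,i}(s),x_{A,i}(t)$ to the grid splits at most two cells per coordinate, each into at most two pieces. By \eqref{eq:chop-refine-rho} with $K=2$ (and the fact that zero-increment coordinates can be dropped from the supremum), the adapted-grid distance is at most the $G$-distance, hence still at most $\eta<p_nh/2$.

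Alternatively, one can rerun the proof of Lemma~\ref{lem:chop-inverse} directly. Every refined increment is either a full step of size $h$ or one of at most two partial steps per coordinate. Lemma~\ref{lem:chop-threshold} then yields an $\ell_{1,\beta}$ part plus an $\ell_2/\ell_\infty$ part, and so $\rho^\beta_{p_n}(x_A(s)-x_A(t))\le 8\eta\le a/(2R)$. Since $8\cdot p_nh/16=p_nh/2=a/(2R)$, this gives \eqref{eq:part-small-original}.

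\emph{Expected obstacle.} The main point is this bookkeeping for the grid mismatch in Step 2: making sure the first inequality of \eqref{eq:chop-refine-rho} applies in the right direction when the grid is refined by the two endpoints. I would verify it by noting that the refined grid is a refinement of $G$ with the same endpoints on $Q_i(A)$, so the first inequality gives $\rho^{G'}\le\rho^{G}$. Restricting to the cells between $x_{A,i}(s)$ and $x_{A,i}(t)$ gives exactly the $G_h$ increments of the lemma, and the remaining cells contribute zero increments.
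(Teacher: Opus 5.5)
Your proposal is correct and follows the paper's proof. You apply the first decomposition property to the chopping $G_{H/R}^A$ with $m=m_n$, $p=p_n$ and $v=v(A)$. You then use Lemma~\ref{lem:chop-inverse} with $\eta\le p_n(H/R)/16<p_n(H/R)/2$ to get $\rho^\beta_{p_n}\le 8\eta\le a/(2R)$. Your refinement argument, which matches the grid $Q_i(A)\cap(H/R)\Z$ with the pair-adapted grid of that lemma, correctly fills in a step the paper leaves implicit.

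One minor slip: if both points $x_{A,i}(s),x_{A,i}(t)$ fall in the same cell, that cell splits into three pieces, so $K=3$ rather than $2$. This is harmless, because only the first inequality of \eqref{eq:chop-refine-rho} is used, and its proof does not involve $K$.
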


\begin{proof}
Apply \eqref{eq:abstract-first-partition}--\eqref{eq:abstract-first-drop} with $m=m_n$, $p=p_n$, and $v=v(A)$ to $G_{H/R}^A$.  This gives \eqref{eq:part-small-chopped} and \eqref{eq:part-remaining}.  Since the right hand side of \eqref{eq:part-small-chopped} is $p_n(H/R)/16$, Lemma~\ref{lem:chop-inverse} gives \eqref{eq:part-small-original}.  We shall also use
\begin{equation}\label{eq:part-refine-F}
 F_{H/R}^A(A)\le F(A).
\end{equation}
\end{proof}

We need one more decomposition lemma. Let $A$ be finite, let $x:A\to\R^n$, and let $Q_i$ be intervals containing $x_i(A)$, with endpoints in $H\Z$. Thus $G_H^A(i)=Q_i\cap H\Z$. Let $B\subset A$, let $m\ge8$, $p=\log m$, $q\ge p$, and let $b,\delta>0$. Suppose that, for some $M$,
\begin{equation}\label{eq:part-old-ineq}
 \emptyset\ne D\subset A,
 \quad
 \diam_{d_{p,H}^A}D\le b
 \quad\Longrightarrow\quad
 F_H^A(D)\le M-b,
\end{equation}
and that
\begin{equation}\label{eq:part-old-small}
 \diam_{\rho_q^\beta}x(B)\le\delta,
 \qquad
 h=\frac{2\delta}{q}=\frac{H}{R}.
\end{equation}
We also assume
\begin{equation}\label{eq:part-old-ratio}
 16\delta\sqrt{\frac pq}
 \le \frac{c_0}{20C_0}b.
\end{equation}

\begin{proposition}\label{prop:part-old}
There is a partition
\begin{equation}\label{eq:part-old-partition}
 B=B_1\cup\cdots\cup B_r\cup C,
 \qquad r\le m,
\end{equation}
where $C$ may be empty, such that
\begin{equation}\label{eq:part-old-covered}
 F_h^A(B_j)\le M-b,
 \qquad 1\le j\le r.
\end{equation}
If $C\ne\emptyset$, there are intervals
$Q_i'=[a_i',b_i']$, with $a_i',b_i'\in h\Z$. Put
\[
 Q'=\prod_iQ_i',\qquad y=P_{Q'}x.
\]
Then
\begin{equation}\label{eq:part-old-error}
 \sup_{t\in C}\|x(t)-y(t)\|_{1,\beta}\le\delta.
\end{equation}
and, with
\[
 G_i'=Q_i'\cap h\Z,
 \qquad G'=(G_i')_i,
\]
\begin{equation}\label{eq:part-old-newdrop}
 \mathcal F_{G'}(C)\le F_H^A(B)-d_0b,
 \qquad d_0=\frac{c_0}{20}.
\end{equation}
\end{proposition}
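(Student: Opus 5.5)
The plan is to combine the abstract deletion property \eqref{eq:abstract-delete-small}--\eqref{eq:abstract-delete-drop} with the clipping Lemma~\ref{lem:chop-clip}. First choose a base point $u\in x(B)$. By \eqref{eq:part-old-small}, $\sup_{t\in B}\rho_q^\beta(x(t)-u)\le\delta$. Apply Lemma~\ref{lem:chop-clip} with this $u$, $q$, $\delta$, so $h=2\delta/q=H/R$. This produces intervals $Q_i'$ of length $3h$ with endpoints in $h\Z$. Intersect them with $Q_i$; this is allowed because $Q_i$ has endpoints in $H\Z\subset h\Z$ and contains $u_i$ and $x_i(B)$. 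Lemma~\ref{lem:chop-clip} then gives \eqref{eq:part-old-error} for all of $B$, hence for $C$. It also gives
\[
 \diam_{\rho^{G'}_p}\Phi_{G'}(y(B))\le16\delta\sqrt{p/q}=:\delta',
\]
and this bound survives the deletion of coordinates.

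Next, realize the passage from $G_h^A$ to $G'$ as a deletion of chopped coordinates. The grid $G_h^A(i)=Q_i\cap h\Z$ refines $G_H^A(i)$ and has the same endpoints. The coordinates of $\Phi_{G_h^A}$ indexed by cells inside $Q_i'$ form a set $J$, and all other cells form $K=I\setminus J$. On cells inside $Q_i'$ the chopping of $x_i$ and of $y_i=P_{Q_i'}x_i$ coincide. The cells outside $Q_i'$ are exactly what $G'$ discards, so up to translation invariance $X^{G',\cdot}_{y(t)}=X^{G_h^A,J}_{x(t)}$. Hence $\mathcal F_{G'}(C)=F_{G_h^A,J}(C)$, and the diameter of $B$ in $d_{p,G_h^A,J}$ is at most $\delta'$. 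Now apply \eqref{eq:abstract-delete-small}--\eqref{eq:abstract-delete-drop} to the grid $G_h^A$, the set $B$, the coordinates $J$, and a radius $r$ of order $b$; take $r=b/L$ for a suitable constant $L$, which I expect to be related to $L_0$. This yields $t_1,\dots,t_m\in B$. Set $B_j$ to be the sets $B\cap B_{d_{p,G_h^A}}(t_j,r)$, made disjoint, and set $C$ to be the remainder. If $C\neq\emptyset$, then \eqref{eq:abstract-delete-drop} together with \eqref{eq:abstract-monotone} ($F_h^A\le F_H^A$) gives
\[
 \mathcal F_{G'}(C)\le F_H^A(B)-c_0r+C_0\delta'.
\]
By \eqref{eq:part-old-ratio}, $C_0\delta'\le c_0b/20$. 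With $r$ chosen of order $b$, for instance $r=b/10$, we get $-c_0r+C_0\delta'\le -c_0b/20=-d_0b$, which is \eqref{eq:part-old-newdrop}.

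It remains to verify \eqref{eq:part-old-covered}. Each $B_j$ has $d_{p,h}^A$-diameter at most $2r$. To use the hypothesis \eqref{eq:part-old-ineq}, I must pass back to the coarser metric $d_{p,H}^A$ and the functional $F_H^A$. The functional direction is \eqref{eq:abstract-monotone}: $F_h^A(B_j)\le F_H^A(B_j)$. The metric direction is the first inequality of \eqref{eq:chop-refine-rho} read backwards: the coarse distance is at most $K$ times the fine distance, where $K=R$ is the number of fine cells per coarse cell. This gives $\diam_{d_{p,H}^A}B_j\le 2Rr$, a factor $R$ worse than needed. This is the main obstacle. I would first try to choose $r$ so that $2Rr\le b$, namely $r=b/(2R)$, and then check whether the drop $c_0r=c_0b/(2R)$ still beats $C_0\delta'$. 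Since \eqref{eq:part-old-ratio} only gives $C_0\delta'\le c_0b/20$, this likely fails. If it does, I would instead avoid losing the factor $R$ by using the explicit structure of $d_{p,G}$. Fine increments of size $h$ with $p h$ large relative to the distance are controlled as in Lemma~\ref{lem:chop-inverse}, which gives the coarse distance up to a constant $8$ rather than $R$. Then $r=b/16$ gives $\diam_{d_{p,H}^A}B_j\le b$, hence $F_H^A(B_j)\le M-b$ by \eqref{eq:part-old-ineq}, and the drop $c_0b/16$ still dominates $c_0b/20$. Finally, the number of pieces is $r\le m$ because at most $m$ balls are used.
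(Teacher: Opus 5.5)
Your handling of the set $C$ is sound and agrees with the paper. You apply Lemma~\ref{lem:chop-clip} at some $u\in x(B)$ with $Q_i'\subset Q_i$, you identify the $G'$-process at $y(t)$ with the chopped process of $x(t)$ on the cells inside $Q_i'$, and you absorb $C_0\delta'$ through \eqref{eq:part-old-ratio}.

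The gap is in \eqref{eq:part-old-covered}, at exactly the obstacle you flagged, and your proposed way around it fails. Lemma~\ref{lem:chop-inverse} gives the factor $8$ only when the fine distance $\eta$ satisfies $\eta<ph/2$. With $\delta=qh/2$, hypothesis \eqref{eq:part-old-ratio} reads $8h\sqrt{pq}\le\frac{c_0}{20C_0}b$. Hence $b\ge\frac{160C_0}{c_0}\sqrt{pq}\,h\ge\frac{160C_0}{c_0}\,ph$, and a radius of order $b$ lies far above that threshold.

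This is not just a weakness of the lemma. Take $\beta_1=1$ and a single coordinate with $x_1(t)-x_1(s)=kH$, where $k\ge p$ is an integer; this pair is admissible in $B$ once $q\ge 2kR$. Then $d_{p,H}^A(s,t)=H\sqrt{kp}$, while $d_{p,h}^A(s,t)=H\sqrt{kp/R}$. Since $R$ is large, one can choose $b$ compatible with \eqref{eq:part-old-ratio} so that the second distance is below $b/16$ and the first exceeds $b$. So a $d_{p,h}^A$-ball of radius of order $b$ need not have $d_{p,H}^A$-diameter at most $b$, and \eqref{eq:part-old-ineq} cannot be invoked. Separately, even granting the factor $8$, the radius $b/16$ only yields a drop of $c_0b/16-c_0b/20=c_0b/80$, not $d_0b$.

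The missing idea is that the resolution $h$ is needed only on the window $Q_i'$, where the retained coordinates live, not on all of $Q_i$. The paper therefore chops with $G_i^*=G_H^A(i)\cup G_i'$ rather than with $G_h^A$.

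\begin{itemize}
\item Since $G_i'$ has at most four points, each $H$-cell is cut into at most five cells. Lemma~\ref{lem:chop-refine} then gives $d_{p,H}^A\le 5d_p^*$, and $\mathcal F_{G^*}(B)\le F_H^A(B)$ because the endpoints are unchanged.
\item Because $H\Z\subset h\Z$, the cells of $G^*$ inside $Q_i'$ are exactly the cells of $G'$. Writing $J^*$ for these coordinates, $\mathcal F_{G'}(C)=\mathcal F_{G^*,J^*}(C)$ and $\diam_{d_{p,G^*,J^*}}B\le\frac{c_0}{20C_0}b$.
\item Apply the deletion property to $(G^*,J^*)$ with radius $b/10$. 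The covered pieces have $d_p^*$-diameter at most $b/5$, hence $d_{p,H}^A$-diameter at most $b$.
\item Then \eqref{eq:part-old-ineq} together with $F_h^A\le F_H^A$ gives \eqref{eq:part-old-covered}.
\item The set $C$ drops by $c_0b/10-c_0b/20=d_0b$, which is \eqref{eq:part-old-newdrop}.
\end{itemize}
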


\begin{proof}
Fix $u\in x(B)$. By \eqref{eq:part-old-small} and
Lemma~\ref{lem:chop-clip}, there are intervals
\[
 Q_i'=[a_i',b_i']\subset Q_i,\qquad
 a_i',b_i'\in h\Z,\qquad b_i'-a_i'\le3h.
\]
Put
\[
 Q'=\prod_iQ_i',\qquad y=P_{Q'}x,\qquad
 G_i'=Q_i'\cap h\Z,\qquad G'=(G_i')_i.
\]
Then \eqref{eq:part-old-error} holds and
\begin{equation}\label{eq:part-second-diameter}
 \diam_{\rho_p^\beta}y(B)
 \le16\delta\sqrt{\frac pq}
 \le\frac{c_0}{20C_0}b.
\end{equation}

Put
\[
 G_i^*=G_H^A(i)\cup G_i',\qquad G^*=(G_i^*)_i.
\]
Since $G_i'$ contains at most four points, every interval of
$G_H^A(i)$ is divided into at most five intervals in $G_i^*$.
Lemma~\ref{lem:chop-refine} therefore gives
\begin{equation}\label{eq:part-five}
 d_{p,H}^A\le5d_p^*,
\end{equation}
where $d_p^*$ is the distance associated with $G^*$.

Let
\[
 J^*=\{(i,u):[u,u^+]\subset Q_i'\}
\]
be the corresponding coordinates of $G^*$. By
Lemma~\ref{lem:chop-clip} and \eqref{eq:part-second-diameter},
\[
 \diam_{d_{p,G^*,J^*}}B
 \le\frac{c_0}{20C_0}b.
\]
Apply \eqref{eq:abstract-delete-small}--\eqref{eq:abstract-delete-drop}
to $G^*$ and $J^*$ with radius $b/10$. This gives
\eqref{eq:part-old-partition}.

If $B_j$ is contained in one of the resulting $d_p^*$-balls, then
\[
 \diam_{d_p^*}B_j\le\frac b5,
\]
and hence, by \eqref{eq:part-five},
\[
 \diam_{d_{p,H}^A}B_j\le b.
\]
Thus \eqref{eq:part-old-ineq}, followed by refinement, gives
\eqref{eq:part-old-covered}.

For the remaining set $C$, if nonempty,
\eqref{eq:abstract-delete-drop} and refinement give
\[
 \mathcal F_{G'}(C)
 \le F_H^A(B)-\frac{c_0}{10}b
      +C_0\frac{c_0}{20C_0}b
 \le F_H^A(B)-\frac{c_0}{20}b.
\]
This is \eqref{eq:part-old-newdrop}.
\end{proof}

We now construct the partitions. Proposition~\ref{prop:part-first}
gives sets satisfying \eqref{eq:part-small-chopped} and, possibly, one
set satisfying \eqref{eq:part-remaining}. For this set we keep $H$
unchanged; for all the others we replace $H$ by $H/R$. Before making
this replacement, Proposition~\ref{prop:part-old} may be applied using
an estimate of the form \eqref{eq:part-remaining} obtained earlier in
the construction. We now describe the recursion.

We construct the partitions recursively. Fix $t_0\in T$, replace $T$ by
$T-t_0$, and put
\[
 D=\diam_2T.
\]
If $D=0$ there is nothing to prove. Otherwise let
$\mathcal A_0=\{T\}$ and put
\begin{equation}\label{eq:part-root}
 x_T(t)=t,
 \qquad
 H(T)=D,
 \qquad
 Q_i(T)=[-D,D].
\end{equation}
Then
\begin{equation}\label{eq:part-root-F}
 F(T)\le\mathcal F(T),
 \qquad
 a(T)=(\log8)D.
\end{equation}

Suppose that $\mathcal A_n$ and the corresponding data have been
defined. Fix $A\in\mathcal A_n$ and apply
Proposition~\ref{prop:part-first}.

If $C$ is the possible member satisfying \eqref{eq:part-remaining},
put
\begin{equation}\label{eq:part-case-a}
 x_C=x_A,
 \qquad
 Q_i(C)=Q_i(A),
 \qquad
 H(C)=H(A).
\end{equation}
For this set we retain \eqref{eq:part-remaining}.

If $B$ satisfies \eqref{eq:part-small-chopped} and
Proposition~\ref{prop:part-old} is not applied, put
\begin{equation}\label{eq:part-case-b}
 x_B=x_A,
 \qquad
 Q_i(B)=Q_i(A),
 \qquad
 H(B)=\frac{H(A)}{R}.
\end{equation}

It remains to describe when Proposition~\ref{prop:part-old} is applied.
For $t\in A$, write
\[
 A_j=A_j(t),\qquad 0\le j\le n.
\]
Suppose that for some $s<n$, $A_s$ is the member satisfying
\eqref{eq:part-remaining} in the partition of $A_{s-1}$, and that
\[
 H(A_{s+1})=\frac{H(A_s)}{R},
 \qquad
 H(A_j)=H(A_{s+1}),\quad s+1\le j\le n.
\]
Put
\begin{equation}\label{eq:part-case-c-data}
 a_s=p_sH(A_s),
 \qquad
 a_n=p_nH(A).
\end{equation}
If
\begin{equation}\label{eq:part-use-old}
 a_n\le2a_s,
\end{equation}
we apply Proposition~\ref{prop:part-old} to $B$ before replacing
$H(A)$ by $H(A)/R$.

By construction,
\begin{equation}\label{eq:part-old-scale}
 H(A)=\frac{H(A_s)}{R}.
\end{equation}
Set
\begin{equation}\label{eq:part-pqb}
 p=p_{s-1}=\frac{p_s}{2},
 \qquad
 q=p_n,
 \qquad
 b=\frac{a_s}{32L_0R},
 \qquad
 \delta=\frac{a_n}{2R}.
\end{equation}
Then
\[
 \frac qp=2R\frac{a_n}{a_s},
 \qquad
 \frac{2\delta}{q}=\frac{H(A)}{R}.
\]
By \eqref{eq:part-use-old} and \eqref{eq:part-R},
\begin{equation}\label{eq:part-old-diameter}
 16\delta\sqrt{\frac pq}
 \le\frac{256L_0}{\sqrt R}\,b
 \le\frac{c_0}{20C_0}b.
\end{equation}
Hence Proposition~\ref{prop:part-old} applies with $m=m_{s-1}$.

For the sets $B_j$ in \eqref{eq:part-old-partition}, put
\[
 x_{B_j}=x_A,\qquad
 Q_i(B_j)=Q_i(A),\qquad
 H(B_j)=\frac{H(A)}{R}.
\]
If the set $C$ in \eqref{eq:part-old-partition} is nonempty, use the
map and the intervals given by Proposition~\ref{prop:part-old} and put
\[
 H(C)=\frac{H(A)}{R}.
\]

We shall use the following two consequences. For the set $C$,
\eqref{eq:part-old-newdrop}, \eqref{eq:part-use-old}, and
\eqref{eq:part-pqb} give
\begin{equation}\label{eq:part-immediate-drop}
 F(A)-F(C)\ge\kappa a_n,
 \qquad
 \kappa=\frac{d_0}{64L_0R},
\end{equation}
and
\begin{equation}\label{eq:part-clipped-error}
 \sup_{t\in C}\|x_A(t)-x_C(t)\|_{1,\beta}
 \le\frac{a_n}{2R}.
\end{equation}
Moreover, if $t\in T$ and $A_{n+1}(t)$ is obtained from this
application of Proposition~\ref{prop:part-old}, then
\begin{equation}\label{eq:part-delayed-drop}
 F(A_{s-1}(t))-F(A_{n+1}(t))
 \ge \frac{d_0}{32L_0R}a_s.
\end{equation}
Here we use \eqref{eq:part-refine-F} for the sets $B_j$ and
monotonicity for the set obtained by replacing $x_A$ with $P_{Q'}x_A$.

Doing this for every $A\in\mathcal A_n$ defines
$\mathcal A_{n+1}$. If $B\in\mathcal A_{n+1}$ and
$B\subset A\in\mathcal A_n$, then
\[
 F(B)\le F(A),
\]
and
\begin{equation}\label{eq:part-scale-change}
 H(B)\in\left\{H(A),\frac{H(A)}{R}\right\}.
\end{equation}

Choose $N$ so that $m_N\ge|T|$. We may take
$\mathcal A_{N+1}$ to be the partition into singletons. Thereafter we
keep the vectors attached to the singletons fixed; whenever the rule
above replaces $H$ by $H/R$, we make the same replacement and put the
corresponding functional equal to zero.

Each member of $\mathcal A_n$ is first divided into at most $m_n$
sets. If Proposition~\ref{prop:part-old} is applied, one of these sets
is further divided into at most
\[
 m_{s-1}+1\le2m_n
\]
sets. Therefore
\begin{equation}\label{eq:part-cardinality}
 \log_2|\mathcal A_n|
 \le\sum_{j<n}(1+6\cdot2^j)
 =n+6(2^n-1)
 \le7\cdot2^n.
\end{equation}
Repeating the initial partition $\{T\}$ three times, if necessary,
makes the sequence admissible.

\subsection{The key inequality}

We shall use the following simple lemma of Talagrand; see
\cite[Lemma~2.9.5]{TalagrandBook} and
\cite[Lemma~6.1]{BL}. We include the proof for completeness.

\begin{lemma}\label{lem:part-maximal}
Let $(b_n)$ be a bounded sequence of positive numbers and let $\alpha>1$. Put
\[
 V=\{n:b_m<b_n\alpha^{|m-n|}\text{ for every }m\ne n\}.
\]
Then
\begin{equation}\label{eq:part-maximal}
 \sum_nb_n\le\frac{2\alpha}{\alpha-1}\sum_{n\in V}b_n.
\end{equation}
\end{lemma}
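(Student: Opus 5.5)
We bound each $b_m$ by one $b_n$ with $n\in V$, paying a geometric factor in $|m-n|$, and then sum. The plan is to define a map $m\mapsto n(m)\in V$ such that
\[
 b_m\le b_{n(m)}\,\alpha^{-|m-n(m)|}.
\]
Summing over $m$ and grouping by the value of $n(m)$ then gives
\[
 \sum_m b_m\le\sum_{n\in V}b_n\sum_{k\in\Z}\alpha^{-|k|}
 =\frac{\alpha+1}{\alpha-1}\sum_{n\in V}b_n\le\frac{2\alpha}{\alpha-1}\sum_{n\in V}b_n .
\]
This is the claimed bound, since $\alpha+1\le2\alpha$.

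To build $n(m)$, fix $m$ and consider $c_k=b_k\alpha^{-|k-m|}$. Since $(b_k)$ is bounded and $\alpha>1$, we have $c_k\to0$ as $|k|\to\infty$. Moreover $c_m=b_m>0$. Hence $\sup_k c_k$ is attained, and only finitely many indices attain it.

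Among the maximizers we choose $n$ with $|n-m|$ maximal; if there are two such, on opposite sides of $m$, we take either. Maximality at $k=m$ gives $b_m=c_m\le c_n=b_n\alpha^{-|n-m|}$, which is the required inequality.

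It remains to check that $n\in V$, i.e.\ that $b_k<b_n\alpha^{|k-n|}$ for every $k\ne n$. Fix $k\ne n$. From $c_k\le c_n$ we get
\[
 b_k\le b_n\,\alpha^{|k-m|-|n-m|}\le b_n\,\alpha^{|k-n|},
\]
using the triangle inequality $|k-m|\le|k-n|+|n-m|$.

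The delicate step is strictness. Equality in the display forces both $c_k=c_n$ and $|k-m|=|k-n|+|n-m|$. The second condition says that $n$ lies between $m$ and $k$, so $|k-m|>|n-m|$ because $k\ne n$. Then $k$ is also a maximizer, and it is farther from $m$ than $n$, contradicting the choice of $n$. Therefore the inequality is strict and $n\in V$.

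The main obstacle is this strict inequality in the definition of $V$; it is handled by the tie-breaking rule of choosing the farthest maximizer. The other steps are routine.
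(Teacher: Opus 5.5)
Your proof is correct and follows the paper's strategy: dominate each $b_m$ by $b_n\alpha^{-|m-n|}$ for some $n\in V$, then sum the geometric series. The one difference is how the dominating index is found. You construct it explicitly as the farthest maximizer of $b_k\alpha^{-|k-m|}$, and this choice makes the strict inequality in the definition of $V$ transparent. The paper instead takes a maximal element above $m$ for the order $n\prec m$, meaning $b_m\ge b_n\alpha^{|m-n|}$, and argues that bounded $(b_n)$ rules out infinite increasing chains.
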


\begin{proof}
Define $n\prec m$ if
\[
 b_m\ge b_n\alpha^{|m-n|}.
\]
Then $V$ is the set of maximal elements of $\prec$. Since $(b_n)$ is
bounded, there is no infinite increasing chain. Hence for every $n$
there is $m\in V$ such that $n\prec m$. Therefore
\[
 \sum_n b_n
 \le \sum_{m\in V}b_m\sum_n\alpha^{-|m-n|}
 \le \frac{2\alpha}{\alpha-1}\sum_{m\in V}b_m.
\]
\end{proof}

Fix $t\in T$ and put $A_n=\mathcal A_n(t)$. Write
\begin{equation}\label{eq:part-branch-notation}
 H_n=H(A_n),
 \qquad
 F_n=F(A_n),
 \qquad
 a_n=p_nH_n.
\end{equation}
By \eqref{eq:part-scale-change},
\begin{equation}\label{eq:part-an-recursion}
 a_{n+1}=
 \begin{cases}
 2a_n,& H_{n+1}=H_n,\\
 2a_n/R,& H_{n+1}=H_n/R.
 \end{cases}
\end{equation}
The sequence $(a_n)$ is bounded. Indeed, if $H_{n+1}=H_n$, then
\eqref{eq:part-remaining}, applied to a singleton, gives
\[
 F_n\ge v(A_n)=\frac{a_n}{16L_0R}.
\]
Hence
\begin{equation}\label{eq:part-an-bounded}
 \sup_na_n\le\max\{a_0,32L_0RF_0\}.
\end{equation}

Let
\[
 E=\{n:H_{n+1}=H_n/R\}=\{e_0<e_1<\cdots\}.
\]
Apply Lemma~\ref{lem:part-maximal} with $\alpha=2$ to $(a_n)$, and let
$V$ be the set of maximal elements. If $n\notin E$, then
$a_{n+1}=2a_n$, and hence $n\notin V$. Thus $V\subset E$, and
\begin{equation}\label{eq:part-first-selection}
 \sum_na_n\le4\sum_ja_{e_j}.
\end{equation}
Applying the lemma once more to $(a_{e_j})$, and denoting its set of
maximal elements by $W$, we obtain
\begin{equation}\label{eq:part-second-selection}
 \sum_na_n\le16\sum_{j\in W}a_{e_j}.
\end{equation}

Fix $j\in W$ and put $s=e_j$. The case $s=0$ will be included in the
term $a_0$. Suppose that $s>0$. Then $s-1\notin E$. Indeed, otherwise
$e_{j-1}=s-1$ and
\[
 a_{s-1}=\frac R2a_s\ge2a_s,
\]
contrary to the maximality of $j$ in $W$. Hence $H_s=H_{s-1}$, and
$A_s$ is the member satisfying \eqref{eq:part-remaining} in the
partition of $A_{s-1}$.

If Proposition~\ref{prop:part-old} is applied in constructing
$A_{s+1}$ from $A_s$, then \eqref{eq:part-immediate-drop} gives
\begin{equation}\label{eq:part-drop-immediate}
 a_s\le\kappa^{-1}(F_s-F_{s+1}).
\end{equation}
Otherwise $H_{s+1}=H_s/R$. Let $n=e_{j+1}$. Since $j\in W$,
\[
 a_n<2a_s.
\]
Moreover,
\[
 H_{s+1}=H_{s+2}=\cdots=H_n.
\]
Thus the estimate \eqref{eq:part-remaining} attached to $A_s$ may be
used in the construction of $A_{n+1}$, and
\eqref{eq:part-delayed-drop} gives, after changing $\kappa$ by a
numerical factor,
\begin{equation}\label{eq:part-drop-later}
 a_s\le\kappa^{-1}(F_{s-1}-F_{n+1}).
\end{equation}

Expanding the differences in \eqref{eq:part-drop-immediate} and
\eqref{eq:part-drop-later} into sums of $F_m-F_{m+1}$, each such term
occurs at most three times. Since $(F_n)$ is nonincreasing,
\eqref{eq:part-second-selection} yields
\begin{equation}\label{eq:part-sum-an}
 \sum_{n\ge0}a_n
 \le16a_0+48\kappa^{-1}F_0.
\end{equation}

\subsection{Proof of Theorem~\ref{thm:chop-abstract}}
For $t\in T$, put
\[
 z(t)=x_{\{t\}}(t).
\]
Along the sets $\mathcal A_n(t)$ the map $x_A$ changes only when the
set $C$ in Proposition~\ref{prop:part-old} occurs. In that case,
\eqref{eq:part-clipped-error} and \eqref{eq:part-immediate-drop} give
\[
 \|x_A(t)-x_C(t)\|_{1,\beta}
 \le \frac{1}{2R\kappa}\bigl(F(A)-F(C)\bigr).
\]
Since $F$ is nonincreasing along the partitions, summation gives
\begin{equation}\label{eq:part-reserve}
 \max_{t\in T}\|t-z(t)\|_{1,\beta}
 \le \frac{1}{2R\kappa}F(T).
\end{equation}

We next choose representatives. Put $\pi_0(T)=0$. Suppose that
$B\in\mathcal A_{n+1}$, $B\subset A\in\mathcal A_n$. If
$H(B)=H(A)$, put
\[
 \pi_{n+1}(B)=\pi_n(A).
\]
If $H(B)=H(A)/R$, choose
\[
 \pi_{n+1}(B)\in x_B(B).
\]
For a singleton this gives $\pi_N(\{t\})=z(t)$.

Fix $t\in T$ and write $A_n=\mathcal A_n(t)$. We claim that
\begin{equation}\label{eq:part-rep-bound}
 \rho_{p_n}^\beta\bigl(x_{A_n}(t)-\pi_n(A_n)\bigr)\le2a_n.
\end{equation}
Before the first index $s$ for which $H_{s+1}=H_s/R$, we have
$x_{A_n}(t)=t$ and $\pi_n(A_n)=0$. Since
$C_{p_0}^\beta\subset C_{p_0}^\infty$,
\[
 \rho_{p_0}^\beta(t)
 \le \sqrt{p_0}\|t\|_2+p_0\|t\|_\infty
 \le2p_0D,
\]
and \eqref{eq:est-clock} gives \eqref{eq:part-rep-bound}.

Otherwise let $s<n$ be the largest index such that
$H_{s+1}=H_s/R$. The set $A_{s+1}$ is contained in a member $B$
satisfying \eqref{eq:part-small-original}. Coordinatewise projection
does not increase $\rho_p^\beta$, and hence
\[
 \diam_{\rho_{p_s}^\beta}
 x_{A_{s+1}}(A_{s+1})
 \le\frac{a_s}{2R}.
\]
Since $\pi_{s+1}(A_{s+1})\in x_{A_{s+1}}(A_{s+1})$ and neither
$x_{A_j}$ nor $\pi_j(A_j)$ changes for $s+1\le j\le n$,
\eqref{eq:est-clock} gives
\[
 \rho_{p_n}^\beta\bigl(x_{A_n}(t)-\pi_n(A_n)\bigr)
 \le\frac{p_n}{p_s}\frac{a_s}{2R}
 =\frac{a_n}{2}.
\]
This proves \eqref{eq:part-rep-bound}.

Now let $B\subset A$ be consecutive members of the partitions. If
$H(B)=H(A)$, then the corresponding representatives are equal. If
$H(B)=H(A)/R$ and $x_B=x_A$ on $B$, then
\eqref{eq:part-rep-bound} gives
\[
 \rho_{p_n}^\beta\bigl(\pi_{n+1}(B)-\pi_n(A)\bigr)\le2a_n.
\]
If $x_B=P_{Q'}x_A$, then \eqref{eq:part-clipped-error},
\eqref{eq:est-l1}, and \eqref{eq:part-rep-bound} give
\begin{equation}\label{eq:part-point-increment}
 \rho_{p_n}^\beta\bigl(\pi_{n+1}(B)-\pi_n(A)\bigr)\le3a_n.
\end{equation}

Repeating the partition $\{T\}$ three times at the beginning and
renumbering, we obtain an admissible sequence $(\mathcal A_k)$.
Let $u_k(A)$ denote the corresponding representatives. The increment from
$\mathcal A_n$ to $\mathcal A_{n+1}$ then corresponds to $k=n+4$.
Since
\[
 2^{n+4}=\frac{16}{\log8}\,p_n,
\]
\eqref{eq:est-clock}, \eqref{eq:part-point-increment}, and
\eqref{eq:part-sum-an} yield
\begin{equation}\label{eq:part-chain-sum}
 \max_{t\in T}\sum_{k=1}^{N}
 \rho_{2^k}^\beta\bigl(u_k(t)-u_{k-1}(t)\bigr)
 \le C\bigl(a(T)+F(T)\bigr).
\end{equation}
Together with \eqref{eq:part-reserve}, \eqref{eq:part-root-F}, and
$a(T)=(\log8)D$, this proves Theorem~\ref{thm:chop-abstract}.

For \eqref{eq:abstract-application}, $\mathcal F(T)=S_\beta(T)$.
Lemma~\ref{lem:est-deletion-diameter} gives
\[
 D\le CS_\beta(T),
\]
and hence \eqref{eq:chop-main} follows.

It remains to prove \eqref{eq:chop-main-decomp}. Put
\[
 h_k(t)=u_k(t)-u_{k-1}(t),
 \qquad
 \delta_k(t)=\rho_{2^k}^\beta(h_k(t)).
\]
By Lemma~\ref{lem:chop-threshold}, writing
\[
 J_k(t)=
 \left\{i:
 |h_{k,i}(t)|>\frac{2\delta_k(t)}{2^k}
 \right\},
\]
and
\[
 h_k^{(1)}(t)=h_k(t)\ind_{J_k(t)},
 \qquad
 h_k^{(2)}(t)=h_k(t)\ind_{J_k(t)^c},
\]
we have
\begin{equation}\label{eq:part-increment-split}
 \|h_k^{(1)}(t)\|_{1,\beta}\le\delta_k(t),
 \qquad
 2^{k/2}\|h_k^{(2)}(t)\|_2
 +2^k\|h_k^{(2)}(t)\|_\infty
 \le4\delta_k(t).
\end{equation}

Returning to the original set $T$, put
\[
 a(t)=t-z(t)+\sum_{k=1}^N h_k^{(1)}(t),
 \qquad
 v(t)=\sum_{k=1}^N h_k^{(2)}(t).
\]
Then
\[
 t-t_0=a(t)+v(t).
\]
Moreover, the partial sums
\[
 \sum_{j=1}^k h_j^{(2)}(t)
\]
are constant on the members of $\mathcal A_k$. Hence they form an
admissible representation of $v(T)$ in \eqref{eq:intro-Cexp}.
Therefore \eqref{eq:chop-main}, \eqref{eq:part-reserve}, and
\eqref{eq:part-increment-split} give
\[
 \max_{t\in T}\|a(t)\|_{1,\beta}
 +\cexp(v(T))
 \le C S_\beta(T).
\]
This proves \eqref{eq:chop-main-decomp} and completes the proof of
Theorem~\ref{thm:chop-main}.

\section{The lower bound}\label{sec:lower}

We now prove
\begin{equation}\label{eq:lower-goal}
 \Gamma^\Psi(T)\le C S_Y(T)
\end{equation}
for every finite $T$.  We first finish the proof for $\xi^{[\beta]}$ and then pass to general log-concave tails by decomposing the functions $\sigma_i$ into dyadic levels.

\subsection{The case of $\xi^{[\beta]}$}

For $1\le\beta\le\infty$ put
\begin{equation}\label{eq:lower-psibeta}
 \psi_\beta(a)=
 \begin{cases}
 a^2/4,&0\le a\le2,\\
 2\beta-1,&a>2.
 \end{cases}
\end{equation}
When $\beta=\infty$, the second value is $+\infty$.  This is the function $\Psi$ in \eqref{eq:intro-Psi} for the variable $\varepsilon\min\{E,\beta\}$.

We use the notation $\Gamma^\theta$ for the functional \eqref{eq:intro-Gamma} with $\Psi_i$ replaced by nonnegative nondecreasing functions $\theta_i$.  Thus
\begin{equation}\label{eq:lower-Dtheta}
 D_r^\theta(s,t)=\sum_i\theta_i(|s_i-t_i|/r),
 \qquad
 F_r^\theta(t,\nu)=-\log\sum_s\nu(s)e^{-D_r^\theta(t,s)}.
\end{equation}
The following elementary facts will be used several times.  They are also used in \cite{HWW}; we include the proof.

\begin{lemma}\label{lem:lower-glue}
Let $\theta=(\theta_i)$ and $\eta=(\eta_i)$ be nonnegative nondecreasing functions.  For every finite $T$,
\begin{align}
 \Gamma^{\theta+\eta}(T)
 &\le C\bigl(\Gamma^\theta(T)+\Gamma^\eta(T)\bigr),
 \label{eq:lower-glue-cost}\\
 \Gamma^\theta((a+v)(T))
 &\le C\bigl(\Gamma^\theta(a(T))+\Gamma^\theta(v(T))\bigr).
 \label{eq:lower-glue-map}
\end{align}
If $c\ge1$, then
\begin{equation}\label{eq:lower-multiply}
 \Gamma^{c\theta}(T)\le c\Gamma^\theta(T).
\end{equation}
If $\theta_i^{(m)}\uparrow\theta_i$ pointwise, then, for finite $T$,
\begin{equation}\label{eq:lower-monotone}
 \Gamma^{\theta^{(m)}}(T)\uparrow\Gamma^\theta(T).
\end{equation}
\end{lemma}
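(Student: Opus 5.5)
The plan is to work directly with the definition \eqref{eq:intro-Gamma}. The key tool is a convexity property of $\nu\mapsto F_r(t,\nu)$: for $\lambda\in[0,1]$,
\[
 F_r(t,\lambda\nu+(1-\lambda)\mu)\le \min\{F_r(t,\nu)+\log(1/\lambda),\,F_r(t,\mu)+\log(1/(1-\lambda))\}.
\]
This holds because $-\log$ is decreasing and $\sum_s(\lambda\nu+(1-\lambda)\mu)(s)e^{-D}\ge\lambda\sum_s\nu(s)e^{-D}$.

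\emph{Monotonicity and scaling.} Both \eqref{eq:lower-multiply} and \eqref{eq:lower-monotone} are handled pointwise.
\begin{itemize}
\item For $c\ge1$, Jensen (or H\"older) gives $\sum_s\nu(s)e^{-cD}\ge(\sum_s\nu(s)e^{-D})^c$. Hence $F^{c\theta}_r\le cF^\theta_r$ for each fixed family $(\nu_k)$, and \eqref{eq:lower-multiply} follows.
\item For \eqref{eq:lower-monotone}, each $F_r^{\theta^{(m)}}(t,\nu)$ increases to $F_r^\theta(t,\nu)$ for fixed $\nu$ by monotone convergence in the finite sum, including the value $+\infty$. So $\Gamma^{\theta^{(m)}}$ is nondecreasing and bounded by $\Gamma^\theta$.
\item For the reverse inequality I would use compactness. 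Since $T$ is finite, $\cP(T)^{\Z}$ is compact in the product topology. The quantity $F_r(t,\nu)$ is lower semicontinuous in $\nu$ and the series has nonnegative terms, so the objective $\max_t\sum_k 2^{-k}F_{2^{-k}}(t,\nu_k)$ is lower semicontinuous in $(\nu_k)$. Choose near-minimizers $(\nu^{(m)}_k)$ for $\theta^{(m)}$ and pass to a convergent subsequence. Then for $m'\ge m$ the objective for $\theta^{(m)}$ at $\nu^{(m')}$ is at most $\Gamma^{\theta^{(m')}}+\epsilon$. Letting $m'\to\infty$ and using lower semicontinuity, then letting $m\to\infty$ with monotone convergence over the sum in $k$, shows the limit family has $\theta$-objective at most $\lim\Gamma^{\theta^{(m)}}$.
\end{itemize}
The main technical care is here, handling $\Z$-indexed sequences and infinite values, but it is routine.

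\emph{Gluing costs, \eqref{eq:lower-glue-cost}.} Take near-optimal families $(\nu_k)$ for $\theta$ and $(\mu_k)$ for $\eta$. Shifting scales by a factor $2$ lets the sum of two small costs be controlled. For $k\in\Z$ I would use the product-type measure $\lambda_k=\tfrac12\nu_{k+1}+\tfrac12\mu_{k+1}$ at level $k$ and rely on $\theta_i(x)\le\theta_i(2x)$ via monotonicity. The problem is that a single point $s$ must make both $D^\theta$ and $D^\eta$ small at once, and a mixture does not achieve this. The cleaner route is the standard one: reinterpret $\Gamma^\theta$ through partitions or nets. Concretely, $e^{-F}$ is a weighted average, so at least half of that average comes from the set $\{s: D^\theta_r(t,s)\le F+\log 2\}$, which has $\nu$-mass at least $e^{-F}/2$. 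Then I would define
\[
 \lambda_k(s)\propto \sum_{s'}\nu_{k}(s')\mu_{k}(s'')\ind_{\dots},
\]
choosing for each pair $(s',s'')$ a point of $T$ nearest to both. I expect this combination step to be the main obstacle. What I would try first is to pass through the equivalent admissible-partition form: $\Gamma^\theta(T)\asymp\inf\sup_t\sum_k 2^{-k}\cdot(\text{entropy at scale }2^{-k})$, as in \cite{HWW}. Then take common refinements $\mathcal A_k\wedge\mathcal B_k$, whose cardinality stays admissible up to an index shift, and use the subadditivity $D^{\theta+\eta}=D^\theta+D^\eta$.

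\emph{Gluing maps, \eqref{eq:lower-glue-map}.} The same partition formulation applies. Take the common refinement of partitions of $a(T)$ and $v(T)$, pulled back to $T$, and use the quasi-triangle inequality
\[
 \theta_i\bigl(|x+y|/r\bigr)\le\theta_i(2|x|/r)+\theta_i(2|y|/r).
\]
The factor $2$ in the argument is absorbed by a shift of the scale index $k\mapsto k+1$, which costs a factor $2$ in the weights $2^{-k}$.
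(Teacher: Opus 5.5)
Your arguments for \eqref{eq:lower-multiply} and \eqref{eq:lower-monotone} are correct and are the paper's: pointwise Jensen for the first; near-minimizers, compactness of $\cP(T)^{\Z}$, lower semicontinuity and monotone convergence for the second. The gap is in \eqref{eq:lower-glue-cost} and \eqref{eq:lower-glue-map}. Your direct construction stops exactly where an idea is needed. The fallback you propose is an equivalence between $\Gamma^\theta$ and an admissible-partition functional for arbitrary nonnegative nondecreasing $\theta$, and that is not established in the paper. The paper passes between $\Gamma$ and partitions only for $\Psi$ and $\psi_\infty$ (Lemma~\ref{lem:upper-MGamma}, Proposition~\ref{prop:upper-partitions}, Lemma~\ref{lem:lower-kernel-tree}). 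Both directions there use the quadratic behaviour near $0$ to control the scales $r\gtrsim\diam_2T$. In the generality of the lemma the usual form is false. Take $\theta_1=\ind_{(0,\infty)}$ and $T=\{0,1\}\subset\R$. Every scale then contributes at least $2^{-k}\log\frac{2e}{e+1}$, so $\Gamma^\theta(T)=\infty$. Yet $\inf\max_t\sum_{n\ge0}2^nr_n(\mathcal A_n(t))$, with $D_{r_n(A)}\le2^n$ on each $A$, equals $0$. A correct substitute, uniform in $\theta$, would be harder to prove than the lemma itself.

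The missing idea is a minimizing map that does not depend on the point $x$. Fix $r$ and $\mu,\nu\in\cP(T)$. For $y,z\in T$ let $w(y,z)\in T$ minimize $D_r^\theta(y,w)+D_r^\eta(z,w)$, and let $\lambda$ be the image of $\mu\otimes\nu$ under $w$. Although $w$ does not depend on $x$, the point $x\in T$ is a competitor in the minimization. Hence the dyadic triangle inequality $D_{2r}(x,w)\le D_r(x,y)+D_r(y,w)$ gives
\[
 D_{2r}^{\theta+\eta}\bigl(x,w(y,z)\bigr)\le 2D_r^\theta(x,y)+2D_r^\eta(x,z).
\]
This is how a single point is made close to $x$ for both costs at once. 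The product structure and $\int e^{-2D}\,d\mu\ge\bigl(\int e^{-D}\,d\mu\bigr)^2$ then give $F_{2r}^{\theta+\eta}(x,\lambda)\le 2F_r^\theta(x,\mu)+2F_r^\eta(x,\nu)$. Summing over dyadic $r$, with $\mu_{k+1},\nu_{k+1}$ used at level $k$, gives the constant $4$.

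The multiplicative Jensen step is essential. Your mixture bound and your level-set bound (mass $\ge e^{-F}/2$ on $\{D\le F+\log2\}$) each lose an additive constant at every scale. Those losses are not summable against $2^{-k}$ as $k\to-\infty$.

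For \eqref{eq:lower-glue-map}, take $\mu\in\cP(a(T))$ and $\nu\in\cP(v(T))$. Let $w(y,z)\in T$ minimize $D_r^\theta(a(w),y)+D_r^\theta(v(w),z)$, and push $\mu\otimes\nu$ forward by $(y,z)\mapsto(a+v)(w(y,z))$. Applying the triangle inequality twice gives $D_{4r}^\theta\bigl((a+v)(x),(a+v)(w)\bigr)\le 2D_r^\theta(a(x),y)+2D_r^\theta(v(x),z)$. The same Jensen step then finishes the proof.
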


\begin{proof}
For nonnegative nondecreasing $\theta_i$ we have
\begin{equation}\label{eq:lower-dyadic-triangle}
 D_{2r}^\theta(x,z)
 \le D_r^\theta(x,y)+D_r^\theta(y,z),
\end{equation}
since
\[
 \frac{|x_i-z_i|}{2r}
 \le
 \max\left\{
 \frac{|x_i-y_i|}{r},
 \frac{|y_i-z_i|}{r}
 \right\}.
\]

We first prove \eqref{eq:lower-glue-cost}. Fix $r>0$ and
$\mu,\nu\in\mathcal P(T)$. For $y,z\in T$, choose $w=w(y,z)\in T$
minimizing
\[
 D_r^\theta(y,w)+D_r^\eta(z,w),
\]
and let $\lambda$ be the image of $\mu\otimes\nu$ under $w$. By
\eqref{eq:lower-dyadic-triangle} and the choice of $w$,
\[
 D_{2r}^{\theta+\eta}(x,w)
 \le
 2D_r^\theta(x,y)+2D_r^\eta(x,z).
\]
Hence
\[
 F_{2r}^{\theta+\eta}(x,\lambda)
 \le
 2F_r^\theta(x,\mu)+2F_r^\eta(x,\nu),
\]
where we used
\[
 \int e^{-2D}\,d\mu
 \ge
 \left(\int e^{-D}\,d\mu\right)^2.
\]
Summing over dyadic $r$ and taking the infimum proves
\eqref{eq:lower-glue-cost}.

For \eqref{eq:lower-glue-map}, take probability measures on $a(T)$
and $v(T)$ and choose $w\in T$ minimizing
\[
 D_r^\theta(a(w),y)+D_r^\theta(v(w),z).
\]
Using \eqref{eq:lower-dyadic-triangle} twice gives
\[
 D_{4r}^\theta(a(x)+v(x),a(w)+v(w))
 \le
 2D_r^\theta(a(x),y)+2D_r^\theta(v(x),z),
\]
and the same argument proves \eqref{eq:lower-glue-map}.

For \eqref{eq:lower-multiply},
\[
 \int e^{-cD_r^\theta}\,d\nu
 \ge
 \left(\int e^{-D_r^\theta}\,d\nu\right)^c,
\]
and therefore
\[
 F_r^{c\theta}(t,\nu)\le cF_r^\theta(t,\nu).
\]

Finally, $\Gamma^{\theta^{(m)}}(T)$ is increasing and bounded above by
$\Gamma^\theta(T)$. Let
\[
 L=\lim_m\Gamma^{\theta^{(m)}}(T)<\infty.
\]
Choose almost minimizing families
$(\nu_k^{(m)})_{k\in\mathbb Z}$ and, by a diagonal argument, suppose
that $\nu_k^{(m)}\to\nu_k$ for every $k$. For fixed $m_0$, Fatou's
lemma gives
\[
 \max_{t\in T}\sum_{k\in\mathbb Z}
 2^{-k}F_{2^{-k}}^{\theta^{(m_0)}}(t,\nu_k)
 \le L.
\]
Letting $m_0\to\infty$ and applying monotone convergence once more
gives $\Gamma^\theta(T)\le L$. This proves
\eqref{eq:lower-monotone}.
\end{proof}

We next compare the second part of Theorem~\ref{thm:chop-main} with $\Gamma^{\psi_\infty}$.  We use the following elementary partition estimate.

\begin{lemma}\label{lem:lower-kernel-tree}
Let $S$ be finite and let $(\mathcal P_k)_{k\ge0}$ be an increasing
sequence of partitions of $S$, with $\mathcal P_0=\{S\}$ and eventually
consisting of singletons. For $A\in\mathcal P_k$, let $r_k(A)\ge0$,
with $r_k(A)=0$ only if $A$ is a singleton, and assume that
\[
 r_{k+1}(B)\le r_k(A)
 \qquad
 (B\subset A,\;
 B\in\mathcal P_{k+1},\ A\in\mathcal P_k).
\]
Suppose moreover that
\begin{equation}\label{eq:lower-tree-hyp}
 |\mathcal P_k|\le \exp(C_0 2^k),
 \qquad
 D_{r_k(A)}^{\psi_\infty}(s,t)\le C_0 2^k
 \quad(s,t\in A,\ r_k(A)>0).
\end{equation}
Then
\begin{equation}\label{eq:lower-tree-conclusion}
 \Gamma^{\psi_\infty}(S)
 \le C(C_0)\max_{t\in S}
 \left(
 r_0(S)+
 \sum_{k\ge1}2^k r_k(\mathcal P_k(t))
 \right).
\end{equation}
\end{lemma}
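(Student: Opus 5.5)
We will construct the measures $\nu_k$ directly from the partitions. For each $k\ge0$ and each $A\in\mathcal P_k$ we fix a point $t_A\in A$, and we put
\[
 \mu_k=\frac1{|\mathcal P_k|}\sum_{A\in\mathcal P_k}\delta_{t_A}.
\]
Since $\Gamma^{\psi_\infty}$ uses dyadic scales $2^{-j}$ while the radii $r_k(A)$ are arbitrary, we first pass to dyadic radii. Replace $r_k(A)$ by $\bar r_k(A)$, the smallest power of two that is at least $r_k(A)$. This costs a factor $2$ on the right of \eqref{eq:lower-tree-conclusion}. Monotonicity of $\psi_\infty$ keeps \eqref{eq:lower-tree-hyp} valid, and $\bar r_{k+1}(B)\le\bar r_k(A)$ is preserved.

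For a scale $r=2^{-j}$ we take $\nu_j$ to be a mixture $\nu_j=\sum_k w_k\mu_k$ with weights $w_k=c\,2^{-(k+1)}$ (one could also use $6/(\pi^2k^2)$-type weights). For $t\in S$ and $A_k=\mathcal P_k(t)$ with $\bar r_k(A_k)\le r$, monotonicity of $\psi_\infty$ and \eqref{eq:lower-tree-hyp} give
\[
 D_r^{\psi_\infty}(t,t_{A_k})\le D_{\bar r_k(A_k)}^{\psi_\infty}(t,t_{A_k})\le C_0 2^k .
\]
Hence
\[
 F_r(t,\nu_j)\le -\log\bigl(w_k|\mathcal P_k|^{-1}e^{-C_02^k}\bigr)\le C(C_0)2^k .
\]
We then choose, for each $t$ and $r$, the smallest $k=k(t,r)$ with $\bar r_k(\mathcal P_k(t))\le r$. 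Since $\bar r_k$ is nonincreasing along the chain, this gives
\[
 F_r(t,\nu_j)\le C2^{k(t,r)}.
\]

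Next we sum $\sum_j 2^{-j}F_{2^{-j}}(t,\nu_j)$ and split the scales into three ranges.

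\emph{Scales $r\ge\bar r_0(S)$.} Here $k=0$, so $F_r\le C$. This bound alone is not summable over large $r$, so we refine it. The diameter condition at level $0$ gives $D_{\bar r_0}(s,t)\le C_0$ for all $s,t$. For $r\ge 2\bar r_0$ and any $s,t\in S$ we have $|s_i-t_i|/r\le 2$, where $\psi_\infty(a)=a^2/4$, so $D_r\le C_0(\bar r_0/r)^2$. Thus $F_r(t,\nu)\le C_0(\bar r_0/r)^2$ for any $\nu$. The sum over these scales is then at most $C\bar r_0$.

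\emph{Scales $r<\min_A \bar r_k(A)$ over nonsingleton $A$ at the final level.} Once $\mathcal P_k(t)=\{t\}$, the measure $\mu_k$ charges $t$ with mass at least $|\mathcal P_k|^{-1}$. Here we take $k$ to be the first level where $\mathcal P_k(t)$ is a singleton, so $r_k=0$ and this level counts as reached for every $r$.

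\emph{Intermediate scales.} For $\bar r_k(A_k)\le r<\bar r_{k-1}(A_{k-1})$ we get $F_r\le C2^k$. Summing $2^{-j}$ over these dyadic $r$ gives at most $2\bar r_{k-1}(A_{k-1})\cdot C2^k$. Summing over $k$ yields
\[
 C\Bigl(\bar r_0+\sum_{k\ge1}2^k\bar r_{k}(A_{k})\Bigr),
\]
after an index shift that costs a factor of $2$. This matches \eqref{eq:lower-tree-conclusion}.

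The main obstacle is bookkeeping: the intermediate-range sum naturally produces $2^k\bar r_{k-1}$ rather than $2^k\bar r_k$. If the shift does not close, the fallback is to use level $k$ in the bound for all $r\ge\bar r_k(A_k)$. The cost $C2^k$ per scale, summed over the geometric range $r\ge\bar r_k$, must then be charged to the smallest $k$ with $\bar r_k\le r$. Doing this geometrically, the sum over $r\in[\bar r_k,\bar r_{k-1})$ is at most $2\bar r_{k-1}$. The term $2^k\bar r_{k-1}=2\cdot2^{k-1}\bar r_{k-1}$ is then exactly the $(k-1)$ term, so the shift closes. Measurability and finiteness are trivial since $S$ is finite.
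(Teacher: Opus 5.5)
Your proof is correct and follows essentially the same route as the paper. Both use a single mixture $\nu=\sum_k2^{-k-1}\mu_k$ of uniform measures on representatives, the bound $F_r(t,\nu)\le C2^k$ once $r\ge r_k(\mathcal P_k(t))$, the quadratic regime $D_r\le C_0r_0(S)^2/r^2$ for $r\ge r_0(S)$, and the index shift $2^kr_{k-1}=2\cdot2^{k-1}r_{k-1}$. The only difference is that the paper bounds $\int_0^\infty F_r(t,\nu)\,dr$ and compares it with the dyadic sum, whereas you round the radii up to powers of two.

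One small slip: the hypotheses do not force $r_k=0$ at the first singleton level. This is harmless, because $D_r(t,t)=0$ makes that level's bound $F_r\le C2^k$ valid at every scale. Your final fallback remark is also unneeded, since the index shift does close.
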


\begin{proof}
If $r_0(S)=0$, then $S$ is a singleton and the assertion is immediate.
For each $A\in\mathcal P_k$, choose $\pi_k(A)\in A$, and let
$\mu_k$ be the uniform measure on
\[
 \{\pi_k(A):A\in\mathcal P_k\}.
\]
Put
\[
 \nu=\sum_{k\ge0}2^{-k-1}\mu_k.
\]
Writing
\[
 r_k(t)=r_k(\mathcal P_k(t)),
\]
we have, for $r\ge r_k(t)$,
\[
 \nu\bigl(\{\pi_k(\mathcal P_k(t))\}\bigr)
 \ge 2^{-k-1}|\mathcal P_k|^{-1}.
\]
If $r_k(t)>0$, \eqref{eq:lower-tree-hyp} gives
\begin{equation}\label{eq:lower-tree-Fr}
 F_r^{\psi_\infty}(t,\nu)\le C2^k.
\end{equation}
If $r_k(t)=0$, then $\mathcal P_k(t)=\{t\}$, and the same estimate
follows from the displayed lower bound on
$\nu(\{\pi_k(\mathcal P_k(t))\})$.
Since $r_k(t)$ is nonincreasing,
\[
 \int_0^{r_0(t)}
 F_r^{\psi_\infty}(t,\nu)\,dr
 \le
 C\left(
 r_0(t)+\sum_{k\ge1}2^k r_k(t)
 \right).
\]

For $r\ge r_0(S)$, \eqref{eq:lower-tree-hyp} with $k=0$ implies
$|s_i-t_i|\le2r_0(S)$ for $s,t\in S$. Hence
\[
 D_r^{\psi_\infty}(s,t)
 =\frac{\|s-t\|_2^2}{4r^2}
 \le C_0\frac{r_0(S)^2}{r^2},
\]
and consequently
\[
 \int_{r_0(S)}^\infty
 F_r^{\psi_\infty}(t,\nu)\,dr
 \le Cr_0(S).
\]
Thus
\[
 \max_{t\in S}
 \int_0^\infty F_r^{\psi_\infty}(t,\nu)\,dr
 \le
 C\max_{t\in S}
 \left(
 r_0(S)+\sum_{k\ge1}2^k r_k(t)
 \right).
\]
The dyadic sum defining $\Gamma^{\psi_\infty}$ is bounded, up to a
universal constant, by this integral. This proves
\eqref{eq:lower-tree-conclusion}.
\end{proof}

\begin{lemma}\label{lem:lower-Cexp}
For every finite $S$,
\begin{equation}\label{eq:lower-Cexp}
 \Gamma^{\psi_\infty}(S)\le C\cexp(S).
\end{equation}
\end{lemma}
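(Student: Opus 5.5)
Fix an admissible sequence $(\mathcal A_k)_{0\le k\le N}$ of partitions of $S$ and vectors $u_k(A)$, with $u_N(\{s\})=s$, that almost attain the infimum in \eqref{eq:intro-Cexp}, and write $M$ for the corresponding value of the maximum. The plan is to apply Lemma~\ref{lem:lower-kernel-tree} with $\mathcal P_k=\mathcal A_k$ for $k\le N$, putting $\mathcal P_k=\mathcal A_N$ for $k>N$. The cardinality hypothesis holds because $|\mathcal A_k|\le2^{2^k}\le\exp(2^k)$. What remains is to choose the radii $r_k(A)$.

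For $t\in S$ put
\[
 w_j(t)=2^{j/2}\|u_j(t)-u_{j-1}(t)\|_2+2^j\|u_j(t)-u_{j-1}(t)\|_\infty .
\]
Because $u_j(t)$ is constant on $\mathcal A_j(t)$, the quantity $w_j(t)$ depends only on $\mathcal A_j(t)$, and so it is constant on $\mathcal A_k(t)$ for every $j\le k$. The tail $\sum_{j>k}w_j(t)$, however, varies within $\mathcal A_k(t)$. We therefore set
\[
 r_k(A)=2^{-k}\max_{s\in A}\sum_{j>k}w_j(s)\cdot 2^{(k-j)/2}\cdot(\text{const}),
\]
or more simply $r_k(A)=\max_{s\in A}\sum_{j>k}2^{-j}w_j(s)$, and check below whether this is enough. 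This $r_k$ is nonincreasing along the nested partitions, since both the set over which we maximize and the tail shrink. It vanishes only when $A$ is a singleton: if it vanished, all later increments would be zero, so all points of $A$ would equal $u_k(A)$.

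To verify the distance hypothesis, take $s,t\in A\in\mathcal A_k$ and write $s-t=(s-u_k(A))-(t-u_k(A))$. Telescoping gives $s-u_k(A)=\sum_{j>k}(u_j(s)-u_{j-1}(s))$, which yields $\|s-u_k\|_\infty\le\sum_{j>k}2^{-j}w_j(s)\le r_k$ and $\|s-u_k\|_2\le\sum_{j>k}2^{-j/2}w_j(s)$. Now $D^{\psi_\infty}_r(s,t)<\infty$ requires $\|s-t\|_\infty\le2r$, and in that case $D=\|s-t\|_2^2/(4r^2)$. So we need $\|s-t\|_2\le C2^{k/2}r_k$. The difficulty is that $\sum_{j>k}2^{-j/2}w_j$ is not bounded by $2^{k/2}\sum_{j>k}2^{-j}w_j$. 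The fix is to define
\[
 r_k(A)=\max_{s\in A}\sum_{j>k}\bigl(2^{-j}+2^{-(j+k)/2}\bigr)w_j(s),
\]
which dominates the $\ell_\infty$ part and $2^{-k/2}$ times the $\ell_2$ part. With this choice $D_{r_k}(s,t)\le 4\cdot2^k/4\cdot\text{const}$ and $\|s-t\|_\infty\le 2r_k$, so the hypothesis holds with a universal $C_0$.

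It remains to bound the conclusion of Lemma~\ref{lem:lower-kernel-tree}. For fixed $t$, let $s_k\in\mathcal A_k(t)$ attain the maximum defining $r_k$. Since $w_j(s_k)=w_j(t)$ for $j\le k$, we get $\sum_{j>k}w_j(s_k)\le M$. We then bound $r_0(S)+\sum_k2^kr_k(\mathcal A_k(t))$ by exchanging the order of summation:
\[
 \sum_k 2^k\sum_{j>k}\bigl(2^{-j}+2^{-(j+k)/2}\bigr)w_j(s_k)\le\sum_j\sum_{k<j}\bigl(2^{k-j}+2^{(k-j)/2}\bigr)w_j(s_k).
\]
This is the main obstacle, because the maximizer $s_k$ changes with $k$. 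We handle it by noting that $w_j(s_k)\le\max_{s\in\mathcal A_k(t)}w_j(s)$ and bounding instead
\[
 \max_{s}\sum_{j}w_j(s)\sum_{k<j}\bigl(2^{k-j}+2^{(k-j)/2}\bigr),
\]
which is the standard fix when the same worst-case point is used along the branch. If that estimate fails, the fallback is to first regularize the chain as in \cite[Section~2.9]{TalagrandBook}, replacing $w_j$ by a branch-monotone majorant $\tilde w$ with $\tilde w_k(A)\ge 2^{-k}\max_{s\in A}\sum_{j>k}w_j(s)$. The geometric weights then make the double sum at most $CM$. Letting the near-optimal choice tend to the infimum gives $\Gamma^{\psi_\infty}(S)\le C\cexp(S)$.
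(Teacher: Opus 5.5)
\textbf{The final summation step fails, and it cannot be repaired while you keep $\mathcal P_k=\mathcal A_k$.} Your verification of the hypotheses of Lemma~\ref{lem:lower-kernel-tree} is fine: $r_k$ is monotone along the partitions, and telescoping gives $\|s-u_k(A)\|_\infty\le r_k(A)$ and $\|s-u_k(A)\|_2\le 2^{k/2}r_k(A)$, hence $D^{\psi_\infty}_{r_k(A)}(s,t)\le 2^k$. The problem is the bound on $\max_t\sum_k 2^kr_k(\mathcal A_k(t))$. Your ``standard fix'' replaces $\sum_k\max_{s\in\mathcal A_k(t)}f_k(s)$ by $\max_s\sum_k f_k(s)$. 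That inequality goes the wrong way, precisely because the maximizer $s_k$ changes with $k$.

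Here is an example showing that no choice of radii works on the unrefined partitions; it also defeats your fallback majorant $\tilde w$. In $\R^N$, take $S=\{0\}\cup\{s_j:1\le j\le N\}$ with $s_j=\lambda 2^{-j}e_j$. Let
$\mathcal A_k=\{\{s_1\},\dots,\{s_k\},\{0,s_{k+1},\dots,s_N\}\}$, with $u_k=0$ on the large cell and $u_k(\{s_j\})=s_j$. Each $s_j$ has a single increment, of cost $(2^{j/2}+2^j)\lambda 2^{-j}\le 2\lambda$, and $0$ has none. So this representation has value at most $2\lambda$. But $\mathcal A_k(0)$ contains $s_{k+1}$, and $\psi_\infty=+\infty$ on $(2,\infty)$. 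Hence the hypothesis $D^{\psi_\infty}_{r_k}(0,s_{k+1})\le C_02^k$ forces $r_k(\mathcal A_k(0))\ge\|s_{k+1}\|_\infty/2=\lambda 2^{-k-2}$. Therefore $\sum_{k}2^kr_k(\mathcal A_k(0))\ge (N-1)\lambda/4$, which is unbounded in $N$. Any majorant with $\tilde w_k(A)\ge 2^{-k}\max_{s\in A}\sum_{j>k}w_j(s)$ on the same cells gives the same lower bound.

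\textbf{The missing idea is to refine the partitions so that the tail quantity is essentially constant on each cell.} The paper puts $p_k=2^k$ and
$b_k(t)=\sqrt{p_k}\|t-u_k(t)\|_2+p_k\|t-u_k(t)\|_\infty$.
Geometric summation at a single point gives $\sum_k b_k(t)\le CM$; this step is legitimate because it involves no maximum over a cell. With $B=\max_t\sum_kb_k(t)$, one refines $\mathcal A_k$ by the level sets of $\lfloor p_jb_j(\cdot)/B\rfloor$, $j\le k$. This multiplies the cardinality only by $\prod_{j\le k}(p_j+1)\le\exp(Cp_k)$. On each refined cell $A$ one has $|b_k(s)-b_k(t)|\le B/p_k$. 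Hence $r_k(A)=\max\{\diam_\infty A,\diam_2 A/\sqrt{p_k}\}$ satisfies $p_kr_k(A)\le 2b_k(t)+2B/p_k$ for every $t\in A$. The branch sum is then controlled by the tail of the single point $t$, plus $\sum_k 2B/p_k\le 4B$. In the example above, this refinement separates $s_j$ from $0$ already at level about $j/2$, which makes the branch sum at $0$ geometric.
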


\begin{proof}
Fix an admissible sequence $(\mathcal A_k)$ and vectors $u_k(A)$ in
the definition of $\cexp(S)$, and denote the corresponding value by
$K$. Write
\[
 u_k(t)=u_k(\mathcal A_k(t)),\qquad p_k=2^k,
\]
and extend the sequence by singletons, with $u_k(t)=t$, after its last
term. Put
\begin{equation}\label{eq:lower-bk}
 b_k(t)=\sqrt{p_k}\,\|t-u_k(t)\|_2
       +p_k\|t-u_k(t)\|_\infty.
\end{equation}
Since
\[
 t-u_k(t)=\sum_{l>k}\bigl(u_l(t)-u_{l-1}(t)\bigr),
\]
geometric summation gives
\begin{equation}\label{eq:lower-bksum}
 \max_{t\in S}\sum_{k\ge0}b_k(t)\le CK.
\end{equation}
Set
\[
 B=\max_{t\in S}\sum_{k\ge0}b_k(t).
\]
If $B=0$, then $b_0(t)=0$ for every $t\in S$, so $S$ is a singleton.
Thus we may assume that $B>0$.

For $j\ge0$ put
\[
 m_j(t)=\left\lfloor\frac{p_jb_j(t)}{B}\right\rfloor,
\]
and let $\mathcal P_k$ be the common refinement of $\mathcal A_k$ and
the partitions determined by $m_0,\ldots,m_k$. Since
$0\le m_j(t)\le p_j$,
\begin{equation}\label{eq:lower-refined-card}
 |\mathcal P_k|
 \le |\mathcal A_k|\prod_{j\le k}(p_j+1)
 \le \exp(Cp_k).
\end{equation}
For $A\in\mathcal P_k$ put
\begin{equation}\label{eq:lower-rk}
 r_k(A)=
 \max\left\{
 \diam_\infty A,\,
 \frac{\diam_2 A}{\sqrt{p_k}}
 \right\}.
\end{equation}
Clearly
\[
 r_{k+1}(B)\le r_k(A)
 \qquad
 (B\subset A,\;
 B\in\mathcal P_{k+1},\ A\in\mathcal P_k).
\]

If $t\in A\in\mathcal P_k$, then $u_k$ is constant on $A$ and
\[
 |b_k(s)-b_k(t)|\le\frac{B}{p_k},
 \qquad s\in A.
\]
Consequently
\begin{equation}\label{eq:lower-rk-bound}
 p_k r_k(A)
 \le2b_k(t)+\frac{2B}{p_k}.
\end{equation}
Moreover, if $r_k(A)>0$, then for $s,t\in A$,
\[
 \|s-t\|_\infty\le r_k(A),
 \qquad
 \|s-t\|_2\le\sqrt{p_k}\,r_k(A),
\]
and hence
\begin{equation}\label{eq:lower-rk-dist}
 D_{r_k(A)}^{\psi_\infty}(s,t)\le p_k.
\end{equation}
If $r_k(A)=0$, then $A$ is a singleton.
It follows from \eqref{eq:lower-bksum} and
\eqref{eq:lower-rk-bound} that
\begin{equation}\label{eq:lower-rk-sum}
 \max_{t\in S}
 \sum_{k\ge0}p_k r_k(\mathcal P_k(t))
 \le CB
 \le CK.
\end{equation}

Now put
\[
 \widetilde{\mathcal P}_0=\{S\},
 \qquad
 \widetilde{\mathcal P}_{k+1}=\mathcal P_k,
\]
and
\[
 \widetilde r_0(S)=2B,
 \qquad
 \widetilde r_{k+1}(A)=r_k(A).
\]
Since $b_0(t)\le B$,
\[
 \diam_2S\le2B,
 \qquad
 \diam_\infty S\le2B,
\]
so the hypotheses of Lemma~\ref{lem:lower-kernel-tree} follow from
\eqref{eq:lower-refined-card} and \eqref{eq:lower-rk-dist}. Therefore,
by \eqref{eq:lower-rk-sum},
\[
 \Gamma^{\psi_\infty}(S)
 \le C\left(
 B+\max_{t\in S}\sum_{k\ge0}p_k
 r_k(\mathcal P_k(t))
 \right)
 \le CK.
\]
Taking the infimum over the admissible representations proves
\eqref{eq:lower-Cexp}.
\end{proof}

We can now finish the proof for $\xi^{[\beta]}$.

\begin{theorem}\label{thm:lower-capped}
For every finite $T\subset\R^n$,
\begin{equation}\label{eq:lower-capped}
 \Gamma^{\psi_\beta}(T)\le C S_\beta(T).
\end{equation}
\end{theorem}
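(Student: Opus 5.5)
We prove Theorem~\ref{thm:lower-capped} by combining the decomposition of Theorem~\ref{thm:chop-main} with the gluing rules of Lemma~\ref{lem:lower-glue}. Since $\Gamma^{\psi_\beta}$ is translation invariant, we may replace $T$ by $T-t_0$. By \eqref{eq:chop-main-decomp} we write $t-t_0=a(t)+v(t)$ with
\[
 \max_{t\in T}\|a(t)\|_{1,\beta}+\cexp(v(T))\le CS_\beta(T).
\]
By \eqref{eq:lower-glue-map} it is then enough to bound $\Gamma^{\psi_\beta}(a(T))$ and $\Gamma^{\psi_\beta}(v(T))$ separately by $C S_\beta(T)$. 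For the chaining part we use $\psi_\beta\le\psi_\infty$ pointwise, so that $\Gamma^{\psi_\beta}(v(T))\le\Gamma^{\psi_\infty}(v(T))$. Lemma~\ref{lem:lower-Cexp} then gives the bound $C\cexp(v(T))\le CS_\beta(T)$.

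\textbf{The $\ell_1$ part.} Put $M=\max_{t}\|a(t)\|_{1,\beta}$. We aim to show $\Gamma^{\psi_\beta}(A)\le CM$ for any finite $A$ with $\sup_{x\in A}\|x\|_{1,\beta}\le M$. We take all $\nu_k$ equal to the Dirac mass at $0$, or at any fixed point of $A$ after translating; this only changes constants, since $\diam_{1,\beta}A\le 2M$. Then
\[
 F_r(x,\delta_0)=D_r^{\psi_\beta}(x,0)=\sum_i\psi_{\beta_i}(|x_i|/r).
\]
We split the coordinates according to $|x_i|\le 2r$ or $|x_i|>2r$. The first group contributes $\sum_i x_i^2/(4r^2)\ind_{\{|x_i|\le 2r\}}$. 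The second contributes at most $\sum_i 2\beta_i\ind_{\{|x_i|>2r\}}$, and when $\beta_i=\infty$ this set is empty since $x_i=0$. Summing $2^{-k}$ times these terms over $k\in\Z$, coordinatewise, gives:
\begin{itemize}
 \item from the quadratic part, $\sum_{k:2^{-k}\ge|x_i|/2}2^{-k}x_i^2/(4\cdot 2^{-2k})\le C|x_i|$;
 \item from the cap part, $2\beta_i\sum_{k:2^{-k}<|x_i|/2}2^{-k}\le 2\beta_i|x_i|$.
\end{itemize}
Hence $\sum_k2^{-k}F_{2^{-k}}(x,\delta_0)\le C\sum_i\beta_i|x_i|\le CM$, using $\beta_i\ge1$.

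Combining the two parts via \eqref{eq:lower-glue-map} yields \eqref{eq:lower-capped}. The main obstacle is the applicability of \eqref{eq:lower-glue-map}. It is stated for $(a+v)(T)$ with $a$ and $v$ maps on $T$. Here $t\mapsto a(t)$ and $t\mapsto v(t)$ may fail to be injective, so we must read $\Gamma^\theta(a(T))$ as computed on the image set. The proof of that lemma, which pulls back measures through a chosen $w\in T$, handles this correctly, and we only need to check this. The remaining steps are the routine geometric sums above.
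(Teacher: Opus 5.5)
Your proof is correct and follows the paper's argument: the decomposition from Theorem~\ref{thm:chop-main}, the bound $\psi_\beta\le\psi_\infty$ together with Lemma~\ref{lem:lower-Cexp} for $v(T)$, Dirac measures at a fixed point of $a(T)$ for the weighted $\ell_1$ part, and \eqref{eq:lower-glue-map} to combine the two pieces. The only difference is cosmetic: you evaluate the dyadic sums coordinatewise, whereas the paper uses the identity $\int_0^\infty\psi_\beta(u)u^{-2}\,du=\beta$.
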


\begin{proof}
By Theorem~\ref{thm:chop-main}, there are $t_0\in T$ and maps
$a,v:T\to\R^n$ such that
\[
 t-t_0=a(t)+v(t)
\]
and
\begin{equation}\label{eq:lower-capped-decomp}
 A:=\max_{t\in T}\sum_i\beta_i|a_i(t)|
 \le CS_\beta(T),
 \qquad
 \cexp(v(T))\le CS_\beta(T).
\end{equation}

Fix $a_0\in a(T)$. Since
\begin{equation}\label{eq:lower-psi-integral}
 \int_0^\infty\psi_\beta(u)u^{-2}\,du=\beta,
\end{equation}
the choice $\nu_k=\delta_{a_0}$ in the definition of
$\Gamma^{\psi_\beta}$ gives
\begin{align}
 \Gamma^{\psi_\beta}(a(T))
 &\le
 C\max_{t\in T}
 \sum_i |a_i(t)-a_{0,i}|
 \int_0^\infty\psi_{\beta_i}(u)u^{-2}\,du \notag\\
 &\le
 C\max_{t\in T}\sum_i
 \beta_i|a_i(t)-a_{0,i}|
 \le CA.
 \label{eq:lower-a}
\end{align}
Moreover, since $\psi_{\beta_i}\le\psi_\infty$,
Lemma~\ref{lem:lower-Cexp} yields
\begin{equation}\label{eq:lower-v}
 \Gamma^{\psi_\beta}(v(T))
 \le\Gamma^{\psi_\infty}(v(T))
 \le C\cexp(v(T)).
\end{equation}
Therefore, by translation invariance and
Lemma~\ref{lem:lower-glue},
\[
 \Gamma^{\psi_\beta}(T)
 \le
 C\left(
 \Gamma^{\psi_\beta}(a(T))
 +\Gamma^{\psi_\beta}(v(T))
 \right)
 \le CS_\beta(T).
\]
\end{proof}

\subsection{Decomposition of the slopes}

We return to the variables $Y_i=\varepsilon_iq_i(E_i)$ of Section~\ref{sec:intro}.  The function $\sigma_i=V'_{i,+}$ is nondecreasing.  On $(1,b_i)$ we have $\sigma_i\ge2$.  We divide this part of its range into dyadic intervals.  Put
\begin{equation}\label{eq:lower-sj}
 s_j=2^{j+1},
 \qquad
 c_j=2^{-j}=\frac2{s_j},
 \qquad j\ge0,
\end{equation}
and
\begin{equation}\label{eq:lower-Aij}
 A_{ij}=\{u\in(1,b_i):s_j\le\sigma_i(u)<2s_j\},
 \qquad
 w_{ij}=s_j|A_{ij}|.
\end{equation}
If $w_{ij}\ge1$, put
\begin{equation}\label{eq:lower-betaij}
 \beta_{ij}=\frac{w_{ij}+1}{2}\in[1,\infty].
\end{equation}
We introduce a new coordinate only when $w_{ij}\ge1$.

For $m\ge0$ define a linear map $L_m$ by
\begin{equation}\label{eq:lower-Lm}
 (L_mt)_{i,*}=t_i,
 \qquad
 (L_mt)_{i,j}=c_jt_i
 \quad(0\le j\le m,\ w_{ij}\ge1).
\end{equation}
The coordinate $(i,*)$ has cap $1$, and $(i,j)$ has cap $\beta_{ij}$.  Thus Theorem~\ref{thm:lower-capped} applies to $L_mT$.

Let
\[
 \chi(r)=\min\{1,r^2\}.
\]
For $a\ge0$ define
\begin{equation}\label{eq:lower-Phim}
 \Phi_i^{(m)}(a)=
 \chi(a/2)+
 \sum_{\substack{0\le j\le m\\ w_{ij}\ge1}}
 \psi_{\beta_{ij}}(c_ja),
 \qquad
 \Phi_i(a)=\lim_{m\to\infty}\Phi_i^{(m)}(a).
\end{equation}
By definition,
\begin{equation}\label{eq:lower-pullback}
 \Gamma^{\Phi^{(m)}}(T)=\Gamma^{\psi_\beta}(L_mT).
\end{equation}
The next lemma compares these functions with the original functions $\Psi_i$.

\begin{lemma}\label{lem:lower-cost-comparison}
For every finite $T$,
\begin{equation}\label{eq:lower-cost-comparison}
 \Gamma^\Psi(T)\le C\Gamma^\Phi(T).
\end{equation}
\end{lemma}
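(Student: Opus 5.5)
The plan is to prove a pointwise comparison $\Psi_i(a)\le C\,\Phi_i(Ca)$ for all $a\ge0$ and all $i$, with universal $C$. Then \eqref{eq:lower-multiply} and a change of scale finish the argument. Indeed, $\Psi_i(a)\le C\Phi_i(Ca)$ gives $D_r^\Psi(s,t)\le C D_{r/C}^\Phi(s,t)$. Hence $F_r^\Psi(t,\nu)\le C F^\Phi_{r/C}(t,\nu)$, using the Jensen bound $\int e^{-cD}d\nu\ge(\int e^{-D}d\nu)^c$ exactly as in the proof of \eqref{eq:lower-multiply}. Taking $C$ to be a power of $2$, the dyadic sum defining $\Gamma^\Psi$ is bounded by a constant multiple of the shifted dyadic sum for $\Gamma^\Phi$, after reindexing the measures $\nu_k\mapsto\nu_{k+\log_2C}$. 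This yields \eqref{eq:lower-cost-comparison}.

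For the pointwise comparison we split $\Psi_i(a)=\int_0^{b_i}\sigma_i\ind_{\{\sigma_i<a\}}du$ into the part over $[0,1]$ and the parts over the dyadic slope sets $A_{ij}$. On $[0,1]$ we have $\sigma_i(u)=2u$, so this part is $\min\{a,2\}^2/4$, which is at most $\chi(a/2)$ up to constants. On $A_{ij}$ we have $\sigma_i<2s_j$, and the contribution is nonzero only if $a>s_j$. In that case it is at most $2s_j|A_{ij}|=2w_{ij}$. With $c_j=2/s_j$ we get $c_ja>2$, so $\psi_{\beta_{ij}}(c_ja)=2\beta_{ij}-1=w_{ij}$ whenever $w_{ij}\ge1$. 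Hence the $j$-th piece is at most $2\psi_{\beta_{ij}}(c_ja)$ when $w_{ij}\ge1$.

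It remains to handle the levels with $w_{ij}<1$, which were discarded. Their total contribution is at most $\sum_{j:s_j<a,\,w_{ij}<1}2w_{ij}$, and this sum has at most $\log_2 a$ terms. This is the main obstacle, because a sum of many discarded levels could exceed every retained term.

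I would first try to absorb them by monotonicity. Since $\sigma_i$ is nondecreasing, the sets $A_{ij}$ are consecutive intervals. If any level $j'\ge j$ with $s_{j'}<a$ has $w_{ij'}\ge1$, I would try to bound the discarded levels below it by the retained terms. Failing that, I would change the argument scale and use $\Phi_i(Ca)$ with $C=4$. The quantity $\psi_{\beta_{ij}}(c_j\cdot 4a)$ is still large for the next retained level, and the quadratic part $\psi(c_j a)$ of levels with $c_ja\le2$ also contributes.

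If that also fails, I would use the cruder bound that the discarded levels contribute at most $2\sum_{j<\log_2 a}1\le C\log(e+a)$. The idea is then to show $\Psi_i(a)\ge\min\{1,a^2/4\}$ together with the growth $\Phi_i(Ca)\ge c\,a$ for large $a$. The growth would come from $\chi$ and the convexity of $N_i$: $\sigma_i\ge2$ beyond $1$ forces $|A_{ij}|$ to accumulate. This would dominate a logarithmic error. In the worst case, when $b_i$ is finite and close to $1$, $\Psi_i(a)$ is itself bounded by a constant plus the retained terms, and I would treat that case directly.

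Once the pointwise inequality holds, summing over $i$ gives $D^\Psi_r\le C D^\Phi_{r/C}$, and the reduction in the first paragraph completes the proof.
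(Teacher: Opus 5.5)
There is a genuine gap. Your argument rests on the pointwise inequality $\Psi_i(a)\le C\Phi_i(Ca)$. This inequality is false for every universal $C$, and your fallbacks do not repair it.

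Here is a counterexample. Fix $0<\epsilon<1$, take $N_i(u)=u$ on $[0,1]$, and let $\sigma_i=2^{j+1}$ on consecutive intervals of length $\epsilon2^{-j-1}$, $j\ge0$, starting at $u=1$. This gives a convex $N_i$ with $q_i(1)=1$, $b_i=1+\epsilon$, and $N_i\uparrow\infty$ at $b_i$. Every level has $w_{ij}=\epsilon<1$, so all levels are discarded and $\Phi_i=\chi(\cdot/2)\le1$. On the other hand,
\[
 \Psi_i(a)=\chi(a/2)+\epsilon\,\#\{j\ge0:2^{j+1}<a\}\longrightarrow\infty
 \qquad(a\to\infty).
\]
The same example refutes each fallback:
\begin{itemize}
\item $\Phi_i$ need not grow at all, since $\chi\le1$.
\item A finite $b_i$ close to $1$ does not make $\Psi_i$ bounded.
\item Absorbing discarded levels into a retained level above them cannot work, since arbitrarily many discarded levels can lie below a retained level with $w_{ij'}=1$.
\end{itemize}

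The missing idea is that the discarded levels can be controlled only after passing to $\Gamma$, using its exact scale covariance.
\begin{itemize}
\item A discarded level contributes at most $2w_{ij}\ind_{\{a>s_j\}}\le2\chi(2^{-j-1}a)$. Hence $\Psi_i\le2\Phi_i+2E_0$ with $E_0(a)=\sum_{j\ge0}\chi(2^{-j-1}a)$, which grows like $\log a$.
\item $E_0$ is not pointwise comparable to $\chi(\cdot/2)$. However, reindexing the dyadic scales gives $\Gamma^{\theta(\cdot/8)}(T)=\frac18\Gamma^\theta(T)$, and the truncations satisfy $E_0^{(M)}(a)\le3\chi(a/2)+E_0^{(M)}(a/8)$.
\item With the constant $4$ in \eqref{eq:lower-glue-cost}, this yields $\Gamma^{E_0^{(M)}}\le12\Gamma^{\chi(\cdot/2)}+\frac12\Gamma^{E_0^{(M)}}$. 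The left side is finite for finite $T$, so the last term can be absorbed, and \eqref{eq:lower-monotone} lets $M\to\infty$.
\item Since $\chi(a/2)\le\Phi_i(a)$, this gives $\Gamma^{E_0}\le C\Gamma^\Phi$. Applying Lemma~\ref{lem:lower-glue} to $\Psi_i\le2\Phi_i+2E_0$ then finishes the proof.
\end{itemize}

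Your rescaling reduction in the first paragraph is correct, as are your bounds on $[0,1]$ and on the retained levels. What your proposal lacks is this multiscale absorption step.
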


\begin{proof}
For $a\ge0$ put
\begin{equation}\label{eq:lower-E0}
 E_0(a)=\sum_{j\ge0}\chi(2^{-j-1}a).
\end{equation}
The part of \eqref{eq:intro-Psi} coming from $0<u<1$ is exactly $\chi(a/2)$.  The contribution of $A_{ij}$ is at most
\[
 2w_{ij}\ind_{\{a>s_j\}}.
\]
If $w_{ij}\ge1$ and $a>s_j$, then $c_ja>2$ and
\[
 \psi_{\beta_{ij}}(c_ja)=2\beta_{ij}-1=w_{ij}.
\]
If $w_{ij}<1$, then
\[
 2w_{ij}\ind_{\{a>s_j\}}
 \le2\chi(c_ja/2).
\]
Consequently
\begin{equation}\label{eq:lower-Psi-Phi}
 \Psi_i(a)\le2\Phi_i(a)+2E_0(a).
\end{equation}

Let $E_0^{(M)}$ denote the first $M+1$ terms of \eqref{eq:lower-E0}.  Splitting off the first three terms gives
\begin{equation}\label{eq:lower-E0-recursion}
 E_0^{(M)}(a)
 \le3\chi(a/2)+E_0^{(M)}(a/8).
\end{equation}
Since
\[
 \Gamma^{E_0^{(M)}(\cdot/8)}(T)
 =\frac18\Gamma^{E_0^{(M)}}(T),
\]
and the proof of \eqref{eq:lower-glue-cost} gives the factor $4$,
\eqref{eq:lower-multiply} and Lemma~\ref{lem:lower-glue} give
\[
 \Gamma^{E_0^{(M)}}(T)
 \le 12\Gamma^{\chi(\cdot/2)}(T)
      +\frac12\Gamma^{E_0^{(M)}}(T).
\]
It follows that
\begin{equation}\label{eq:lower-E0-bound}
 \Gamma^{E_0}(T)
 \le C\Gamma^{\chi(\cdot/2)}(T)
 \le C\Gamma^\Phi(T),
\end{equation}
where we used Lemma~\ref{lem:lower-glue} and \eqref{eq:lower-monotone} to pass to the limit.  Equations \eqref{eq:lower-Psi-Phi}, \eqref{eq:lower-E0-bound}, and Lemma~\ref{lem:lower-glue} prove the result.
\end{proof}

\subsection{A convex-order comparison}

The argument in this subsection is based on the convex-order
comparison developed in \cite{HWW}. We give it here in the notation
used above.

Let $\Xi_{i,*}$ and $\Xi_{ij}$ be the independent variables
corresponding to the coordinates in \eqref{eq:lower-Lm}, and put
\begin{equation}\label{eq:lower-Zim}
 Z_i^{(m)}
 =
 \Xi_{i,*}
 +
 \sum_{\substack{0\le j\le m\\ w_{ij}\ge1}}
 c_j\Xi_{ij}.
\end{equation}
The variables $Z_i^{(m)}$ are independent and symmetric, and
\begin{equation}\label{eq:lower-expanded-process}
 \E\max_{t\in T}\langle L_mt,\Xi\rangle
 =
 \E\max_{t\in T}\sum_i t_iZ_i^{(m)}.
\end{equation}

\begin{lemma}\label{lem:lower-scalar-moment}
For every $i$, $m$, and $p\ge1$,
\begin{equation}\label{eq:lower-scalar-moment}
 \|Z_i^{(m)}\|_p\le Cq_i(p).
\end{equation}
\end{lemma}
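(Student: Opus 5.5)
The plan is to use the triangle inequality in $L_p$ and bound each summand of \eqref{eq:lower-Zim} separately. This reduces \eqref{eq:lower-scalar-moment} to a deterministic inequality between the dyadic slope data $(w_{ij})$ and the quantile $q_i(p)$.

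\emph{Reduction.} First, $|\Xi_{i,*}|\le1=q_i(1)\le q_i(p)$. For the remaining terms, $|\Xi_{ij}|\le\min\{E_{ij},\beta_{ij}\}$, so $\|\Xi_{ij}\|_p\le\min\{\|E\|_p,\beta_{ij}\}\le\min\{p,\beta_{ij}\}$ (using $\|E\|_p\le\Gamma(p+1)^{1/p}\le p$ for $p\ge1$). When $w_{ij}\ge1$ we have $\beta_{ij}=(w_{ij}+1)/2\le w_{ij}$, and $c_jw_{ij}=2^{-j}2^{j+1}|A_{ij}|=2|A_{ij}|$. Hence
\[
 \|Z_i^{(m)}\|_p\le 1+\sum_{j\ge0}\min\bigl\{2^{-j}p,\;2|A_{ij}|\bigr\},
\]
and it suffices to show that the sum on the right is at most $Cq_i(p)$.

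\emph{Deterministic estimate.} Fix $i$ and drop it from the notation. Since $\sigma$ is nondecreasing, the sets $A_j$ are consecutive intervals in $(1,b)$, ordered by $j$. Put $u_1=q(2p)$; we may assume $u_1<b$, since otherwise the sum is at most $2(b-1)\le 2q(p)$. We use the following facts:
\begin{itemize}
\item $V(u)=1+\int_1^u\sigma$ on $(1,b)$ and $V=2N-1$ there.
\item $V(q(p))\le 2p-1$, and $V(q(2p))$ is of order $4p$.
\item Concavity of $q$ gives $q(2p)\le2q(p)$.
\end{itemize}
Let $j_1$ be the index with $\sigma(u_1)\in[s_{j_1},2s_{j_1})$, or $j_1=0$ if $\sigma(u_1)<2$. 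We split the sum at $j_1$.
\begin{itemize}
\item For $j<j_1$, the interval $A_j$ lies to the left of $u_1$. These terms sum to at most $2(u_1-1)\le 4q(p)$.
\item For $j\ge j_1$, we use the bound $2^{-j}p$, and the geometric tail gives at most $2\cdot2^{-j_1}p$.
\end{itemize}

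\emph{Main step.} The remaining task, which I expect to be the only delicate point, is to show $2^{-j_1}p\le Cq(p)$. On $[q(p),q(2p)]$ the function $V$ increases by roughly $2p$, up to the normalization at the endpoints, which I will handle using the right-continuity of the quantile. On that interval $\sigma\le\sigma(u_1)<2s_{j_1}=2^{j_1+2}$. Therefore
\[
 q(2p)-q(p)\ \ge\ \frac{2p}{2^{j_1+2}}\ \gtrsim\ 2^{-j_1}p,
\]
which gives $2^{-j_1}p\le Cq(2p)\le 2Cq(p)$.

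Care is needed when $q(p)<1$ cannot occur (since $q(p)\ge q(1)=1$) and when $b<\infty$ truncates the range. In the latter case the sets $A_j$ beyond $b$ are empty and their terms vanish. Combining the three bounds then gives \eqref{eq:lower-scalar-moment}.
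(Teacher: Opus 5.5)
Your proposal is correct and follows essentially the paper's argument. You use the same Minkowski reduction to $1+\sum_j\min\{c_jp,\,2|A_{ij}|\}$ via $c_j\beta_{ij}\le c_jw_{ij}=2|A_{ij}|$, and the same split: bins lying to the left of a quantile point are bounded by their total length, and the remaining bins by the geometric sum of $c_jp$. The only variation is how you get the slope lower bound. You obtain it from $V(q(2p))-V(q(p))=2p$ together with $q(2p)\le 2q(p)$, whereas the paper uses the chord through the origin, $2N'_+(v)\ge 2N(q(p))/q(p)=2p/q(p)$ for $v>q(p)$. There is also a harmless slip: in the case $q(2p)=b$ the bound should read $2(b-1)\le 2q(2p)\le 4q(p)$, not $2q(p)$.
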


\begin{proof}
Fix $i$ and omit the index $i$. For $\beta\ge1$,
\begin{equation}\label{eq:lower-capped-moment}
 \|\varepsilon\min(E,\beta)\|_p
 \le C\min\{p,\beta\}.
\end{equation}
Hence, by Minkowski's inequality, it is enough to prove
\begin{equation}\label{eq:lower-scalar-sum}
 1+
 \sum_{\substack{j\le m\\ w_j\ge1}}
 c_j\min\{p,\beta_j\}
 \le Cq(p).
\end{equation}

Put
\[
 u=q(p),\qquad \ell_j=|A_j|.
\]
Since $w_j\ge1$,
\begin{equation}\label{eq:lower-bin-length}
 c_j\beta_j
 =\frac{c_j(w_j+1)}2
 \le c_jw_j
 =2\ell_j.
\end{equation}
Thus the indices for which $A_j\subset(1,u)$ contribute at most
$2(u-1)$.

Suppose first that $u<b$. Then $N(u)=p$. If
$A_j\cap(u,b)\ne\emptyset$, choose $v\in A_j$ with $v>u$. By
convexity of $N$,
\[
 2s_j>\sigma(v)=2N'_+(v)
 \ge\frac{2N(u)}{u}
 =\frac{2p}{u},
\]
and therefore $s_j>p/u$. Hence the remaining terms are bounded by
\[
 \sum_{s_j>p/u}c_jp
 =
 \sum_{s_j>p/u}\frac{2p}{s_j}
 \le4u.
\]
Together with the first coordinate this proves
\eqref{eq:lower-scalar-sum}. If $u=b<\infty$, all the sets $A_j$
are contained in $(1,u)$, and \eqref{eq:lower-bin-length} gives the
same conclusion.
\end{proof}

Recall that, for symmetric random variables $U$ and $V$,
\[
 U\preceq_{\rm cx}V
\]
means
\[
 \E f(U)\le\E f(V)
\]
for every convex function $f$ for which both expectations exist.

\begin{lemma}\label{lem:lower-convex-order}
Let $q$ be nonnegative, nondecreasing and concave, with $q(1)=1$.
If $Z$ is symmetric and
\begin{equation}\label{eq:lower-convex-assumption}
 \|Z\|_p\le Kq(p),
 \qquad p\ge1,
\end{equation}
then
\begin{equation}\label{eq:lower-convex-order}
 Z\preceq_{\rm cx}CK\,\varepsilon q(E),
\end{equation}
where $E$ is a mean-one exponential variable and $\varepsilon$ is an
independent symmetric sign.
\end{lemma}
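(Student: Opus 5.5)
For symmetric variables, convex order is characterized by the stop-loss transforms. Thus $U\preceq_{\rm cx}V$ holds as soon as
\[
 \E(|U|-x)_+\le\E(|V|-x)_+\qquad\text{for all }x\ge0,
\]
since $\E|U|$ and $\E|V|$ are finite in our setting. Equivalently, it suffices that the integrated tails satisfy $\int_x^\infty\P(|U|>y)\,dy\le\int_x^\infty\P(|V|>y)\,dy$. The plan is to bound the integrated tail of $Z$ from above using the moment hypothesis, and the integrated tail of $CK\,q(E)$ from below, and then compare the two.

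\emph{Upper bound for $Z$.} Fix $x\ge0$ and choose $p\ge1$ so that $x$ sits on the scale $eKq(p)$. For $x\le eK$ we simply use $\E(|Z|-x)_+\le\E|Z|\le K$. For $x\ge eK$, take $p$ with $eKq(p)\le x\le eKq(2p)$, using continuity of $q$ on the range where it is finite. Chebyshev then gives, for $y\ge eKq(p')$, the bound $\P(|Z|>y)\le(Kq(p')/y)^{p'}\le e^{-p'}$. Integrating over dyadic blocks $y\in[eKq(2^jp),eKq(2^{j+1}p)]$ yields
\[
 \E(|Z|-x)_+\le\sum_{j\ge0}eK\,q(2^{j+1}p)\,e^{-2^jp}\le C Kq(p)e^{-p}.
\]
The last step uses concavity, in the form $q(2^{j+1}p)\le2^{j+1}q(p)$. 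If $q$ reaches $b=\sup q<\infty$, then $\|Z\|_\infty\le Kb$, and for $x\ge eKb$ the stop-loss is zero after enlarging the constant.

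\emph{Lower bound for $V=C'K\,q(E)$.} Since $\P(q(E)\ge q(p))\ge\P(E\ge p)=e^{-p}$, and $\P(q(E)\ge q(p/2))\ge e^{-p/2}$, we get, for $x\le C'Kq(p)/2$,
\[
 \E(|V|-x)_+\ge\bigl(C'Kq(p)-x\bigr)e^{-p}\ge\tfrac12C'Kq(p)e^{-p}.
\]
For $x$ between $C'Kq(p)/2$ and $eKq(2p)$ we instead use the level $2p$, or a slightly larger level $\lambda p$, and concavity to convert $q(\lambda p)\ge q(p)$ into the required slack. The same argument covers the constant regime $x\le eK$, where $\E|V|\ge C'K(1-e^{-1})$ beats $\E|Z|\le K$.

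\emph{Main obstacle.} The hardest step is matching scales. The upper bound for $Z$ at threshold $x\approx eKq(p)$ is of order $Kq(p)e^{-p}$, while the lower bound for $V$ needs a level $p'$ with $C'Kq(p')\ge2x$ and $e^{-p'}q(p')\gtrsim e^{-p}q(p)$. The plan is to pick $C'$ large so that $C'q(p)\ge 2eq(2p)$; this works because concavity gives $q(2p)\le2q(p)$. One can then take $p'=p$ directly, which absorbs the gap between $q(p)$ and $q(2p)$ arising from the choice of $p$.

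The regime where $q$ saturates at $b$ needs separate care. There, $|Z|\le Kb$ while $V$ reaches $C'Kb$ with tail probability $\P(q(E)\ge q(p))$ still at least $e^{-p}$, so the same computation goes through with $b$ in place of $q(p)$. Once these integrated-tail comparisons hold for every $x\ge0$, the symmetric convex-order criterion gives $Z\preceq_{\rm cx}CK\,\varepsilon q(E)$.
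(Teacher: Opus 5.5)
Your proposal is correct, but it takes a different route from the paper.

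\textbf{Your route.} You use the stop-loss characterization of convex order for symmetric integrable variables. You then compare $\E(|Z|-x)_+$ with $\E(C'Kq(E)-x)_+$ scale by scale. For $eKq(p)\le x\le eKq(2p)$, integrating the Markov tails over the dyadic blocks $[eKq(2^jp),eKq(2^{j+1}p)]$ gives $\E(|Z|-x)_+\le C_1Kq(p)e^{-p}$. A single level of $E$ gives $\E(C'Kq(E)-x)_+\ge\tfrac12C'Kq(p)e^{-p}$. Both estimates check out, and the comparison closes once $C'\ge\max\{4e,2C_1\}$.

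\textbf{The paper's route.} The paper never integrates tails. It reads the same bound $\P(|Z|>eKq(p))\le e^{-p}$ directly as the stochastic domination $|Z|\preceq_{\rm st}eKq(E+1)$. It then removes the shift in three steps:
\begin{itemize}
\item $q(E+1)\le q(E)+1$;
\item the midpoint convexity bound $g(eK(q(E)+1))\le\tfrac12 g(2eKq(E))+\tfrac12 g(2eK)$ for the symmetrization $g$ of an arbitrary convex $f$;
\item Jensen's inequality together with $\E q(E)\ge e^{-1}$, which absorbs the constant term $g(2eK)$.
\end{itemize}

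\textbf{Comparison.} Markov's inequality already yields a pointwise tail comparison, which is stronger than stop-loss order. That is why the paper can skip the integration and the scale matching. Its proof is shorter, works with every convex $f$ without quoting the stop-loss theorem, and gives the explicit constant $2e^2$.

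Your version avoids the additive-shift trick. It also makes the role of concavity transparent through the doubling bound $q(2^{j+1}p)\le2^{j+1}q(p)$. The price is an external characterization theorem, the choice of $p$ with $eKq(p)\le x\le eKq(2p)$, and separate handling of small $x$ and of bounded $q$.

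\textbf{Clean-ups for the write-up.}
\begin{itemize}
\item Delete the unused estimate at level $p/2$.
\item Delete the discussion of the range between $C'Kq(p)/2$ and $eKq(2p)$. With $C'\ge4e$ that range is empty, as your last paragraph in effect observes.
\item In the regime $x\le eK$, subtract $x$. Use $\E(C'Kq(E)-x)_+\ge C'K(1-e^{-1})-eK\ge K$ for $C'$ large, rather than comparing $\E|V|$ with $\E|Z|$ alone.
\end{itemize}
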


\begin{proof}
By Markov's inequality,
\[
 \P\bigl(|Z|>eKq(p)\bigr)\le e^{-p},
 \qquad p\ge1.
\]
If $b=\lim_{p\to\infty}q(p)<\infty$, then
\eqref{eq:lower-convex-assumption} also gives $|Z|\le Kb$ almost
surely. Thus
\begin{equation}\label{eq:lower-stochastic}
 |Z|\preceq_{\rm st}eKq(E+1).
\end{equation}
By concavity and $q(1)=1$,
\[
 q(v+1)\le q(v)+1.
\]

Let $f$ be convex and put
\[
 g(x)=\frac{f(x)+f(-x)}2.
\]
Then $g$ is even, convex, and nondecreasing on $[0,\infty)$. From
\eqref{eq:lower-stochastic} and convexity,
\[
 \E f(Z)
 \le
 \frac12\E g\bigl(2eKq(E)\bigr)
 +\frac12 g(2eK).
\]
Moreover,
\[
 \E q(E)\ge\P(E\ge1)=e^{-1},
\]
so Jensen's inequality gives
\[
 g(2eK)
 \le
 \E g\bigl(2e^2Kq(E)\bigr).
\]
After increasing the numerical constant, both terms are bounded by
\[
 \E g\bigl(CKq(E)\bigr)
 =
 \E f\bigl(CK\varepsilon q(E)\bigr).
\]
This proves \eqref{eq:lower-convex-order}.
\end{proof}

Applying Lemmas~\ref{lem:lower-scalar-moment} and
\ref{lem:lower-convex-order} coordinatewise gives
\begin{equation}\label{eq:lower-process-comparison}
 \E\max_{t\in T}\sum_i t_iZ_i^{(m)}
 \le C S_Y(T).
\end{equation}
Indeed, for fixed values of the remaining coordinates, the function
\[
 x_i\longmapsto
 \max_{t\in T}
 \left(
 t_ix_i+\sum_{l\ne i}t_lx_l
 \right)
\]
is convex. Integrating one coordinate at a time gives the claim.

\subsection{Proof of the lower bound}

We now combine the preceding estimates.  By \eqref{eq:lower-pullback}, Theorem~\ref{thm:lower-capped}, \eqref{eq:lower-expanded-process}, and \eqref{eq:lower-process-comparison},
\begin{equation}\label{eq:lower-Phim-bound}
 \Gamma^{\Phi^{(m)}}(T)
 =\Gamma^{\psi_\beta}(L_mT)
 \le C\E\max_{t\in T}\langle L_mt,\Xi\rangle
 \le CS_Y(T).
\end{equation}
Letting $m\to\infty$ and using \eqref{eq:lower-monotone} gives
\[
 \Gamma^\Phi(T)\le CS_Y(T).
\]
Lemma~\ref{lem:lower-cost-comparison} now yields
\begin{equation}\label{eq:lower-final}
 \Gamma^\Psi(T)\le CS_Y(T),
\end{equation}
which proves \eqref{eq:lower-goal}.

\section{The upper bound}\label{sec:upper}

We prove
\begin{equation}\label{eq:upper-goal}
 S_Y(T)\le C\Gamma^\Psi(T)
\end{equation}
for finite $T$.

\subsection{Partitions}

Monotonicity of the functions $\Psi_i$ gives
\begin{equation}\label{eq:upper-scaled-triangle}
 D_{2r}(x,z)\le D_r(x,y)+D_r(y,z).
\end{equation}
We shall also use
\begin{equation}\label{eq:upper-area}
 \int_0^\infty\min\{1,D_r(x,y)\}\,dr=\|x-y\|_2.
\end{equation}
Indeed, putting $R=\|x-y\|_2/2$, \eqref{eq:intro-Psi-basic} gives
\[
 D_r(x,y)=\frac{\|x-y\|_2^2}{4r^2},\qquad r\ge R,
\]
whereas $D_r(x,y)\ge1$ for $r<R$. Integration proves
\eqref{eq:upper-area}.

\begin{lemma}\label{lem:upper-MGamma}
For finite $T$,
\begin{equation}\label{eq:upper-MGamma}
 \mathcal M^\Psi(T)\asymp\Gamma^\Psi(T),
 \qquad
 \diam_2T\le C\Gamma^\Psi(T).
\end{equation}
\end{lemma}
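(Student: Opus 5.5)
We prove three inequalities: $\Gamma^\Psi\le C\mathcal M^\Psi$, $\mathcal M^\Psi\le C\Gamma^\Psi$, and $\diam_2T\le C\Gamma^\Psi(T)$. The first two compare the integral over $r$ in \eqref{eq:intro-M} with the dyadic sum in \eqref{eq:intro-Gamma}. The only tool beyond monotonicity in $r$ is the scaled triangle inequality \eqref{eq:upper-scaled-triangle}.

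For $\Gamma^\Psi\le C\mathcal M^\Psi$, take a near-optimal $\nu$ for $\mathcal M^\Psi$ and put $\nu_k=\nu$ for all $k$. Since each $\Psi_i$ is nondecreasing, $D_r(t,s)$ is nonincreasing in $r$, and so is $F_r(t,\nu)$. Hence for $r\in[2^{-k-1},2^{-k}]$ we have $F_{2^{-k}}(t,\nu)\le F_r(t,\nu)$, which gives
\[
 2^{-k}F_{2^{-k}}(t,\nu)\le 2\int_{2^{-k-1}}^{2^{-k}}F_r(t,\nu)\,dr .
\]
Summing over $k\in\Z$ yields $\Gamma^\Psi(T)\le2\mathcal M^\Psi(T)$.

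For $\mathcal M^\Psi\le C\Gamma^\Psi$, we must replace the scale-dependent family $(\nu_k)$ by a single measure. Given a near-optimal $(\nu_k)$, put $\nu=\sum_{k\in\Z}w_k\nu_k$ with summable weights $w_k$. Then $F_r(t,\nu)\le F_r(t,\nu_k)+\log(1/w_k)$. The penalty $\sum_k2^{-k}\log(1/w_k)$ cannot be made finite uniformly, because of large $k$. We handle this by a cutoff at a scale $r_0$ comparable to $\Gamma^\Psi(T)$, and by using the fact that $F_r(t,\nu_k)$ is large at scales where it matters. Concretely, we choose
\[
 w_k\propto\exp\bigl(-\max_tF_{2^{-k}}(t,\nu_k)\bigr)\quad\text{for }2^{-k}\le r_0,
\]
suitably normalized. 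Alternatively, we use that $F_r\ge0$ always and that the $\log$-penalty at scale $2^{-k}$ can be absorbed by $2^{-k}k$-type terms once $2^{-k}\le r_0$. For $r\ge r_0$ we estimate $F_r(t,\nu)\le\log(1/w)+\|s-t\|_2^2/(4r^2)$ directly, using \eqref{eq:intro-Psi-basic} once $r$ exceeds $\diam_\infty T$. The integral of this over $r\ge r_0$ is then $O(r_0+\diam_2T)$, which explains why we prove the diameter bound first. If the weights do not work out cleanly, the fallback is the tree argument of Lemma~\ref{lem:lower-kernel-tree}: build partitions from the $\nu_k$ as in the standard equivalence of majorizing measures with admissible partitions, then convert back to a single $\nu$ using $\nu=\sum2^{-k-1}\mu_k$. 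We expect this step, controlling the entropy penalty uniformly across scales, to be the main obstacle.

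For the diameter bound, fix $s,t\in T$ and apply \eqref{eq:upper-scaled-triangle} to a measure $\nu_k$:
\[
 D_{2r}(s,t)\le D_r(s,x)+D_r(x,t).
\]
From this we get $\min\{1,D_{2r}(s,t)\}\le C(F_r(s,\nu)+F_r(t,\nu))$. To see this, suppose both $F_r(s,\nu)$ and $F_r(t,\nu)$ are at most $c$. Then each of the sets $\{x:D_r(s,x)\le1/2\}$ and $\{x:D_r(x,t)\le1/2\}$ carries more than half of the mass of $\nu$, so they intersect. Taking $x$ in the intersection gives $D_{2r}(s,t)\le1$, and then $\min\{1,D_{2r}(s,t)\}\le D_{2r}(s,t)$ is controlled. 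Integrating in $r$ and using \eqref{eq:upper-area} gives $\|s-t\|_2\le C\mathcal M^\Psi(T)$. If we use the dyadic form with $\nu_k$ instead, the same argument gives the bound by $C\Gamma^\Psi(T)$ directly, which avoids circularity with the second step.
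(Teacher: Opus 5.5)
Your proof of $\Gamma^\Psi(T)\le2\mathcal M^\Psi(T)$ is correct and is the paper's. The converse $\mathcal M^\Psi(T)\le C\Gamma^\Psi(T)$ is not proved. The right structure (a mixture of the $\nu_k$ below a cutoff, a direct estimate above it) appears only as one of several options, and the details you give are wrong where it matters.

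First, the obstruction is at the other end. For any summable weights, the penalty $2^{-k}\log(1/w_k)$ diverges as $k\to-\infty$ (large scales). Below a cutoff $R_0=2^{-k_0}$, the weights $w_{k_0+j}=2^{-j-1}$, anchored at the cutoff, cost only $R_0\sum_{j\ge0}2^{-j}(j+1)\log2\le CR_0$.

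Second, your concrete choice $w_k\propto\exp(-\max_tF_{2^{-k}}(t,\nu_k))$ fails. The normalizing series over the infinitely many $k$ with $2^{-k}\le r_0$ can diverge: for Rademacher variables $\Psi_i\le1$, so $F_r\le\max_sD_r\le n$ for every $r$. Moreover, the resulting penalty contains $\sum_k2^{-k}\max_tF_{2^{-k}}(t,\nu_k)$, with the maximum inside the sum. The hypothesis $\max_t\sum_k2^{-k}F_{2^{-k}}(t,\nu_k)\le H$ does not control this.

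Third, your large-scale bound $F_r(t,\nu)\le\log(1/w)+\|s-t\|_2^2/(4r^2)$ has infinite integral over $[r_0,\infty)$ as soon as $w<1$. So the claimed $O(r_0+\diam_2T)$ is false as written.

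The fallback is only a sketch. Lemma~\ref{lem:lower-kernel-tree} is stated for $\psi_\infty$. The partitions of Proposition~\ref{prop:upper-partitions} are built from a single measure supplied by the very lemma you are proving, so that route would need its own construction from the family $(\nu_k)$.

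The missing argument is short. Take $(\nu_k)$ with value at most $H$ in \eqref{eq:intro-Gamma}. Let $D=\diam_2T$, $R_0=2^{-k_0}\in[D,2D)$ and $\nu=\sum_{j\ge0}2^{-j-1}\nu_{k_0+j}$. Then $F_{R_02^{-j}}(x,\nu)\le F_{R_02^{-j}}(x,\nu_{k_0+j})+(j+1)\log2$, and monotonicity of $F_r$ in $r$ gives $\int_0^{R_0}F_r(x,\nu)\,dr\le H+CR_0$. For $r\ge R_0$ no penalty is needed at all. For every $\nu\in\mathcal P(T)$, $F_r(x,\nu)\le\max_sD_r(x,s)\le D^2/(4r^2)$ by \eqref{eq:intro-Psi-basic}, which integrates to at most $D/4$. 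Since $D\le C\Gamma^\Psi(T)\le CH$, this gives $\mathcal M^\Psi(T)\le CH$.

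The diameter step has a smaller hole. Showing that $F_r(s,\nu),F_r(t,\nu)\le c$ forces $D_{2r}(s,t)\le1$ does not prove the asserted $\min\{1,D_{2r}(s,t)\}\le C(F_r(s,\nu)+F_r(t,\nu))$. By itself it also says nothing about $\|s-t\|_2$ when the $\Psi_i$ are bounded: for Rademacher variables and $T=\{0,Me_1\}$, one has $D_{2r}(0,Me_1)\le1$ for every $r$.

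There are two ways to fix this.
\begin{itemize}
\item Rerun your intersection argument with the sets $\{x:D_r(s,x)\le1/4\}$; for small $c$ they still carry more than half of the mass. This gives $D_{2r}(s,t)\le1/2$, so every $\Psi_i$ is in its quadratic range by \eqref{eq:intro-Psi-basic} and $\|s-t\|_2\le2\sqrt2\,r$. Hence $F_r(s,\nu)+F_r(t,\nu)>c$ for $r<\|s-t\|_2/(2\sqrt2)$, and you integrate over these $r$ only.
\item Follow the paper. Since $e^{-a}+e^{-b}\le1+e^{-(a+b)}$ for $a,b\ge0$, averaging the scaled triangle inequality against $\nu$ gives $e^{-F_r(s,\nu)}+e^{-F_r(t,\nu)}\le1+e^{-D_{2r}(s,t)}$. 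Hence $F_r(s,\nu)+F_r(t,\nu)\ge1-e^{-D_{2r}(s,t)}$, which is the quantitative inequality.
\end{itemize}
Either version works scale by scale with the $\nu_k$. So $\diam_2T\le C\Gamma^\Psi(T)$ is obtained before, and independently of, the comparison of $\mathcal M^\Psi$ and $\Gamma^\Psi$, as you intended.
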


\begin{proof}
Fix $\nu\in\mathcal P(T)$. By \eqref{eq:upper-scaled-triangle},
\[
 e^{-F_r(x,\nu)}+e^{-F_r(y,\nu)}
 \le1+e^{-D_{2r}(x,y)}.
\]
Hence
\begin{equation}\label{eq:upper-two-points}
 F_r(x,\nu)+F_r(y,\nu)
 \ge c\min\{1,D_{2r}(x,y)\}.
\end{equation}
Together with \eqref{eq:upper-area}, this yields
\begin{equation}\label{eq:upper-diameter-M}
 \diam_2T
 \le C\max_{x\in T}\int_0^\infty F_r(x,\nu)\,dr,
\end{equation}
and, allowing a different measure for every dyadic $r$,
\begin{equation}\label{eq:upper-diameter-Gamma}
 \diam_2T\le C\Gamma^\Psi(T).
\end{equation}

For every nonnegative nonincreasing $f$,
\begin{equation}\label{eq:upper-dyadic-integral}
 \int_0^\infty f(r)\,dr
 \le\sum_{k\in\mathbb Z}2^{-k}f(2^{-k})
 \le2\int_0^\infty f(r)\,dr.
\end{equation}
Thus
\begin{equation}\label{eq:upper-GammaM-easy}
 \Gamma^\Psi(T)\le2\mathcal M^\Psi(T).
\end{equation}

For the converse, choose $(\nu_k)$ such that the expression in
\eqref{eq:intro-Gamma} is at most $H$. Put $D=\diam_2T$. If $D=0$
there is nothing to prove. Choose $k_0$ so that
\[
 D\le R_0:=2^{-k_0}<2D,
\]
and put
\[
 r_j=R_02^{-j},
 \qquad
 \nu=\sum_{j\ge0}2^{-j-1}\nu_{k_0+j}.
\]
Then
\begin{equation}\label{eq:upper-mixture}
 F_{r_j}(x,\nu)
 \le F_{r_j}(x,\nu_{k_0+j})+(j+1)\log2,
\end{equation}
and therefore
\begin{equation}\label{eq:upper-mixture-integral}
 \int_0^{R_0}F_r(x,\nu)\,dr\le H+CR_0.
\end{equation}
For $r\ge R_0$,
\[
 F_r(x,\nu)\le\frac{D^2}{4r^2}.
\]
Since \eqref{eq:upper-diameter-Gamma} gives $D\le CH$, we obtain
\[
 \max_x\int_0^\infty F_r(x,\nu)\,dr\le CH.
\]
Taking the infimum proves \eqref{eq:upper-MGamma}.
\end{proof}

Put $p_n=2^n$.

\begin{proposition}\label{prop:upper-partitions}
For every finite $T$ there are an admissible sequence
$(\mathcal A_n)_{n\ge0}$ and numbers $r_n(A)\ge0$,
$A\in\mathcal A_n$, such that
\begin{align}
 \mathcal A_0&=\{T\},
 &|\mathcal A_n|&\le2^{2^n},
 \label{eq:upper-part-card}\\
 r_{n+1}(B)&\le r_n(A)\quad(B\subset A),
 &D_{r_n(A)}(s,t)&\le2^n\quad(s,t\in A,\ r_n(A)>0),
 \label{eq:upper-part-dist}
\end{align}
and $r_n(A)=0$ only when $A$ is a singleton. Moreover,
\begin{equation}\label{eq:upper-part-sum}
 \max_{t\in T}\sum_{n\ge0}
 2^nr_n(\mathcal A_n(t))
 \le C\Gamma^\Psi(T).
\end{equation}
For all sufficiently large $n$, $\mathcal A_n$ is the singleton
partition and $r_n(\{t\})=0$.
\end{proposition}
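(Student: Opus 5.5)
We derive the partitions from a near-optimal measure for $\mathcal M^\Psi(T)$, following Talagrand's construction of partitions from majorizing measures. By Lemma~\ref{lem:upper-MGamma}, fix $\nu\in\mathcal P(T)$ with
\[
 \max_{t\in T}\int_0^\infty F_r(t,\nu)\,dr\le C\Gamma^\Psi(T).
\]
For each $t$ and $n$ we define the critical radius
\[
 r_n(t)=\inf\{r>0:F_r(t,\nu)\le 2^{n}\}.
\]
This is nonincreasing in $n$. Since $F_r(t,\nu)$ is nonincreasing in $r$ and exceeds $2^n$ for $r<r_n(t)$, we get
\[
 \sum_n 2^n r_n(t)\le C\int_0^\infty F_r(t,\nu)\,dr .
\]
So the sum bound \eqref{eq:upper-part-sum} holds at the level of points. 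It remains to build partitions whose radii are controlled by these pointwise radii.

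The partition at level $n$ is built greedily inside each $A\in\mathcal A_{n-1}$. The key estimate is a Markov-type bound. If $F_r(t,\nu)\le 2^n$, then $\nu\{s:D_r(t,s)\le 2^{n+1}\}\ge e^{-2^n}-e^{-2^{n+1}}\ge \tfrac12 e^{-2^n}$. So the ball $\{s:D_r(t,s)\le 2^{n+1}\}$ has $\nu$-mass at least $\tfrac12 e^{-2^n}$.

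Inside $A$ we pick the point $t_1$ with the smallest $r_n$ and put $\rho=r_n(t_1)$. We let the first cell be all remaining $s$ with $D_{2\rho}(s,t_1)$ small, and repeat. To make the cells disjoint we take a maximal family of points whose small balls $\{D_{r}\le 2^{n+1}\}$ are disjoint. Then the $\nu$-mass bound caps the number of centers at $2e^{2^n}$. By the scaled triangle inequality \eqref{eq:upper-scaled-triangle}, every point lies within $D_{2r}\le 2^{n+2}$ of some center, and any two points of a cell satisfy $D_{4r}\le 2^{n+3}$.

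Two adjustments then handle the bookkeeping. First, we reindex $n\mapsto n+c$ so that the bound $e^{C2^n}$ on the number of cells becomes $2^{2^{n}}$ and the distance bound becomes $D_{r}\le 2^n$. Taking intersections with the parent partition squares the count, which the shift also absorbs. Second, we set $r_n(A)=\min\{r_{n-1}(\text{parent}),\,C\max_{s\in A} r_{n+c}(s)\}$. This makes the radii monotone, as \eqref{eq:upper-part-dist} requires.

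The main obstacle is that cell radii must be compared with the radius at the \emph{fixed} point $t$, not at the center. This is exactly the issue in Talagrand's proof of $\gamma\le C\int$ majorizing measure, cf.\ \cite[Ch.~2]{TalagrandBook}. We handle it by ordering centers by increasing $r_n$. A point $t$ ends up in the cell of a center $t_j$ only if it was not covered earlier, and then $r_n(t_j)\le r_n(t)$ because we always pick the minimal remaining radius. Choosing the covering radius as $2r_n(t_j)$ then gives $r_n(\mathcal A_n(t))\le C r_{n-c}(t)$, so \eqref{eq:upper-part-sum} follows from the pointwise estimate above.

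For small $n$, where $2^n\le 1$, we use $r_0(T)\le C\diam_2 T\le C\Gamma^\Psi(T)$. This holds because $D_r(s,t)=\|s-t\|_2^2/4r^2$ for large $r$, and Lemma~\ref{lem:upper-MGamma} bounds the diameter. Since $T$ is finite, once $2^{2^n}\ge |T|$ we may take singletons and set $r_n=0$.
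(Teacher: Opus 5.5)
\textbf{There is a genuine gap in the cardinality step.} Your construction needs two properties of the centers, and your selection rule cannot give both.

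To compare the cell radius with $r_n(t)$, you choose centers in \emph{increasing} order of $\rho_j=r_n(t_j)$ and cover with radius $2\rho_j$. To bound the number of centers by $2e^{2^n}$, you need the mass balls $B_j=\{y:D_{\rho_j}(t_j,y)\le 2^{n+1}\}$ to be pairwise disjoint. With increasing radii they need not be. If $i<j$ and $y\in B_i\cap B_j$, then $\rho_i\le\rho_j$, and the scaled triangle inequality \eqref{eq:upper-scaled-triangle} only gives $D_{2\rho_j}(t_i,t_j)\le 2^{n+2}$. The fact that $t_j$ was not covered by $t_i$ says $D_{2\rho_i}(t_i,t_j)>2^{n+2}$. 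Since $D_r$ decreases in $r$, these two statements are compatible, so there is no contradiction.

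Disjointness does hold if you process centers in decreasing order of radius, or admit a point only when its own ball misses all earlier balls. But then a cell contains points whose critical radii differ by an unbounded factor. The single number $r_n(A)$ must dominate the largest of them, and this is exactly your uncontrolled $C\max_{s\in A}r_{n+c}(s)$.

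A concrete failure:
\begin{itemize}
\item Take exponential variables, so $\Psi_i(a)=a^2/4$ on $[0,2]$ and $\Psi_i=\infty$ beyond. Let $T=\{0,\lambda e_1,\dots,\lambda^Me_M\}$ with $\lambda$ large compared with your covering constant. Put $\nu(\lambda^je_j)=2^{-(M-j+2)}$ and $\nu(0)>\frac12$.
\item One checks that $\max_t\int_0^\infty F_r(t,\nu)\,dr\le C\lambda^M\le C\Gamma^\Psi(T)$, so this $\nu$ is an admissible choice in your first step.
\item For $n\ge1$ and $j\le M-2^{n+1}$ one has $r_n(\lambda^je_j)=\lambda^j/2$. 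Every such $B_j$ contains the heavy point $0$.
\item Under your rule each of these points becomes its own cell. So level $n$ has more than $M-2^{n+1}$ cells, with $M$ arbitrary.
\end{itemize}

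There is also a second, smaller defect: $r_n(t)$ can vanish. For Bernoulli variables $\Psi_i\le1$, and with the same $T$ and $\nu=\delta_0$ one gets $F_r(t,\nu)\le1$ for all $r$, so $r_n\equiv0$. Your $r_n(A)$ can then be $0$ on non-singleton cells, which the proposition forbids.

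\textbf{The missing idea is to group points by the size of their critical radius before separating, with a summable additive slack.} The paper sets $q_n(x)=\sup\{r:F_r(x,\nu)>2^n\}\le G2^{-n}$ and $\varepsilon_n=G4^{-n}$. It splits $T$ according to $\ell=\lfloor q_n(x)/\varepsilon_n\rfloor\in\{0,\dots,2^n\}$. Each group gets the common radius $R=(\ell+2)\varepsilon_n\in(q_n(x)+\varepsilon_n,\,q_n(x)+2\varepsilon_n]$. This gives:
\begin{itemize}
\item Inside a group all mass balls have the same radius. So a maximal $4\cdot 2^n$-separated family for $D_{2R}$ has disjoint $D_R$-balls of $\nu$-mass at least $\frac12e^{-2^n}$, hence at most $2e^{2^n}$ points.
\item There are only $2^n+1$ groups.
\item The total error $\sum_n 2^n\cdot 2\varepsilon_n\le 4G$ is summable.
\item $R\ge 2\varepsilon_n>0$, so no radius vanishes.
\item Monotonicity along refinements follows from $q_{n+1}\le q_n$ and $2\varepsilon_{n+1}<\varepsilon_n$, with no need to take minima with the parent.
\end{itemize}
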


\begin{proof}
By Lemma~\ref{lem:upper-MGamma}, choose $\nu\in\mathcal P(T)$ such that
\begin{equation}\label{eq:upper-G}
 G:=\max_{x\in T}\int_0^\infty F_r(x,\nu)\,dr
 \le C\Gamma^\Psi(T).
\end{equation}
If $G=0$, then \eqref{eq:upper-diameter-M} implies that $T$ is a
singleton, and the assertion is immediate. For $n\ge0$, put
\begin{equation}\label{eq:upper-qn}
 q_n(x)=\sup\{r>0:F_r(x,\nu)>p_n\},
 \qquad
 \varepsilon_n=Gp_n^{-2}.
\end{equation}
Then
\begin{equation}\label{eq:upper-qn-basic}
 q_n(x)\le\frac G{p_n},
 \qquad
 q_{n+1}(x)\le q_n(x),
 \qquad
 \sum_{n\ge0}p_nq_n(x)\le2G.
\end{equation}

For fixed $n$, partition $T$ according to
\[
 \ell=\left\lfloor\frac{q_n(x)}{\varepsilon_n}\right\rfloor,
 \qquad 0\le\ell\le p_n,
\]
and put $R=(\ell+2)\varepsilon_n$ on each such set. Then
\begin{equation}\label{eq:upper-Rn}
 q_n(x)+\varepsilon_n<R\le q_n(x)+2\varepsilon_n,
 \qquad
 F_R(x,\nu)\le p_n.
\end{equation}
Consequently
\begin{equation}\label{eq:upper-ball-mass}
 \nu\{y:D_R(x,y)\le2p_n\}
 \ge e^{-p_n}-e^{-2p_n}
 \ge\frac12e^{-p_n}.
\end{equation}

Choose a maximal $4p_n$-separated family for $D_{2R}$ in each set.
By \eqref{eq:upper-scaled-triangle} and
\eqref{eq:upper-ball-mass}, it has at most $2e^{p_n}$ points.
The corresponding $D_{2R}$-balls of radius $4p_n$ cover the set and
have $D_{4R}$-diameter at most $8p_n$.

Let $\mathcal C_n$ be the resulting partition. Then
\begin{equation}\label{eq:upper-Cn-card}
 |\mathcal C_n|
 \le2(p_n+1)e^{p_n}\le e^{4p_n}.
\end{equation}
Let $\mathcal P_n$ be the common refinement of
$\mathcal C_0,\ldots,\mathcal C_n$. For $A\in\mathcal P_n$, let
$C_n(A)\in\mathcal C_n$ be the member containing $A$, and put
\[
 \widetilde r_n(A)=4R(C_n(A)).
\]
If $B\in\mathcal P_{n+1}$ and $B\subset A\in\mathcal P_n$, then
\eqref{eq:upper-Rn} gives
\[
 \widetilde r_{n+1}(B)\le\widetilde r_n(A).
\]
Writing $R_n(x)=R(\mathcal C_n(x))$, we also have
\begin{equation}\label{eq:upper-Pn-sum}
 \max_x\sum_{n\ge0}p_nR_n(x)\le CG.
\end{equation}
Moreover,
\[
 |\mathcal P_n|\le e^{8p_n}.
\]

Choose a fixed integer $r$ sufficiently large and put
\[
 \mathcal A_k=\{T\},\quad r_k(T)=CG,
 \qquad 0\le k<r,
\]
and, for $n\ge0$,
\[
 \mathcal A_{n+r}=\mathcal P_n,
 \qquad
 r_{n+r}(A)=\widetilde r_n(A).
\]
For all sufficiently large $n$, replace $\mathcal A_n$ by the
singleton partition and put $r_n(\{t\})=0$. For $r$ large enough,
\eqref{eq:upper-part-card}--\eqref{eq:upper-part-dist} follow from
\eqref{eq:upper-Cn-card}. Equations \eqref{eq:upper-G} and
\eqref{eq:upper-Pn-sum} give \eqref{eq:upper-part-sum}.
\end{proof}

Fix $(\mathcal A_n)$ and $r_n(A)$ as above and put
\begin{equation}\label{eq:upper-B}
 B=\max_{t\in T}\sum_{n\ge0}
 2^nr_n(\mathcal A_n(t)).
\end{equation}
Thus $B\le C\Gamma^\Psi(T)$. Choose
$\pi_n(A)\in A$, with $\pi_N(\{t\})=t$, and fix
$\pi_0(T)=t_0\in T$. Write
\[
 \pi_n(t)=\pi_n(\mathcal A_n(t)),
 \qquad
 \Delta_n(t)=\pi_n(t)-\pi_{n-1}(t),
 \qquad
 r_n(t)=r_n(\mathcal A_n(t)).
\]

\subsection{A truncation lemma}

For $c>0$ put
\[
 \operatorname{clip}_c(u)=\max\{-c,\min\{u,c\}\}.
\]

\begin{lemma}\label{lem:upper-scalar}
Let $\kappa>0$, $r_0\ge\cdots\ge r_N=0$, $y_N=x$, and
$|x-y_0|\le\kappa r_0$. Then
\begin{equation}\label{eq:upper-scalar}
 \left|
 x-y_0-\sum_{k=0}^{N-1}
 \operatorname{clip}_{2\kappa r_k}(y_{k+1}-y_k)
 \right|
 \le
 6\kappa\sum_{n=1}^{N-1}r_{n-1}
 \ind_{\{|x-y_n|>\kappa r_n\}}.
\end{equation}
\end{lemma}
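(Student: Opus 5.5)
The plan is to telescope and group the increments between consecutive \emph{good} indices, where the chain $y_n$ is close to $x$ relative to $r_n$. Put $d_k=y_{k+1}-y_k$, so that $x-y_0=\sum_{k=0}^{N-1}d_k$ because $y_N=x$. The left-hand side of \eqref{eq:upper-scalar} is then
\[
 \Bigl|\sum_{k=0}^{N-1}\bigl(d_k-\operatorname{clip}_{2\kappa r_k}(d_k)\bigr)\Bigr|.
\]
Call an index $n\in\{0,\ldots,N\}$ good if $|x-y_n|\le\kappa r_n$, and bad otherwise. By hypothesis $n=0$ is good, and $n=N$ is good since $|x-y_N|=0=\kappa r_N$. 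So the bad indices all lie in $\{1,\ldots,N-1\}$, and these are exactly the indices that carry a nonzero indicator on the right of \eqref{eq:upper-scalar}.

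Next, list the good indices as $0=g_0<g_1<\cdots<g_M=N$ and split the sum over $k$ into the blocks $g_j\le k<g_{j+1}$. Fix one block, with endpoints $a<b$ both good.

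If $b=a+1$, the block has the single term $d_a=(y_{a+1}-x)+(x-y_a)$. This gives $|d_a|\le\kappa r_{a+1}+\kappa r_a\le2\kappa r_a$, because $r$ is nonincreasing. So the clip does nothing and the block contributes $0$.

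If $b\ge a+2$, the indices $a+1,\ldots,b-1$ are all bad. The unclipped block sum telescopes to $y_b-y_a$, and $|y_b-y_a|\le\kappa(r_a+r_b)\le2\kappa r_a$. Each clipped term satisfies $|\operatorname{clip}_{2\kappa r_k}(d_k)|\le2\kappa r_k$. Hence the block error is at most
\[
 2\kappa r_a+\sum_{k=a}^{b-1}2\kappa r_k
 =4\kappa r_a+2\kappa\sum_{k=a+1}^{b-1}r_k .
\]

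Finally, charge this bound to the bad indices of the block. The term $4\kappa r_a$ goes to the bad index $n=a+1$, for which $r_{n-1}=r_a$. Each remaining term $2\kappa r_k$, with $a+1\le k\le b-1$, goes to the bad index $n=k$, using $r_k\le r_{k-1}=r_{n-1}$. Every bad index $n$ then receives at most $(4+2)\kappa r_{n-1}=6\kappa r_{n-1}$. Distinct blocks use disjoint sets of bad indices, so summing over blocks and applying the triangle inequality gives \eqref{eq:upper-scalar}.

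The only step needing care is the grouping: bounding each clipping error $|d_k-\operatorname{clip}(d_k)|$ individually fails, because $|x-y_k|$ at a bad index is not controlled by $r_{k-1}$. Telescoping each block between good indices avoids this, since the endpoints of a block are controlled by the good-index bound.
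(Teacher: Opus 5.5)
Your proposal is correct and follows the paper's proof. The paper's maximal intervals $\{p,\ldots,q\}$ of bad indices are exactly your blocks with $a=p-1$ and $b=q+1$, and terms between two consecutive good indices are left unchanged by the clip. Telescoping over each run and charging $4\kappa r_{p-1}+2\kappa\sum_{k=p}^{q}r_k$ to the bad indices gives the same constant $6\kappa$.
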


\begin{proof}
Let
\[
 I=\{1\le n\le N-1:|x-y_n|>\kappa r_n\}.
\]
If $k,k+1\notin I$, then
\[
 |y_{k+1}-y_k|\le2\kappa r_k,
\]
so the corresponding term is unchanged.

Let $\{p,\ldots,q\}$ be a maximal interval contained in $I$. Since
\[
 \sum_{k=p-1}^{q}(y_{k+1}-y_k)=y_{q+1}-y_{p-1},
\]
the error over $p-1\le k\le q$ is at most
\[
 |y_{q+1}-y_{p-1}|
 +2\kappa\sum_{k=p-1}^{q}r_k
 \le
 6\kappa\sum_{n=p}^{q}r_{n-1}.
\]
Summing over the maximal intervals proves \eqref{eq:upper-scalar}.
\end{proof}

For $\kappa\ge2$ put
\begin{equation}\label{eq:upper-akappa}
 a_i^{(\kappa)}(t)
 =
 t_i-t_{0,i}
 -
 \sum_{n=1}^N
 \operatorname{clip}_{2\kappa r_{n-1}(t)}
 (\Delta_{n,i}(t)).
\end{equation}

\begin{lemma}\label{lem:upper-a-integral}
For every $t\in T$,
\begin{equation}\label{eq:upper-a-integral}
 \sum_i|a_i^{(2)}(t)|
 +
 \sum_i\int_1^{b_i}
 |a_i^{(\sigma_i(u))}(t)|\,du
 \le CB.
\end{equation}
\end{lemma}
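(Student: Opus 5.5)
Fix $t\in T$ and a coordinate $i$. The plan is to apply Lemma~\ref{lem:upper-scalar} coordinatewise with $x=t_i$, $y_n=\pi_n(t)_i$, $r_n=r_n(t)$ and $y_0=t_{0,i}$. The first task is to check its hypothesis $|x-y_0|\le\kappa r_0$ for $\kappa\ge2$. Since $t,t_0\in T$ and $r_0(T)>0$ when $T$ is not a singleton, \eqref{eq:upper-part-dist} gives $\Psi_i(|t_i-t_{0,i}|/r_0)\le 1$. By \eqref{eq:intro-Psi-basic} this forces $|t_i-t_{0,i}|\le 2r_0$, so the hypothesis holds.

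Lemma~\ref{lem:upper-scalar} then yields
\[
 |a_i^{(\kappa)}(t)|\le 6\kappa\sum_{n\ge1}r_{n-1}(t)\,\ind_{\{|t_i-\pi_n(t)_i|>\kappa r_n(t)\}}.
\]
This reduces the lemma to bounding, for each $n$, a weighted count of the coordinates where $t$ and $\pi_n(t)$ are far apart at scale $r_n$. Both $t$ and $\pi_n(t)$ lie in $\mathcal A_n(t)$, so \eqref{eq:upper-part-dist} gives $\sum_i\Psi_i(|t_i-\pi_n(t)_i|/r_n)\le 2^n$.

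For the first term take $\kappa=2$. If $|t_i-\pi_{n,i}|>2r_n$, then $\Psi_i\ge1$ by \eqref{eq:intro-Psi-basic}, so at most $2^n$ coordinates contribute. Summing over $i$ gives at most $12\sum_n 2^n r_{n-1}\le CB$, using that $r_{n-1}\ge r_n$ and reindexing to $\sum 2^{n-1}r_{n-1}$.

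For the integral term, swap the sum over $n$ with the integral and the sum over $i$. This leaves the task of bounding
\[
 \sum_i\int_1^{b_i}\sigma_i(u)\,\ind_{\{|t_i-\pi_{n,i}|>\sigma_i(u) r_n\}}\,du
\]
by $C\,2^n$. The right object is $\Psi_i$ itself. Since $\sigma_i$ is nondecreasing, the set $\{u:\sigma_i(u)<a\}$ with $a=|t_i-\pi_{n,i}|/r_n$ is an initial segment. Hence the integrand $\sigma_i(u)\ind_{\{\sigma_i(u)<a\}}$ integrated over $(1,b_i)$ is at most $\Psi_i(a)$ by \eqref{eq:intro-Psi}. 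So the inner quantity is at most $\sum_i\Psi_i(|t_i-\pi_{n,i}|/r_n)\le 2^n$.

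The remaining concern is the factor $\kappa=\sigma_i(u)$ appearing in $6\kappa r_{n-1}$. This is exactly why the bound above carries the weight $\sigma_i(u)$, and that weight is supplied by the definition of $\Psi_i$. We have $\kappa\ge2$ on $(1,b_i)$ because $\sigma_i\ge2$ there. Multiplying by $6r_{n-1}$ and summing over $n$ again gives $CB$.

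The main obstacle is bookkeeping, not analysis. Lemma~\ref{lem:upper-scalar} should be applied with $\kappa$ depending on $u$ before any sum is interchanged. One also needs to keep track of the case $r_n=0$: there $\pi_n(t)=t$ and the indicator vanishes. The boundary $n=N$ needs no separate treatment, since $\mathcal A_N$ consists of singletons and $\pi_N(t)=t$.
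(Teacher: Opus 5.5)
Your argument for the two main bounds is the paper's own. You apply Lemma~\ref{lem:upper-scalar} coordinatewise with $\kappa=2$ and with $\kappa=\sigma_i(u)$. You count at most $2^n$ coordinates with $|t_i-\pi_{n,i}(t)|>2r_n(t)$. You then use Tonelli and the definition \eqref{eq:intro-Psi} of $\Psi_i$ to absorb the weight $\sigma_i(u)$. All of that is correct.

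\textbf{The gap.} Your verification of the hypothesis $|t_i-t_{0,i}|\le\kappa r_0$ does not work. From \eqref{eq:upper-part-dist} with $n=0$ you get only $\Psi_i(|t_i-t_{0,i}|/r_0)\le1$. But \eqref{eq:intro-Psi-basic} gives $\Psi_i(a)\ge1$ for $a>2$, not $\Psi_i(a)>1$, and $\Psi_i$ can equal $1$ on part of $(2,\infty)$. For a Bernoulli coordinate ($q_i\equiv1$, $b_i=1$) one has $\Psi_i(a)=\min\{1,a^2/4\}$, so $\Psi_i\le1$ gives no information at all.

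This is not cosmetic. Take $T=\{0,Me_1\}\subset\R$ with $Y_1=\varepsilon_1$ and $t_0=0$. The choice $\mathcal A_0=\{T\}$, $r_0(T)=1$, with $\mathcal A_1$ the singletons and $r_1=0$, satisfies \eqref{eq:upper-part-card}--\eqref{eq:upper-part-dist}. It gives $B=1\le C\Gamma^\Psi(T)$, yet $a^{(2)}(Me_1)=M-\operatorname{clip}_4(M)=M-4$. So the lemma does not follow from the listed properties of the partitions alone.

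\textbf{The fix.} The paper uses a feature of the construction in Proposition~\ref{prop:upper-partitions}. There the initial levels have $r_k(T)=CG$ with $C$ large, and $\diam_2T\le CG$ by \eqref{eq:upper-diameter-M}. Hence $r_0(T)\ge\diam_2T$, and so $|t_i-t_{0,i}|\le\|t-t_0\|_2\le r_0\le\kappa r_0$ for every $\kappa\ge2$. If you replace your justification with this, the rest of your proof goes through as written. That includes the handling of $r_n(t)=0$, the use of $\sigma_i\ge2$ on $(1,b_i)$, and the final summation $\sum_n 2^nr_{n-1}(t)\le2B$.
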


\begin{proof}
Since $r_0(T)\ge\diam_2T$, Lemma~\ref{lem:upper-scalar} applies.
If $r_n(t)>0$, then
\begin{equation}\label{eq:upper-dist-to-rep}
 D_{r_n(t)}(t,\pi_n(t))\le2^n.
\end{equation}
If $r_n(t)=0$, then $\mathcal A_n(t)=\{t\}$ and $\pi_n(t)=t$.
Using $\kappa=2$ and \eqref{eq:intro-Psi-basic},
\[
 \sum_i|a_i^{(2)}(t)|
 \le
 C\sum_{n=1}^{N-1}r_{n-1}(t)
 \sum_i
 \ind_{\{|t_i-\pi_{n,i}(t)|>2r_n(t)\}}
 \le CB.
\]

For the second term, apply Lemma~\ref{lem:upper-scalar} with
$\kappa=\sigma_i(u)$ and use Tonelli's theorem:
\begin{align*}
 &\sum_i\int_1^{b_i}|a_i^{(\sigma_i(u))}(t)|\,du\\
 &\quad\le
 C\sum_{n=1}^{N-1}r_{n-1}(t)
 \sum_i\int_1^{b_i}
 \sigma_i(u)
 \ind_{\{\sigma_i(u)r_n(t)
 <|t_i-\pi_{n,i}(t)|\}}\,du\\
 &\quad\le
 C\sum_{n=1}^{N-1}2^nr_{n-1}(t)
 \le CB,
\end{align*}
by \eqref{eq:intro-Psi} and
\eqref{eq:upper-dist-to-rep}.
\end{proof}

Put
\begin{equation}\label{eq:upper-a}
 a(t)=a^{(2)}(t),
 \qquad
 Q_i=q_i(E_i).
\end{equation}

\begin{proposition}\label{prop:upper-a}
For $a$ defined in \eqref{eq:upper-a},
\begin{equation}\label{eq:upper-a-positive}
 \E\max_{t\in T}\sum_i|a_i(t)|Q_i\le CB.
\end{equation}
\end{proposition}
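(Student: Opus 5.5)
We bound $\E\max_{t}\sum_i|a_i(t)|Q_i$ deterministically via the layer-cake representation of $Q_i=q_i(E_i)$. The quantile function is the inverse of $N_i$, and on $(1,b_i)$ we have $N_i=(V_i+1)/2$. Hence, on $\{Q_i>1\}$,
\[
 Q_i=1+\int_1^{b_i}\ind_{\{N_i(u)\le E_i\}}\,du .
\]
So for every $t$,
\[
 \sum_i|a_i(t)|Q_i\le\sum_i|a_i(t)|+\sum_i|a_i(t)|\int_1^{b_i}\ind_{\{N_i(u)\le E_i\}}\,du .
\]
The first term is at most $CB$ by Lemma~\ref{lem:upper-a-integral}. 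The second term cannot be handled by Lemma~\ref{lem:upper-a-integral} directly, since $a=a^{(2)}$ and not $a^{(\sigma_i(u))}$. The plan is therefore to split, for each $u\in(1,b_i)$,
\[
 |a_i^{(2)}(t)|\le|a_i^{(\sigma_i(u))}(t)|+|a_i^{(2)}(t)-a_i^{(\sigma_i(u))}(t)| .
\]
The part with $a^{(\sigma_i(u))}$ is bounded pointwise by $\sum_i\int_1^{b_i}|a_i^{(\sigma_i(u))}(t)|\,du\le CB$, again by Lemma~\ref{lem:upper-a-integral}, since the indicator is at most $1$.

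The difference term is the real content. By \eqref{eq:upper-akappa},
\[
 a_i^{(2)}-a_i^{(\kappa)}=\sum_n\bigl(\operatorname{clip}_{2\kappa r_{n-1}}-\operatorname{clip}_{4r_{n-1}}\bigr)(\Delta_{n,i}) ,
\]
with $\kappa=\sigma_i(u)\ge2$. Its absolute value is at most
\[
 \sum_n\min\bigl\{|\Delta_{n,i}|,2\sigma_i(u)r_{n-1}\bigr\}\ind_{\{|\Delta_{n,i}|>4r_{n-1}\}} .
\]
Here we do need the randomness, through the indicators $\ind_{\{N_i(u)\le E_i\}}$. I would bound the expectation of the maximum over $t$ chain-wise. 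For each $n$ and each pair $(A,B)$ of consecutive partition members, set $\Delta=\pi_n(B)-\pi_{n-1}(A)$ and consider the random variable
\[
 W_{n,B}=\sum_i\int_1^{b_i}\min\bigl\{|\Delta_i|,2\sigma_i(u)r_{n-1}\bigr\}\ind_{\{|\Delta_i|>4r_{n-1}\}}\ind_{\{N_i(u)\le E_i\}}\,du .
\]
This is a sum of independent nonnegative terms. We then want
\[
 \E\max_t\sum_nW_{n,\mathcal A_n(t)}\le CB ,
\]
which follows from the standard chaining device
\[
 \E\max_t\sum_n W_n\le\max_t\sum_n u_n(t)+\sum_n\sum_{B\in\mathcal A_n}\E\bigl(W_{n,B}-u_n(B)\bigr)_+ ,
\]
with thresholds $u_n(B)=C2^nr_{n-1}$. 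Since $|\mathcal A_n|\le2^{2^n}$, it suffices to have the tail bound
\[
 \P\bigl(W_{n,B}>C2^nr_{n-1}x\bigr)\le e^{-2^{n+2}x},\qquad x\ge1 .
\]

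To get this tail bound, use that $D_{r_{n-1}}(t,\pi_{n-1}(t))\le2^{n-1}$, which gives the same kind of control on $\Delta$ at scale $2r_{n-1}$ by \eqref{eq:upper-scaled-triangle}. Let $J=\{i:|\Delta_i|>4r_{n-1}\}$. Since $\Psi_i(a)\ge1$ for $a\ge2$, we get $|J|\le C2^n$, and moreover $\sum_{i\in J}\Psi_i(|\Delta_i|/2r_{n-1})\le C2^n$. Each summand of $W_{n,B}$ is at most $2r_{n-1}\int_1^{\infty}\sigma_i(u)\ind_{\{\sigma_i(u)<|\Delta_i|/(2r_{n-1})\}}\,du$ on the event $\{E_i\ge N_i(u)\}$, plus a truncated piece. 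Its mean is governed by $r_{n-1}\Psi_i$, and its size at large $E_i$ is controlled through $\int\sigma_i\ind_{\{N_i\le E_i\}}\approx2E_i$. So each summand is dominated by $Cr_{n-1}\min\{E_i,\Psi_i(\cdot)\}$ plus a bounded piece. A Bernstein/exponential-moment bound for $\sum_{i\in J}\min\{E_i,\psi_i\}$, with $|J|\le C2^n$ and $\sum\psi_i\le C2^n$, then yields the desired tail bound. Summing over $n$ gives the total $C\sum_n2^nr_{n-1}\le CB$.

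The main obstacle is precisely this tail estimate. It requires checking that $\min\{|\Delta_i|,2\sigma_i(u)r_{n-1}\}$ integrated against $\ind_{\{N_i(u)\le E_i\}}$ is dominated by $r_{n-1}$ times a capped exponential, uniformly in the shape of $N_i$. For this I would first try the pointwise bound
\[
 \int_1^{b_i}\sigma_i\ind_{\{N_i\le E\}}\le2E ,
\]
and the complementary bound
\[
 \int_1^{b_i}\ind_{\{\sigma_i<a\}}\sigma_i\le\Psi_i(a) .
\]
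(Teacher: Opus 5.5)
Your proposal is correct and is essentially the paper's argument. The paper likewise integrates $|a_i(t)|$ up to $Q_i$ and splits $a^{(2)}$ against $a^{(\sigma_i(u))}$ via Lemma~\ref{lem:upper-a-integral}. It bounds the clip difference by $H_{A,i}(u)\le 2r_A\sigma_i(u)$, supported on $I_A=\{i:|\Delta_{A,i}|>4r_A\}$ with $|I_A|\le 2^{n-1}$. It then controls $\max_{n,A}2^{-n}\sum_{i\in I_A}E_i$ through $\E e^{\frac12\sum_{i\in I_A}E_i}=2^{|I_A|}$ and $|\mathcal A_n|\le 2^{2^n}$, which is equivalent to your thresholded union bound.

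The step you flag as the main obstacle closes at once with the crude bounds you list last: $\min\{|\Delta_i|,2\sigma_i(u)r_{n-1}\}\le 2\sigma_i(u)r_{n-1}$ and $\int_1^{Q_i}\sigma_i(u)\,du\le 2E_i$ give $W_{n,B}\le 4r_{n-1}\sum_{i\in J}E_i$. No $\Psi_i$-refinement is needed. Your sketched domination by $Cr_{n-1}\min\{E_i,\Psi_i\}$ plus a bounded piece is also not accurate as stated, since the range where $\sigma_i(u)\ge|\Delta_i|/(2r_{n-1})$ can contribute a term of order $r_{n-1}E_i$.
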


\begin{proof}
For $A\in\mathcal A_n$, $n\ge1$, let $A'\in\mathcal A_{n-1}$ contain
$A$ and put
\begin{equation}\label{eq:upper-increment-data}
 r_A=r_{n-1}(A'),
 \qquad
 \Delta_A=\pi_n(A)-\pi_{n-1}(A'),
 \qquad
 I_A=\{i:|\Delta_{A,i}|>4r_A\}.
\end{equation}
If $r_A>0$, then
\begin{equation}\label{eq:upper-IA}
 |I_A|
 \le D_{r_A}(\pi_n(A),\pi_{n-1}(A'))
 \le2^{n-1}.
\end{equation}

For $u>1$ put
\begin{equation}\label{eq:upper-HA}
 H_{A,i}(u)
 =
 \min\{|\Delta_{A,i}|,2\sigma_i(u)r_A\}
 -
 \min\{|\Delta_{A,i}|,4r_A\}.
\end{equation}
Then $H_{A,i}(u)=0$ for $i\notin I_A$ and
\begin{equation}\label{eq:upper-HA-bound}
 0\le H_{A,i}(u)\le2r_A\sigma_i(u).
\end{equation}
From \eqref{eq:upper-akappa},
\begin{equation}\label{eq:upper-a-compare}
 |a_i(t)|
 \le
 |a_i^{(\sigma_i(u))}(t)|
 +
 \sum_{n=1}^N
 H_{\mathcal A_n(t),i}(u).
\end{equation}
Using \eqref{eq:upper-a-integral} and integrating
\eqref{eq:upper-a-compare} up to $Q_i$ gives
\begin{equation}\label{eq:upper-a-random-bound}
 \sum_i|a_i(t)|Q_i
 \le
 CB+
 \sum_{n=1}^N\sum_i
 \int_1^{Q_i}
 H_{\mathcal A_n(t),i}(u)\,du.
\end{equation}
Since
\[
 \int_1^{Q_i}\sigma_i(u)\,du
 =V_i(Q_i)-V_i(1)
 \le2E_i,
\]
the contribution of $A\in\mathcal A_n$ is at most
\begin{equation}\label{eq:upper-increment-positive}
 4r_A\sum_{i\in I_A}E_i.
\end{equation}

Put
\begin{equation}\label{eq:upper-Zstar}
 Z_*=
 \max_{\substack{A\in\mathcal A_n,\ n\ge1\\r_A>0}}
 \frac1{2^n}\sum_{i\in I_A}E_i.
\end{equation}
By \eqref{eq:upper-IA},
\[
 \E\exp\left(\frac12\sum_{i\in I_A}E_i\right)
 =2^{|I_A|}
 \le2^{2^{n-1}}.
\]
Together with $|\mathcal A_n|\le2^{2^n}$, this gives
\[
 \P(Z_*>u)
 \le\sum_{n\ge1}e^{-cu2^n},
 \qquad u\ge C,
\]
and hence $\E Z_*\le C$. Therefore, by
\eqref{eq:upper-a-random-bound} and
\eqref{eq:upper-increment-positive},
\[
 \E\max_t\sum_i|a_i(t)|Q_i
 \le
 CB+
 C\E Z_*
 \max_t\sum_{n=1}^N2^nr_{\mathcal A_n(t)}
 \le CB.
\]
\end{proof}

\subsection{Proof of the upper bound}

For $A\in\mathcal A_n$, $n\ge1$, define
\begin{equation}\label{eq:upper-hA}
 h_{A,i}=\operatorname{clip}_{4r_A}(\Delta_{A,i}),
\end{equation}
and put
\begin{equation}\label{eq:upper-v}
 v(t)=\sum_{n=1}^Nh_{\mathcal A_n(t)}.
\end{equation}
Then
\begin{equation}\label{eq:upper-decomp-id}
 t-t_0=a(t)+v(t).
\end{equation}
Moreover,
\begin{equation}\label{eq:upper-hA-size}
 \|h_A\|_\infty\le4r_A,
 \qquad
 \|h_A\|_2^2
 \le16r_A^2\,2^{n-1},
\end{equation}
by \eqref{eq:upper-part-dist} and
\eqref{eq:intro-Psi-basic}. Hence
\begin{equation}\label{eq:upper-Cexp-v}
 \mathcal C_{\rm exp}(v(T))\le CB.
\end{equation}

\begin{lemma}\label{lem:upper-moment}
For $p\ge1$ and $h\in\R^n$,
\begin{equation}\label{eq:upper-moment}
 \|\langle h,Y\rangle\|_p
 \le
 C\bigl(\sqrt p\,\|h\|_2+p\|h\|_\infty\bigr).
\end{equation}
\end{lemma}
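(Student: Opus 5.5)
The plan is to reduce to the symmetric exponential sum already handled in Proposition~\ref{prop:est-moments}, using the representation \eqref{eq:intro-quantile} and the pointwise bound $q_i(p)\le\max\{1,p\}$ coming from concavity and the normalization \eqref{eq:intro-normalization}. Since $Y_i\eqdist\varepsilon_iq_i(E_i)$ with all variables independent, we have
\[
 \langle h,Y\rangle\eqdist\sum_i h_i\varepsilon_iq_i(E_i),
 \qquad
 q_i(E_i)\le\max\{1,E_i\}\le 1+E_i .
\]
We may assume $p\ge2$. For $1\le p<2$, we use $\|\cdot\|_p\le\|\cdot\|_2$ together with the $p=2$ bound, and this costs only a constant.

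First I would condition on $(E_i)$ and apply the contraction principle for Rademacher sums. The coefficients satisfy $|h_iq_i(E_i)|\le|h_i|(1+E_i)$, so conditionally
\[
 \E_\varepsilon\Bigl|\sum_ih_i\varepsilon_iq_i(E_i)\Bigr|^p
 \le\E_\varepsilon\Bigl|\sum_ih_i\varepsilon_i(1+E_i)\Bigr|^p .
\]
Taking expectations and applying Minkowski's inequality then gives
\[
 \|\langle h,Y\rangle\|_p\le
 \Bigl\|\sum_ih_i\varepsilon_i\Bigr\|_p+\Bigl\|\sum_ih_i\varepsilon_iE_i\Bigr\|_p .
\]

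Next I would bound the two terms separately. The first term is at most $C\sqrt p\,\|h\|_2$ by Khintchine's inequality, or equivalently by the subgaussian Chernoff bound. For the second term I would use the computation at the end of the proof of Proposition~\ref{prop:est-moments}, which applies verbatim with $\beta_i=\infty$: it bounds the moment generating function by $\exp(2\lambda^2\|h\|_2^2)$ for $|\lambda|\|h\|_\infty\le1/2$, and Chernoff plus integration of tails then gives $C(\sqrt p\|h\|_2+p\|h\|_\infty)$. In fact this is exactly \eqref{eq:est-bernstein} with $I$ the full index set and all $\beta_i=\infty$, so I can cite that estimate directly.

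The argument is short, and the only step needing care is the justification of $q_i(p)\le\max\{1,p\}$. For $p\le1$ it follows from monotonicity and $q_i(1)=1$. For $p\ge1$ it follows from concavity together with $q_i(0)\ge0$, since then $q_i(p)/p$ is nonincreasing. One also has to check that contraction is applied conditionally with deterministic coefficients, which is legitimate because the signs are independent of the $E_i$. If the countable case were needed, the bound extends by $L_2$ convergence and Fatou's lemma, but here $h\in\R^n$, so no limit argument is required.
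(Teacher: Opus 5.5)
Your proposal is correct and follows essentially the same route as the paper: bound $q_i(E_i)\le 1+E_i$ using concavity and $q_i(1)=1$, apply the contraction principle conditionally on $(E_i)$, then control the Rademacher part by Khintchine and the symmetric exponential part by $\prod_i(1-\lambda^2h_i^2)^{-1}\le\exp(2\lambda^2\|h\|_2^2)$ for $|\lambda|\|h\|_\infty\le 1/2$. Citing \eqref{eq:est-bernstein} with $\beta_i=\infty$ for the exponential part, and reducing $1\le p<2$ to $p=2$, are only minor bookkeeping differences from the paper's proof.
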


\begin{proof}
Concavity and $q_i(1)=1$ give $q_i(u)\le1+u$. Conditioning on
$(E_i)$ and using the contraction principle reduces the estimate to
\[
 \sum_i h_i\varepsilon_i(1+E_i).
\]
The Rademacher part is bounded by
$C\sqrt p\,\|h\|_2$. For the exponential part,
\[
 \E\exp\left(\lambda\sum_i h_i\varepsilon_iE_i\right)
 =
 \prod_i(1-\lambda^2h_i^2)^{-1}
 \le
 \exp(2\lambda^2\|h\|_2^2)
\]
whenever $|\lambda|\|h\|_\infty\le1/2$. The standard exponential
moment estimate gives \eqref{eq:upper-moment}.
\end{proof}

\begin{lemma}\label{lem:upper-v-width}
For $v$ defined in \eqref{eq:upper-v},
\begin{equation}\label{eq:upper-v-width}
 \E\max_{t\in T}|\langle v(t),Y\rangle|\le CB.
\end{equation}
\end{lemma}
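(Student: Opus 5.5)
This is a standard generic-chaining bound using the increments $h_A$. We have $v(t)=\sum_{n=1}^N h_{\mathcal A_n(t)}$, where $h_A$ depends only on $A\in\mathcal A_n$. For each $A$, the bounds \eqref{eq:upper-hA-size} and Lemma~\ref{lem:upper-moment} give, for $p\ge1$,
\[
 \|\langle h_A,Y\rangle\|_p
 \le C\bigl(\sqrt p\,4r_A2^{(n-1)/2}+4pr_A\bigr).
\]
In tail form, via Markov with $p=u2^n$ for $u\ge1$, this reads
\[
 \P\bigl(|\langle h_A,Y\rangle|>Cu\,2^nr_A\bigr)\le e^{-u2^n}.
\]
Indeed, with $p=u2^n$ the right side of the moment bound is at most $C(u^{1/2}2^n r_A+u2^nr_A)\le Cu2^nr_A$.

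Next I take a union bound over all $n\ge1$ and $A\in\mathcal A_n$. Since $|\mathcal A_n|\le2^{2^n}$, the event
\[
 \Omega_u=\bigl\{\exists n,\ A\in\mathcal A_n:\ |\langle h_A,Y\rangle|>Cu2^nr_A\bigr\}
\]
has probability at most $\sum_n 2^{2^n}e^{-u2^n}\le Ce^{-u}$ for $u\ge2$. Off $\Omega_u$, every $t$ satisfies
\[
 |\langle v(t),Y\rangle|\le\sum_n|\langle h_{\mathcal A_n(t)},Y\rangle|\le Cu\sum_n2^nr_{n-1}(\mathcal A_{n-1}(t)).
\]
Here $r_A=r_{n-1}(A')$, so reindexing gives the bound $Cu\cdot 2B$ with $B$ from \eqref{eq:upper-B}. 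Integrating the tail $\P(\max_t|\langle v(t),Y\rangle|>CuB)\le Ce^{-u}$ over $u$ yields $\E\max_t|\langle v(t),Y\rangle|\le CB$.

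I expect no serious obstacle. The only points needing care are checking that the constant in Lemma~\ref{lem:upper-moment} combined with $\|h_A\|_2\le 4r_A2^{(n-1)/2}$ indeed produces the scale $2^nr_A$, and that the singleton stage ($r_A=0$ forces $h_A=0$) causes no trouble. Both are immediate.
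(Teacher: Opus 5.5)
Your proof is correct and follows the paper's argument. You bound each increment $\langle h_A,Y\rangle$ at scale $2^n r_A$ via Lemma~\ref{lem:upper-moment} and \eqref{eq:upper-hA-size}, take a union bound using $|\mathcal A_n|\le 2^{2^n}$, and sum along the chain using $r_A=r_{n-1}(A')$ to get $2B$; the only cosmetic difference is that the paper applies Markov at the fixed moment $p=2^n$, obtaining tails $(C/u)^{2^n}$ for the normalized maximum, whereas you take $p=u2^n$ to get exponential tails.
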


\begin{proof}
For $A\in\mathcal A_n$, put
\[
 w_A=
 \sqrt{2^n}\,\|h_A\|_2
 +2^n\|h_A\|_\infty.
\]
By \eqref{eq:upper-hA-size},
\begin{equation}\label{eq:upper-wA}
 w_A\le C2^nr_A.
\end{equation}
Lemma~\ref{lem:upper-moment} gives
\[
 \|\langle h_A,Y\rangle\|_{2^n}\le Cw_A.
\]
Since $|\mathcal A_n|\le2^{2^n}$,
\[
 \P\left(
 \max_{\substack{n\ge1,\ A\in\mathcal A_n\\w_A>0}}
 \frac{|\langle h_A,Y\rangle|}{w_A}>u
 \right)
 \le
 \sum_{n\ge1}\left(\frac{C}{u}\right)^{2^n}2^{2^n}.
\]
Thus
\begin{equation}\label{eq:upper-increment-max}
 \E\max_{n,A}
 \frac{|\langle h_A,Y\rangle|}{w_A}
 \le C.
\end{equation}
For every $t$,
\[
 |\langle v(t),Y\rangle|
 \le
 \left(
 \max_{n,A}
 \frac{|\langle h_A,Y\rangle|}{w_A}
 \right)
 \sum_{n=1}^Nw_{\mathcal A_n(t)}.
\]
Now \eqref{eq:upper-wA}, \eqref{eq:upper-B}, and
\eqref{eq:upper-increment-max} give
\eqref{eq:upper-v-width}.
\end{proof}

\begin{theorem}\label{thm:upper}
For every finite $T\subset\R^n$,
\begin{equation}\label{eq:upper-final}
 S_Y(T)\le C\Gamma^\Psi(T).
\end{equation}
\end{theorem}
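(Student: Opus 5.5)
The plan is to combine the decomposition \eqref{eq:upper-decomp-id} with the two estimates already proved for its pieces. Fix the admissible sequence $(\mathcal A_n)$ and radii $r_n(A)$ from Proposition~\ref{prop:upper-partitions}, the representatives $\pi_n$ with $\pi_0(T)=t_0$, and the quantity $B$ of \eqref{eq:upper-B}, so that $B\le C\Gamma^\Psi(T)$. Since $Y$ is symmetric and centered, $\E\langle t_0,Y\rangle=0$. Hence for every finite $F\subset T$ we have $\E\max_{t\in F}Y_t=\E\max_{t\in F}\langle t-t_0,Y\rangle$. It therefore suffices to bound $\E\max_{t\in T}\langle t-t_0,Y\rangle$; for finite $T$ the supremum in \eqref{eq:intro-width} is attained at $F=T$ by monotonicity.

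By \eqref{eq:upper-decomp-id}, $t-t_0=a(t)+v(t)$, so pointwise
\[
 \max_{t\in T}\langle t-t_0,Y\rangle
 \le\max_{t\in T}\sum_i|a_i(t)|\,|Y_i|
 +\max_{t\in T}|\langle v(t),Y\rangle|.
\]
The second term has expectation at most $CB$ by Lemma~\ref{lem:upper-v-width}. For the first, use the representation \eqref{eq:intro-quantile}: $|Y_i|\eqdist q_i(E_i)=Q_i$ jointly in $i$. Then Proposition~\ref{prop:upper-a} bounds $\E\max_t\sum_i|a_i(t)|Q_i$ by $CB$. Adding the two estimates gives $S_Y(T)\le CB\le C\Gamma^\Psi(T)$.

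The only point needing care is that the decomposition identity \eqref{eq:upper-decomp-id} holds exactly. By \eqref{eq:upper-akappa} with $\kappa=2$, the clipping level is $4r_{n-1}(t)=4r_A$ with $A=\mathcal A_n(t)$. This matches \eqref{eq:upper-hA}, so $a(t)+v(t)=t-t_0$ by definition. One also checks that $\pi_N(t)=t$ at the singleton level, which is guaranteed by Proposition~\ref{prop:upper-partitions}, so the telescoping in Lemma~\ref{lem:upper-scalar} closes. I expect no real obstacle beyond this bookkeeping, since the substantive estimates are contained in Proposition~\ref{prop:upper-a} and Lemma~\ref{lem:upper-v-width}.

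Finally, the degenerate case $\Gamma^\Psi(T)=0$ forces $T$ to be a singleton, by \eqref{eq:upper-diameter-Gamma}. In that case $S_Y(T)=0$ trivially.
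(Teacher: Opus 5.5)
Your proposal is correct and matches the paper's proof essentially verbatim. You center at $t_0$ using $\E\langle t_0,Y\rangle=0$, split $t-t_0=a(t)+v(t)$, bound the first piece by Proposition~\ref{prop:upper-a} (via $|Y_i|\eqdist Q_i$) and the second by Lemma~\ref{lem:upper-v-width}, and conclude with $B\le C\Gamma^\Psi(T)$ from \eqref{eq:upper-part-sum}.
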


\begin{proof}
Since $\E\langle t_0,Y\rangle=0$, by
\eqref{eq:upper-decomp-id}, Proposition~\ref{prop:upper-a}, and
Lemma~\ref{lem:upper-v-width},
\[
 S_Y(T)
 \le
 \E\max_t\sum_i|a_i(t)|Q_i
 +
 \E\max_t|\langle v(t),Y\rangle|
 \le CB.
\]
Equation \eqref{eq:upper-part-sum} gives
$B\le C\Gamma^\Psi(T)$.
\end{proof}

Together with \eqref{eq:lower-final} and
Lemma~\ref{lem:upper-MGamma}, this proves
Theorem~\ref{thm:intro-main} for finite $T$.

\begin{proof}[Proof of Theorem~\ref{thm:intro-decomp}]
The construction above gives $t_0\in T$ and maps $a,v:T\to\R^n$ such that
\[
 t-t_0=a(t)+v(t)
\]
and
\[
 \E\max_{t\in T}\sum_i|a_i(t)|\,|Y_i|
 +\cexp(v(T))
 \le CB.
\]
By \eqref{eq:upper-part-sum} and \eqref{eq:lower-final},
\[
 B\le C\Gamma^\Psi(T)\le C S_Y(T).
\]
With $T_1=a(T)$ and $T_2=v(T)$ this gives
\[
 \inf\left\{
 \E\max_{a\in T_1}\sum_i|a_i|\,|Y_i|+\cexp(T_2)
 \right\}
 \le C S_Y(T).
\]
For $Y_i=\xi_i^{[\beta_i]}$, Theorem~\ref{thm:chop-main} gives
\[
 \inf\left\{
 \sup_{a\in T_1}\sum_i\beta_i|a_i|+\cexp(T_2)
 \right\}
 \le C S_\beta(T).
\]

Let now $T-t_0\subset T_1+T_2$. Choose
$a(t)\in T_1$ and $v(t)\in T_2$ with
$t-t_0=a(t)+v(t)$. Then
\[
 S_Y(T)
 \le
 \E\max_{a\in T_1}\sum_i|a_i|\,|Y_i|
 +\E\max_{v\in T_2}\langle v,Y\rangle.
\]
Fix an admissible representation in \eqref{eq:intro-Cexp} and write
\[
 h_k(v)=u_k(v)-u_{k-1}(v).
\]
Since $u_0$ is constant on $T_2$ and $\E\langle u_0,Y\rangle=0$,
the argument of Lemma~\ref{lem:upper-v-width} gives
\[
 \E\max_{v\in T_2}\langle v,Y\rangle
 \le C\max_{v\in T_2}\sum_k
 \left(
 2^{k/2}\|h_k(v)\|_2+2^k\|h_k(v)\|_\infty
 \right).
\]
Taking the infimum over admissible representations gives
\[
 \E\max_{v\in T_2}\langle v,Y\rangle
 \le C\cexp(T_2).
\]
Taking the infimum proves \eqref{eq:intro-decomp}. If
$Y_i=\xi_i^{[\beta_i]}$, then
\[
 \E\max_{a\in T_1}\langle a,\xi^{[\beta]}\rangle
 \le\sup_{a\in T_1}\sum_i\beta_i|a_i|.
\]
The same argument proves \eqref{eq:intro-cap-rem}.
\end{proof}

\section{Convex hulls}\label{sec:hulls}

We first prove Corollary~\ref{cor:intro-hull} when the functions
$q_i$ are strictly increasing and unbounded.

\subsection{A consequence of the small-cover theorem}

For a bounded nonempty set $A\subset[0,\infty)^n$, put
\begin{equation}\label{eq:hull-P}
 P_Y(A)=\E\sup_{a\in A}\sum_i a_iq_i(E_i)
\end{equation}
and
\begin{equation}\label{eq:hull-K}
 K(A)=\clconv\{(\eta_i a_i)_i:
 a\in A,\ \eta_i\in\{-1,1\}\}.
\end{equation}

We use the following consequence of the positive small-cover theorem
of Bednorz, Martynek and Meller \cite[Theorem~2.1]{BMM}. There are a
universal constant $L$ and a countable family $(z^\lambda,I_\lambda)$,
with $z_i^\lambda\ge1$ for $i\in I_\lambda$, such that
\begin{align}
 \left\{x\in[0,\infty)^n:
 \sup_{a\in A}\sum_i a_ix_i\ge LP_Y(A)\right\}
 &\subset
 \bigcup_\lambda
 \{x:x_i\ge z_i^\lambda,\ i\in I_\lambda\},
 \label{eq:hull-cover}\\
 \sum_\lambda e^{-p_\lambda}&\le\frac12,
 \qquad
 p_\lambda=\sum_{i\in I_\lambda}N_i(z_i^\lambda).
 \label{eq:hull-cover-mass}
\end{align}
For $t\ge1$,
\[
 \frac{\P(E_i\ge t)}{\P(E_i\ge2t)}=e^t\ge e,
 \qquad
 q_i(2t)\le2q_i(t),
 \qquad
 \E q_i(E_i)\ge e^{-1}q_i(1).
\]
Thus the constants in \cite[Theorem~2.1]{BMM} are universal.

We shall also use
\begin{equation}\label{eq:hull-q}
 q_i(v)\le q_i(u)\max\{1,v/u\},
 \qquad u>0,
\end{equation}
which follows from concavity.

\begin{proposition}\label{prop:hull-positive}
For every $\theta\ge1$ there are vectors $u_j\in\R^n$, $j\ge1$, such
that
\begin{equation}\label{eq:hull-positive}
 K(A)\subset\clconv\{\pm u_j:j\ge1\},
 \qquad
 \|\langle u_j,Y\rangle\|_{\theta\ell_j}
 \le C_\theta P_Y(A),
\end{equation}
where $\ell_j=\log(e+j)$.
\end{proposition}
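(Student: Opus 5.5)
The plan is to turn the small cover \eqref{eq:hull-cover}--\eqref{eq:hull-cover-mass} into an explicit countable family of test vectors. We normalize $P_Y(A)=1$ and put $M=LP_Y(A)=L$. Every point of $K(A)$ acts on sign-symmetric vectors, so by Hahn--Banach it suffices to build $u_j$ with the following property: whenever $x\in\R^n$ satisfies $\sup_j|\langle u_j,x\rangle|\le1$, we have $\sup_{a\in A}\sum_i a_i|x_i|\le C_\theta$. That is, the polar of $\{\pm u_j\}$ is contained in $C_\theta$ times the polar of $K(A)$.

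For each $\lambda$ and each sign pattern $\eta\in\{-1,1\}^{I_\lambda}$, define
\[
 u_{\lambda,\eta}=\frac{M}{p_\lambda}\sum_{i\in I_\lambda}\eta_i\,\frac{N_i(z_i^\lambda)}{z_i^\lambda}\,e_i .
\]
This is the dual ``tangent'' vector to the tail set $\{x_i\ge z_i^\lambda,\ i\in I_\lambda\}$: it satisfies $\langle u_{\lambda,\eta},x\rangle\ge M$ whenever $\eta_ix_i\ge z_i^\lambda$ on $I_\lambda$.

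The covering step goes as follows. Take $x$ with $\sup_{a}\sum_ia_i|x_i|>CM$. By homogeneity, $y=|x|$ scaled by a factor lies in the left side of \eqref{eq:hull-cover}. So some $\lambda$ has $|x_i|\ge z_i^\lambda$ on $I_\lambda$, and then $|\langle u_{\lambda,\eta},x\rangle|\ge M$ for $\eta=\operatorname{sign}x$. By itself this controls only one scale. To get the polar inclusion, I would instead apply the cover to the sets $2^kA$ for all $k$ simultaneously, so that every level of $\sup_a\sum a_i|x_i|$ is caught by some test vector of proportional size. Concretely, the family is the union over dyadic $k$ of the vectors $2^{-k}u^{(k)}_{\lambda,\eta}$.

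Next we order the vectors and check the moment bound. Enumerate the pairs $(\lambda,\eta)$ in increasing order of $p_\lambda$. Since $\sum_\lambda e^{-p_\lambda}\le1/2$ and there are $2^{|I_\lambda|}\le e^{p_\lambda}$ sign patterns (using $N_i(z_i^\lambda)\ge1$ because $z_i^\lambda\ge1$), the index $j$ of $u_{\lambda,\eta}$ satisfies $\log j\lesssim p_\lambda$, and hence $\theta\ell_j\le C_\theta\,p_\lambda$ up to the additive constant. It therefore suffices to show
\[
 \|\langle u_{\lambda,\eta},Y\rangle\|_{p}\le C\max\{1,p/p_\lambda\}\quad\text{for }p\ge1 .
\]
By Lata\l a's moment formula for log-concave sums (equivalently Proposition~\ref{prop:est-moments} transferred via Lemma~\ref{lem:lower-convex-order}), $\|\sum_i c_iY_i\|_p\asymp\sup\{\sum c_iw_i:\sum_i \hat N_i(w_i)\le p\}$, where $\hat N_i$ is the convexified tail. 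With $c_i=MN_i(z_i)/(p_\lambda z_i)$, convexity of $N_i$ together with $N_i(w)\ge N_i(z)w/z$ for $w\ge z$ and the estimate \eqref{eq:hull-q} give $\sum c_iw_i\lesssim M\max\{1,p/p_\lambda\}$. The dyadic scales contribute a geometric factor. Strict monotonicity and unboundedness of $q_i$ make $N_i$ finite and continuous, which is where that hypothesis enters.

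I expect the main obstacle to be the Hahn--Banach/polar step. The small cover only says that large values of the positive process lie in a union of tail sets. Converting this into the statement that the absolute convex hull of the $u$'s dominates $K(A)$ needs a duality argument that is uniform over all $x$. That step probably forces the multiscale family together with a careful choice of weights $N_i(z_i)/z_i$ versus simply $1/z_i$. I would first try the plain $1/(z_i|I_\lambda|)$ normalization. If the moment estimate fails for large $|I_\lambda|$, I would switch to the $N_i$-weighted version above.
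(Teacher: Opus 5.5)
Your test vectors $u_{\lambda,\eta}=LP_Y(A)\,\eta w^\lambda$ are exactly the paper's, and so are the Hahn--Banach reduction and the ordering by nondecreasing $p_\lambda$ with $\log j\lesssim p_\lambda$. However, the step you single out as the main obstacle rests on a slip, and the multiscale remedy you propose does not work as justified.

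The slip is that, when you rescale $|x|$ into the left side of \eqref{eq:hull-cover}, you drop the scaling factor. Let $s=\sup_{a\in A}\sum_ia_i|x_i|>0$ and $M=LP_Y(A)$. Then $y=(M/s)|x|$ satisfies $\sup_{a\in A}\langle a,y\rangle=M$, so for some $\lambda$ one has $|x_i|\ge(s/M)z_i^\lambda$ on $I_\lambda$. With $\eta=\operatorname{sign}x$ on $I_\lambda$,
\[
 \langle u_{\lambda,\eta},x\rangle
 =\frac{M}{p_\lambda}\sum_{i\in I_\lambda}\frac{N_i(z_i^\lambda)}{z_i^\lambda}|x_i|
 \ge\frac{s}{p_\lambda}\sum_{i\in I_\lambda}N_i(z_i^\lambda)=s .
\]
Hence $\sup_{a\in A}\sum_ia_i|x_i|\le\sup_{\lambda,\eta}|\langle u_{\lambda,\eta},x\rangle|$ for every $x$. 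This is \eqref{eq:hull-support-function} applied to $|x|$, i.e.\ the polar inclusion with constant $1$. Both sides are positively homogeneous, so a single level set always suffices.

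You should therefore discard the multiscale family. Since $P_Y(2^kA)=2^kP_Y(A)$, the set covered for $2^kA$ is the same as for $A$. Moreover, $2^{-k}u^{(k)}_{\lambda,\eta}$ carries the same prefactor $LP_Y(A)$ for every $k$, so there is no geometric factor. If the covers for different $k$ are chosen differently, the counting \eqref{eq:hull-count} holds only within each cover. A union of infinitely many undamped families then has no rank bound $\ell_j\lesssim p_\lambda$, which is exactly what your moment step needs.

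Your moment estimate is correct if you cite the Gluskin--Kwapie\'n formula for sums with log-concave tails; the split into $w_i\le z_i^\lambda$ and $w_i>z_i^\lambda$ does give $M(1+p/p_\lambda)$. It is not, however, obtainable ``equivalently'' from Proposition~\ref{prop:est-moments} and Lemma~\ref{lem:lower-convex-order}. That proposition only gives a Bernstein-type upper bound for capped variables. That lemma dominates other variables by $\varepsilon q(E)$, which is the wrong direction for an upper bound on moments of $\langle u,Y\rangle$.

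The paper avoids all of this with a pointwise bound. Since $q_i(p_{\lambda i})=z_i^\lambda$, \eqref{eq:hull-q} gives $\frac{p_{\lambda i}}{z_i^\lambda}q_i(E_i)\le p_{\lambda i}+E_i$. Hence $|\langle\eta w^\lambda,Y\rangle|\le1+p_\lambda^{-1}\sum_{i\in I_\lambda}E_i$, and Gamma moments give \eqref{eq:hull-witness-moment}.

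Finally, the $N_i$-weights are forced, not a fallback. With weights $1/(z_i^\lambda|I_\lambda|)$, take $I_\lambda=\{1,2\}$ with $z_2^\lambda=1$ and $N_1(z_1^\lambda)$ large. The vector then puts coefficient $M/2$ on $Y_2$, whose $L_p$ norm can grow like $p$. Meanwhile the ordering only guarantees exponents $\theta\ell_j$ of order $p_\lambda$, which is large.
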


\begin{proof}
We may assume $P_Y(A)>0$. Put
\[
 p_{\lambda i}=N_i(z_i^\lambda),
 \qquad
 w_i^\lambda=
 \frac{p_{\lambda i}}{p_\lambda z_i^\lambda}
 \ind_{I_\lambda}(i).
\]
Since $N_i(1)=1$,
\begin{equation}\label{eq:hull-support}
 p_{\lambda i}\ge1,
 \qquad
 |I_\lambda|\le p_\lambda,
 \qquad
 \sum_iw_i^\lambda\le1.
\end{equation}
If $x_i\ge z_i^\lambda$ for $i\in I_\lambda$, then
$\langle w^\lambda,x\rangle\ge1$. Hence
\eqref{eq:hull-cover}, by homogeneity, gives
\begin{equation}\label{eq:hull-support-function}
 \sup_{a\in A}\langle a,x\rangle
 \le LP_Y(A)\sup_\lambda\langle w^\lambda,x\rangle,
 \qquad x\in[0,\infty)^n.
\end{equation}
The Hahn--Banach theorem applied to \eqref{eq:hull-support-function}
gives
\begin{equation}\label{eq:hull-positive-containment}
 K(A)\subset
 LP_Y(A)\clconv
 \{\eta w^\lambda:
 \lambda,\ \eta\in\{-1,1\}^{I_\lambda}\}.
\end{equation}

Since $q_i(p_{\lambda i})=z_i^\lambda$, \eqref{eq:hull-q} gives
\[
 \frac{p_{\lambda i}}{z_i^\lambda}q_i(E_i)
 \le p_{\lambda i}+E_i.
\]
Thus, for $s\ge1$,
\begin{equation}\label{eq:hull-witness-moment}
 \|\langle\eta w^\lambda,Y\rangle\|_s
 \le
 1+\frac{C(|I_\lambda|+s)}{p_\lambda}.
\end{equation}

List the pairs $(\lambda,\eta)$ in nondecreasing order of $p_\lambda$.
By \eqref{eq:hull-cover-mass} and \eqref{eq:hull-support},
\begin{equation}\label{eq:hull-count}
 \#\{(\lambda,\eta):p_\lambda\le r\}
 \le\frac12e^{(1+\log2)r}.
\end{equation}
Hence the $j$th vector satisfies $\ell_j\le Cp_\lambda$.
Taking $s=\theta\ell_j$ in \eqref{eq:hull-witness-moment} and using
\eqref{eq:hull-positive-containment} proves the proposition.
\end{proof}

\subsection{The strictly increasing case}

Let $T$ be finite and keep the notation of Section~\ref{sec:upper}.
For $A\in\mathcal A_n$, $n\ge1$, with $r_A>0$, put
\begin{equation}\label{eq:hull-gA}
 c_A=2^nr_A,
 \qquad
 g_A=\frac{h_A}{c_A}.
\end{equation}
By \eqref{eq:upper-hA-size} and Lemma~\ref{lem:upper-moment},
\begin{equation}\label{eq:hull-gA-moment}
 \|\langle g_A,Y\rangle\|_{c2^n}\le Cc,
 \qquad c\ge1.
\end{equation}
List the vectors $g_A$ in increasing order of $n$. Since
\[
 \sum_{k=1}^n2^{2^k}\le2\,2^{2^n},
\]
the $j$th vector satisfies $\ell_j\le C2^n$. Hence, for every fixed
$\theta\ge1$,
\begin{equation}\label{eq:hull-gA-rank}
 \|\langle g_A,Y\rangle\|_{\theta\ell_j}\le C_\theta.
\end{equation}

Moreover,
\[
 v(t)=\sum_{n=1}^N
 c_{\mathcal A_n(t)}g_{\mathcal A_n(t)},
 \qquad
 \sum_{n=1}^Nc_{\mathcal A_n(t)}\le2B,
\]
so
\begin{equation}\label{eq:hull-v}
 v(T)\subset
 2B\clconv\{\pm g_A:
 A\in\mathcal A_n,\ n\ge1,\ r_A>0\}.
\end{equation}

Put
\begin{equation}\label{eq:hull-Aplus}
 A_+=\{(|a_i(t)|)_i:t\in T\}.
\end{equation}
Then $a(T)\subset K(A_+)$ and Proposition~\ref{prop:upper-a} gives
\begin{equation}\label{eq:hull-Pplus}
 P_Y(A_+)\le CB.
\end{equation}
Proposition~\ref{prop:hull-positive} therefore gives a sequence whose
closed symmetric convex hull contains $a(T)$ and whose
$\theta\ell_j$ moments are bounded by $C_\theta B$.

Let $(u_j)$ be the sequence given by
Proposition~\ref{prop:hull-positive}. Let $(s_j)$ be an enumeration of
the vectors obtained by multiplying by $4$ every vector in
\[
 \{u_j:j\ge1\}\cup
 \{2B g_A:A\in\mathcal A_n,\ n\ge1,\ r_A>0\},
\]
so that the $j$th term of either sequence occurs among the first
$2j$ terms. Since $\ell_{2j}\le2\ell_j$,
\[
 \|\langle s_j,Y\rangle\|_{\theta\ell_j}\le C_\theta B.
\]
If $K=\clconv\{\pm s_j:j\ge1\}$, then
\[
 a(T)\subset K/4,
 \qquad
 v(T)\subset K/4.
\]
Since $t-t_0=a(t)+v(t)$,
\[
 T-T\subset K.
\]
Thus
\begin{equation}\label{eq:hull-Tminus}
 T-T\subset\clconv\{\pm s_j:j\ge1\},
 \qquad
 \|\langle s_j,Y\rangle\|_{\theta\ell_j}
 \le C_\theta B.
\end{equation}
Finally,
\[
 B\le C\Gamma^\Psi(T)\le CS_Y(T).
\]

\subsection{The general case}

We use the following compactness lemma.

\begin{lemma}\label{lem:hull-compact}
Let $Y^{(r)}=(Y_1^{(r)},\ldots,Y_n^{(r)})$ and suppose
$Y_i^{(r)}\to Y_i$ in every finite $L_p$. Assume that
\[
 h\longmapsto\|\langle h,Y\rangle\|_1
\]
is a norm. Let $K_r$ be increasing bounded subsets of $\R^n$ and
\[
 K\subset\overline{\bigcup_rK_r}.
\]
Suppose that, for all sufficiently large $r$, there are vectors
$s_j^{(r)}$ such that
\begin{equation}\label{eq:hull-compact-assumption}
 K_r\subset\clconv\{\pm s_j^{(r)}:j\ge1\},
 \qquad
 \|\langle s_j^{(r)},Y^{(r)}\rangle\|_{2\ell_j}\le A.
\end{equation}
Then there are vectors $s_j\in\R^n$ such that
\begin{equation}\label{eq:hull-compact-moments}
 \|\langle s_j,Y\rangle\|_{2\ell_j}\le A
\end{equation}
and
\begin{equation}\label{eq:hull-compact-containment}
 K\subset
 \clconv\bigl(\{\pm s_j:j\ge1\}\cup C_\infty(A)\bigr),
 \qquad
 C_\infty(A)=\{h:\|\langle h,Y\rangle\|_\infty\le A\}.
\end{equation}
The lemma remains valid with $2\ell_j$ replaced by
$2\theta\ell_j$, for any fixed $\theta\ge1$. Moreover,
\begin{equation}\label{eq:hull-Cinfty}
 C_\infty(A)
 =
 \left\{h:\sum_i b_i|h_i|\le A\right\}
 =
 \conv\left(\{0\}\cup\{\pm Ae_i/b_i:b_i<\infty\}\right).
\end{equation}
\end{lemma}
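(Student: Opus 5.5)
The plan is a compactness argument in the finitely many coordinates of $\R^n$: extract limits of the vectors $s_j^{(r)}$, pass the moment bounds to the limit, and recover the containment through support functions. The only new ingredient is the set $C_\infty(A)$, which absorbs the limits of vectors whose index $j$ runs off to infinity.

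\emph{Step 1: uniform boundedness.} Fix any norm $|\cdot|$ on $\R^n$. By hypothesis $\|h\|_Y:=\|\langle h,Y\rangle\|_1$ is a norm, so $\|h\|_Y\ge c|h|$ for some $c>0$. Since $Y^{(r)}\to Y$ in $L_1$, we have
\[
 \bigl|\|\langle h,Y^{(r)}\rangle\|_1-\|h\|_Y\bigr|\le\varepsilon_r|h|,\qquad \varepsilon_r\to0 .
\]
Hence for large $r$,
\[
 \tfrac c2|s_j^{(r)}|\le\|\langle s_j^{(r)},Y^{(r)}\rangle\|_1\le\|\langle s_j^{(r)},Y^{(r)}\rangle\|_{2\ell_j}\le A,
\]
and all the $s_j^{(r)}$ lie in a fixed ball.

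\emph{Step 2: limits and moment bounds.} A diagonal argument gives a subsequence $r'$ along which $s_j^{(r')}\to s_j$ for every $j$. For fixed $j$ and $p=2\ell_j$, convergence in $L_p$ together with boundedness yields
\[
 \|\langle s_j,Y\rangle\|_p=\lim\|\langle s_j^{(r')},Y^{(r')}\rangle\|_p\le A,
\]
which is \eqref{eq:hull-compact-moments}. The same argument works with $2\theta\ell_j$ in place of $2\ell_j$.

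\emph{Step 3: containment, the main difficulty.} Put $M=\clconv(\{\pm s_j\}\cup C_\infty(A))$, a closed convex symmetric set. Because $K\subset\overline{\bigcup_r K_r}$ and $M$ is closed, it suffices to show $K_{r_0}\subset M$ for each $r_0$. By finite-dimensional Hahn--Banach separation, this reduces to showing, for each $x\in K_{r_0}$ and each $y\in\R^n$,
\[
 \langle x,y\rangle\le\sup_{m\in M}\langle m,y\rangle .
\]
For $r'\ge r_0$, the inclusion $x\in\clconv\{\pm s_j^{(r')}\}$ gives $\langle x,y\rangle\le\sup_j|\langle s_j^{(r')},y\rangle|$. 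Choose $j_{r'}$ attaining this supremum within $1/r'$. There are two cases.
\begin{itemize}
\item[(i)] If $j_{r'}$ stays bounded along a further subsequence, it is eventually constant, equal to some $j$. Passing to the limit gives $\langle x,y\rangle\le|\langle s_j,y\rangle|$.
\item[(ii)] If $j_{r'}\to\infty$, the bounded vectors $s_{j_{r'}}^{(r')}$ have a convergent subsequence with limit $h$. For each fixed $p$, eventually $2\ell_{j_{r'}}\ge p$, so by monotonicity of $L_p$ norms $\|\langle s_{j_{r'}}^{(r')},Y^{(r')}\rangle\|_p\le A$. Convergence in $L_p$ then gives $\|\langle h,Y\rangle\|_p\le A$ for every $p$, hence $\|\langle h,Y\rangle\|_\infty\le A$, i.e.\ $h\in C_\infty(A)$. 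Therefore $\langle x,y\rangle\le|\langle h,y\rangle|$, and since $C_\infty(A)$ is symmetric this is at most $\sup_{m\in M}\langle m,y\rangle$.
\end{itemize}
Case (ii) is exactly the step that forces $C_\infty(A)$ into the statement; everything else is routine.

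\emph{Step 4: the identity \eqref{eq:hull-Cinfty}.} First, $\|Y_i\|_\infty=b_i$, since $\P(|Y_i|\ge u)>0$ for every $u<b_i$. For the inequality $\|\langle h,Y\rangle\|_\infty\ge\sum_i b_i|h_i|$, use independence and symmetry: for any $u_i<b_i$, the event $\{\varepsilon_i=\operatorname{sign}h_i,\ |Y_i|\ge u_i\ \text{for all } i\}$ has positive probability, and on it $\langle h,Y\rangle\ge\sum_i|h_i|u_i$. The reverse inequality is the triangle inequality. Consequently $C_\infty(A)$ is the weighted $\ell_1$ ball $\{h:\sum_i b_i|h_i|\le A\}$, with $h_i=0$ forced whenever $b_i=\infty$. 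Its extreme points are $\pm Ae_i/b_i$ for $b_i<\infty$, which gives the stated convex-hull description; the point $0$ is included so that the formula also covers the case where every $b_i=\infty$.
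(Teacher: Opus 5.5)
Your proposal is correct and follows essentially the same route as the paper. It uses a uniform lower bound $\|\langle h,Y^{(r)}\rangle\|_1\ge c|h|$ for large $r$ to confine all $s_j^{(r)}$ to a fixed ball, then a diagonal extraction with the moment bounds passed to the limit, the dichotomy between eventually constant and divergent almost-maximizing indices (the divergent case landing in $C_\infty(A)$) followed by Hahn--Banach, and a positive-probability event built from independence and symmetry to identify $\|\langle h,Y\rangle\|_\infty=\sum_i b_i|h_i|$; your write-up is only slightly more explicit (the $\varepsilon_r|h|$ estimate, the extreme-point description of the weighted $\ell_1$ ball).
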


\begin{proof}
The norms
\[
 h\longmapsto\|\langle h,Y^{(r)}\rangle\|_1
\]
converge uniformly on the Euclidean unit sphere. Hence, for all large
$r$,
\[
 \|\langle h,Y^{(r)}\rangle\|_1\ge c\|h\|_2.
\]
Thus the vectors in \eqref{eq:hull-compact-assumption} lie in a fixed
Euclidean ball. Passing to a subsequence, we may assume
\[
 s_j^{(r)}\longrightarrow s_j
\]
for every $j$, and \eqref{eq:hull-compact-moments} follows from the
$L_p$ convergence of the coordinates.

If $j_r\to\infty$ and $s_{j_r}^{(r)}\to s$, then, for every finite
$p$,
\[
 \|\langle s,Y\rangle\|_p\le A,
\]
so $s\in C_\infty(A)$. If $\lambda$ is linear and
$x\in\bigcup_rK_r$, then
\[
 \lambda(x)\le\sup_j|\lambda(s_j^{(r)})|.
\]
Choosing almost maximizing indices and passing to a subsequence gives
\[
 \lambda(x)\le
 \max\left\{
 \sup_j|\lambda(s_j)|,\,
 \sup_{h\in C_\infty(A)}\lambda(h)
 \right\}.
\]
The Hahn--Banach theorem gives \eqref{eq:hull-compact-containment}.

Finally,
\begin{equation}\label{eq:hull-Linfty}
 \|\langle h,Y\rangle\|_\infty
 =\sum_i b_i|h_i|.
\end{equation}
The inequality ``$\le$'' is clear. Let
$M<\sum_i b_i|h_i|$ and choose $0\le c_i<b_i$ so that
\[
 M<\sum_i|h_i|c_i.
\]
For $h_i\ne0$, symmetry gives
\[
 \P\bigl(h_iY_i>|h_i|c_i\bigr)>0.
\]
Independence then gives
\[
 \P\left(\sum_i h_iY_i>M\right)>0.
\]
Letting $M\uparrow\sum_i b_i|h_i|$ proves
\eqref{eq:hull-Linfty}, hence \eqref{eq:hull-Cinfty}.
\end{proof}

For $r\ge1$ put
\begin{equation}\label{eq:hull-qr}
 q_i^{(r)}(p)
 =
 \frac{q_i(p)+r^{-1}p}{1+r^{-1}},
 \qquad
 Y_i^{(r)}=\varepsilon_iq_i^{(r)}(E_i).
\end{equation}
Then $q_i^{(r)}$ is concave, strictly increasing and unbounded,
$q_i^{(r)}(1)=1$, and $Y_i^{(r)}\to Y_i$ in every finite $L_p$.

Choose increasing finite sets $T_r\subset T$ with dense union. Since
$n<\infty$ and $T$ is bounded,
\begin{equation}\label{eq:hull-width-approx}
 |S_{Y^{(r)}}(T_r)-S_Y(T_r)|
 \le
 \sup_{t\in T}\|t\|_\infty
 \sum_i\E|Y_i^{(r)}-Y_i|
 \longrightarrow0.
\end{equation}
Apply the preceding result, with exponent $2\theta\ell_j$, to
$T_r$ and $Y^{(r)}$. The resulting bounds are at most
$C_\theta S_Y(T)$ for all large $r$. Jensen's inequality, the lower
Khintchine inequality and $\E q_i(E_i)\ge e^{-1}$ give
\[
 \|\langle h,Y\rangle\|_1\ge c\|h\|_2,
 \qquad h\in\R^n,
\]
so $h\mapsto\|\langle h,Y\rangle\|_1$ is a norm.

Apply Lemma~\ref{lem:hull-compact} with $K_r=T_r-T_r$. Enumerate
the vectors $s_j$ and the finitely many vectors in
$C_\infty(C_\theta S_Y(T))$ in one sequence, with the $j$th vector
$s_j$ occurring among the first $2j$ terms. Since the added vectors
satisfy every finite moment estimate and $\ell_{2j}\le2\ell_j$,
Corollary~\ref{cor:intro-hull} follows. The singleton case is
immediate.

\subsection{A converse estimate}

\begin{proposition}\label{prop:hull-converse}
Suppose
\begin{equation}\label{eq:hull-converse-assumption}
 \|\langle s_j,Y\rangle\|_{\ell_j}\le A,
 \qquad j\ge1.
\end{equation}
Then
\begin{equation}\label{eq:hull-converse-max}
 \E\sup_{j\ge1}|\langle s_j,Y\rangle|\le CA.
\end{equation}
Consequently, if
\[
 T-T\subset\clconv\{\pm s_j:j\ge1\},
\]
then $S_Y(T)\le CA$.
\end{proposition}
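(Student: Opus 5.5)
The plan is a union bound over $j$ using the moment assumption at the exponent $p_j=\ell_j=\log(e+j)$, upgraded to a larger exponent so that the resulting tail bounds are summable in $j$. Two ingredients are needed.

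The first is moment comparison. Since $\langle s,Y\rangle=\sum_i s_i\varepsilon_i q_i(E_i)$ is a sum of independent symmetric variables with log-concave tails, moments grow regularly: for $q\ge p\ge1$,
\[
 \|\langle s,Y\rangle\|_q\le C\frac qp\|\langle s,Y\rangle\|_p .
\]
This is the Lata\l a-type regular growth for such sums. It can also be read off from Theorem~\ref{thm:est-affine} together with the convex-order reduction, or, more simply, from concavity of the $q_i$: the quantity $\|\langle s,Y\rangle\|_p$ is comparable to a Musielak--Orlicz type norm $\sup\{\sum s_iz_i:\sum V_i(z_i)\le p\}$, which is $(q/p)$-homogeneous in the scale exactly as in Lemma~\ref{lem:est-clock}. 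If this proves delicate, I would use the weaker bound $\|\cdot\|_{q}\le C(q/p)\|\cdot\|_p$ valid for all $q\ge p$, which is all that is needed.

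The second ingredient is the tail bound and the summation. Take $q=2\ell_j$. Then $\|\langle s_j,Y\rangle\|_{2\ell_j}\le CA$, and Markov's inequality gives
\[
 \P\bigl(|\langle s_j,Y\rangle|>euCA\bigr)\le (eu)^{-2\ell_j}\le u^{-2}(e+j)^{-2}
\]
for $u\ge1$. Summing over $j$ yields $\P(\sup_j|\langle s_j,Y\rangle|>euCA)\le Cu^{-2}$. Integrating in $u$ then gives $\E\sup_j|\langle s_j,Y\rangle|\le CA$, which is \eqref{eq:hull-converse-max}.

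The consequence follows from the hull condition. Fix $t_0\in T$. For $t\in T$, the vector $t-t_0$ lies in $\clconv\{\pm s_j\}$, so the linear functional $x\mapsto\langle x,Y(\omega)\rangle$ satisfies
\[
 \langle t-t_0,Y\rangle\le\sup_j|\langle s_j,Y\rangle|
\]
pointwise. This holds by continuity on the closure, since in finite dimension $Y(\omega)$ is a fixed vector. For every finite $F\subset T$ we have $\E\langle t_0,Y\rangle=0$, hence
\[
 \E\max_{t\in F}\langle t,Y\rangle\le\E\sup_j|\langle s_j,Y\rangle|\le CA.
\]
Taking the supremum over $F$ gives $S_Y(T)\le CA$.

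The main obstacle is the moment-growth step from $\ell_j$ to $2\ell_j$. Without it, Markov at level $\ell_j$ gives only $(e+j)^{-1}$ tails, which are not summable. If the regular-growth comparison is not available in the needed form, I would fall back on Lata\l a's two-sided moment formula for sums of independent symmetric log-concave variables, which directly gives $\|\cdot\|_{2p}\le C\|\cdot\|_p$.
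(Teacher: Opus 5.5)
Your proof is correct, but it takes a detour the paper does not need, and the obstacle you identify is not real. Markov's inequality at the given exponent $\ell_j$ already yields summable tails once the threshold is placed strictly above $eA$. For $u\ge A$ one has $\P(|\langle s_j,Y\rangle|\ge u)\le (A/u)^{\ell_j}=(e+j)^{-\log(u/A)}$, which at $u=e^2A$ equals $(e+j)^{-2}$. Your non-summable bound $(e+j)^{-1}$ comes only from choosing $u=eA$.

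The paper integrates this tail to get $\E\bigl[|\langle s_j,Y\rangle|\ind_{\{|\langle s_j,Y\rangle|\ge e^2A\}}\bigr]\le Ae^{2(1-\ell_j)}\bigl(1+1/(\ell_j-1)\bigr)$. It then sums over $j$, using $\ell_j\ge\log(e+1)>1$, together with $\sup_j|\langle s_j,Y\rangle|\le e^2A+\sum_j|\langle s_j,Y\rangle|\ind_{\{|\langle s_j,Y\rangle|\ge e^2A\}}$. That argument uses nothing about $Y$ beyond the hypothesis, so it holds for arbitrary random variables with $\|X_j\|_{\ell_j}\le A$.

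Your route instead needs the regular moment growth $\|\langle s,Y\rangle\|_{2p}\le C\|\langle s,Y\rangle\|_p$. This estimate is true here, by the Gluskin--Kwapie\'n and Lata\l a two-sided moment formula for sums of independent symmetric variables with log-concave tails. It is, however, an external theorem. The paper proves two-sided moment bounds only for the capped variables $\xi^{[\beta]}$ (Proposition~\ref{prop:est-moments} and Theorem~\ref{thm:est-affine}), and the convex-order Lemma~\ref{lem:lower-convex-order} is one-sided. So the estimate cannot simply be ``read off'' from the paper's results. Your proposed ``weaker'' fallback is also the same inequality, not a weaker one.

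Granting that input, the rest agrees with the paper. Your Markov and union-bound computation at exponent $2\ell_j$ is correct. So is your deduction of $S_Y(T)\le CA$ from the hull condition, via the pointwise bound $\langle t-t_0,Y\rangle\le\sup_j|\langle s_j,Y\rangle|$ and $\E\langle t_0,Y\rangle=0$.
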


\begin{proof}
For $u\ge A$, Markov's inequality gives
\[
 \P(|\langle s_j,Y\rangle|\ge u)
 \le(A/u)^{\ell_j}.
\]
Hence
\begin{equation}\label{eq:hull-tail}
 \E\left[
 |\langle s_j,Y\rangle|
 \ind_{\{|\langle s_j,Y\rangle|\ge e^2A\}}
 \right]
 \le
 Ae^{2(1-\ell_j)}
 \left(1+\frac1{\ell_j-1}\right).
\end{equation}
The right hand side is summable in $j$, so
\[
 \E\sup_j|\langle s_j,Y\rangle|\le CA.
\]
If $t_0\in T$, the convex-hull assumption gives
\[
 \sup_{t\in T}|\langle t-t_0,Y\rangle|
 \le\sup_j|\langle s_j,Y\rangle|.
\]
Taking expectations proves the result.
\end{proof}

\section{Countably many coordinates}\label{sec:countable}

We return to the process \eqref{eq:intro-process} on $\ell_2$, with
the normalization \eqref{eq:intro-normalization}.

\begin{lemma}\label{lem:countable-basic}
There are universal constants $c,C>0$ such that, for
$h\in\ell_2$ and $p\ge1$,
\begin{equation}\label{eq:countable-basic}
 c\|h\|_2\le\|Y_h\|_1,
 \qquad
 \|Y_h\|_p\le Cp\|h\|_2.
\end{equation}
\end{lemma}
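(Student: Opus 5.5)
The plan has two parts: the lower bound $c\|h\|_2\le\|Y_h\|_1$, and the upper bound $\|Y_h\|_p\le Cp\|h\|_2$. In both, I first treat finitely supported $h$ and then pass to general $h\in\ell_2$ by $L_2$-convergence of the series \eqref{eq:intro-process} together with Fatou's lemma.

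For the lower bound I follow the argument already used in Lemma~\ref{lem:est-deletion-diameter} and in the general case of Section~\ref{sec:hulls}. By \eqref{eq:intro-quantile}, $Y_h=\sum_i h_i\varepsilon_iq_i(E_i)$. Conditioning on $(\varepsilon_i)$ and applying Jensen's inequality to the magnitudes $q_i(E_i)$ gives
\[
\E\Bigl|\sum_i h_i\varepsilon_i q_i(E_i)\Bigr|\ge\E\Bigl|\sum_i h_i\varepsilon_i\,\E q_i(E_i)\Bigr|.
\]
This step is legitimate because $x\mapsto|\sum_i h_i\varepsilon_ix_i|$ is convex and the $E_i$ are independent of the signs. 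Next I use $\E q_i(E_i)\ge\P(E_i\ge1)q_i(1)=e^{-1}$, which holds since $q_i$ is nondecreasing and $q_i(1)=1$. Finally, the contraction principle combined with the lower Khintchine inequality $\E|\sum_i a_i\varepsilon_i|\ge2^{-1/2}\|a\|_2$ yields $\|Y_h\|_1\ge c\|h\|_2$.

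For the upper bound I use Lemma~\ref{lem:upper-moment}, whose proof does not depend on the dimension $n$. It gives
\[
\|Y_h\|_p\le C\bigl(\sqrt p\|h\|_2+p\|h\|_\infty\bigr)\le 2Cp\|h\|_2,
\]
since $\|h\|_\infty\le\|h\|_2$ and $\sqrt p\le p$ for $p\ge1$. To be explicit about the ingredients: concavity gives $q_i(u)\le1+u$, and the contraction principle, applied conditionally on $(E_i)$, reduces the estimate to $\sum_i h_i\varepsilon_i$ and $\sum_i h_i\varepsilon_iE_i$. These are handled by Khintchine's inequality and by the exponential moment bound $\prod_i(1-\lambda^2h_i^2)^{-1}\le\exp(2\lambda^2\|h\|_2^2)$ for $|\lambda|\|h\|_\infty\le1/2$, respectively.

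The only step needing care is the passage to infinite support. Let $h^{(N)}$ denote the truncation of $h$ to its first $N$ coordinates. Then $Y_{h^{(N)}}\to Y_h$ in $L_2$, hence in $L_1$, so the lower bound passes to the limit directly. For the upper bound, take a subsequence along which $Y_{h^{(N)}}\to Y_h$ almost surely; Fatou's lemma then gives
\[
\|Y_h\|_p\le\liminf_N\|Y_{h^{(N)}}\|_p\le Cp\|h\|_2.
\]
I do not expect any real obstacle here. The one point to check is that the constants in Lemma~\ref{lem:upper-moment} and in the Khintchine and contraction steps are universal and independent of the number of coordinates, which they are.
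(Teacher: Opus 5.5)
Your proposal is correct and takes essentially the same route as the paper. You treat finite support first, obtain the lower bound from Jensen over the magnitudes, $\E q_i(E_i)\ge e^{-1}$, contraction and the lower Khintchine inequality, obtain the upper bound from Lemma~\ref{lem:upper-moment}, and then approximate in $\ell_2$. Your $L_1$-limit and Fatou argument simply spells out the paper's ``approximation in $\ell_2$'' step.
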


\begin{proof}
First suppose that $h$ has finite support. The upper bound follows
from Lemma~\ref{lem:upper-moment}. For the lower bound, using the
product representation
\[
 Y_i=\varepsilon_iq_i(E_i),
\]
Fubini's theorem and Jensen's inequality give
\[
 \E|Y_h|
 \ge
 \E_\varepsilon
 \left|
 \sum_i h_i\varepsilon_i\,\E q_i(E_i)
 \right|.
\]
Since
\[
 \E q_i(E_i)\ge\P(E_i\ge1)=e^{-1},
\]
the contraction principle and the lower Khintchine inequality yield
\[
 \E|Y_h|\ge c\|h\|_2.
\]
Approximation in $\ell_2$ proves \eqref{eq:countable-basic}. In
particular, the series defining $Y_h$ converges in every finite
$L_p$.
\end{proof}

For finite $F\subset\ell_2$, define $D_r$, $\Gamma^\Psi(F)$ and
$\mathcal M^\Psi(F)$ as before, with sums over all coordinates. For
arbitrary nonempty $T\subset\ell_2$, put
\begin{equation}\label{eq:countable-Gamma}
 \Gamma^\Psi(T)
 =
 \sup_{\substack{F\subset T\\F\text{ finite, nonempty}}}
 \Gamma^\Psi(F),
 \qquad
 \mathcal M^\Psi(T)
 =
 \sup_{\substack{F\subset T\\F\text{ finite, nonempty}}}
 \mathcal M^\Psi(F).
\end{equation}

\begin{theorem}\label{thm:countable-main}
For every nonempty $T\subset\ell_2$,
\begin{equation}\label{eq:countable-main}
 S_Y(T)\asymp\Gamma^\Psi(T)\asymp\mathcal M^\Psi(T).
\end{equation}
If $S_Y(T)<\infty$, then, for every fixed $\theta\ge1$, there are
$s_j\in\ell_2$, $j\ge1$, such that
\begin{equation}\label{eq:countable-hull}
 T-T\subset\clconv_{\ell_2}\{\pm s_j:j\ge1\},
 \qquad
 \|Y_{s_j}\|_{\theta\ell_j}
 \le C_\theta S_Y(T).
\end{equation}
\end{theorem}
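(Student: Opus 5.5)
The plan is to reduce everything to finite subsets of finitely many coordinates and then pass to limits, using the finite-dimensional results (Theorem~\ref{thm:intro-main} and Corollary~\ref{cor:intro-hull}) together with Lemma~\ref{lem:countable-basic}.

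\emph{Step 1: the equivalences for finite $F$.} Fix a finite $F\subset\ell_2$. For $m\ge1$ let $P_m$ be the projection onto the first $m$ coordinates. By Theorem~\ref{thm:intro-main} in $\R^m$,
\[
S_Y(P_mF)\asymp\Gamma^\Psi(P_mF)\asymp\mathcal M^\Psi(P_mF).
\]
Next let $m\to\infty$.
\begin{itemize}
\item[] On the process side, Lemma~\ref{lem:countable-basic} gives $\|Y_t-Y_{P_mt}\|_1\le C\|t-P_mt\|_2\to0$. Since $F$ is finite, $S_Y(P_mF)\to S_Y(F)$.
\item[] On the functional side, for each fixed $r>0$ and $s,t\in F$ the sum $D_r(P_ms,P_mt)$ increases to $D_r(s,t)$. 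This is monotone convergence of the kernels, exactly the situation of \eqref{eq:lower-monotone} with $\theta^{(m)}_i=\Psi_i\ind_{\{i\le m\}}$ (the coordinates $i>m$ get the zero cost).
\end{itemize}
I will rerun the Fatou/diagonal argument of Lemma~\ref{lem:lower-glue}: choose almost optimal measures on $F$, identifying $P_mF$ with $F$ through $t\mapsto P_mt$ (valid once $P_m$ is injective on $F$). This gives $\Gamma^\Psi(P_mF)\uparrow\Gamma^\Psi(F)$. The same argument applies to $\mathcal M^\Psi$, with a single measure and Fatou in $r$. Hence $S_Y(F)\asymp\Gamma^\Psi(F)\asymp\mathcal M^\Psi(F)$ with universal constants.

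\emph{Step 2: general $T$.} Taking suprema over finite $F\subset T$ gives \eqref{eq:countable-main}, by \eqref{eq:intro-width} and \eqref{eq:countable-Gamma}.

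\emph{Step 3: the hull statement.} Assume $S_Y(T)<\infty$. By \eqref{eq:countable-basic}, $\diam_2 T\le CS_Y(T)$, so $T$ is bounded in $\ell_2$ and separable. Choose increasing finite sets $T_r\subset T$ whose union is dense in $T$. Apply Corollary~\ref{cor:intro-hull}, with exponent $2\theta\ell_j$, to $P_rT_r\subset\R^r$. This yields vectors $s^{(r)}_j\in\R^r\subset\ell_2$ with
\[
P_r(T_r-T_r)\subset\clconv\{\pm s_j^{(r)}\},\qquad \|Y_{s_j^{(r)}}\|_{2\theta\ell_j}\le C_\theta S_Y(T).
\]
Here I used $S_Y(P_rT_r)\le S_Y(T_r)$, which follows by conditioning and Jensen as in \eqref{eq:est-deletion}. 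By \eqref{eq:countable-basic}, $\|s^{(r)}_j\|_2\le C_\theta S_Y(T)$. Pass to a subsequence with $s^{(r)}_j\to s_j$ weakly in $\ell_2$ for every $j$. Then:
\begin{itemize}
\item[] Moments pass to the limit by weak lower semicontinuity. Indeed, $Y_{s^{(r)}_j}\to Y_{s_j}$ in distribution, since characteristic functions are products over coordinates and the tails are controlled by the uniform $\ell_2$ bound.
\item[] Containment mimics Lemma~\ref{lem:hull-compact}, replacing the norm-compactness there by weak compactness. Fix $x\in T_{r_0}-T_{r_0}$ and a continuous functional $\lambda\in\ell_2$. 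Then $\lambda(P_rx)\le\sup_j|\lambda(s^{(r)}_j)|$. Taking limits, with near-maximizing indices $j_r$, gives either a limit $s_j$ or a weak limit $s$ of $s^{(r)}_{j_r}$ with $j_r\to\infty$.
\end{itemize}
In the second case, the moment bound with exponent $2\theta\ell_{j_r}\to\infty$ puts $s$ in $C_\infty(A)=\{h:\sum_ib_i|h_i|\le A\}$. That identity is \eqref{eq:hull-Linfty}, extended to $\ell_2$ by finite truncation. By Hahn--Banach, $T-T$ lies in the closed hull of $\{\pm s_j\}\cup C_\infty(A)$.

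\emph{Step 4: absorbing $C_\infty(A)$ (main obstacle).} This is the step I expect to be hardest. In infinite dimension $C_\infty(A)$ is a weighted $\ell_1$ ball with infinitely many extreme points $\pm Ae_i/b_i$, where $b_i<\infty$. I would enumerate these extreme points and interleave them with the $s_j$. The difficulty is that each $Ae_i/b_i$ satisfies $\|Y\|_\infty\le A$, so every moment bound holds uniformly, but interleaving infinitely many of them can push $s_j$ to index about $2j$ only if they are placed sparsely. So I would insert the $k$th extreme point at position about $2^k$, in the gaps, keeping each $s_j$ within index $2j$. Since $\ell_{2j}\le2\ell_j$, this loses only a factor in the exponent, which the choice $2\theta$ absorbs. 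Finally, the closure in $\ell_2$ of the convex hull contains the whole weighted $\ell_1$ ball, because finitely supported vectors are dense in it. This gives \eqref{eq:countable-hull}.
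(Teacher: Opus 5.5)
Your proposal follows the paper's proof essentially step for step. The finite case uses projection onto the first $m$ coordinates together with \eqref{eq:lower-monotone} and Fatou. The hull statement uses weak compactness of the $s_j^{(r)}$, Hahn--Banach with the extra set $C_\infty(A)$, and absorption of $C_\infty(A)$ by interleaving its extreme points.

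\emph{One justification is false.} Weak convergence $s_j^{(r)}\rightharpoonup s_j$ with a uniform $\ell_2$ bound does not give $Y_{s_j^{(r)}}\to Y_{s_j}$ in distribution. The uniform bound does not make the tails $\sum_{i>K}(s^{(r)}_{j,i})^2$ uniformly small. For example, $e_r\rightharpoonup0$, while $Y_{e_r}=Y_r$ satisfies $\P(|Y_r|\ge1)\ge\P(E_r\ge1)=e^{-1}$ for every $r$.

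\emph{The conclusion you need is still true, for the reason the paper uses.} By Lemma~\ref{lem:countable-basic}, $h\mapsto Y_h$ is a bounded linear map from $\ell_2$ into $L_p$ for each finite $p$. Hence $h\mapsto\|Y_h\|_p$ is a norm-continuous convex function, and therefore weakly lower semicontinuous. The same fact is what places the weak limits of $s^{(r)}_{j_r}$ with $j_r\to\infty$ in $C_\infty(A)$: for each fixed $p$, eventually $2\theta\ell_{j_r}\ge p$, so $\|Y_s\|_p\le A$ for all finite $p$.

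\emph{Step~4 is not a real obstacle.} Put $s_j$ at position $2j$ and the points $\pm Ae_i/b_i$ at the odd positions. Then $s_j$ lies among the first $2j$ terms, which is all that $\ell_{2j}\le2\ell_j$ requires. The added points satisfy every moment bound with constant $A$, so their positions are irrelevant. Your sparser placement also works but is unnecessary.
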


\begin{proof}
Let $P_m$ be the projection onto the first $m$ coordinates and let
$F\subset\ell_2$ be finite. Put
\[
 \Psi_i^{(m)}=\Psi_i\ind_{\{i\le m\}}.
\]
Then
\[
 \Gamma^\Psi(P_mF)=\Gamma^{\Psi^{(m)}}(F).
\]
Since $\Psi_i^{(m)}\uparrow\Psi_i$, \eqref{eq:lower-monotone} gives
\begin{equation}\label{eq:countable-Gamma-limit}
 \Gamma^\Psi(P_mF)\longrightarrow\Gamma^\Psi(F).
\end{equation}

For $\mathcal M^\Psi$, the sequence
$\mathcal M^{\Psi^{(m)}}(F)$ is increasing and bounded above by
$\mathcal M^\Psi(F)$. Let its limit be $L$ and choose
$\nu_m\in\mathcal P(F)$ with value at most $L+o(1)$. Passing to a
subsequence, $\nu_m\to\nu\in\mathcal P(F)$. For fixed $m_0$,
Fatou's lemma gives
\[
 \max_{t\in F}
 \int_0^\infty
 F_r^{\Psi^{(m_0)}}(t,\nu)\,dr
 \le L.
\]
Letting $m_0\to\infty$ gives
\begin{equation}\label{eq:countable-M-limit}
 \mathcal M^\Psi(P_mF)\longrightarrow\mathcal M^\Psi(F).
\end{equation}

By Lemma~\ref{lem:countable-basic},
\[
 \max_{t\in F}|Y_{P_mt}-Y_t|\longrightarrow0
 \qquad\text{in }L_1.
\]
Applying Theorem~\ref{thm:intro-main} to $P_mF$ and letting
$m\to\infty$ proves \eqref{eq:countable-main} for finite $F$.
Taking the supremum over finite subsets proves the first assertion.

Assume now that $S_Y(T)<\infty$. After translation we may suppose
$0\in T$. For $t\in T$, symmetry and
Lemma~\ref{lem:countable-basic} give
\[
 S_Y(T)
 \ge\E(Y_t)_+
 =\frac12\|Y_t\|_1
 \ge c\|t\|_2.
\]
Thus $T$ is bounded in $\ell_2$.

Choose increasing finite sets $T_m\subset T$ with dense union. We have
\begin{equation}\label{eq:countable-projection-width}
 S_Y(P_mT_m)\le S_Y(T_m)\le S_Y(T).
\end{equation}
For $M\ge m$ and fixed $x_1,\ldots,x_m$, the function
\[
 (z_{m+1},\ldots,z_M)
 \longmapsto
 \max_{t\in T_m}
 \left(
 \sum_{i\le m}t_ix_i+
 \sum_{m<i\le M}t_iz_i
 \right)
\]
is convex. Since $\E Y_i=0$, Jensen's inequality and Fubini's theorem
give
\[
 S_Y(P_mT_m)\le S_Y(P_MT_m).
\]
Letting $M\to\infty$ proves
\eqref{eq:countable-projection-width}.

Apply Corollary~\ref{cor:intro-hull}, with exponent
$2\theta\ell_j$, to $P_mT_m$. We obtain vectors
$s_j^{(m)}$, supported on the first $m$ coordinates, such that
\begin{equation}\label{eq:countable-finite-hull}
 P_m(T_m-T_m)
 \subset
 \clconv\{\pm s_j^{(m)}:j\ge1\},
 \qquad
 \|Y_{s_j^{(m)}}\|_{2\theta\ell_j}\le A,
\end{equation}
where
\[
 A=C_\theta S_Y(T).
\]

By Lemma~\ref{lem:countable-basic},
\[
 \|s_j^{(m)}\|_2\le CA.
\]
After passing to a subsequence and using a diagonal argument, we may
assume
\[
 s_j^{(m)}\rightharpoonup s_j
 \qquad(m\to\infty)
\]
for every $j$. For every finite $p$, the map
\[
 h\longmapsto Y_h
\]
is bounded from $\ell_2$ to $L_p$ by
Lemma~\ref{lem:countable-basic}. Hence its norm is weakly lower
semicontinuous, and
\begin{equation}\label{eq:countable-limit-moments}
 \|Y_{s_j}\|_{2\theta\ell_j}\le A.
\end{equation}

If $j_m\to\infty$ and
$s_{j_m}^{(m)}\rightharpoonup s$, then, for every finite $p$,
\[
 \|Y_s\|_p\le A.
\]
Thus
\[
 \|Y_s\|_\infty\le A.
\]
Put
\[
 C_\infty(A)=
 \{h\in\ell_2:\|Y_h\|_\infty\le A\}.
\]

We claim that
\begin{equation}\label{eq:countable-hull-pre}
 T-T
 \subset
 \clconv_{\ell_2}
 \bigl(\{\pm s_j:j\ge1\}\cup C_\infty(A)\bigr).
\end{equation}
Let $\lambda$ be a continuous linear functional on $\ell_2$ and
$t,u$ belong to the dense union of the sets $T_m$. For all large $m$,
\[
 \lambda(P_m(t-u))
 \le\sup_j|\lambda(s_j^{(m)})|.
\]
Choose $j_m$ so that the right hand side is attained up to $o(1)$.
After passing to a subsequence, either $j_m$ is constant or
$j_m\to\infty$. In the first case the limit is bounded by
$\sup_j|\lambda(s_j)|$; in the second it is bounded by
$\sup_{h\in C_\infty(A)}\lambda(h)$. Since
$P_m(t-u)\to t-u$ in $\ell_2$,
\[
 \lambda(t-u)
 \le
 \max\left\{
 \sup_j|\lambda(s_j)|,\,
 \sup_{h\in C_\infty(A)}\lambda(h)
 \right\}.
\]
By density the same inequality holds for every $t-u\in T-T$.
The Hahn--Banach theorem gives \eqref{eq:countable-hull-pre}.

It remains to identify $C_\infty(A)$. We claim that
\begin{equation}\label{eq:countable-Cinfty}
 C_\infty(A)
 =
 \left\{
 h\in\ell_2:
 \sum_{i\ge1}b_i|h_i|\le A
 \right\}
 =
 \clconv_{\ell_2}
 \left(\{0\}\cup\{\pm Ae_i/b_i:b_i<\infty\}\right).
\end{equation}
For the first equality, the inequality
\[
 \|Y_h\|_\infty
 \le\sum_i b_i|h_i|
\]
is immediate whenever the sum on the right is finite.

For the reverse inequality, let
\[
 M<\sum_{i\ge1}b_i|h_i|.
\]
Choose $m$ and numbers $0\le c_i<b_i$, $i\le m$, such that
\[
 M<\sum_{i\le m}|h_i|c_i.
\]
By symmetry and independence,
\[
 \P\left(\sum_{i\le m}h_iY_i>M\right)>0.
\]
Write
\[
 R_m=\sum_{i>m}h_iY_i.
\]
The series defining $R_m$ converges in $L_2$; $R_m$ is symmetric and
independent of $(Y_i)_{i\le m}$. Hence $\P(R_m\ge0)\ge1/2$, and
\[
 \P(Y_h>M)>0.
\]
Letting $M\uparrow\sum_i b_i|h_i|$ gives
\begin{equation}\label{eq:countable-Linfty}
 \|Y_h\|_\infty
 =
 \sum_{i\ge1}b_i|h_i|.
\end{equation}
The second equality in \eqref{eq:countable-Cinfty} follows by
truncating the weighted $\ell_1$ expansion.

Let $(\widetilde s_j)$ enumerate
\[
 \{s_j:j\ge1\}\cup
 \{\pm Ae_i/b_i:b_i<\infty\}
\]
so that $s_j$ occurs among
$\widetilde s_1,\ldots,\widetilde s_{2j}$. The added vectors have
$L_\infty$ norm at most $A$. Since $\ell_{2j}\le2\ell_j$,
\eqref{eq:countable-limit-moments} gives \eqref{eq:countable-hull}
for $(\widetilde s_j)$.
\end{proof}

\section*{Acknowledgments}
This work was supported by the National Key R\&D Program of China (No.2024YFA1013501), the National Natural Science Foundation of China (No. 12571162 and 12371148), and Shandong Provincial Natural Science Foundation (No. ZR2024MA082).

\end{document}